\documentclass[final,12pt]{colt2021} 


\title[Convergence and approximation for SGD]{Convergence rates and
  approximation results for SGD and its continuous-time counterpart}
\usepackage{times}
\usepackage{etoc}




\coltauthor{%
 \Name{Xavier Fontaine} \Email{\maillink{xavier.fontaine@polytechnique.edu}}\\
 \addr Centre Borelli, ENS Paris-Saclay
 \AND
 \Name{Valentin De Bortoli} \Email{\maillink{valentin.debortoli@gmail.com}}\\
 \addr Department of Statistics,  University of Oxford
 \AND
 \Name{Alain Durmus} \Email{\maillink{alain.durmus@ens-paris-saclay.fr}}\\
 \addr Centre Borelli, ENS Paris-Saclay
}

\usepackage[utf8]{inputenc}   
\usepackage[T1]{fontenc}      



\usepackage{comment}
\usepackage{amssymb,mathrsfs}
\usepackage{amsfonts}
\usepackage{upgreek}
\usepackage{graphicx}
\usepackage{color}

\usepackage{stmaryrd}
\usepackage{enumitem}
\usepackage{url}

\usepackage{graphicx} 
\usepackage{tikz}
\usepackage{pgfplots}
\usepackage{xcolor}
\usepackage{bbm}

\usepackage{ifthen}
\usepackage{xargs}
\setlength{\belowcaptionskip}{-0.8\baselineskip}
\usepackage{wrapfig}




\usepackage{cleveref}
\usepackage{autonum}

 \usepackage{aliascnt}


\newaliascnt{lemma_colt}{theorem}
\newtheorem{lemma_colt}[lemma_colt]{Lemma}
\aliascntresetthe{lemma_colt}
\crefname{lemma}{lemma}{lemmas}
\Crefname{Lemma}{Lemma}{Lemmas}

\newaliascnt{corollary_colt}{theorem}
\newtheorem{corollary_colt}[corollary_colt]{Corollary}
\aliascntresetthe{corollary_colt}
\crefname{corollary}{corollary}{corollaries}
\Crefname{Corollary}{Corollary}{Corollaries}

\newaliascnt{proposition_colt}{theorem}
\newtheorem{proposition_colt}[proposition_colt]{Proposition}
\aliascntresetthe{proposition_colt}
\crefname{proposition}{proposition}{propositions}
\Crefname{Proposition}{Proposition}{Propositions}

\newaliascnt{definition_colt}{theorem}

\aliascntresetthe{definition_colt}
\crefname{definition}{definition}{definitions}
\Crefname{Definition}{Definition}{Definitions}





\newtheorem{assumption}{\textbf{A}\hspace{-3pt}}
\crefformat{assumption}{{\textbf{A}}#2#1#3}

\newtheorem{assumptionF}{\textbf{F}\hspace{-3pt}}
\crefformat{assumptionF}{{\textbf{F}}#2#1#3}

\newenvironment{assumptionbis}[1]
  {%
   \addtocounter{assumptionF}{-1}%
   \begin{assumptionF}}
  {\end{assumptionF}}

\Crefname{assumptionB}{\textbf{B}\hspace{-3pt}}{\textbf{B}\hspace{-3pt}}
\crefname{assumptionB}{\textbf{B}}{\textbf{B}}

\Crefname{assumptionC}{\textbf{C}\hspace{-3pt}}{\textbf{C}\hspace{-3pt}}
\crefname{assumptionC}{\textbf{C}}{\textbf{C}}

\newtheorem{assumptionH}{\textbf{H}\hspace{-3pt}}
\Crefname{assumptionH}{\textbf{H}\hspace{-3pt}}{\textbf{H}\hspace{-3pt}}
\crefname{assumptionH}{\textbf{H}}{\textbf{H}}

\Crefname{assumptionT}{\textbf{T}\hspace{-3pt}}{\textbf{T}\hspace{-3pt}}
\crefname{assumptionT}{\textbf{T}}{\textbf{T}}

\Crefname{assumptionT}{\textbf{T}\hspace{-3pt}}{\textbf{T}\hspace{-3pt}}
\crefname{assumptionT}{\textbf{T}}{\textbf{T}}

\Crefname{assumptionL}{\textbf{L}\hspace{-3pt}}{\textbf{L}\hspace{-3pt}}
\crefname{assumptionL}{\textbf{L}}{\textbf{L}}

\Crefname{assumptionQ}{\textbf{Q}\hspace{-3pt}}{\textbf{Q}\hspace{-3pt}}
\crefname{assumptionQ}{\textbf{Q}}{\textbf{Q}}


\Crefname{assumptionAR}{\textbf{AR}\hspace{-3pt}}{\textbf{AR}\hspace{-3pt}}
\crefname{assumptionAR}{\textbf{AR}}{\textbf{AR}}

\usepackage{caption}
\def\yo{Y}
\def\yoo{y_0}
\def\trcst{\Lip_{\mathtt{T}}}
\def\T{^{\top}}
\def\f{\tilde{f}}
\def\rmg{\mathrm{g}}
\def\rmf{\mathrm{f}}
\newcommand{\maillink}[1]{\href{mailto:#1}{\textcolor{black}{#1}}}

\newcommand{\Cweakapp}{\ttd}
\def\ttfp{\Cweakapp_{p}}
\def\ttfpun{\Cweakapp_{p,1}}
\def\ttfpdeux{\Cweakapp_{p,2}}

\def\ttfpquatre{\Cweakapp_{p,4}}
\def\ttamin{\mathtt{a}}
\def\ttfun{\Cweakapp_4}
\def\ttfdeux{\Cweakapp_5}
\def\btta{\bar{\mathtt{A}}}

\def\KuLo{Kurdyka-\L ojasiewicz}
\newcommand{\tta}{\mathtt{A}}
\newcommand{\ttb}{\mathtt{B}}
\newcommand{\ttc}{\mathtt{C}}
\newcommand{\ttd}{\mathtt{D}}

\def\boundLSig{\Lip\eta}

\newcommand{\Capprox}{\tta}
\newcommand{\Ctech}{\ttc}
\newcommand{\Cstrong}{\ttb}
\newcommand{\Cconv}{\ttc}
\newcommand{\Cweak}{C}

\def\conj{\varkappa}
\def\mtta{\mathtt{a}}
\def\explog{\vareps}

\def\Cbeta{\Cweak_{\beta, \explog}}
\def\Aar{\Capprox_{\alpha, r}}
\def\xo{Y}
\def\xoo{x_0}
\def\Db{\Ctech}
\def\intk{\int_{k\gua}^{(k+1)\gua}}
\newcommandx\ctun[1][1=T]{\Capprox_{#1,1}}

\def\cttun{\tilde{\Capprox}_{T,1}}

\def\ctdeux{\Capprox_{T,2}}
\def\cttrois{\Capprox_{T,3}}
\def\ctquatre{\Capprox_{T,4}}
\def\ctcinq{\Capprox_{T,5}}
\def\ctsix{\Capprox_{T,6}}
\def\bctsix{\bar{\Capprox}_{T,6}}
\def\ctsept{\Capprox_{T,7}}
\def\cthuit{\Capprox_{T,8}}
\def\ctneuf{\Capprox_{T,9}}
\def\gfun{\mathbb{G}}

\def\Cconvcontun{\Cconv_{1,\alpha}^{(c)}}
\def\Cconvcontdeux{\Cconv_{2,\alpha}^{(c)}}
\def\Cconvconttrois{\Cconv_{3,\alpha}^{(c)}}
\def\Cconvdiscun{\Cconv_{1,\alpha}^{(d)}}
\def\Cconvdiscdeux{\Cconv_{2,\alpha}^{(d)}}
\def\Cconvdisctrois{\Cconv_{3,\alpha}^{(d)}}
\def\Cconvcont{\Phibf_{\alpha}^{(c)}}
\def\Cconvdisc{\Phibf_{\alpha}^{(d)}}
\def\Csham{\Cconv_1}
\def\Cshamz{\tilde{\Cconv_1}}

\def\Cshama{\Cconv_{\alpha}}
\def\Cshamamoins{\Cshama^-}
\def\Cshamaplus{\Cshama^+}
\def\Ccont{\Cconv^{(c)}}
\def\Ccontz{\tilde{\Ccont}}
\def\Cdisc{\Cconv^{(d)}}

\def\Cconvdtrois{\Cconv^{(b)}}
\def\Cconvdun{(\gamma\eta/2)}
\def\Cconvddeux{(\gamma/2)}
\def\Cshamdisc{\Cconv_{0}}
\def\Cshamt{\tilde{\Cconv}_{\alpha}}
\def\Psial{\Psibf_{\alpha}}

\def\Cstrongdisc{\Cstrong_3}
\def\Cstrongdiscf{\Cstrong_4}
\def\Cstrongloj{\Cstrong_5}
\def\Cstronglojdisc{\Cstrong_6}
\def\Cstrongtilde{\tilde{\Cstrong}}
\def\maxnorm{C}

\newcommand{\pinv}{^{-1}}
\newcommand{\st}{^{\star}}

\newcommand{\rref}[1]{\tup{\Cref{#1}}}
\newcommand{\la}{\langle}
\newcommand{\ra}{\rangle}
\newcommand{\LL}{\L ojasiewicz~}
\newcommand{\gua}{\gamma_{\alpha}}
\newcommand{\bgua}{\bgamma_{\alpha}}

\newcommand{\sigb}{\eta}
\newcommand{\phe}{\varphi_{\varepsilon}}
\newcommand{\feps}{f_{\varepsilon}}
\newcommand{\nfeps}{\nabla f_{\varepsilon}}
\newcommand{\intd}{\int_{\bR^{\dim}}}
\newcommandx{\expec}[2]{\mathbb{E}\left[#1 \middle \vert #2  \right]} 
\newcommandx{\expecLigne}[2]{\mathbb{E}[#1 \vert #2]} 
\newcommand{\expek}[1]{\expec{#1}{\cF_k}}
\newcommand{\expen}[1]{\expec{#1}{\cF_n}}
\newcommand{\expenLigne}[1]{\expecLigne{#1}{\cF_n}}
\newcommand{\nn}{_{n+1}}
\newcommand{\kk}{_{k+1}}
\newcommand{\pal}{^{\alpha}}
\newcommand{\pmal}{^{-\alpha}}

\def\dim{d}

\newcommand{\Lip}{\mathtt{L}}
\newcommand{\Mtt}{\mathtt{M}}
\newcommand{\Mip}{\Mtt}
\newcommand{\Ktt}{\mathtt{K}}

\newcommand{\SDE}{SDE}

\newcommandx{\norm}[2][1=]{\ifthenelse{\equal{#1}{}}{\left\Vert #2 \right\Vert}{\left\Vert #2 \right\Vert^{#1}}}
\newcommandx{\normLigne}[2][1=]{\ifthenelse{\equal{#1}{}}{\Vert #2 \Vert}{\Vert #2\Vert^{#1}}}



\def\bfc{\mathbf{c}}
\def\bfY{\mathbf{Y}}
\def\bfX{\mathbf{X}}
\def\bfXt{\tilde{\mathbf{X}}}
\def\bfXd{\overline{\mathbf{X}}}

\def\bfB{\mathbf{B}}


\def\msa{\mathsf{A}}

\def\mss{\mathsf{S}}

\def\msb{\mathsf{B}}

\def\mse{\mathsf{E}}

\def\msz{\mathsf{Z}}
\def\msy{\mathsf{Y}}


\newcommand{\mcb}[1]{\mathcal{B}(#1)}

\def\mcz{\mathcal{Z}}
\def\mcy{\mathcal{Y}}

\def\mce{\mathcal{E}}

\def\mcf{\mathcal{F}}
\def\mcg{\mathcal{G}}
\def\mck{\mathcal{K}}
\def\mch{\mathcal{H}}


\def\rset{\mathbb{R}}

\def\nset{\mathbb{N}}
\def\nsets{\mathbb{N}^{\star}}

\def\bN{\mathbb{N}}
\def\bR{\mathbb{R}}
\def\bRd{\mathbb{R}^{\dim}}
\def\cF{\mathcal{F}}


\def\rmb{\mathrm{b}}

\def\rmw{\mathrm{w}}
\def\rmd{\mathrm{d}}

\def\rmc{\mathrm{C}}
\def\rmcc{\mathrm{c}}

\def\rmcc{\mathrm{c}}

\def\Pens{\mathscr{P}}
\def\Fens{\mathscr{F}}
\def\Hens{\mathscr{H}}

\newcommand{\R}{\mathbb R}

\newcommand{\dd}{\mathrm{d}}

\def\trace{\operatorname{Tr}}
\newcommandx{\functionspace}[2][1=+]{\mathbb{F}_{#1}(#2)}

\newcommand{\argmin}{\operatorname*{arg\,min}}

\newcommandx{\VarDeux}[3][3=]{\operatorname{Var}^{#3}_{#1}\left\{#2 \right\}}

\newcommand{\1}{\mathbbm{1}}

\newcommand{\LeftEqNo}{\let\veqno\@@leqno}


\newcommand{\floor}[1]{\left\lfloor #1 \right\rfloor}
\newcommand{\floorLigne}[1]{\lfloor #1 \rfloor}
\newcommand{\ceil}[1]{\left\lceil #1 \right\rceil}



\newcommand{\N}{\ensuremath{\mathbb{N}}}

\newcommand{\PE}{\mathbb{E}}
\newcommand{\PP}{\mathbb{P}}


\newcommand{\abs}[1]{\left\vert #1 \right\vert}
\newcommand{\absLigne}[1]{\vert #1 \vert}

\newcommandx{\Vnorm}[2][1=V]{\| #2 \|_{#1}}
\newcommandx{\VnormEq}[2][1=V]{\left\| #2 \right\|_{#1}}

\newcommand{\parenthese}[1]{\left(#1 \right)}
\newcommand{\parentheseLigne}[1]{(#1 )}
\newcommand{\parentheseDeux}[1]{\left[ #1 \right]}
\newcommand{\parentheseDeuxLigne}[1]{[ #1 ]}
\newcommand{\defEns}[1]{\left\lbrace #1 \right\rbrace }
\newcommand{\defEnsLigne}[1]{\lbrace #1 \rbrace }



\newcommand{\proba}[1]{\mathbb{P}\left( #1 \right)}

\newcommandx\probaMarkovTilde[2][2=]
{\ifthenelse{\equal{#2}{}}{{\widetilde{\mathbb{P}}_{#1}}}{\widetilde{\mathbb{P}}_{#1}\left[ #2\right]}}

\newcommand{\expe}[1]{\PE \left[ #1 \right]}
\newcommand{\expesq}[1]{\PE^{1/2} \left[ #1 \right]}

\newcommand{\expeLigne}[1]{\PE [ #1 ]}

\newcommand{\bigO}{\ensuremath{\mathcal O}}








\def\ie{\textit{i.e.}}

\def\eqsp{\;}
\newcommand{\coint}[1]{\left[#1\right)}
\newcommand{\ocint}[1]{\left(#1\right]}
\newcommand{\ooint}[1]{\left(#1\right)}
\newcommand{\ccint}[1]{\left[#1\right]}

\newcommand{\ccintLigne}[1]{[#1]}

\newcommandx{\weight}[2][2=n]{\omega_{#1,#2}^N}

\newcommandx\sequence[3][2=,3=]
{\ifthenelse{\equal{#3}{}}{\ensuremath{\{ #1_{#2}\}}}{\ensuremath{\{ #1_{#2}, \eqsp #2 \in #3 \}}}}

\newcommandx\sequenceD[3][2=,3=]
{\ifthenelse{\equal{#3}{}}{\ensuremath{\{ #1_{#2}\}}}{\ensuremath{( #1)_{ #2 \in #3} }}}

\newcommandx{\sequencen}[2][2=n\in\N]{\ensuremath{\{ #1_n, \eqsp #2 \}}}
\newcommandx\sequenceDouble[4][3=,4=]
{\ifthenelse{\equal{#3}{}}{\ensuremath{\{ (#1_{#3},#2_{#3}) \}}}{\ensuremath{\{  (#1_{#3},#2_{#3}), \eqsp #3 \in #4 \}}}}
\newcommandx{\sequencenDouble}[3][3=n\in\N]{\ensuremath{\{ (#1_{n},#2_{n}), \eqsp #3 \}}}

\def\iid{i.i.d.}

\def\eg{\textit{e.g.}}

\newcommand{\opnorm}[1]{{\left\vert\kern-0.25ex\left\vert\kern-0.25ex\left\vert #1
    \right\vert\kern-0.25ex\right\vert\kern-0.25ex\right\vert}}

\def\Id{\operatorname{Id}}

\newcommandx{\CPE}[3][1=]{{\mathbb E}_{#1}\left[#2 \middle \vert #3  \right]} 
\newcommandx{\CPELigne}[3][1=]{{\mathbb E}_{#1}[#2 \vert #3  ]} 
\newcommandx{\CPEsq}[3][1=]{{\mathbb{E}^{1/2}}_{#1}\left[#2 \middle \vert #3  \right]} 
\newcommandx{\CPVar}[3][1=]{\mathrm{Var}^{#3}_{#1}\left\{ #2 \right\}}
\newcommand{\CPP}[3][]
{\ifthenelse{\equal{#1}{}}{{\mathbb P}\left(\left. #2 \, \right| #3 \right)}{{\mathbb P}_{#1}\left(\left. #2 \, \right | #3 \right)}}

\newcommandx{\osc}[2][1=]{\mathrm{osc}_{#1}(#2)}

\def\Id{\operatorname{Id}}

\def\transpose{\top}


\def\bgamma{\bar{\gamma}}




\def\veps{\varepsilon}

\def\sphere{\mss}

\newcommand{\ensemble}[2]{\left\{#1\,:\eqsp #2\right\}}
\newcommand{\ensembleLigne}[2]{\{#1\,:\eqsp #2\}}


\newcommand\coupling[2]{\Gamma(\mu,\nu)}

\def\vareps{\varepsilon}

\def\Phibf{\mathbf{\Phi}}
\def\Psibf{\mathbf{\Psi}}

\newcommandx{\KL}[2]{\text{KL}\left( #1 | #2 \right)}
\newcommandx{\KLLigne}[2]{\text{KL}( #1 | #2 )}

\def\gaStep
\def\QKer{Q}

\def\distance{\mathbf{d}}
\newcommandx{\wasserstein}[3][1=\distance,3=]{\mathbf{W}_{#1}^{#3}\left(#2\right)}
\newcommandx{\wassersteinLigne}[3][1=\distance,3=]{\mathbf{W}_{#1}^{#3}(#2)}
\newcommandx{\wassersteinD}[1][1=\distance]{\mathbf{W}_{#1}}
\newcommandx{\wassersteinDLigne}[1][1=\distance]{\mathbf{W}_{#1}}

\def\Rcoupling{\mathrm{R}}

\def\sigmaD{\sigma^2}

\newcommandx{\phibfs}[1][1=]{\pmb{\varphi}_{\sigmaD_{#1}}}

\newcommandx\sequenceg[3][2=,3=]
{\ifthenelse{\equal{#3}{}}{\ensuremath{( #1_{#2})}}{\ensuremath{( #1_{#2})_{ #2 \geq #3}}}}

\def\Rker{\Rcoupling}

\def\Qker{\mathrm{Q}}

\newcommandx{\distV}[1][1=\bfc]{\mathbf{W}_{#1}}
\newcommandx{\distVdeux}[1][1=W_2]{\mathbf{d}_{#1}}

\def\mtt{\mathtt{m}}

\newcommand{\tup}[1]{\textup{#1}}

\def\loiz{\pi^Z}
\def\muz{\loiz}
\def\funH{H}

\renewcommand{\doteq}{=}



\usepackage[page,header]{appendix}
\usepackage{titletoc}

\makeatletter
 \let\Ginclude@graphics\@org@Ginclude@graphics 
\makeatother

\begin{document}

\maketitle

\defcitealias{bachmoulines2011}{BM'11}
\defcitealias{orvieto2019continuous}{OL'19}

\begin{abstract}
  This paper proposes a thorough theoretical analysis of Stochastic Gradient
  Descent (SGD) with non-increasing step sizes.  First, we show that the
  recursion defining SGD can be provably approximated by solutions of a time
  inhomogeneous Stochastic Differential Equation (SDE) using an appropriate
  coupling. In the specific case of a batch noise we refine our results using
  recent advances in Stein's method. Then, motivated by recent analyses of
  deterministic and stochastic optimization methods by their continuous
  counterpart, we study the long-time behavior of the continuous processes at
  hand and establish non-asymptotic bounds. To that purpose, we develop new
  comparison techniques which are of independent interest. Adapting these
  techniques to the discrete setting, we show that the same results hold for the
  corresponding SGD sequences.  In our analysis, we notably improve
  non-asymptotic bounds in the convex setting for SGD under weaker assumptions
  than the ones considered in previous works. Finally, we also establish
  finite-time convergence results under various conditions, including
  relaxations of the famous \L ojasiewicz inequality, which can be applied to a
  class of non-convex functions.
\end{abstract}

\begin{keywords}%
  Stochastic Gradient Descent, Stochastic Differential Equations, approximation results, convergence rates%
\end{keywords}

\addtocontents{toc}{\protect\setcounter{tocdepth}{0}}


\section{Introduction}
\label{sec:introduction}

Recently, first-order optimization methods \citep{boydcandes} have been shown
to share similar long-time behavior with solutions of certain Ordinary
Differential Equations (ODE). One starting point of this analysis is to
remark that most of these algorithms can be regarded as discretization
schemes. 
For instance, gradient descent (GD) can be seen as the Euler discretization of
the gradient flow corresponding to the objective function $f$, \ie, the ODE
$\rmd x(t)/\rmd t=-\nabla f(x(t))$. The analysis of the long-time behavior of
solutions of this gradient flow equation provides fruitful insights on the
convergence of GD. This idea has been adapted to the Nesterov acceleration
scheme~\citep{nesterov83} by~\cite{boydcandes}, and in this case the limiting
continuous flow is associated with a second-order ODE. This result then allows
for a much more intuitive analysis of this scheme and the technique has been
subsequently extended to derive tighter estimates~\citep{jordanhighresolution}
or to analyze different settings~\citep{krichene, adr2018, apidopoulos}.

Following this approach this paper proposes a new analysis of the Stochastic
Gradient Descent (SGD) algorithm to optimize a continuously differentiable
function $f :\rset^d \to \rset $ given stochastic estimates of its gradient in
convex and non-convex settings.  
Using ODEs, and in particular the gradient flow
equation, to study SGD has already been applied in numerous papers~\citep{ljung:1977,kushner:clark:1978,metivier:priouret:1984,metivier:priouret:1987,benveniste:metivier:priouret:1990,benaim:1996,tadic:doucet:2017}.
However, to take into account more precisely the noisy nature of SGD, it has
been recently suggested to use Stochastic Differential Equations (SDE) as
continuous-time models for the analysis of SGD.  \citet{li2017sme} introduced
Stochastic Modified Equations and established weak approximations
theorems, gaining more intuition on SGD, in particular to obtain new
hyper-parameter adjustment policies. In another line of work,
\citet{feng2019uniform} derived uniform in time approximation bounds using
ergodic properties of SDEs.  

The first contribution of this paper is to show that SDEs can also be
used as continuous-times processes properly modeling SGD with non-increasing
stepsizes. In \Cref{sec:sgd_inhomogenous}, we show that SGD with non-increasing
stepsizes is a discretization of a certain class of stochastic continuous
processes $(\bfX_t)_{t \geq 0}$ solution of time inhomogeneous SDEs. More
precisely, we derive strong and weak approximation estimates between the two
processes. Our strong approximation results are new and rely on some appropriate
coupling between SGD and the associated SDE. These new estimates highlight the
advantages and limitations of the analysis of an SDE as a continuous-time proxy
for SGD. In the specific case of a batch noise we can sharpen our analysis using
recent advances in Stein's method. 

However, in general, these approximation bounds between solutions of SDEs and
recursions defined by SGD are derived under a finite time horizon $T \geq 0$ and
the error between the discrete and the continuous-time processes does not go to
zero as $T$ goes to infinity, which is a strong limitation to study the
long-time behavior of SGD, see \citep{li2017sme,li2019jmlr}. We emphasize that
our goal is not to address this problem here by showing uniform in time bounds
between the two processes. Instead, we highlight how the long-time behavior of
the continuous process related to SGD can be used to gain insight on the
convergence of SGD itself.  In that sense our work follows the same lines
as~\citep{boydcandes, krichene, adr2018} which use continuous-time approaches to
provide intuitive ways of deriving convergence results.  More precisely, in the
rest of the paper we first study the behavior of
$(t \mapsto \expeLigne{f(\bfX_t)}-\min_{\rset^{\dim}} f)$ which can be analysed
under different sets of assumptions on $f$, including a convex and weakly
quasi-convex setting. Then, we propose an adaptation of the main arguments of
this analysis to the discrete setting. This allows us to show, under the same
conditions, that $(\expeLigne{f(X_n)} - \min_{\rset^{\dim}} f)_{n \in \nset}$
also converges to $0$ with the same rates, where $(X_n)_{n \in \nset}$ is the
recursion defined by SGD.

Based on this interpretation, we provide much simpler proofs of existing results
and obtain sharper convergence rates for SGD than the ones derived in previous
works in the convex and the weakly quasi-convex settings
\citep{bachmoulines2011, taylorbach,orvieto2019continuous}.  In the convex
setting, we prove for the first time that the convergence rates of SGD match the
minimax lower-bounds \citep{agarwal2012lower} under the same assumptions as
~\citep{bachmoulines2011}. 
Finally, we consider a relaxation of the weakly quasi-convex
setting introduced in \citep{hardt2018gradient}.  
Recent works \citep{orvieto2019continuous} use SDEs to analyse SGD and derive
convergence rates in the weakly quasi-convex. However the rates they obtained
are not optimal and we show that our analysis leads to better rates under weaker
assumptions.  To summarize, our contributions are as follows:
\begin{enumerate}[labelwidth=!,wide,label=(\roman*),noitemsep,nolistsep]
\item We derive strong approximation results between the discrete-time and the
  continuous-time processes  in \Cref{sec:sgd_inhomogenous}. Our strong
  approximation results are new and rely on a specific coupling between SGD and
  the associated SDE. Contrary to other works our bounds cover the case of
  non-increasing stepsizes.
\item We introduce our main tools for the analysis of discrete and
  continuous-time processes and apply them in the context of strongly-convex
  functions to give intuition on our approach in
  \Cref{sec:conv-cont-discr}. Then, we use them to study SGD for the
  minimization of convex functionals in \Cref{sec:convex-case}. We show for the
  first time that the convergence rate is at least of order $\bigO(n^{-1/2})$,
  with stepsize $\gamma_n = \bigO(n^{-1/2})$, without bounded gradient
  assumptions (both for the continuous-time and discrete-time processes). This
  disproves a conjecture of \citep{bachmoulines2011}.
\item In \Cref{sec:non-convex-case}, we relax the convexity assumption (weakly
  quasi-convex assumption) and in this framework we improve on recent bounds by
  \citet{orvieto2019continuous} and derive new convergence results under general \L
  ojasiewicz-type assumptions.
\end{enumerate}



\section{SGD with Non-Increasing Stepsizes as a Time Inhomogeneous Diffusion Process}
\label{sec:sgd_inhomogenous}
\subsection{Problem Setting and Main Assumptions}

Throughout this paper we consider the problem of the unconstrained minimization
of $f\in \rmc^1(\rset^d, \rset)$, an objective function satisfying the following regularity
condition.
\begin{assumption}
	\label{assum:f_lip}
	For any $x, y \in \rset^{\dim}$,
        $\norm{ \nabla f(x) - \nabla f(y) } \leq \Lip \norm{x - y}$, with
        $\Lip \geq 0$, \ie, $f$ is $\Lip$-smooth.
      \end{assumption}
We consider the general case where we do not have access to $\nabla f$ but only to unbiased estimates.
There are classically two ways to handle this and we will treat both of them in
this
paper. 

\begin{assumption}
	\label{assum:grad_sto}
	There exists a Polish probability space $(\msz,\mcz,\muz)$ and $\eta
        \geq 0$ such that one of the following conditions holds:
	\begin{enumerate}[wide, labelwidth=!, labelindent=0pt,label=(\alph*),noitemsep,nolistsep]
	\item \label{item:approx_sto} There exists a function
          $\funH : \rset^d \times \msz \to \rset^d$ such that for
          any $x \in \rset^{\dim}$,
      \begin{equation}
			\int_{\msz} H(x, z) \rmd \muz(z) = \nabla f(x) \eqsp, \qquad \int_{\msz} \norm{H(x, z) - \nabla f(x)}^2 \rmd \muz(z)  \leq \eta \eqsp .
       \end{equation}
     \item \label{item:ml} There exists a function
       $\f:\bR^d \times \msz \to \bR$ such that for all $z\in\msz$,
       $\f(\cdot,z) \in \rmc^1(\rset^d, \rset)$ is $\Lip$-smooth. In addition,
       there exists $x^\star \in \rset^d$ such that for any $x \in \rset^d$
	\begin{equation}
		\int_{\msz} \f(x, z) \rmd \muz(z) = f(x) \eqsp, \quad \int_{\msz} \nabla \f(x, z) \rmd \muz(z) = \nabla f(x) \eqsp, \quad \int_{\msz} \normLigne{\nabla \f(x\st, z)}^2 \rmd \muz(z)  \leq \eta \eqsp .
	\end{equation}
	In this case, we define $H = \nabla \tilde{f}$.
	\end{enumerate}
\end{assumption}
The first setting \Cref{assum:grad_sto}-\ref{item:approx_sto} corresponds to the
stochastic approximation setting with a square-integrable noise term and has
been studied in
\citep{robbinsmonro,bachmoulines2011,orvieto2019continuous}. This is a weaker
assumption than the bounded gradient assumption considered in
\citep{kingma:ba:2014,shamirzhang,feng2019uniform,rakhlin2012}. The second
setting \Cref{assum:grad_sto}-\ref{item:ml} relaxes the square-integrability
condition, which is often not satisfied in classical machine learning problems
(logistic regression or smooth Support Vector Machines) at the cost of imposing
the Lipschitz regularity of $H(\cdot, z)$ for all $z \in \msz$. We also point
out that the Polish assumption (\ie, the space $\msz$ is metric, complete and
separable) is only used in the proof of \Cref{prop:strong_approx} and can be
avoided in the rest of the paper.

Under \Cref{assum:f_lip} and \Cref{assum:grad_sto}, we introduce the sequence
$(X_n)_{n \in \nset}$ starting from $X_0 \in \rset^d$ corresponding to SGD with
non-increasing stepsizes and defined for any $n \in \nset$ by
\begin{equation}
  \label{eq:sgd}
X_{n+1} = X_n - \gamma (n+1)^{-\alpha} H(X_n, Z_{n+1})  \eqsp ,
\end{equation}
where $\gamma >0$, $\alpha \in \ccint{0,1}$ and $(Z_n)_{n \in \nset}$ is a
sequence of independent random variables on a probability space
$(\Omega,\mcf,\PP)$ valued in $(\msz, \mcz)$ such that for any $n \in \nset$,
$Z_n$ is distributed according to $\muz$. We now turn to the continuous
counterpart of \eqref{eq:sgd}.  Define for any $x \in \rset^{\dim}$, the
semi-definite positive matrix
$\Sigma(x) = \muz(\{ H(x, \cdot) - \nabla f(x) \} \{ H(x, \cdot) - \nabla
f(x)\}^{\top})$ and, for $\alpha \in \coint{0, 1}$, consider the time
inhomogeneous SDE,
\begin{equation}
  \label{eq:sde}
\rmd \bfX_t = -(\gua + t)^{-\alpha} \{\nabla f(\bfX_t) \rmd t + \gua^{1/2} \Sigma(\bfX_t)^{1/2} \rmd \bfB_t\} \eqsp ,
\end{equation}
where $\gua = \gamma^{1/(1-\alpha)}$ and $(\bfB_t)_{t \geq 0}$ is a
$d$-dimensional Brownian motion.  For solutions of this SDE to exist in a strong
sense, we consider the following assumption on $x \mapsto \Sigma(x)^{1/2}$.
\begin{assumption}
  \label{assum:lip_sigma}
  There exists $\Mtt \geq 0$ such that for any $x,y \in \rset^{\dim}$,
  $\normLigne{\Sigma(x)^{1/2} - \Sigma(y)^{1/2}} \leq \Mtt
  \normLigne{x -y }$.
\end{assumption}
Indeed, using \citep[Chapter 5, Theorem 2.5]{karatzas1991brownian}, strong
solutions $(\bfX_t)_{t \geq 0}$ exist if \Cref{assum:f_lip} and
\Cref{assum:lip_sigma} hold. Condition \Cref{assum:lip_sigma} can be hard to
check in practice and can be replaced by the following stronger (but easier to
verify) assumption: $\Sigma \in \rmc^2(\rset^d, \rset)$ with bounded Hessian,
see \cite[Theorem 5.2.3]{stroock2007multidimensional}. In the sequel,
$(\bfX_t)_{t \geq 0}$ is referred to as the \textit{continuous} SGD process in
contrast to $(X_n)_{n \in \nset}$ which is referred to as the \textit{discrete}
SGD process.




\subsection{Approximations Results}
In this section, we prove that $(\bfX_t)_{t \geq 0}$ solution of \eqref{eq:sde}
is indeed, under some conditions, a continuous counterpart of
$(X_n)_{n \in \nset}$ given by \eqref{eq:sgd}.  First, we informally derive the
form of \eqref{eq:sde}.  Let $(\bfXt_{t})_{t \geq 0}$ be the linear
interpolation of $(X_n)_{n \in \nset}$, \ie, for any
$t \in \ccint{n\gamma_{\alpha},(n+1)\gamma_{\alpha}}$, $n \in \nset$,
$\bfXt_t = ((t-n \gua)X_{n+1}+((n+1)\gua-t) X_{n})/\gua$, with
$\gua = \gamma^{1/(1-\alpha)}$.  Using a first-order Taylor expansion and
assuming that the noise is roughly Gaussian with zero-mean and covariance matrix
$\Sigma(\bfXt_{n\gua})$, we have the following approximation,
\begin{align}
  &\bfXt_{(n+1)\gua} - \bfXt_{n \gua} = X_{n+1}-X_n   \approx - \gamma(n+1)^{-\alpha} H(\bfXt_{n \gua}, Z_{n+1}) \\
  & \quad  \approx -\gua (n \gua + \gua)^{-\alpha} \{ \nabla f(\bfXt_{n\gua}) + \Sigma(\bfXt_{n \gua})^{1/2} G_{n+1} \} \\
  & \quad  \textstyle{\approx -\int_{n\gua}^{(n+1)\gua} (s+\gua)^{-\alpha} \nabla f(\bfXt_s) \rmd s - \gua^{1/2} \int_{n\gua}^{(n+1)\gua} (s+\gua)^{-\alpha} \Sigma(\bfXt_{s})^{1/2} \rmd \bfB_s \eqsp,}
  \label{eq:informal}
\end{align}
where for any $n \in \nset$, $G_n$ is a $d$-dimensional standard Gaussian random
variable. The next result justifies the \textit{ansatz} \eqref{eq:informal} and
establishes strong approximation bounds for SGD. We recall the definition of the
Wasserstein (extended) distance of order $2$, denoted
$\wassersteinD[2]: \ \Pens(\rset^d) \times \Pens(\rset^d) \to \ccint{0,+\infty}$
(where $\Pens(\rset^d)$ is the set of probability measures over
$(\rset^d, \mcb{\rset^d})$ and given for any $\mu_1, \mu_2 \in \Pens(\rset^d)$
by
$ \wassersteinD[2]^2(\mu_1, \mu_2) = \inf_{\Lambda \in \Gamma(\mu_1,
  \mu_2)}\int_{\rset^d \times \rset^d} \norm{v_1 - v_2}^2 \rmd
\Lambda(v_1,v_2)$, where $\Gamma(\mu_1, \mu_2) \subset \Pens(\rset^{2d})$ is the
set of transference plans between $\mu_1$ and $\mu_2$, \ie \
$\Lambda \in \Gamma(\mu_1, \mu_2)$ if for any $\msa \in \mcb{\rset^d}$,
$\Lambda(\msa \times \rset^d) = \mu_1(\msa)$ and
$\Lambda(\rset^d \times \msa) = \mu_2(\msa)$.

  \begin{theorem}    
  \label{prop:strong_approx}
  Let $\bgamma >0$ and $\alpha \in \coint{0,1}$. Assume
  \tup{\Cref{assum:f_lip}}, \tup{\rref{assum:grad_sto}-\ref{item:ml}} and
  \tup{\Cref{assum:lip_sigma}}.  Then there exists a random variable
  $((\bfB_t)_{t \geq 0}, (Z_n)_{n \in \nset})$ such that the following hold:
  \begin{enumerate}[wide, labelwidth=!, labelindent=0pt, label=(\alph*)]
  \item $(Z_n)_{n \in \nset}$ is a sequence of independent random variables such
    that for any $n \in \nset$, $Z_n$ is distributed according to $\pi^Z$ and
    $(\bfB_t)_{t \geq 0}$ is a $d$-dimensional Brownian motion.
  \item For any $T \geq 0$, there exists $C \geq 0$ such that for any
    $\gamma \in \ocint{0, \bgamma}$, $n \in \nset$ with
    $n \leq n_T = \floor{T / \gua}$ and $\gua = \gamma^{1/(1-\alpha)}$ we have
  \begin{equation}
    \label{eq:strong_approx}
      \expesq{\| \bfX_{n\gua} - X_{n} \|^{2}} \leq C (\gamma^{\delta} \vareps + \gamma) (1 + \log(\gamma^{-1}))  \eqsp , \quad \text{with $\delta = \min(1, (2-2\alpha)^{-1})$ ,}
    \end{equation}
    where $(\bfX_{t})_{t \geq 0}$ is solution of \eqref{eq:sde}, 
    $(X_n)_{n \in \nset}$ is defined by \eqref{eq:sgd} with
    $\bfX_0 = X_0 \in \rset^d$ and
    \begin{equation}
  \label{eq:vareps_def}
  \textstyle{\vareps^2 = \sup_{n \in \{0, \dots, n_T\} } \expeLigne{\wassersteinD[2]^2(\nu^{\rmd}(\bfX_{n \gua}), \nu^{\rmcc}(\bfX_{n \gua}))} \eqsp , }
\end{equation}
where for any $\tilde{x} \in \rset^d$, $\nu^{\rmd}(\tilde{x})$ is the
distribution of $H(\tilde{x}, Z_{0})$ and $\nu^{\rmcc}(\tilde{x})$ is the
distribution of $\nabla f(\tilde{x}) + \Sigma^{1/2}(\tilde{x}) G$, with $G$ a
standard Gaussian random variable.
  \end{enumerate}
\end{theorem}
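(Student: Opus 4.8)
The plan is to realise $(X_n)_{n\in\nset}$ and $(\bfX_t)_{t\geq 0}$ on one probability space through a step‑by‑step coupling, and then control $E_n:=\bfX_{n\gua}-X_n$ on the horizon $n\leq n_T=\floor{T/\gua}$ by a Gronwall argument; below $C_T$ is a constant that changes from line to line and depends on $T,\alpha,\bgamma,\Lip,\Mtt,\eta,d,\normLigne{\bfX_0-x^\star}$ but not on $\gamma$. I build $((\bfB_t)_{t\geq0},(Z_n)_{n\in\nset})$ inductively over the intervals $\coint{n\gua,(n+1)\gua}$. Assume $\bfB$ is defined on $\coint{0,n\gua}$ and $Z_1,\dots,Z_n$; then the strong solution $\bfX_{n\gua}$ of \eqref{eq:sde} on $\coint{0,n\gua}$ (which exists by \Cref{assum:f_lip} and \Cref{assum:lip_sigma}) is a measurable function of the past $\mcf_{n\gua}=\sigma((\bfB_s)_{s\leq n\gua},Z_1,\dots,Z_n)$. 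Both $\nu^{\rmd}(\tilde x)$ and $\nu^{\rmcc}(\tilde x)$ have finite second moment — for $\nu^{\rmd}(\tilde x)$ this uses $\Lip$‑smoothness of $\f(\cdot,z)$ together with $\int\normLigne{\nabla\f(x^\star,z)}^2\rmd\muz(z)\leq\eta$ — so there is an optimal $\wassersteinD[2]$‑coupling of $\nu^{\rmcc}(\bfX_{n\gua})$ and $\nu^{\rmd}(\bfX_{n\gua})$; writing $\nu^{\rmcc}(\tilde x)$ as the law of $\nabla f(\tilde x)+\Sigma^{1/2}(\tilde x)G$, a measurable selection of such couplings (the Polish assumption on $\msz$ is used exactly here, via regular conditional distributions / a gluing argument) lets me sample, conditionally on $\mcf_{n\gua}$, a Brownian path on $\coint{n\gua,(n+1)\gua}$ together with $Z_{n+1}\sim\muz$ such that $G_{n+1}:=\gua^{-1/2}(\bfB_{(n+1)\gua}-\bfB_{n\gua})$ and $Z_{n+1}$ realise this optimal coupling, the remaining Brownian‑bridge randomness being independent of everything. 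Then $(\bfB_t)_{t\geq0}$ is a Brownian motion and $(Z_n)_{n\in\nset}$ is i.i.d.\ $\sim\muz$, which is item (a); I let $(\bfX_t)_{t\geq0}$ solve \eqref{eq:sde} and $(X_n)_{n\in\nset}$ be \eqref{eq:sgd}. By construction, conditionally on $\mcf_{n\gua}$ the vector $R^{b}_n:=H(\bfX_{n\gua},Z_{n+1})-\nabla f(\bfX_{n\gua})-\Sigma^{1/2}(\bfX_{n\gua})G_{n+1}$ is centred with conditional second moment $\wassersteinD[2]^2(\nu^{\rmd}(\bfX_{n\gua}),\nu^{\rmcc}(\bfX_{n\gua}))$, hence $\expe{\normLigne{R^{b}_n}^2}\leq\vareps^2$.

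\textbf{One-step decomposition and a priori bounds.} Using $\gamma(n+1)^{-\alpha}=\gua((n+1)\gua)^{-\alpha}$, $H=\nabla\f$, and the split $H(X_n,Z_{n+1})=\{H(X_n,Z_{n+1})-H(\bfX_{n\gua},Z_{n+1})\}+\{\nabla f(\bfX_{n\gua})+\Sigma^{1/2}(\bfX_{n\gua})G_{n+1}\}+R^{b}_n$, subtracting \eqref{eq:sgd} from the increment of \eqref{eq:sde} over $\ccint{n\gua,(n+1)\gua}$ gives, with $a_n:=\gamma(n+1)^{-\alpha}$,
\[
E_{n+1}=E_n-\hat D_n-D^{(1)}_n-M_n+a_n(R^{a}_n+R^{b}_n)\eqsp,
\]
where $R^{a}_n:=H(X_n,Z_{n+1})-H(\bfX_{n\gua},Z_{n+1})$ satisfies $\normLigne{R^{a}_n}\leq\Lip\normLigne{E_n}$ pointwise by $\Lip$‑smoothness of $\f(\cdot,z)$ — this is the step that lets us avoid any moment bound on $X_n$ and thus work under the weaker \tup{\rref{assum:grad_sto}-\ref{item:ml}}; $\hat D_n=c_n\nabla f(\bfX_{n\gua})$ is the $\mcf_{n\gua}$‑measurable left‑point quadrature bias with $0<c_n=\gua((n+1)\gua)^{-\alpha}-\int_{n\gua}^{(n+1)\gua}(\gua+s)^{-\alpha}\rmd s\leq\tfrac{\alpha}{2}\gua^2((n+1)\gua)^{-\alpha-1}$; $D^{(1)}_n=\int_{n\gua}^{(n+1)\gua}(\gua+s)^{-\alpha}\{\nabla f(\bfX_s)-\nabla f(\bfX_{n\gua})\}\rmd s$; and $M_n=\gua^{1/2}\int_{n\gua}^{(n+1)\gua}\{(\gua+s)^{-\alpha}\Sigma^{1/2}(\bfX_s)-((n+1)\gua)^{-\alpha}\Sigma^{1/2}(\bfX_{n\gua})\}\rmd\bfB_s$ is a martingale increment, conditionally centred given $\mcf_{n\gua}$. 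Standard a priori estimates for \eqref{eq:sde} — whose coefficients have at most linear growth by \Cref{assum:f_lip}, \Cref{assum:lip_sigma} — yield $\sup_{t\leq T}\expe{\normLigne{\nabla f(\bfX_t)}^2}\vee\sup_{t\leq T}\expe{\normLigne{\Sigma^{1/2}(\bfX_t)}^2}\leq C_T$ and $\expe{\normLigne{\bfX_s-\bfX_{n\gua}}^2}\leq C_T\gua^2((n+1)\gua)^{-2\alpha}$ for $s\in\ccint{n\gua,(n+1)\gua}$; combined with $\Lip$‑smoothness, \Cref{assum:lip_sigma}, the Itô isometry and $|(\gua+s)^{-\alpha}-((n+1)\gua)^{-\alpha}|\leq\alpha\gua((n+1)\gua)^{-\alpha-1}$ these give $\expe{\normLigne{D^{(1)}_n}^2}\leq C_T\gua^4((n+1)\gua)^{-4\alpha}$ and $\expe{\normLigne{M_n}^2}\leq C_T\{\gua^4((n+1)\gua)^{-4\alpha}+\gua^4((n+1)\gua)^{-2\alpha-2}\}$.

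\textbf{Gronwall, summation, and the bias.} The bias $\hat D_n$ must be handled outside the squared recursion — feeding it through would produce an irreducible $\bigO(\gamma)$ term in $\expe{\normLigne{E_n}^2}$, i.e.\ $\bigO(\gamma^{1/2})$ after the square root, which is worse than $\gamma^\delta$. So set $S_n=\sum_{k=0}^{n-1}c_k\nabla f(\bfX_{k\gua})$; then $\bar E_n:=E_n+S_n$ obeys the bias‑free recursion $\bar E_{n+1}=\bar E_n-D^{(1)}_n-M_n+a_n(R^{a}_n+R^{b}_n)$ with $\normLigne{R^{a}_n}\leq\Lip(\normLigne{\bar E_n}+\normLigne{S_n})$, while $\expesq{\normLigne{S_n}^2}\leq\sum_{k<n}c_k\,\expesq{\normLigne{\nabla f(\bfX_{k\gua})}^2}\leq C_T\sum_{k\geq0}\gamma(k+1)^{-\alpha-1}\leq C_T\gamma(1+\log(\gamma^{-1}))$ by Minkowski's inequality. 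Expanding $\expe{\normLigne{\bar E_{n+1}}^2}$, using that $M_n$ and $R^{b}_n$ are conditionally centred given $\mcf_{n\gua}$ so their inner products with the $\mcf_{n\gua}$‑measurable part of $\bar E_{n+1}$ vanish, and applying a Young inequality with weight $a_n$ to the cross term with $D^{(1)}_n$, one gets
\[
\expe{\normLigne{\bar E_{n+1}}^2}\leq(1+C_Ta_n)\expe{\normLigne{\bar E_n}^2}+C_T\bigl\{a_n\expe{\normLigne{S_n}^2}+a_n^{-1}\expe{\normLigne{D^{(1)}_n}^2}+\expe{\normLigne{M_n}^2}+a_n^2\vareps^2\bigr\}\eqsp.
\]
Since $\sum_{n\leq n_T}a_n\leq C_T$, the Gronwall factor is $\leq C_T$, and evaluating the resulting geometric‑type sums with $n_T=\floor{T/\gua}$ gives $\sum_{n\leq n_T}a_n^2\leq C_T\gamma^{2\delta}(1+\log(\gamma^{-1}))$ — here $\delta=\min(1,(2-2\alpha)^{-1})$ enters because $\gamma^2\gua^{-(1-2\alpha)}=\gamma^{2\delta}$ when $\alpha<1/2$ and $\sum_n(n+1)^{-2\alpha}\lesssim1+\log(\gamma^{-1})$ when $\alpha\geq1/2$ — together with $\sum_n a_n^{-1}\expe{\normLigne{D^{(1)}_n}^2}\leq C_T\gamma^{2}(1+\log(\gamma^{-1}))$, $\sum_n\expe{\normLigne{M_n}^2}\leq C_T\gamma^{2}(1+\log(\gamma^{-1}))$ and $\sum_n a_n\expe{\normLigne{S_n}^2}\leq C_T\gamma^{2}(1+\log(\gamma^{-1}))^2$. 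Hence $\expesq{\normLigne{\bar E_n}^2}\leq C_T(\gamma^\delta\vareps+\gamma)(1+\log(\gamma^{-1}))$ uniformly in $n\leq n_T$, and $\expesq{\normLigne{E_n}^2}\leq\expesq{\normLigne{\bar E_n}^2}+\expesq{\normLigne{S_n}^2}$ yields \eqref{eq:strong_approx}.

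\textbf{Main obstacle.} The delicate point is the coupling construction of the first paragraph: producing, measurably in $\mcf_{n\gua}$, a joint law of a Brownian increment and of $Z_{n+1}$ with the prescribed marginals that attains $\wassersteinD[2](\nu^{\rmd}(\bfX_{n\gua}),\nu^{\rmcc}(\bfX_{n\gua}))$ while keeping $(\bfB_t)_{t\geq0}$ a Brownian motion and $(Z_n)_{n\in\nset}$ i.i.d.; this is where the Polish assumption is genuinely needed. The secondary, more bookkeeping, subtlety is that the quadrature bias $\hat D_n$ cannot be absorbed by the squared Gronwall recursion and must be split off as the predictable cumulative drift $S_n$ and bounded by the triangle inequality in $L^2$.
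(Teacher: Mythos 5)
Your coupling construction is the paper's (\Cref{sec:coupling}): conditionally on the past, draw $(Z_{n+1},G_{n+1})$ from a measurably-selected optimal $\wassersteinD[2]$ transference plan between $\nu^{\rmd}(\bfX_{n\gua})$ and $\nu^{\rmcc}(\bfX_{n\gua})$ --- the gluing / measurable-selection step is exactly where the Polish hypothesis on $\msz$ is needed --- and then fill in the interval with an independent Brownian bridge; the moment and one-step estimates you invoke are \Cref{lemma:bound_moments} and \Cref{lemma:mean_square_one}. The genuinely different part is the Gronwall stage, and it is to your advantage. The paper interposes the Euler step $\bfXt_{(k+1)\gua}$ and the SGD step $\tilde X_{k+1}$, both launched from $\bfX_{k\gua}$, expands $\expe{\|\bfX_{(k+1)\gua}-X_{k+1}\|^2}$ into three pieces, and tames the cross term by conditioning on $\mck_k$, Cauchy--Schwarz, and a Young inequality with weight $\asymp\gamma(k+1)^{-\alpha}$; that Young folds the left-point quadrature bias --- your $\hat D_n$, of $L^2$ size $\asymp\gamma(n+1)^{-1-\alpha}$ --- into a summand $\asymp\gamma(k+1)^{-2-\alpha}$ whose sum is $\bigO(\gamma)$ in $\expe{\|\bfX_{n\gua}-X_n\|^2}$, hence only $\bigO(\gamma^{1/2})$ after the square root, and the paper's final summation step does not appear to fully account for this contribution. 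You instead telescope the error directly, isolate the cumulative quadrature bias $S_n$ as a predictable drift, and bound it additively in $L^2$ by Minkowski, $\expesq{\|S_n\|^2}=\bigO(\gamma)$. This is the correct additive accumulation for a deterministic discretisation bias (exactly as in a classical order-one Euler estimate), and your observation that such a bias cannot be fed through the squared Gronwall recursion is the key structural point --- it is what actually delivers the $\gamma$ (rather than $\gamma^{1/2}$) term claimed in \eqref{eq:strong_approx}. Two small things to tidy up. First, with $c_n\geq 0$ the one-step identity reads $E_{n+1}=E_n+\hat D_n-D^{(1)}_n-M_n+a_n(R^{a}_n+R^{b}_n)$, i.e.\ with a $+\hat D_n$; this sign is immaterial to the argument. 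Second, $a_nR^{a}_n$ is not $\mcf_{n\gua}$-measurable (it contains $Z_{n+1}$), so its inner products with $M_n$ and $a_nR^{b}_n$ do not vanish by the conditioning you invoke; Cauchy--Schwarz and Young on those cross terms produce precisely the $a_n\expe{\|\bar E_n\|^2}$, $a_n\expe{\|S_n\|^2}$, $\expe{\|M_n\|^2}$ and $a_n^2\vareps^2$ contributions already present in your displayed inequality, so the stated recursion is correct, but the one-line justification should acknowledge these terms explicitly.
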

The proof is postponed to \Cref{sec:mean-square-appr}. It relies on a coupling
argument which is made explicit in \Cref{sec:coupling} and uses tools from the
optimal transport theory. The rest of the proof extends approximation results
from \cite{milstein1995numerical} to our coupled setting. To the best of our
knowledge, this strong approximation result is new. A few remarks are in order:

\begin{enumerate}[wide, labelwidth=!,label=(\alph*), nolistsep]
\item This result illustrates the fundamental difference between SGD and
  discretization of SDEs such as the Euler-Maruyama (EM) discretization. In the
  fixed stepsize setting, \ie, $\alpha = 0$, consider $(\bfY_t)_{t \geq 0}$ and
  its EM discretization $(Y_n)_{n \in \nset}$ given by
  $\bfY_0 = Y_0 \in \rset^d$ and for any $n \in \nset$
\begin{equation}
  \label{eq:sde_homo_y}
  \rmd \bfY_t  = \mathrm{b}(\bfY_t) \rmd t +  \upsigma(\bfY_t) \rmd \bfB_t \eqsp, \qquad Y_{n+1} = Y_n + \gamma \rmb(Y_n) + \sqrt{\gamma} \upsigma(Y_n)
  G_{n+1} \eqsp ,
\end{equation}
with $b: \ \rset^d \to \rset^d$, $\upsigma: \ \rset^d \to \rset^{d \times d}$,
and $(G_n)_{n \in \nset}$ is a sequence of \iid \ random variables such that for
any $n \in \nset$, $\expeLigne{G_n} = 0$ and $\expeLigne{G_n G_n^\top} =
\Id$. Using \Cref{prop:strong_approx}, we have that in the Gaussian case the
strong approximation bound for SGD is at least of order $1$. For SDE, this
depends on the structure of $\upsigma$. If $\upsigma$ is constant then the
strong approximation is of order $1$, otherwise it is of order $1/2$, see \eg,
\citep{kloeden:platen:2011,milstein1995numerical}. In addition, it can be shown
that if $(G_n)_{n \in \nset}$ is no longer a sequence of Gaussian random
variables then for $\rmb= 0$, $\upsigma = \Id$, (but it holds under mild
conditions on $\rmb$ and $\upsigma$), there exists $C \geq 0$ such that for any
$T \geq 0$, $\gamma >0$, $n \in \nset$, $n\gamma \leq T$,
$\PE^{1/2}[\normLigne{\bfY_{n\gamma} - Y_n}^2] \geq C \sqrt{T} \eqsp ,$ \ie, no
strong approximation holds. The behavior is different for SGD for which we
obtain a strong approximation of order $\bigO(\gamma^{1/2} \vareps)$, regardless
the structure of the noise.
\item We remark that a strong approximation of order $\bigO(\gamma^{1/2})$ can
  also be derived for the error between SGD and the associated gradient flow
  ODE. Replacing the gradient flow by a stochastic continuous-time process
  improves this error bound up to $\bigO(\gamma^{1/2}\vareps)$, where $\vareps$
  is a measure of the distance between the noise and some Gaussian distribution
  in $\wassersteinD[2]$. This highlights the fact that the SDE \eqref{eq:sde} is
  well-suited to model SGD \eqref{eq:sgd} in the case of a noise which is close
  to a Gaussian, but might not be better than a classical ODE approach for a
  more general noise. In this case, we conjecture that an appropriate L\'{e}vy
  process would further improve these bounds.
\item Finally, we highlight that \Cref{prop:strong_approx} can be improved to
  obtain functional strong approximation bounds using Doob's inequality. Note
  also that we derive our results under the regularity assumption
  \rref{assum:grad_sto}-\ref{item:ml} which implies that for any $z \in \msz$,
  $x \mapsto H(x, z)$ is Lipschitz continuous. It is not clear if our results
  can be extended to \rref{assum:grad_sto}-\ref{item:approx_sto}. We postpone
  these investigations to future work.
\end{enumerate}

We now present a refinement of \Cref{prop:strong_approx} in the case of batch
noise. We begin by recalling the batch noise setting. Assume that
$(\msz, \mcz) = (\msy^{M}, \mcy^{\otimes M})$, $\pi^Z = \pi^{\otimes M}$ with
$M \in \nset$, $\pi$ a probability measure on the Polish space $(\msy, \mcy)$
and for any $x \in \rset^d$, $z = \{y_i\}_{i=1}^M$, let
\begin{equation}
  \label{eq:H-def}
 \textstyle{H(x, z) = (1/M) \sum_{i=1}^M \nabla \tilde{f}(x, y_i)  \eqsp .}
\end{equation}
Note that $\Sigma = (1/M) \Sigma_f$, where for any $x \in \rset^d$,
$\Sigma_f(x) = \pi[(\nabla \tilde{f} - \nabla f(x))(\nabla \tilde{f} - \nabla
f(x))^\top]$.
\begin{corollary_colt}
  \label{coro:batch_noise}
  Let $\bgamma >0$ and $\alpha \in \coint{0,1}$. Assume
  \tup{\Cref{assum:f_lip}}, \tup{\rref{assum:grad_sto}-\ref{item:ml}} and
  \tup{\Cref{assum:lip_sigma}} (with respect to $(\msy, \mcy, \pi)$). Let $H$ be
  given by \eqref{eq:H-def}.  Assume that there exists
  $x^\star \in \rset^d$, $C, p \geq 0$ such that for any $x \in \rset^d$ and
  $y \in \msy$
  \begin{equation}
    \textstyle{\int_{\msy} \normLigne{\nabla \tilde{f}(x^\star, y)}^4 \rmd \pi(y) <
  +\infty \eqsp, \quad \normLigne{\Sigma_f(x)^{-1/2}}\leq C (1 + \norm{x}^p) \eqsp .}
\end{equation}
Then, there exists a random variable
  $((\bfB_t)_{t \geq 0}, (Z_n)_{n \in \nset})$ such that for any $T \geq 0$,
  there exists $C \geq 0$ such that for any $\gamma \in \ocint{0, \bgamma}$,
  $n \in \nset$ with $n\gua \leq T$ $\gua = \gamma^{1/(1-\alpha)}$ we have
  \begin{equation}
      \expesq{\| \bfX_{n\gua} - X_{n} \|^{2}} \leq C (\gamma^{\delta}M^{-1}  + \gamma) (1 + \log(\gamma^{-1}))  \eqsp , \quad \text{with $\delta = \min(1, (2-2\alpha)^{-1})$ .}
    \end{equation}  
  \end{corollary_colt}
  The proof is postponed to \Cref{sec:case-batch-noise} and heavily relies on
  new quantitative bounds for the Central Limit Theorem established using
  Stein's method in \cite{bonis2020stein}.  \Cref{coro:batch_noise} shows that
  in the presence of batch noise (and in the fixed stepsize setting), choosing a
  batch size $M = \bigO(\gamma^{-1/2})$ is enough to obtain a linear
  approximation between the continuous-time process and SGD. In
  \Cref{sec:case-batch-noise}, we also show that a batch-size of order
  $M = \bigO(\gamma^{-1})$ is necessary to obtain a linear approximation between
  the deterministic gradient flow and SGD.  Finally, we also establish weak
  approximation errors between continuous and discrete versions of SGD but due
  to space constraints, they are stated and proved in
  \Cref{sec:weak-approximation}.




\section{Convergence of the Continuous and Discrete SGD Processes}
\label{sec:conv-cont-discr}


\subsection{Two Basic Comparison Lemmas}
\label{sec:two-basic-comparison}
We now turn to the convergence of SGD.  Our general strategy is as follows: in
the continuous-time setting, in order to derive sharp convergence rates for
\eqref{eq:sde}, we consider appropriate energy functions
$\mathscr{V} : \rset_+ \times \rset^d \to \rset_+$ which depend on the
conditions imposed on the function $f$.  Then, we show that
$(t \mapsto v(t) = \PE[\mathscr{V}(t,\bfX_t)])$ satisfies an ODE and prove that
it is bounded using the following simple lemma.
\begin{lemma_colt}
	\label{lemma:ode}
	Let $F \in \rmc^1(\bR_+\times \bR, \bR)$ and $v \in \rmc^1(\bR_+,\bR_+)$
        such that for all $t \geq 0$, $\rmd v(t)/\rmd t  \leq F(t,v(t))$.  If there exists
        $t_0 > 0$ and $A > 0$ such that for all $t \geq t_0$ and for all
        $u \geq A$, $F(t,u)< 0$, then there exists $B >0$ such that for
        all $t\geq 0$, $v(t) \leq B$, with $B = \max(\max_{t\in \ccint{0,t_0}}v(t),A)$.
\end{lemma_colt}
\begin{proof}
  Assume that there exists $t \geq 0$ such that $v(t)>B$, and let $		t_1= \inf \defEns{t \geq 0 \, : \, v(t) > B }$.
	By definition of $B$, $t_1 \geq t_0$, and by continuity of
        $v$, $v(t_1)=B$. By assumption, $F(t_1,v(t_1))< 0$. Then
        $\rmd v(t_1)/\rmd t  < 0$ and there exists $t_2<t_1$ such that
        $v(t_2)>v(t_1)=B$, hence the contradiction.
\end{proof}
Considering discrete analogues of the energy functions and ODEs found in the
study of the continuous process solution of \eqref{eq:sde}, we derive explicit
convergence bounds for the discrete SGD process. To that purpose, we establish a
discrete analog of \Cref{lemma:ode} whose proof is postponed to
\Cref{sec:strongly-convex_appendix}. 

\begin{lemma_colt}
	\label{lemma:ode_discrete}
	Let $F : \nset \times \rset \to \rset$ satisfying for any $n \in \nset$,
        $F(n,\cdot) \in \rmc^1(\bR, \bR)$. Let $(u_n)_{n \in \bN}$ be a sequence
        of non-negative numbers satisfying for all $n \in \bN$,
        $u\nn-u_n \leq F(n,u_n)$. Assume that there exist $n_0 \in \bN$ and
        $A_1 > 0$ such that for all $n \geq n_0$ and for all $x \geq A_1$,
        $F(n,x)< 0$. In addition, assume that there exists $A_2>0$ such that
        for all $n \geq n_0$ and for all $x \geq 0$, $F(n,x) \leq A_2$.
        Then, there exists $B > 0$ such that for all $n \in \nset$, 
        $u_n \leq B$ with $B = \max(\max_{n \leq n_0+1} u_n,A_1)+A_2$.
\end{lemma_colt}

\subsection{Strongly-Convex Case}
\label{sec:strongly_convex}

First, we illustrate the simplicity and effectiveness of our approach by
recovering optimal convergence rates if the objective function is strongly
convex.  Due to the two settings associated with \Cref{assum:grad_sto}, we
consider two versions of the strong convexity hypothesis, either directly on $f$
if \rref{assum:grad_sto}-\ref{item:approx_sto} holds or on $\f$ if
\rref{assum:grad_sto}-\ref{item:ml} holds.
\begin{assumptionF}
  \label{assum:strongly_cvx}
	Either one of the following conditions holds:
	\begin{enumerate}[wide, labelwidth=!, labelindent=0pt,label=(\alph*),noitemsep,nolistsep]
	\item Case \tup{\rref{assum:grad_sto}-\ref{item:approx_sto}}: \label{item:f_str_conv}  $f$ is $\mu$-strongly convex with $\mu>0$, \ie, for any $x,y \in \bRd$,
        $\langle \nabla f(x) - \nabla f(y), x -y \rangle \geq \mu \norm{x-y}^2$.
      \item \label{item:ftilde_str_conv} Case
        \tup{\rref{assum:grad_sto}-\ref{item:ml}}: for all $z \in \msz$,
        $\f(\cdot, z)$ is $\mu$-strongly convex.
	\end{enumerate}
\end{assumptionF}
Note that \Cref{assum:strongly_cvx}-\ref{item:ftilde_str_conv} implies directly
the strong convexity of $f$.  The results presented below are not new, see
\citep{bachmoulines2011} for the discrete case and \citep{orvieto2019continuous}
for the continuous one, but they can be obtained very easily within our
framework. We only derive our results in the continuous-time setting for
pedagogical purposes, and gather their discrete counterparts in
\Cref{sec:strongly-convex_appendix}. First, we derive convergence rates on the
last iterates.  Denote by $x^\star$ the unique minimizer of $f$ (which exists
under
\Cref{assum:strongly_cvx}).
\begin{theorem}
	\label{thm:strong_continuous}
	Let $\alpha, \gamma \in \ooint{0,1}$ and $(\bfX_t)_{t\geq0}$ be given by
        \eqref{eq:sde}.  Assume \rref{assum:f_lip}, \rref{assum:grad_sto},
        \rref{assum:lip_sigma} and \rref{assum:strongly_cvx}. Then there exists
        $C\geq0$ (explicit in the proof) such that for any $T \geq 1$,
        $ \expeLigne{\norm{\bfX_T-x\st}^2} \leq C T^{-\alpha}$.
\end{theorem}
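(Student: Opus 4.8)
The plan is to run the energy-function strategy of \Cref{sec:two-basic-comparison} with the \emph{time-weighted} energy $\mathscr{V}(t,x) = (\gua + t)^{\alpha}\normLigne{x - x^\star}^2$, where $x^\star$ is the (unique) minimizer of $f$, so that $\nabla f(x^\star) = 0$. Setting $v(t) = \PE[\mathscr{V}(t,\bfX_t)]$, I would show that $v$ obeys a differential inequality $v'(t) \leq F(t, v(t))$ with $F$ eventually negative in its second argument, apply \Cref{lemma:ode} to obtain $v(t) \leq B$ for all $t \geq 0$, and then read off $\PE[\normLigne{\bfX_T - x^\star}^2] = (\gua + T)^{-\alpha} v(T) \leq B(\gua + T)^{-\alpha} \leq B\, T^{-\alpha}$ for $T \geq 1$, which is the claim with $C = B$. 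Note that $\langle\nabla f(x),x-x^\star\rangle \geq \mu\normLigne{x-x^\star}^2$ holds under either branch of \Cref{assum:strongly_cvx} (branch \ref{item:ftilde_str_conv} implies strong convexity of $f$), so the two cases of \Cref{assum:grad_sto} differ below only through the bound on $\trace(\Sigma)$; the exponent $\alpha$ in the weight is the largest one for which the forcing term produced below stays bounded after dividing out $(\gua+t)^{-\alpha}$, which is why it yields the rate $T^{-\alpha}$.

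I would first dispose of two preliminaries. (i) The growth bound $\trace(\Sigma(x)) \leq c_1 \normLigne{x - x^\star}^2 + c_2$: under \rref{assum:grad_sto}-\ref{item:approx_sto} it holds with $c_1 = 0$, $c_2 = \eta$ directly from \Cref{assum:grad_sto}; under \rref{assum:grad_sto}-\ref{item:ml}, writing $\nabla\f(x,z) - \nabla f(x) = [\nabla\f(x,z) - \nabla\f(x^\star,z)] + \nabla\f(x^\star,z) - [\nabla f(x) - \nabla f(x^\star)]$, bounding the first and third brackets by $\Lip\normLigne{x-x^\star}$ (using \Cref{assum:f_lip} and the $\Lip$-smoothness of $\f(\cdot,z)$), squaring and integrating against $\pi^Z$ gives it with $c_1,c_2$ depending only on $\Lip,\eta$. (ii) Well-definedness and smoothness: by the global existence of strong solutions and standard localization/Gronwall moment estimates (using \Cref{assum:f_lip}, \Cref{assum:lip_sigma} and the linear-growth bound in (i)), $\sup_{t\in[0,T]}\PE[\normLigne{\bfX_t-x^\star}^2]<\infty$ for all $T$ — indeed \Cref{lemma:ode} applied once to $w(t)=\PE[\normLigne{\bfX_t-x^\star}^2]$ gives a uniform bound $\sup_{t\geq0}w(t)\leq B_0$ — and these moment bounds ensure that the stochastic integrals below are true martingales and that $t \mapsto v(t)$ is $\rmc^1$ with the Itô representation of $v'$.

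Next I would apply Itô's formula to $\mathscr{V}(t,\bfX_t)$: the drift of $\normLigne{\bfX_t - x^\star}^2$ equals $-2(\gua+t)^{-\alpha}\langle\nabla f(\bfX_t),\bfX_t - x^\star\rangle + (\gua+t)^{-2\alpha}\gua\trace(\Sigma(\bfX_t))$ (using that $\Sigma$ is symmetric positive semi-definite), and adding $\partial_t\mathscr{V}(t,\bfX_t) = \alpha(\gua+t)^{\alpha-1}\normLigne{\bfX_t-x^\star}^2$, multiplying by $(\gua+t)^\alpha$, taking expectations (the martingale part vanishes), and invoking \Cref{assum:strongly_cvx} via $\langle\nabla f(x),x-x^\star\rangle \geq \mu\normLigne{x-x^\star}^2$ together with the growth bound (i), I obtain, with $w(t) = (\gua+t)^{-\alpha}v(t)$, the inequality $v'(t) \leq F(t,v(t))$ where
\[ F(t,u) = (\gua + t)^{-\alpha}\bigl( g(t)\, u + \gua c_2 \bigr), \qquad g(t) = \alpha(\gua + t)^{\alpha - 1} - 2\mu + \gua c_1(\gua + t)^{-\alpha}. \]
Because $\gua>0$, $F \in \rmc^1(\bR_+\times\bR,\bR)$; and $g$ is continuous with $g(t) \to -2\mu < 0$ as $t\to\infty$, so there is $t_0 > 0$ with $g(t) \leq -\mu$ for all $t \geq t_0$. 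Hence for $t \geq t_0$ and $u \geq A := \gua c_2/\mu + 1$ we have $F(t,u) \leq (\gua+t)^{-\alpha}(-\mu u + \gua c_2) < 0$, so all hypotheses of \Cref{lemma:ode} hold.

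\Cref{lemma:ode} then gives $v(t) \leq B := \max(\max_{t\in[0,t_0]}v(t), A)$ for all $t \geq 0$, where $\max_{t\in[0,t_0]}v(t) \leq (\gua+t_0)^{\alpha}B_0$ by the uniform bound of preliminary (ii); all of $t_0$, $A$, $B_0$ are explicit in $\mu,\Lip,\eta,\gamma,\alpha$ and $\normLigne{X_0-x^\star}$, which is the constant announced in the statement. Dividing by $(\gua+T)^{\alpha}\geq T^{\alpha}$ (valid for $T\geq 1$) finishes the argument. I expect the only genuine obstacle to be the bookkeeping that makes the Itô step legitimate — the a priori moment control and the attendant martingale and $\rmc^1$-regularity statements for $v$; the convexity estimate, the diffusion growth bound, and the application of \Cref{lemma:ode} are routine.
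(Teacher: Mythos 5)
Your proof is correct and takes essentially the same route as the paper: the same time-weighted energy $(t+\gua)^\alpha\normLigne{\bfX_t-x^\star}^2$, the same strong-convexity estimate $\langle\nabla f(x),x-x^\star\rangle\geq\mu\normLigne{x-x^\star}^2$, and the same invocation of \Cref{lemma:ode}, with the paper's \Cref{lemma:dynkin} supplying exactly the It\^{o}/martingale/$\rmc^1$ bookkeeping you flag as the potential obstacle. The only presentational difference is that you unify the two branches of \Cref{assum:grad_sto} under a single trace bound $\trace(\Sigma(x))\leq c_1\normLigne{x-x^\star}^2+c_2$, whereas the paper proves branch \ref{item:approx_sto} (where $c_1=0$) in the main text and defers branch \ref{item:ml} to \Cref{thm:strongly_b_2}, where \Cref{lemma:trace_norm} yields your bound with $c_1=c_2=\trcst$.
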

This result holds for both versions of \Cref{assum:strongly_cvx} and we present
below a proof under \Cref{assum:strongly_cvx}-\ref{item:f_str_conv}. The result
under \Cref{assum:strongly_cvx}-\ref{item:ftilde_str_conv} is stated and proved
in Appendix~\ref{app:fz_strongly}.
\begin{proof}
  Let $\alpha, \gamma \in \ooint{0,1}$ and consider $\mce : \rset_+ \to \rset_+$
  defined for $t \geq 0$ by
  $\mce(t) = \expeLigne{(t+\gua)^{\alpha} \normLigne{\bfX_t - x^{\star}}^2}$,
  with $\gua = \gamma^{1/(1-\alpha)}$.  Using Dynkin's formula, see
  \Cref{lemma:dynkin}, we have for any $t\geq 0$,
\begin{align}
  \mce(t)=\mce(0) + \alpha \int_0^t  \frac{\mce(s)}{s+\gua}\dd s   + \int_0^t \gua \frac{\expe{\trace (\Sigma(\bfX_s))}}{ (s+\gua)^{\alpha}} \dd s -2 \int_0^t \expe{\la \nabla f(\bfX_s), \bfX_s-x\st \ra} \dd s
           \eqsp . \label{eq:strong_ito}
\end{align}
We now differentiate this expression with  respect to $t$ and
using \Cref{assum:strongly_cvx}-\ref{item:f_str_conv} and \Cref{assum:grad_sto}-\ref{item:approx_sto}, we get for any $t > 0$,
\begin{align}
\rmd   \mce(t)/\rmd t &=\alpha \mce(t)(t+\gua)^{-1} -2 \expe{\la \nabla f(\bfX_t), \bfX_t-x\st \ra} + \gua \expe{\trace (\Sigma(\bfX_t))} (t+\gua)^{-\alpha} \\
  &\leq \alpha \mce(t)/(t+\gua) -2\mu \expeLigne{\normLigne{\bfX_t-x\st}^2} + \gua \eta/ (t+\gua)^{\alpha} \\
  &\leq F(t,\mce(t)) = \alpha \mce(t)(t+\gua)^{-1} -2\mu \mce(t)(t+\gua)^{-\alpha} + \gua \eta (t+\gua)^{-\alpha} \eqsp,
\end{align}
where we have used that $\trace(\Sigma(x)) \leq \eta$
for any $x \in \rset^d$ by \Cref{assum:grad_sto}-\ref{item:approx_sto}.  Hence,
since $F$ satisfies the conditions of \Cref{lemma:ode} with
$t_0=(\alpha/\mu)^{1/(1-\alpha)} $ and $A = 2 \gua \eta / \mu$, applying this
result we get, for any $t \geq 0$, $\mce(t) \leq B$ with
$B = \max(\max_{s \in [0,t_0]} \mce(s),A)$ which concludes the proof.
\end{proof}

Due to space constraints and to avoid over-complicated propositions, we do not
precise the dependency of $C$ with respect to the parameters $\mu$, $\eta$ and
the initial condition. However, in \Cref{prop:constante} we obtain that (i) the
constant in front of the asymptotic term $T^{-\alpha}$ scales as $\eta / \mu$
and (ii) the initial condition is forgotten exponentially fast.

In \Cref{thm:strongly_discrete}, we extend this result to the discrete setting
using \Cref{lemma:ode_discrete} and recover the rates obtained in \citep[Theorem
1]{bachmoulines2011} in the case where $\alpha \in \ocint{0, 1}$. In particular,
if $\alpha = 1$, we obtain a convergence rate of order $\mathcal{O}(T^{-1})$
which matches the minimax lower-bounds established in
\citep{nemirovsky1983problem,agarwal2012lower}. In \Cref{fig:fig_str_cvx_a} and
\Cref{fig:fig_str_cvx_b}, we experimentally verify that the results we obtain
are tight in the simple case where $f(x) =
\normLigne{x}^2$.
\begin{figure}
  \centering
  \begin{minipage}{0.45\linewidth}
    \centering
    \begin{tikzpicture}
      \node at (0,-2.3) {\scriptsize number of iterations};
      \node[rotate=90] at (-2.95, 0) {\scriptsize $\log(\expeLigne{f(X_n)} - \min_{\rset^{\dim}} f)$};
      \node[inner sep=0pt] (russell) at (0,0)
      {\includegraphics[width=0.9\linewidth]{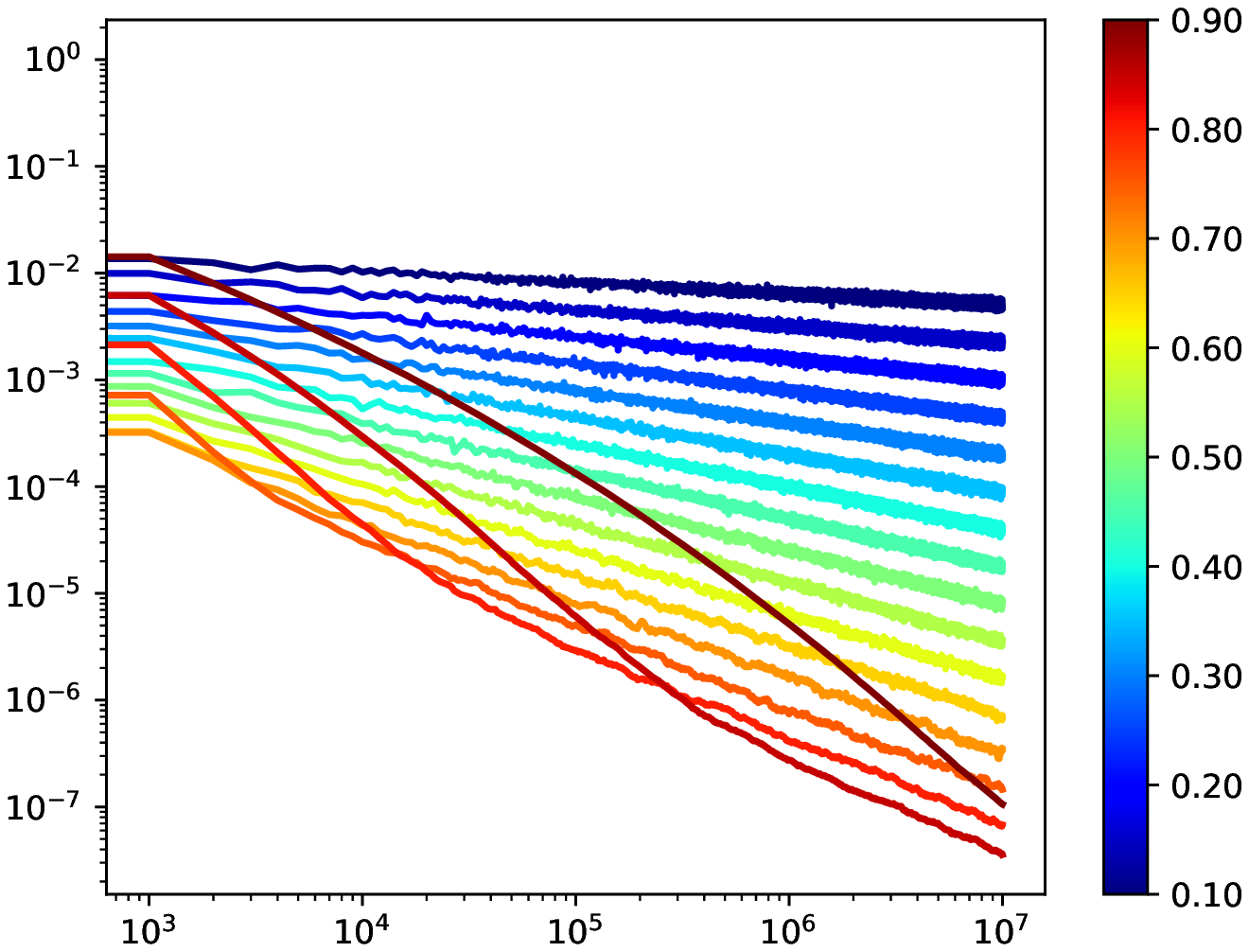}};
\end{tikzpicture}    
    \vspace{-0.3cm}
    \captionof{figure}{Evolution of $(\log(\expeLigne{f(X_n)} - \min_{\rset^{\dim}} f))_{n \in \nset}$}     \label{fig:fig_str_cvx_a}
\end{minipage}
\hfill
  \begin{minipage}{0.45\linewidth}
    \centering
        \begin{tikzpicture}
      \node at (0,-2.3) {\scriptsize value of $\alpha$};
      \node[rotate=90] at (-2.95, 0) {\scriptsize rate of convergence};
      \node[inner sep=0pt] (russell) at (0,0)
      {    \includegraphics[width=0.9\linewidth]{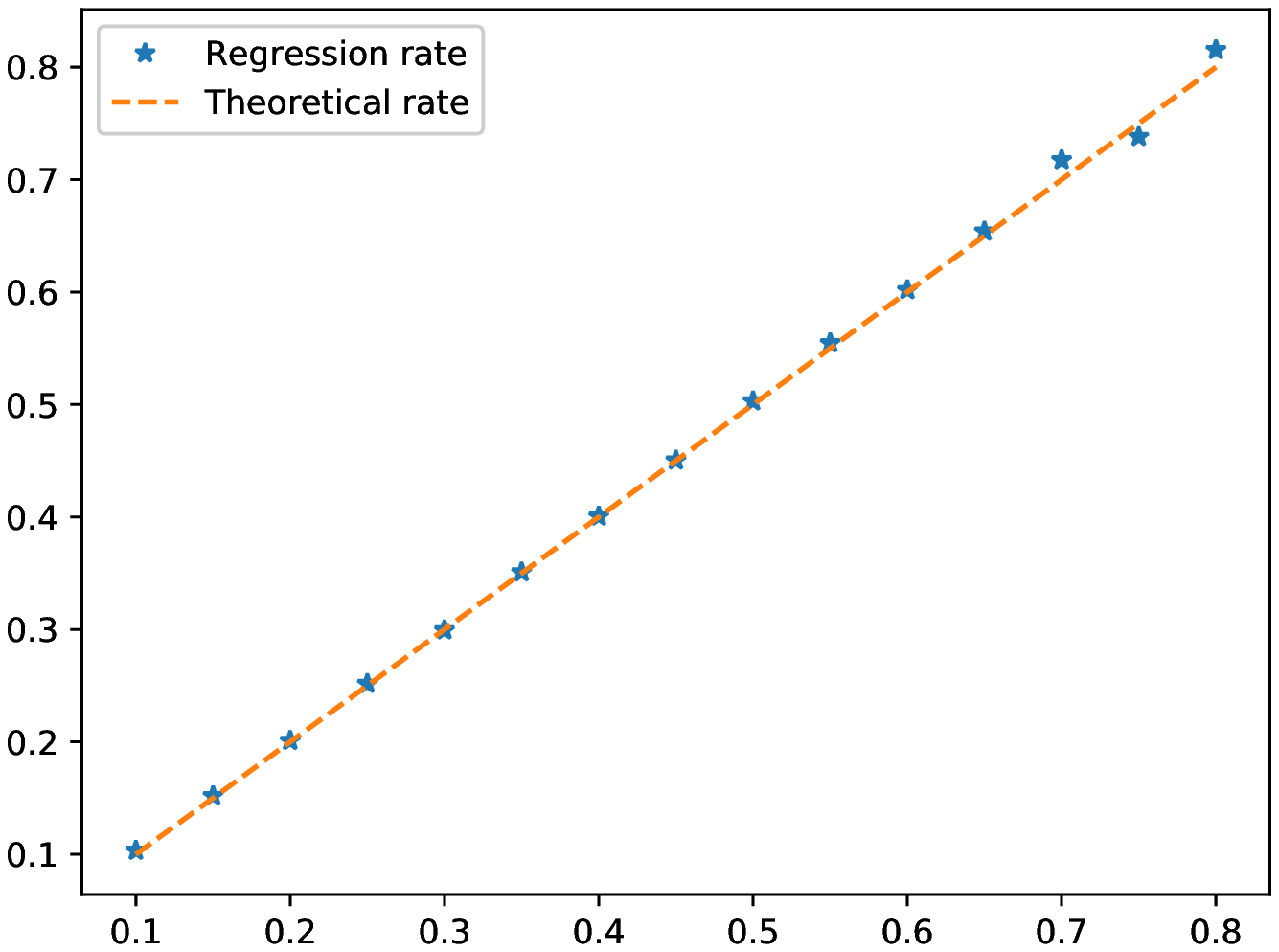}};
\end{tikzpicture}    
\vspace{-0.3cm}
\captionof{figure}{Empirical rates match theoretical rates
    for different values of $\alpha$.}
      \label{fig:fig_str_cvx_b}
    \end{minipage}
    \vspace{.1cm}
  \end{figure}

  We emphasize that the strong convexity assumption can be relaxed if we only
  assume that $f$ is weakly $\mu$-strongly convex, \ie, for any
  $x \in \rset^{\dim}$,
  $\langle \nabla f(x), x - x^{\star} \rangle \geq \mu \norm{x - x^{\star}}^2$.
  In \citep{kleinberg2018alternative} the authors experimentally show that
  modern neural networks satisfy a relaxation of this last condition and it was
  proved in \citep{li2017convergence} that two-layer neural networks with ReLU
  activation functions are weakly $\mu$-strongly convex if the inputs are
  Gaussian. Finally, we show in \Cref{thm:strong_continuous_f} and
  \Cref{thm:strongly_discrete_f} that \Cref{thm:strong_continuous} also implies
  convergence rates for the process
  $(\expe{f(\bfX_t)}-\min_{\rset^{\dim}} f)_{t \geq 0}$ and its discrete
  counterpart.




\section{Convex Case}
\label{sec:convex-case}

In this section, we relax the strong convexity condition. Again we need to
consider two different settings depending on the version of
\Cref{assum:grad_sto} we consider.
\begin{assumptionF}
	\label{assum:f}
	Either one of the following conditions holds:
	\begin{enumerate}[wide, labelwidth=!, labelindent=0pt,label=(\alph*),noitemsep,nolistsep]
        \item \label{item:f_conv} Case
          \tup{\rref{assum:grad_sto}-\ref{item:approx_sto}}: $f$ is convex, \ie, for any $x,y \in \bRd$,
          $\langle \nabla f(x) - \nabla f(y), x -y \rangle \geq 0$, and there
          exists a minimizer $x^{\star} \in \argmin_{\rset^{\dim}}f$.
		\item \label{item:ftilde_conv} Case
          \tup{\rref{assum:grad_sto}-\ref{item:ml}}: for all $z\in \msz$, $\f(\cdot,z)$ is convex and there exists a minimizer  $x^{\star} \in \argmin_{\rset^{\dim}}f$.
	\end{enumerate}
      \end{assumptionF}
      Similarly to the strongly-convex case, we start by studying the continuous
      process.  The discrete analog of the following result is given in
      \Cref{thm:shamir_discrete}.
\begin{theorem}
	\label{thm:shamir_continuous}
        Let $\alpha, \gamma \in \ooint{0,1}$ and $(\bfX_t)_{t \geq 0}$ be given by \eqref{eq:sde}. Assume $f \in \rmc^2(\rset^{\dim}, \rset)$, \rref{assum:f_lip}, \rref{assum:grad_sto},
         \rref{assum:lip_sigma} and \rref{assum:f}. Then, there exists $C \geq 0$ (explicit and given in the proof) such that for any $T \geq 1$
  \begin{equation}
    \expe{f(\bfX_T)}-{ \textstyle\min_{\rset^d}} f
    \leq C(1+\log(T))^2/T^{\alpha\wedge(1-\alpha)}
		\eqsp .
  \end{equation}
\end{theorem}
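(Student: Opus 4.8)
The plan is to apply the energy-function method of \Cref{sec:two-basic-comparison}. Set $w(t) = \expe{f(\bfX_t)} - \min_{\rset^d} f$ and $R(t) = \expe{\norm{\bfX_t - x^\star}^2}$, and consider an energy of the form
\[
\scrV(t,x) = A(t)\,\bigl(f(x) - {\textstyle\min_{\rset^d}} f\bigr) + B(t)\,\norm{x-x^\star}^2 , \qquad A(t) \asymp (\gua+t)^{1-\alpha}, \quad B(t)\asymp 1 ,
\]
where the weight $A(t)$ is (a multiple of) $\int_0^t(\gua+s)^{-\alpha}\rmd s$, possibly corrected by a $(1+\log(\gua+t))$ factor near the critical scaling. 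Applying Dynkin's formula (\Cref{lemma:dynkin} --- this is where $f\in\rmc^2$ is used, to make sense of the $\trace(\Sigma\nabla^2 f)$ term), the time derivative of $v(t):=\expe{\scrV(t,\bfX_t)}$ splits into a drift part $A'(t)w(t) - A(t)(\gua+t)^{-\alpha}\expe{\norm{\nabla f(\bfX_t)}^2} + B'(t)R(t) - 2B(t)(\gua+t)^{-\alpha}\expe{\langle\nabla f(\bfX_t),\bfX_t-x^\star\rangle}$ and an Itô part $\tfrac12\gua(\gua+t)^{-2\alpha}\expe{A(t)\trace(\Sigma(\bfX_t)\nabla^2 f(\bfX_t)) + 2B(t)\trace(\Sigma(\bfX_t))}$. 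The two inputs I would use are: the strengthened convexity inequality for $\Lip$-smooth convex functions,
\[
\langle\nabla f(x), x-x^\star\rangle \;\geq\; \bigl(f(x) - {\textstyle\min_{\rset^d}} f\bigr) + (2\Lip)^{-1}\norm{\nabla f(x)}^2 ,
\]
valid in both settings of \Cref{assum:grad_sto} (via co-coercivity of each $\f(\cdot,z)$ under \rref{assum:grad_sto}-\ref{item:ml}); and a bound on the diffusion coefficient, $\trace(\Sigma(x))\leq\eta$ uniformly under \rref{assum:grad_sto}-\ref{item:approx_sto}, respectively $\trace(\Sigma(x))\lesssim\Lip\langle\nabla f(x),x-x^\star\rangle+\eta$ under \rref{assum:grad_sto}-\ref{item:ml}, together with $\trace(\Sigma(x)\nabla^2 f(x))\leq\Lip\trace(\Sigma(x))$.

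Plugging these in, the cross terms in $\langle\nabla f(\bfX_t),\bfX_t-x^\star\rangle$ can be made to produce, for $t$ beyond an explicit threshold and after a suitable choice of $A,B$, a net dissipation controlling $w(t)$ (and a leftover $\norm{\nabla f}^2$ dissipation with the right sign), leaving a differential inequality of the form $v'(t)\leq -\mu(t)\,v(t) + h(t)$, or a Riccati-type variant $v'(t)\leq -\mu(t)\,v(t)^2/\bigl(A(t)R(t)\bigr) + h(t)$, where $\mu(t)\asymp(\gua+t)^{-\alpha}$ up to the weight and $h(t)$ is built from the Itô source $\asymp\gua\eta(\gua+t)^{-2\alpha}(\Lip A(t)+B(t))$ and from an a priori bound on $R(t)$. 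The latter I would get from its own Dynkin identity, $R'(t)\leq-2(\gua+t)^{-\alpha}w(t)+\gua\eta(\gua+t)^{-2\alpha}$, which gives $R(t)\leq C(1+\log(\gua+t))(\gua+t)^{(1-2\alpha)\vee 0}$. \Cref{lemma:ode} then applies: its hypotheses hold for $t\geq t_0$ with a threshold whose growth, after substituting the above bounds and simplifying in the two regimes $\alpha<1/2$ and $\alpha>1/2$, is of order $(1+\log(\gua+t))^2(\gua+t)^{1-\alpha}$, whence $v(t)\lesssim(1+\log(\gua+t))^2(\gua+t)^{1-\alpha}$ and $w(t)\leq v(t)/A(t)\lesssim(1+\log(\gua+t))^2(\gua+t)^{-\alpha\wedge(1-\alpha)}$, which is the claim for $T\geq 1$. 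The discrete analog stated in \Cref{thm:shamir_discrete} is obtained by the same route, with \Cref{lemma:ode_discrete} replacing \Cref{lemma:ode} and Abel-summation versions of the weights $A(t),B(t)$.

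The main obstacle is obtaining the \emph{sharp} exponent $\alpha\wedge(1-\alpha)$ rather than a weaker one: a crude argument that merely bounds the drift by $0$ and integrates the Itô term gives only $v(t)\lesssim(\gua+t)^{2-3\alpha}$, hence $w(t)\lesssim(\gua+t)^{1-2\alpha}$, which does not even decay when $\alpha<1/2$. Squeezing out the sharp rate requires genuinely retaining and exploiting the gradient-squared dissipation (so the strengthened convexity and co-coercivity inequalities are essential) and carefully coupling the dynamics of $w(t)$ and $R(t)$ --- in particular feeding the a priori (and possibly growing) bound on $\norm{\bfX_t-x^\star}^2$ back into the estimate at exactly the right place, and distinguishing the ``deterministic'' regime $\alpha>1/2$ (where $\norm{\bfX_t-x^\star}^2$ stays bounded and the rate is driven by $A(t)$) from the ``noise-dominated'' regime $\alpha<1/2$ (where the rate is the equilibrium level of the $f$-dynamics). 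The tuning of $A(t),B(t)$, the bookkeeping around the threshold time, and the placement of the $(1+\log)$ corrections needed at $\alpha=1/2$ --- which is where the two logarithmic factors in the statement come from --- are the delicate parts; verifying that the resulting constant $C$ is explicit, as claimed, and the remaining computations, are routine.
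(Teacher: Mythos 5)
You take a Lyapunov/energy-function route with a \emph{fixed} reference point, $\scrV(t,x)=A(t)\bigl(f(x)-f^\star\bigr)+B\|x-x^\star\|^2$, plus \Cref{lemma:ode}, whereas the paper's proof in \Cref{app:shamir_cont} is built around a continuous version of \emph{suffix averaging}: it introduces $S(u)=u^{-1}\int_{T-u}^{T}\{\expeLigne{f(\bfX_s)}-f^\star\}\rmd s$, decomposes $\expeLigne{f(\bfX_T)}-f^\star = (S(0)-S(1))+(S(1)-S(T))+(S(T)-f^\star)$, and controls each term through the weighted energy $(t+\gua)^{\alpha}\|x-Y\|^2$ with a \emph{moving} reference $Y$; the crucial choice $Y=\bfX_{T-u}$ makes the boundary term $(T-u+\gua)^{\alpha}\expeLigne{\|\bfX_{T-u}-Y\|^2}$ vanish in \Cref{lemma:control_derivative}. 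Your a priori bound $R(t)\lesssim(\gua+t)^{(1-2\alpha)\vee0}(1+\log)$ is exactly the paper's \Cref{lemma:D2}, and your bound on the It\^o source is consistent with \Cref{lemma:bound_scal}, so those ingredients are fine. But the two approaches are genuinely different, and I believe your route has a gap that prevents it from reaching the exponent $\alpha\wedge(1-\alpha)$.

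Concretely, with $A(t)\asymp(\gua+t)^{1-\alpha}$ and $B$ constant, the It\^o source coming from the $A(t)f$ part of $\scrV$ is
\begin{equation}
  (\gua/2)\,A(t)(\gua+t)^{-2\alpha}\,\expeLigne{\langle\nabla^2 f(\bfX_t),\Sigma(\bfX_t)\rangle}
  \ \lesssim\ \Lip\eta\,\gua\,A(t)(\gua+t)^{-2\alpha}\ \asymp\ (\gua+t)^{1-3\alpha}\eqsp,
\end{equation}
which upon integration and division by $A(T)$ gives $w(T)\lesssim T^{1-2\alpha}$ --- you already noted this. Your proposed remedy, keeping the gradient-squared dissipation, gives at best $\expeLigne{\|\nabla f(\bfX_t)\|^2}\geq w(t)^2/R(t)$ by Cauchy--Schwarz, hence a Riccati-type inequality whose equilibrium balances $A(t)(\gua+t)^{-\alpha}w^2/R(t)\asymp w^2/(1+\log)$ against $(\gua+t)^{1-3\alpha}$, yielding $w(t)\lesssim(1+\log)^{1/2}(\gua+t)^{(1-3\alpha)/2}$ for $\alpha\in(1/3,1/2)$ and nothing useful for $\alpha\leq1/3$. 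That is precisely the \emph{weakly quasi-convex} rate (compare $(3\alpha-1)/2$ in \Cref{table:orvieto} and \Cref{cor:let-alpha-gamma}-\ref{item:a_weak}), not the convex rate $\alpha$ claimed by \Cref{thm:shamir_continuous}; the strengthened convexity inequality and co-coercivity do not change the exponent here, since the extra $-\Lip^{-1}B(\gua+t)^{-\alpha}\expeLigne{\|\nabla f\|^2}$ is of lower order than the $A(t)$-weighted dissipation. Put differently: within the fixed-reference Lyapunov framework the dissipation of $v=A w + R$ expressed through $v$ has rate only $\asymp(\gua+t)^{-1}$, which is critically weak, and no choice of $A,B$ (including logarithmic corrections) that I can see closes the gap between $T^{1-2\alpha}$ or $T^{-(3\alpha-1)/2}$ and $T^{-\alpha}$. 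What rescues the paper is the moving reference: the quantity $S(u)-w(T-u)$ is controlled by \Cref{lemma:control_derivative} with $Y=\bfX_{T-u}$, so the second-moment boundary term disappears, the remaining It\^o source is only $\gua\eta(\gua+t)^{-\alpha}$ over a window of length $u$, and integrating $S'(u)=-u^{-1}\bigl(S(u)-w(T-u)\bigr)$ produces the $\log^2$ factor and the rate $\alpha\wedge(1-\alpha)$. The Abel-summed discrete version (\Cref{lemma:control_derivative_discrete}--\Cref{lemma:st-fstar-discrete}) together with a bootstrap on $\expeLigne{\|\nabla f(X_n)\|^2}$ gives \Cref{thm:shamir_discrete}; this bootstrap is also not present in your sketch. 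So the key missing idea is the suffix-averaging decomposition with a process-valued reference point.
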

To the best of our knowledge, these non-asymptotic results are new for the
continuous process $(\bfX_t)_{t \geq 0}$ defined by \eqref{eq:sde}.  Note that
for $\alpha =1/2$ the convergence rate is of order $\bigO(T^{-1/2} \log^2(T))$
which matches (up to a logarithmic term) the minimax lower-bound for the
discrete-time process \citep{agarwal2012lower} and is in accordance with the
tight bounds derived in the discrete case under additional assumptions
\citep{shamirzhang}.  The general proof is postponed to
\Cref{app:shamir_cont}. The main strategy to prove \Cref{thm:shamir_continuous}
is to carefully analyze a continuous version of the suffix averaging \citep{shamirzhang,HarveyLPR19}, introduced in the discrete case by
\cite{zhang2004solving}. We can relax the assumption
$f \in \rmc^2(\rset^{\dim}, \rset)$ assuming that the set
$\argmin_{\rset^{\dim}} f$ is bounded.

\begin{corollary_colt}
	\label{thm:shamir_continuous_C1}
        Let $\alpha, \gamma \in \ooint{0,1}$ and $(\bfX_t)_{t \geq 0}$ be given
        by \eqref{eq:sde}.  Assume that $\argmin_{\rset^{\dim}} f$ is bounded,
        \rref{assum:f_lip}, \rref{assum:grad_sto}, \rref{assum:lip_sigma} and
        \rref{assum:f}. Then, there exists $C \geq 0$ (explicit and given in the
        proof) such that for any $T \geq 1$,
  \begin{equation}
    \expe{f(\bfX_T)}-{ \textstyle{ \textstyle\min_{\rset^d}}} f
    \leq C(1+\log(T))^2/T^{\alpha\wedge(1-\alpha)} \eqsp .
  \end{equation}
\end{corollary_colt}
The proof is postponed to \Cref{app:shamir_cont} and relies on the fact that if
$f$ is convex then for any $\vareps >0$, $f \ast g_{\vareps}$ is also convex,
where $(g_{\vareps})_{\vareps >0}$ is a family of non-negative mollifiers. We
now turn to the discrete counterpart of \Cref{thm:shamir_continuous}.

\begin{theorem}
	\label{thm:shamir_discrete}
	Let $\gamma, \alpha \in \ooint{0,1}$ and
        $(X_n)_{n\geq 0}$ be given by \eqref{eq:sgd}. Assume \rref{assum:f_lip}, \rref{assum:grad_sto} and \rref{assum:f}. Then, there exists $C \geq 0$ (explicit and given in the proof) such that for any $N \geq 1$,
		\begin{equation}
       \expe{f(X_N)}-{ \textstyle\min_{\rset^d}} f \leq C (1+\log(N+1))^2/(N+1)^{\alpha\wedge(1-\alpha)} \eqsp.
     \end{equation}
\end{theorem}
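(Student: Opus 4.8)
The plan is to transpose to the discrete setting the argument behind \Cref{thm:shamir_continuous}, with one-step Taylor expansions playing the role of Dynkin's formula and \Cref{lemma:ode_discrete} that of \Cref{lemma:ode}. Throughout I write $\gamma_{n} = \gamma n^{-\alpha}$, $\cF_n = \sigma(X_0, Z_1, \dots, Z_n)$, $u_n = \expe{\|X_n - x^\star\|^2}$ and $\delta_n = \expe{f(X_n)} - \min_{\rset^d}f \ge 0$.

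First I would establish the basic descent inequality. Expanding $\|X_{n+1}-x^\star\|^2$ from \eqref{eq:sgd}, taking the conditional expectation given $\cF_n$ and using \Cref{assum:f_lip}, the convexity provided by \Cref{assum:f}, and the bound $\mathbb{E}[\|H(X_n, Z_{n+1})\|^2 \mid \cF_n] \le \ttc_1(f(X_n) - \min f) + \ttc_2\eta$ — which follows from the co-coercivity (Bregman-divergence) inequality for $\Lip$-smooth convex functions, checked separately in the two cases of \Cref{assum:grad_sto} — one gets, for $n$ large enough that $\ttc_1\gamma_{n+1} \le 1$,
\[ \mathbb{E}\bigl[\|X_{n+1}-x^\star\|^2 \mid \cF_n\bigr] \le \|X_n - x^\star\|^2 - \gamma_{n+1}\,(f(X_n) - \min f) + \ttc_2\eta\,\gamma_{n+1}^2 . \]
Taking full expectations and dropping the nonpositive term yields $u_{n+1} \le u_n + \ttc_2\eta\gamma_{n+1}^2$, hence (by telescoping, or \Cref{lemma:ode_discrete}) $u_n = \bigO\bigl((n+1)^{(1-2\alpha)\vee 0}(1+\log(n+1))\bigr)$; since $f$ is $\Lip$-smooth with $\nabla f(x^\star)=0$, also $\delta_n \le \Lip u_n$, a first crude bound on the optimality gap.

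Next I would run the suffix-averaging argument. Summing the displayed inequality over a suffix $\{k, \dots, N-1\}$ (with $k$ beyond the threshold above) and using $u_N \ge 0$ gives $\sum_{n=k}^{N-1}\gamma_{n+1}\delta_n \le u_k + \ttc_2\eta\sum_{n=k}^{N-1}\gamma_{n+1}^2$. Introducing the step-size-weighted suffix averages $S_j = \Gamma_j^{-1}\sum_{i=N-j+1}^{N}\gamma_i\delta_i$ with $\Gamma_j = \sum_{i=N-j+1}^{N}\gamma_i$, a direct computation gives the telescoping identity $\delta_N = S_1 = S_N + \sum_{j=1}^{N-1}\tfrac{\gamma_{N-j}}{\Gamma_{j+1}}(S_j - \delta_{N-j})$, the discrete analogue of $\tfrac{d}{ds}\bigl[\tfrac{1}{\Lambda(T)-\Lambda(s)}\int_s^T\lambda(u)a(u)\,du\bigr] = \tfrac{\lambda(s)}{\Lambda(T)-\Lambda(s)}\bigl(\tfrac{1}{\Lambda(T)-\Lambda(s)}\int_s^T\lambda a - a(s)\bigr)$ exploited in the continuous proof. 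The full average $S_N$ is controlled by the telescoped inequality with $k$ near the threshold: its numerator $u_k + \ttc_2\eta\sum\gamma_i^2$ is of order $(N+1)^{(1-2\alpha)\vee 0}$ up to a log and its denominator $\Gamma_N \asymp (N+1)^{1-\alpha}$, which produces the exponent $\alpha\wedge(1-\alpha)$. The correction $S_j - \delta_{N-j} = \Gamma_j^{-1}\sum_i\gamma_i(\delta_i - \delta_{N-j})$ is handled via one-step increments of the gap: expanding $f(X_l)$ around $X_{l-1}$, using $\mathbb{E}[\langle \nabla f(X_{l-1}), H(X_{l-1},Z_l)\rangle \mid \cF_{l-1}] = \|\nabla f(X_{l-1})\|^2 \ge 0$ and again the second-moment bound for $H$ gives $\delta_l - \delta_{l-1} \le \ttc_4\gamma_l^2(1 + \delta_{l-1})$; inserting this together with the crude bound on $\delta_{l-1}$ into the identity and swapping the order of summation leaves sums $\sum_j \tfrac{\gamma_{N-j}}{\Gamma_{j+1}}\sum_{l=N-j+1}^{N}\gamma_l^2(\cdots)$ with $\gamma_{N-j}/\Gamma_{j+1}$ comparable to $1/(j+1)$; the resulting partial harmonic sums together with the sums $\sum\gamma_l^2$ contribute the two logarithmic factors, and balancing scales yields the exponent $\alpha\wedge(1-\alpha)$ uniformly in $\alpha\in(0,1)$.

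The main obstacle is precisely the control of these correction terms. The naive bound $\delta_N \le S_j + (\text{total variation of the gap over a length-}j\text{ suffix})$ is too lossy when $\alpha$ is away from $1/2$, because the noise part $\eta\sum_l\gamma_l^2$ of the variation overwhelms the target rate unless the suffix is short, whereas a short suffix spoils $S_j$; reconciling the two requires the full telescoped (dyadic-in-$j$) structure of the identity above and careful bookkeeping of the logarithmic factors — this is the discrete counterpart of the ``careful analysis of a continuous version of the suffix averaging'' mentioned after \Cref{thm:shamir_continuous}. A secondary, purely mechanical difficulty is propagating the two sub-cases of \Cref{assum:grad_sto} through every noise estimate above.
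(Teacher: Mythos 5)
The overall structure — a basic descent inequality, a moment bound $u_n = \bigO\bigl((n+1)^{(1-2\alpha)\vee 0}\log(n+1)\bigr)$, a suffix-averaging decomposition, and \Cref{lemma:ode_discrete} replacing \Cref{lemma:ode} — matches the paper. Your use of step-size-weighted suffix averages $S_j = \Gamma_j^{-1}\sum_i\gamma_i\delta_i$ instead of the paper's unweighted $S_k = (k+1)^{-1}\sum_{t=N-k}^N\expe{f(X_t)}$ is a cosmetic variation; the telescoping identity you derive is the right discrete analogue of the continuous derivative computation.

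However, there is a genuine gap: you are missing the bootstrapping argument that is the actual crux of the paper's proof, and without it your single pass cannot reach the claimed rate for $\alpha$ away from $2/3$. Your correction terms contain $\gamma_l^2\,\delta_{l-1}$, and you propose to close the estimate by inserting the crude bound $\delta_{l-1}\le\Lip u_{l-1}=\bigO\bigl(l^{(1-2\alpha)\vee 0}\log l\bigr)$. Track the sizes for $\alpha<1/2$: $\gamma_l^2(1+\delta_{l-1})\asymp l^{1-4\alpha}\log l$, and the weight $\sum_{m<l}\gamma_m/\Gamma_{N-m+1}$ you compare to a harmonic tail is $\bigO(\log N)$ for $l$ near $N$ and $\asymp l^{1-\alpha}/N^{1-\alpha}$ for $l\lesssim N/2$. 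Summing over $l$ yields a total correction of order $N^{2-4\alpha}\log^2 N$ (and of order $N^{1-2\alpha}\log N$ for $1/2<\alpha<2/3$). Neither is $\bigO(N^{-(\alpha\wedge(1-\alpha))}\log^2 N)$ unless $\alpha\ge 2/3$. The difficulty you flag in the last paragraph — that the noise part $\eta\sum\gamma_l^2$ dominates for short suffixes — is not the real obstruction; the obstruction is that the a priori bound on $\delta_{l-1}$ itself is too weak to feed back into the correction, and no rearrangement of the same single estimate fixes this.

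The paper resolves this with a finite induction (assertion $\rmH(m)$ in the proof of \Cref{thm:shamir_discrete}): one first proves the crude gap rate $\delta_n=\bigO(n^{1-2\alpha}\log n)$, then uses \Cref{lemma:kolmo} ($\|\nabla f(x)\|^2\le 2\Lip\,\delta$) to convert it into a gradient bound $\expe{\|\nabla f(X_n)\|^2}=\bigO(n^{1-2\alpha}\log n)$, feeds that improved gradient bound into \Cref{prop:shamir_discrete_intermediate} (whose hypothesis \eqref{eq:assum_shamir_int} is precisely a growth bound on $\expe{\|\nabla f(X_n)\|^2}$), extracts an improved gap rate $\bigO(n^{1-3\alpha}\log^3 n)$, and iterates. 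After $m=\lceil 1/\alpha\rceil-1$ passes the gradient norm is bounded and the target rate $\alpha\wedge(1-\alpha)$ (up to logs) falls out. This bootstrap is essential; your proof, as written, effectively performs only the first step. You would need to add: (i) the conversion from a gap rate to a gradient rate via \Cref{lemma:kolmo}, and (ii) a recursion in which each pass through the suffix-averaging identity gains a factor $n^{-\alpha}$ in the controlled growth of $\delta_n$, terminating after finitely many steps.
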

The proof is postponed to \Cref{app:shamir_discrete} and takes its inspiration
from the proof of the continuous counterpart \Cref{thm:shamir_continuous}. Note
that in the case $\alpha = 1/2$ we recover (up to a logarithmic term) the rate
$\bigO(N^{-1/2}\log(N+1))$ derived in \citep[Theorem 2]{shamirzhang} which
matches the minimax lower-bound \cite{agarwal2012lower}, up to a logarithmic
term. We also extend this result to the case $\alpha \neq 1/2$. Note however
that our setting differs from the one of \citep{shamirzhang}. Indeed,
\citep[Theorem 2]{shamirzhang} established the optimal convergence rate for a
projected version of SGD onto a convex compact set of $\rset^{\dim}$ under the
assumption that $f$ is convex (possibly non-smooth) and
$(\expeLigne{\normLigne{H(X_n, Z_{n+1})}^2})_{n \in \nset}$ is bounded. Our
result avoids the boundedness assumption and the projection step of
\citep{shamirzhang}, since in 
\Cref{thm:shamir_discrete}, we replace the boundedness condition by the
regularity condition \Cref{assum:f_lip} (actually our proof can be very easily
adapted to the setting of \citep{shamirzhang}, see
\Cref{cor:shamir_bounded}). 
Our main
contributions in the convex setting are summarized in \Cref{table:bachmoulines}
and \Cref{fig:rates}.
\vspace{-.5cm}
\begin{table}[ht]
\begin{minipage}[c]{0.4\linewidth}
  \centering
  \begin{tikzpicture}
      \node at (0,-2.2) {\scriptsize value of  $\alpha$};
      \node[rotate=90] at (-2.8, 0) {\scriptsize rate of convergence};
      \node[inner sep=0pt] (russell) at (0,0)
{\includegraphics[width=1.\linewidth]{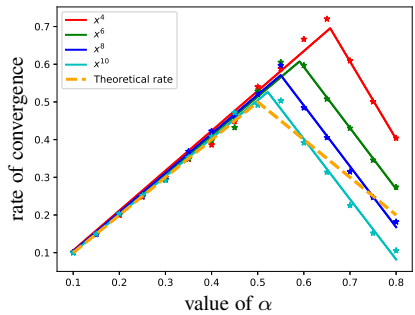}} ;
\end{tikzpicture}
\captionof{figure}{Convergence rates for $\varphi_p$ match the theoretical
  results of \Cref{thm:shamir_discrete} asymptotically.}           \label{fig:figure_cvx}
\end{minipage}
\begin{minipage}[c]{0.58\linewidth}
    \label{table:bachmoulines}
    \small
  \begin{tabular}{|c||c|c|c|}
    \hline
    Reference & Thm.\ref{thm:shamir_discrete} (L) &  \citepalias{bachmoulines2011} (B, L) & \citepalias{bachmoulines2011} (L) \\ \hline
    $\alpha \in \ooint{0,1/3}$ & $\alpha$ & $\times$ & $\times$ \\ \hline
    $\ooint{1/3,1/2}$ & $\alpha$ & $(3\alpha-1)/2$ & $\times$ \\ \hline
    $\ooint{1/2, 2/3}$ & $1-\alpha $ & $\alpha/2$ & $\alpha/2$ \\ \hline
    $\ooint{2/3, 1}$ & $1-\alpha$ & $1-\alpha$ & $1-\alpha$ \\ \hline
  \end{tabular}
  \caption{Convergence rates for convex SGD  (B: Bounded gradients, L: Lipschitz gradient).} \label{table:bachmoulines}
\end{minipage}
\end{table}


            

On the other hand, the setting we consider is the same as
\citep{bachmoulines2011}, but we always obtain better convergence
rates and in particular we get an optimal choice for $\alpha$ ($\alpha = 1/2$)
different from theirs $(\alpha = 2/3)$, see
\Cref{table:bachmoulines}. Hence, we disprove the conjecture
  formulated in \citep{bachmoulines2011} which asserts that the minimax
  rate for SGD in this setting is $1/3$.

In \Cref{fig:figure_cvx}, we experimentally assess the results of
\Cref{thm:shamir_discrete}. We apply SGD on the family of functions
$(\varphi_p)_{p \in \nsets}$, where for any $x \in \rset$, $p \in \nsets$,
\begin{equation}
  \label{eq:varphi_p}
  \varphi_p(x) = x^{2p} \eqsp, \text{if } x\in [-1, 1] \eqsp , \varphi_p(x) = 2p(|x|-1) + 1 \eqsp , \text{otherwise} \eqsp .
\end{equation}
For any $p \in \nset$, $\varphi_p$ satisfies and \Cref{assum:f_lip} and
\Cref{assum:f}. Denoting $\alpha^{\star}_p$ the non-increasing rate $\alpha$ for
which the convergence rate $r_p^{\star}$ is maximum, we experimentally check
that $\lim_{p \to +\infty} r_p^{\star} = 1/2$ and
$\lim_{p \to +\infty} \alpha^{\star}_p = 1/2$.  Note also that
$\alpha^{\star}_p$ decreases as $p$ grows, which is in accordance with the
deterministic setting where the optimal rate in this case is given by $p/(p-2)$,
see \citep{bolte2017error, frankel2015splitting}. As an immediate consequence of
\Cref{thm:shamir_discrete}, we can show that
$(\expeLigne{\normLigne{\nabla f(X_n)}^2})_{n \in \nset}$ enjoys the same rates
of convergence as $(\expeLigne{f(X_n)} - \min_{\rset^{\dim}} f)_{n \in \nset}$,
using that $f$ is smooth.
\begin{corollary_colt}
  \label{cor:bounded_grad_disc}
	Let $\gamma, \alpha \in \ooint{0,1}$ and
        $(X_n)_{n\geq 0}$ be given by \eqref{eq:sgd}. Assume \rref{assum:f_lip}, \rref{assum:grad_sto} and \rref{assum:f}. Then, there exists $C \geq 0$ (explicit and given in the proof) such that for any $N \geq 1$,
        \begin{equation}
        	\expeLigne{\norm{\nabla f(X_N)}^2} \leq C (1+\log(N+1))^2/(N+1)^{\alpha \wedge(1-\alpha)} \eqsp.
     \end{equation}
\end{corollary_colt}
In particular,
$(\expeLigne{\normLigne{\nabla f(X_n)}^2})_{n \in \nset}$ is bounded
which is often found as an assumption for the study of the convergence
of SGD in the convex
setting~\citep{shalev2011pegasos,nemirovski:2009,jmlrhazan,shamirzhang,recht2011hogwild}. Our
result shows that this assumption is unnecessary.


\section{Weakly Quasi-Convex Case}
\label{sec:non-convex-case}

In this section, we no longer consider that $f$ is convex but
a relaxation of this condition.
\begin{assumptionF}
  \label{assum:f_weak}
  There exist $r_1 \in \ooint{0,2}$,
  $r_2 \geq 0$, $\tau >0$ such that for any $x \in \rset^{\dim}$
  \begin{equation}
    \| \nabla f(x) \|^{r_1} \|x - x^{\star}\|^{r_2} \geq \tau(f(x) - f(x^{\star})) \eqsp, \quad \text{ where $\textstyle{x^{\star} \in \argmin_{\rset^{\dim}} f \neq \emptyset\eqsp .}$}
  \end{equation}
\end{assumptionF}
This setting is a generalization of the weakly quasi-convex assumption
considered in \citep{orvieto2019continuous} and introduced in
\citep{hardt2018gradient} as follows.
\begin{assumptionbis}{assum:f_weak}
  \label{assum:f_weak_quasi}
The function   $f$ is weakly quasi-convex if there $\tau >0$ such that for any  $x \in \rset^{\dim}$
      \begin{equation}
      \langle \nabla f(x), x - x^{\star} \rangle \geq \tau(f(x) - f(x^{\star})) \eqsp, \quad \text{ where $\textstyle{x^{\star} \in \argmin_{\rset^{\dim}} f \neq \emptyset\eqsp .}$}
    \end{equation}
\end{assumptionbis}
This last condition itself is a modification of the quasi-convexity assumption
\citep{hazan2015beyond}. It was shown in \citep{hardt2018gradient} that an
idealized risk for linear dynamical system identification is weakly
quasi-convex, and in \citep{yuan2019stagewise} the authors experimentally check
that a residual network (ResNet20) used on CIFAR-10 (with differentiable
activation units) satisfy the weakly quasi-convex assumption.

The assumption \Cref{assum:f_weak} also encompasses the setting where $f$
satisfies some Kurdyka-\LL condition \citep{bolte2017error}, \ie, if there exist
$r \in \ooint{0, 2}$ and $\tilde{\tau} >0$ such that for any
$x \in \rset^{\dim}$,
\begin{equation}
  \label{eq:kl}
  \norm{\nabla f(x)}^{r} \geq \tilde{\tau}(f(x) - f(x^{\star})) \eqsp, \quad \text{ where $\textstyle{x^{\star} \in \argmin_{\rset^{\dim}} f \neq \emptyset\eqsp .}$}
\end{equation}
In this case, \Cref{assum:f_weak} is satisfied with $r_1 = r$, $r_2=0$ and
$\tau = \tilde{\tau}$.  \KuLo \ conditions have been often used in the context of
non-convex minimization~\citep{attouch2010proximal, noll}.  Even though the case
$r_1 = 2$ and $r_2 = 0$ is not considered in \Cref{assum:f_weak}, one can still
derive convergence of order $\alpha$ for $\alpha \in \ooint{0, 1}$, see
\Cref{prop:kl2_discrete}, extending the
results obtained in the strongly convex setting.  We now state the main theorem
of this section.
\begin{theorem}
  \label{thm:f_weak}
  Let $\alpha, \gamma \in \ooint{0,1}$ and $(\bfX_t)_{t \geq 0}$ be given by
  \eqref{eq:sde}. Assume $f \in \rmc^2(\rset^{\dim}, \rset)$,
  \rref{assum:f_lip}, \tup{\rref{assum:grad_sto}-\ref{item:approx_sto}}, \rref{assum:lip_sigma} and
  \rref{assum:f_weak}.  In addition, assume that there exist
  $\beta, \explog \geq 0$ and $\Cbeta \geq 0$ such that for any $t \geq 0$,
  \begin{equation}
    \label{eq:theo_f_weak_condition_norm}
    \expeLigne{\| \bfX_t - x^{\star} \|^{r_2 r_3}} \leq \Cbeta(\gua +t)^{\beta}(1 +
  \log(1 + \gua^{-1}t))^{\explog} \eqsp,
  \end{equation}
  where  $\gua = \gamma^{1/(1-\alpha)}$ and $r_3 = (1 - r_1/2)^{-1}$.  Then, there exists $C \geq 0$ (explicit and given in the proof) such that
  for any $T \geq 1$
  \begin{equation}
        \label{eq:delta}
    \expe{f(\bfX_T)}-{\textstyle \min_{\rset^d}} f \leq C T^{-\delta } \parentheseDeuxLigne{1 + \log(1+\gua^{-1}T)}^{\vareps} \eqsp ,
  \end{equation}
  \begin{equation}
    \begin{aligned}
      &\text{where} \ \delta_1 \wedge \delta_2 \eqsp, \quad \delta_1 = (r_1/2)(1-r_1/2)^{-1}(1-\alpha) -\beta \quad \text{and} \quad \delta_2 = (r_1 /2)\alpha - \beta(1 - r_1/2) \eqsp.
    \end{aligned}
  \end{equation}
\end{theorem}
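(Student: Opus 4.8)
The plan is to follow the energy-function strategy of \Cref{sec:two-basic-comparison}: exhibit an energy $\mce$ such that $v(t)=\expe{\mce(t,\bfX_t)}$ obeys a differential inequality $\rmd v/\rmd t\le F(t,v(t))$ to which \Cref{lemma:ode} applies. Since the target rate carries a logarithmic correction, I would take
$$\mce(t,x)=(t+\gua)^{\delta}L(t)^{-\explog}\,(f(x)-f(x^{\star}))\eqsp,\qquad L(t)=1+\log(1+\gua^{-1}t)\eqsp,$$
with $\delta=\delta_1\wedge\delta_2$, and set $w(t)=\expe{f(\bfX_t)-f(x^{\star})}\ge 0$ and $g(t)=(t+\gua)^{\delta}L(t)^{-\explog}$; note $L'(t)=(t+\gua)^{-1}$ and $g'(t)\le\delta(t+\gua)^{\delta-1}L(t)^{-\explog}$. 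One may assume $\delta>0$, the statement being vacuous otherwise. The $\rmc^1$ regularity and finiteness of $v$ follow from $f\in\rmc^2$, $\nabla f(x^{\star})=0$, $\Lip$-smoothness (so $f(x)-f(x^{\star})\le (\Lip/2)\|x-x^{\star}\|^2$) and the standard second-moment bound on $\bfX_t$.

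Applying Dynkin's formula (\Cref{lemma:dynkin}) to the generator of \eqref{eq:sde} gives
$$\frac{\rmd v}{\rmd t}=g'(t)w(t)-g(t)(t+\gua)^{-\alpha}\expe{\|\nabla f(\bfX_t)\|^{2}}+\tfrac{\gua}{2}\,g(t)(t+\gua)^{-2\alpha}\expe{\trace(\Sigma(\bfX_t)\nabla^2 f(\bfX_t))}\eqsp.$$
By $\Lip$-smoothness $\nabla^2 f\preceq\Lip\Idd$ and by \Cref{assum:grad_sto}-\ref{item:approx_sto} one has $\trace(\Sigma(x))\le\eta$, hence $\trace(\Sigma\nabla^2 f)\le\Lip\eta$; also $g'(t)w(t)\le\delta(t+\gua)^{-1}v(t)$. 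The crux is to lower bound $\expe{\|\nabla f(\bfX_t)\|^{2}}$ using \rref{assum:f_weak}: raising $\|\nabla f(x)\|^{r_1}\ge\tau(f(x)-f(x^{\star}))/\|x-x^{\star}\|^{r_2}$ to the power $p\defeq 2/r_1>1$ and applying Hölder's inequality with conjugate exponents $p$ and $p/(p-1)=r_3=(1-r_1/2)^{-1}$ yields
$$\expe{\|\nabla f(\bfX_t)\|^{2}}\ \ge\ \tau^{p}\,\frac{w(t)^{p}}{\expe{\|\bfX_t-x^{\star}\|^{r_2 r_3}}^{\,p-1}}\ \ge\ \tau^{p}\,\frac{w(t)^{p}}{[\Cbeta(\gua+t)^{\beta}L(t)^{\explog}]^{p-1}}\eqsp,$$
the last step being exactly \eqref{eq:theo_f_weak_condition_norm}, whose moment $r_2 r_3$ matches the one produced by Hölder. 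Substituting $w(t)=g(t)^{-1}v(t)$, the choice of the exponent $\explog$ in $\mce$ is what makes the logarithmic factors cancel in the $v^{p}$ term, leading to
$$\frac{\rmd v}{\rmd t}\ \le\ F(t,v(t))\eqsp,\qquad F(t,u)=\frac{\delta}{t+\gua}\,u-c\,(t+\gua)^{a}\,u^{p}+\frac{\gua\Lip\eta}{2}\,(t+\gua)^{\delta-2\alpha}L(t)^{-\explog}\eqsp,$$
with $c=\tau^{p}/\Cbeta^{p-1}>0$ and $a=-(p-1)(\delta+\beta)-\alpha$.

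It remains to check that $F$ meets the hypotheses of \Cref{lemma:ode}, and this is where the definitions of $\delta_1,\delta_2$ are used: a direct computation shows $a\ge -1$ is equivalent to $\delta\le\delta_1$ and $a\ge\delta-2\alpha$ is equivalent to $\delta\le\delta_2$, so for $\delta=\delta_1\wedge\delta_2$ both hold. Writing $F(t,u)=(t+\gua)^{a}\bigl[\delta(t+\gua)^{-1-a}u-cu^{p}+\tfrac{\gua\Lip\eta}{2}(t+\gua)^{\delta-2\alpha-a}L(t)^{-\explog}\bigr]$, both $(t+\gua)^{-1-a}$ and $(t+\gua)^{\delta-2\alpha-a}L(t)^{-\explog}$ are bounded on $\bR_+$ (their exponents are $\le 0$ and $L\ge 1$), so the bracket is bounded above by $\delta M_1 u-cu^{p}+M_2$; since $p>1$ this is negative — and decreasing in $u$ — for all $u\ge A$ with $A$ a large enough constant, hence $F(t,u)<0$ for all $t\ge 1$ and $u\ge A$. \Cref{lemma:ode} then yields $v(t)\le B$ for all $t\ge 0$, i.e. $w(T)\le B(T+\gua)^{-\delta}L(T)^{\explog}\le B\,T^{-\delta}(1+\log(1+\gua^{-1}T))^{\explog}$ for $T\ge 1$, which is \eqref{eq:delta}. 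I expect the main obstacle to be the Hölder step together with the exponent bookkeeping: one must produce the moment $r_2 r_3$ and the power $a$ exactly so that the thresholds $\delta_1,\delta_2$ emerge, and one must insert the $L(t)^{-\explog}$ factor in the energy so that \Cref{lemma:ode} — which only delivers constant bounds — still applies despite the genuine logarithmic correction in the rate.
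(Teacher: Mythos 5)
Your proposal is correct and follows essentially the same route as the paper: the identical energy $v(t)=(t+\gua)^{\delta}\ell_t^{-\explog}\,\expeLigne{f(\bfX_t)-f^\star}$, the same Dynkin/It\^o computation, the same Hölder step turning \Cref{assum:f_weak} and \eqref{eq:theo_f_weak_condition_norm} into $\expeLigne{\normLigne{\nabla f(\bfX_t)}^2}\ge\tau^{2/r_1}w(t)^{2/r_1}\expe{\normLigne{\bfX_t-x^\star}^{r_2r_3}}^{1-2/r_1}$, and the same conclusion via \Cref{lemma:ode} (the paper unfolds that lemma inline rather than citing it). The only cosmetic difference is that you phrase the Hölder step as raising the pointwise inequality to the power $p=2/r_1$ before integrating, whereas the cleaner order (used in the paper) is to take expectations of $\tau(f-f^\star)\le\normLigne{\nabla f}^{r_1}\normLigne{x-x^\star}^{r_2}$, apply Hölder with exponents $2/r_1$ and $r_3$ to the right-hand side, and only then rearrange and raise to the $2/r_1$ power; your displayed inequality is nevertheless exactly the one the paper obtains.
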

The proof is postponed to \Cref{sec:weakly-convex_appendix}.
First, note that if $f$ satisfies a \KuLo \ condition of type \eqref{eq:kl}
then \Cref{assum:f_weak} is satisfied with $r_1=r$ and $r_2= 0$ and the rates in \Cref{thm:f_weak} simplify and we obtain that
$\delta = \min((r/2)(1-r/2)^{-1}(1-\alpha), (r/2)\alpha)$. The rate is maximized
for $\alpha = (2 - r/2)^{-1}$ and in this case, $\delta = r/(4-r)$. Therefore, if $r \to 2$, then $\delta \to 1$ and we obtain at the limit the same convergence rate that the case where $f$ is strongly convex \Cref{assum:strongly_cvx}.

In the general case $r_2 \neq 0$, the convergence rates obtained in
\Cref{thm:f_weak} depend on $\beta$ where
$(\expeLigne{\normLigne{\bfX_t - x^{\star}}^{r_2r_3}}(\gua +
t)^{-\beta})_{ t\geq 0}$ has at most logarithmic growth.  If
$\beta \neq 0$, then the convergence rates deteriorate.  In what
follows, we shall consider different scenarios under which $\beta$ can
be explicitly controlled.  These estimates imply explicit convergence
rates for SGD using \Cref{thm:f_weak}.

\begin{corollary_colt}
  \label{cor:let-alpha-gamma}
  Let $\alpha, \gamma \in \ooint{0,1}$ and $(\bfX_t)_{t \geq 0}$ given by
  \eqref{eq:sde}. Assume $f \in \rmc^2(\rset^{\dim}, \rset)$,
  \rref{assum:f_lip}, \tup{\rref{assum:grad_sto}-\ref{item:approx_sto}},
  \rref{assum:lip_sigma}.
  \begin{enumerate}[wide, labelwidth=!, labelindent=0pt,label=(\alph*),noitemsep,nolistsep]
  \item \label{item:a_weak} If \rref{assum:f_weak_quasi} holds, then there exists $\Cweak \geq 0$ such that for any $T \geq 1$
  \begin{equation}
    \expe{f(\bfX_T)}-{\textstyle \min_{\rset^d}} f \leq \Cweak \parentheseDeuxLigne{T^{(1 - 3\alpha)/2} + T^{-\alpha/2} + T^{\alpha -1}} \eqsp .
  \end{equation}
\item \label{item:b_weak} If \rref{assum:f_weak_quasi} holds and there exist
  $R \geq 0$ and $c> 0$ such that for any $x \in \rset^{\dim}$ with
  $\| x -x^{\star} \| \geq R$, $f(x) - f(x^{\star}) \geq c \| x- x^{\star} \|$
  then there exists $\Cweak \geq 0$ such that for any $T \geq 1$
  \begin{equation}
    \label{eq:item:b_weak}
          \expe{f(\bfX_T)}-{\textstyle \min_{\rset^d}} f \leq \Cweak \parentheseDeuxLigne{T^{-\alpha/2} + T^{\alpha -1}} \eqsp .
    \end{equation}
  \item \label{item:c_weak} If \rref{assum:f_weak} holds and if there exist
    $R \geq 0$ and $\mtt > 0$ such that for any $x \in \rset^{\dim}$ with
    $\| x - x^\star\| \geq R$,
    $\langle \nabla f(x), x- x^{\star}\rangle \geq \mtt \norm{x-x^{\star}}^2$,
    then there exists $\Cweak \geq 0$ such that for any $T \geq 1$,
    \eqref{eq:item:b_weak} holds.
  \end{enumerate}
\end{corollary_colt}
The proof is postponed to \Cref{sec:weakly-convex_appendix}.  The
main ingredient of the proof is to control the growth of
$t \mapsto \expeLigne{\| \bfX_t - x^{\star} \|^{2}}$ using either the \SDE \
satisfied by $(\| \bfX_t - x^{\star} \|^{2})_{t \geq 0}$ in the case of (a) and
(c), or the \SDE \ satisfied by $(f(\bfX_t) - {\textstyle \min_{\rset^d}} f)_{t \geq 0}$
in the case of (b).

Under \Cref{assum:f_weak_quasi}, we compare the rates we obtain using
\Cref{cor:let-alpha-gamma}-\ref{item:a_weak} with the ones derived by
\citep{orvieto2019continuous} in \Cref{table:orvieto} and \Cref{fig:rates}. Note
that compared to \citep{orvieto2019continuous}, we establish that SGD converges
as soon as $\alpha > 1/3$ and not $\alpha >1/2$. In addition, the convergence
rates we obtain are always better than the ones of
\citep{orvieto2019continuous}. However, note that in both cases, the optimal
convergence rate is $1/3$ obtained using $\alpha = 2/3$. In addition, under
additional growth conditions on the function $f$, and using
\Cref{cor:let-alpha-gamma}-\ref{item:b_weak}-\ref{item:c_weak} we show that the
convergence of SGD in the weak quasi-convex case occurs as soon as $\alpha > 0$.
\begin{table}
    \vskip 0.15in
    \centering
    \small
  \begin{tabular}{|c||c|c|c|}
    \hline
    Reference & \Cref{cor:let-alpha-gamma}-\ref{item:a_weak} &  \Cref{cor:let-alpha-gamma}-\ref{item:b_weak} & \citepalias{orvieto2019continuous} \\ \hline
    $\alpha \in \ooint{0,1/3}$ & $\times$ & $\alpha/2$ &$\times$ \\ \hline
    $\alpha \in \ooint{1/3,1/2}$ & $(3\alpha -1)/2$ & $\alpha/2$ & $\times$  \\ \hline
    $\alpha = 1/2$ & $1/4 + \text{log.}$ & $1/4 + \text{log.}$ &$\times$ \\ \hline
    $\alpha \in \ooint{1/2, 2/3}$ & $\alpha /2 $ & $1-\alpha$&$2\alpha - 1$ \\ \hline
    $\alpha \in \ooint{2/3, 1}$ & $1-\alpha$ & $1-\alpha$ &$1-\alpha$ \\ \hline
  \end{tabular}
  	\caption{Rates for continuous SGD with non-convex assumptions \label{table:orvieto}}
\end{table}
Finally, as in the previous sections, we extend our main result to the discrete setting.
\begin{theorem}
  \label{thm:f_weak_discrete}
  Let $\alpha, \gamma \in \ooint{0,1}$ and $(X_n)_{n \in \nset}$ be given by
  \eqref{eq:sgd}. Assume \rref{assum:f_lip},
  \tup{\rref{assum:grad_sto}-\ref{item:approx_sto}} and \rref{assum:f_weak}.  In
  addition, assume that there exist $\beta, \explog, \Cbeta \geq 0$ such that
  for any $n \in \nset$,
  $\expe{\| X_n - x^{\star} \|^{r_2 r_3}} \leq \Cbeta(n+1)^{\beta}\{1 + \log(1 +
  n)\}^{\explog}$, where $r_3 = (1 - r_1/2)^{-1}$.  Then, there exists
  $\Cweak \geq 0$ (explicit and given in the proof) such that for any $N \geq 1$
  \begin{equation}
    \expe{f(X_N)}-{\textstyle \min_{\rset^d}} f \leq \Cweak N^{-\delta_1\wedge \delta_2} \parenthese{1 + \log(1+N))}^{\vareps} \eqsp ,
  \end{equation}
  where  $\delta_1,\delta_2$ are given in \eqref{eq:delta}.
\end{theorem}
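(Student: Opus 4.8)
The plan is to transpose the proof of \Cref{thm:f_weak} to the discrete recursion \eqref{eq:sgd}, replacing Dynkin's formula by the descent lemma and \Cref{lemma:ode} by \Cref{lemma:ode_discrete}. Write $w_n = \expe{f(X_n)} - \min_{\rset^d} f$ for $n \in \nset$. First I would derive a scalar recursion for $(w_n)_{n \in \nset}$: by \rref{assum:f_lip} and a second-order expansion along $X_{n+1} = X_n - \gamma(n+1)^{-\alpha} H(X_n, Z_{n+1})$, then taking conditional expectation with respect to $\cF_n = \sigma(X_0, Z_1, \dots, Z_n)$ and using \rref{assum:grad_sto}-\ref{item:approx_sto} (which gives both $\expec{H(X_n, Z_{n+1})}{\cF_n} = \nabla f(X_n)$ and $\expec{\norm{H(X_n, Z_{n+1})}^2}{\cF_n} \leq \norm{\nabla f(X_n)}^2 + \eta$), one obtains, for every $n$ large enough that $\Lip \gamma (n+1)^{-\alpha} \leq 1$,
\[
  w_{n+1} \leq w_n - (\gamma/2)\,(n+1)^{-\alpha}\,\expe{\norm{\nabla f(X_n)}^2} + (\Lip\eta/2)\,\gamma^2 (n+1)^{-2\alpha} \eqsp .
\]

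The gradient term is then lower-bounded using the \L ojasiewicz-type inequality \rref{assum:f_weak}: rearranging it gives $\norm{\nabla f(x)}^2 \geq \tau^{2/r_1}(f(x) - f(x^\star))^{2/r_1}\norm{x - x^\star}^{-2r_2/r_1}$, and Hölder's inequality with conjugate exponents $2/r_1$ and $r_3 = (1 - r_1/2)^{-1}$ applied to the identity $f(X_n) - f(x^\star) = \bigl((f(X_n) - f(x^\star))^{2/r_1}\norm{X_n - x^\star}^{-2r_2/r_1}\bigr)^{r_1/2}\norm{X_n - x^\star}^{r_2}$ yields $\expe{\norm{\nabla f(X_n)}^2} \geq \tau^{2/r_1}\, w_n^{2/r_1}\,\big/\,\expe{\norm{X_n - x^\star}^{r_2 r_3}}^{2/(r_1 r_3)}$. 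Plugging in the standing moment hypothesis $\expe{\norm{X_n - x^\star}^{r_2 r_3}} \leq \Cbeta(n+1)^{\beta}\{1 + \log(1+n)\}^{\explog}$ turns the previous display into a closed scalar recursion of Bernoulli type,
\[
  w_{n+1} \leq w_n - \mathtt{a}_n\, w_n^{2/r_1} + \mathtt{b}_n \eqsp, \qquad \mathtt{a}_n \asymp (n+1)^{-\alpha - 2\beta/(r_1 r_3)}\{1+\log(1+n)\}^{-2\explog/(r_1 r_3)} \eqsp, \quad \mathtt{b}_n \asymp (n+1)^{-2\alpha} \eqsp .
\]

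Finally I would apply \Cref{lemma:ode_discrete} to $u_n = g_n w_n$, where $g_n = (n+1)^{\delta_1 \wedge \delta_2}\{1 + \log(1+n)\}^{-\vareps}$ for a suitably chosen $\vareps \geq 0$, the discrete surrogate of the energy function $\mce$ used in the proof of \Cref{thm:f_weak}. Splitting $u_{n+1} - u_n = g_{n+1}(w_{n+1} - w_n) + (g_{n+1} - g_n)w_n$, using $\absLigne{g_{n+1} - g_n} \leq C\,g_n/(n+1)$ and the Bernoulli recursion, one bounds $u_{n+1} - u_n \leq F(n, u_n)$ with $F(n, x) = c_0(n)\,x - c_1(n)\maxplus{x}^{2/r_1} + c_2(n)$ for explicit positive sequences $c_0, c_1, c_2$; note $F(n,\cdot) \in \rmc^1(\rset, \rset)$ since $2/r_1 > 1$. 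The two exponents $\delta_1, \delta_2$ are precisely the constraints that make $F$ fit the template of \Cref{lemma:ode_discrete}: $\delta_2$ balances the dissipation $\mathtt{a}_n w_n^{2/r_1}$ against the noise floor $\mathtt{b}_n$ (noise-dominated equilibrium), while $\delta_1$ is the decay rate of the noiseless recursion $w_{n+1} \leq w_n - \mathtt{a}_n w_n^{2/r_1}$; taking $\delta = \delta_1 \wedge \delta_2$ ensures both that $F(n,x) < 0$ for $n$ and $x$ large and that $\sup_{x \geq 0} F(n,x) \leq A_2$ for $n \geq n_0$. \Cref{lemma:ode_discrete} then yields $\sup_n u_n \leq B$ with $B$ explicit, i.e., $w_n \leq B/g_n$, which is the claimed bound, the logarithmic power $\vareps$ reappearing through $g_n$.

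The main obstacle is this last step: matching the continuous energy $\mce(t)$ with the discrete surrogate $g_n w_n$ and controlling the increments $g_{n+1} - g_n$ together with all logarithmic factors so that the resulting comparison function $F$ obeys the two rigid requirements of \Cref{lemma:ode_discrete} (bounded from above, and eventually negative beyond a fixed level). Verifying that $\delta_1$ and $\delta_2$ are exactly those thresholds, and tracking the right logarithmic exponent $\vareps$, is the bookkeeping-heavy heart of the proof; the first two steps above are routine discrete analogues of the It\^o computation behind \Cref{thm:f_weak}.
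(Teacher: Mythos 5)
Your proposal is correct and follows essentially the same route as the paper's proof: same surrogate energy $E_n = (n+1)^{\delta_1\wedge\delta_2}\expe{f(X_n)}(1+\log(n+1))^{-\vareps}$, same one-step drift from the descent lemma under \rref{assum:grad_sto}-\ref{item:approx_sto}, same H\"older/\L ojasiewicz lower bound on $\expe{\normLigne{\nabla f(X_n)}^2}$, and the same increment bound $(n+2)^\delta-(n+1)^\delta\leq c_\delta(n+1)^{\delta-1}$. The only cosmetic difference is the final step: the paper argues directly that $E_{n+1}\leq E_n$ once $E_n$ exceeds an explicit threshold and $n$ is large, then concludes by recursion, whereas you propose to package the same inequality into the template of \Cref{lemma:ode_discrete}; the two are equivalent, and invoking the lemma as you do arguably makes explicit the control of the upward jump when $E_n$ is below the threshold, which the paper leaves implicit.
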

The proof is postponed to \Cref{sec:weakly-convex_appendix}.  We can conduct the
same discussion as the one after \Cref{thm:f_weak}, and
\Cref{cor:let-alpha-gamma} can be extended to the discrete case, see
\Cref{cor:let-alpha-gamma_disc} in \Cref{sec:weakly-convex_appendix}.



\begin{figure}
	\begin{minipage}[b]{0.49\linewidth}
		\begin{center}
    \scalebox{.7}{
		\begin{tikzpicture}[domain=0:1]
			\begin{axis}
        [
        ,axis equal,
        ,axis x line=bottom
  			,axis y line=center
				,ylabel near ticks,
				,xlabel near ticks,
        ,width=9cm
        ,xlabel=$\text{value of $\alpha$}$
        ,ylabel=$\text{rate of convergence}$
        ,xmin=0
        ,xmax=1
        ,ymin=0
        ,ymax=1.1
        ,xtick={0,0.33,0.5,0.67,1}
        ,xticklabels={$0$,$1/3$,$1/2$,$2/3$,$1$},
        ,ytick={0,0.25,0.33,0.5,1},
        ,yticklabels={$0$,$1/4$,$1/3$,$1/2$,$1$}
        ,legend columns=2
        ,legend style={
        	at={(0.55,1)},
        	anchor=north,
        	/tikz/column 2/.style={
                column sep=5pt,
            }
        	}
        ]
        \addplot[densely dashdotted][domain=0.5:1]{x} ;
        \addlegendentry{strongly convex} ;
        \addplot[loosely dashed][domain=0:0.5]{1-x} ;
        \addlegendentry{deterministic} ;
        \addplot[line width=1.3pt][domain=0:0.5]{x} ;
        \addlegendentry{convex (Thm~\ref{thm:shamir_discrete})} ;
        \addplot[smooth][domain=0.5:0.67]{x/2} ;
        \addlegendentry{\citepalias[Table 1]{bachmoulines2011}} ;

        \addplot[smooth][domain=0.33:0.5]{3*x/2-1/2} ;
        \addplot[line width=1.3pt][domain=0.5:1]{1-x} ;
        \addplot[smooth][domain=0.5:0.67]{x/2} ;
    \end{axis}
  \end{tikzpicture}
  }
	\end{center}
	\end{minipage} \hfill
  	\begin{minipage}[b]{0.49\linewidth}
		\begin{center}
    \scalebox{.7}{
		\begin{tikzpicture}[domain=0:1]
			\begin{axis}
        [
        ,axis equal,
        ,axis x line=bottom
  			,axis y line=center
				,ylabel near ticks,
				,xlabel near ticks,
        ,width=9cm
        ,xlabel=$\text{value of $\alpha$}$
        ,ylabel=$\text{rate of convergence}$
        ,xmin=0
        ,xmax=1
        ,ymin=0
        ,ymax=1.1
        ,xtick={0,0.33,0.5,0.67,1}
        ,xticklabels={$0$,$1/3$,$1/2$,$2/3$,$1$},
        ,ytick={0,0.25,0.33,0.5,1},
        ,yticklabels={$0$,$1/4$,$1/3$,$1/2$,$1$}
        ,legend columns=2
        ,legend style={
        	at={(0.55,1)},
        	anchor=north,
        	/tikz/column 2/.style={
                column sep=5pt,
            }
        	}
        ]
        \addplot[densely dashdotted][domain=0:1]{x} ;
        \addlegendentry{strongly convex} ;
        \addplot[loosely dashed][domain=0:0.67]{1-x} ;
        \addlegendentry{deterministic} ;
        \addplot[line width=1.3pt][domain=0.67:1]{1-x} ;
        \addlegendentry{Cor.\ref{cor:let-alpha-gamma}-\ref{item:a_weak}} ;
        \addplot[dashed, line width=1.3pt][domain=0:0.5]{x/2} ;
        \addlegendentry{Cor.\ref{cor:let-alpha-gamma}-\ref{item:b_weak}} ;
        \addplot[solid][domain=0.5:0.67]{2*x-1} ;
        \addlegendentry{\citepalias[Table 1]{orvieto2019continuous}} ;

        \addplot[line width=1.3pt][domain=0.33:0.5]{3*x/2-1/2} ;
        \addplot[line width=1.3pt][domain=0.5:0.67]{x/2} ;
    \end{axis}
  \end{tikzpicture}
  }
	\end{center}
	\end{minipage}
	\caption{Comparison of convergence rates in convex (left) and weakly quasi-convex (right) settings. \label{fig:rates}}
\end{figure}
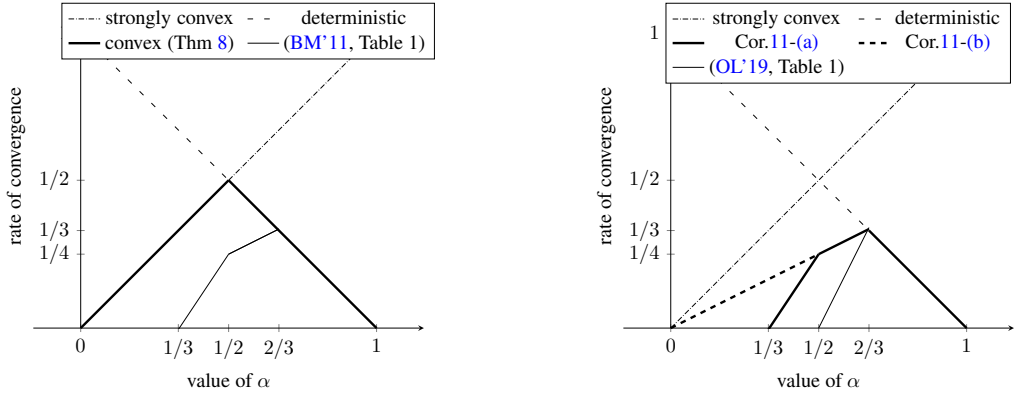



\section{Conclusion}
\label{sec:discussion}

In this paper we investigated the connection between SGD and solutions of
appropriate time inhomogenuous SDEs.  We first proved approximation bounds
between these two processes motivating convergence analysis of continuous
SGD. Then, we turned to the convergence behavior of SGD and showed how the
continuous process can provide a better understanding of SGD using tools from
ODE analysis and stochastic calculus. In particular, we obtained optimal convergence
rates in the strongly convex case and new optimal convergence rates in the convex case. In the non-convex setting, we
considered a relaxation of the weakly quasi-convex condition and improved the
state-of-the art convergence rates in both the continuous and discrete-time
setting.


\section*{Acknowledgments}

V. De Bortoli was partially supported by EPSRC grant EP/R034710/1.

\bibliography{../bibliography/bibliography}

\appendix


\addtocontents{toc}{\protect\setcounter{tocdepth}{2}}

\section{Organization of the appendix}

In these appendices we gather the proofs of our results. We start by deriving
strong and weak approximation bounds in \Cref{sec:appr-results}. We then turn to
the study of the long-time behavior of SGD and its continuous-time counterpart
for the minimization of strongly convex functions in
\Cref{sec:strongly-convex_appendix} under
\Cref{assum:grad_sto}-\ref{item:approx_sto}.  The counterpart of these results
in the case where \Cref{assum:grad_sto}-\ref{item:ml} holds is presented in
\Cref{app:fz_strongly}. In \Cref{sec:convex-case_appendix}, we analyse the
convex setting under \Cref{assum:grad_sto}-\ref{item:approx_sto}. Again, the
counterpart of these results in the case where
\Cref{assum:grad_sto}-\ref{item:ml} holds is given in
\Cref{app:fz_convex}. We conclude with the proofs of the weakly quasi-convex
setting in \Cref{sec:weakly-convex_appendix}.

\tableofcontents

\section{Approximation Results}
\label{sec:appr-results}

In this section, we present the proof of our strong and weak approximation
results. In \Cref{sec:technical-lemmas}, we gather technical lemmas which will
be of use throughout the section. Our coupling construction between the
discrete-time and continuous processes is presented in \Cref{sec:coupling}. In
\Cref{sec:moment-bounds} we provide moment bounds which constitute the first
step towards deriving the strong approximation bounds in
\Cref{sec:mean-square-appr}. The refinement of our theorem in the presence of
batch-noise is considered in \Cref{sec:case-batch-noise}. We also derive weak
approximation bounds in \Cref{sec:weak-approximation}. Throughout this section
we will denote all the constants by the letter $\tta$ followed by some
subscript.


\subsection{Technical Lemmas}
\label{sec:technical-lemmas}

In order to derive the finite-time strong approximations from the one-step
approximations we will make use of the following version of the discrete
Grönwall's lemma.
\begin{lemma_colt}
  \label{lemma:gronwall}
  Let $(u_n)_{n \in \nset}$, $(v_n)_{n \in \nset}$ and $(w_n)_{n \in \nset}$
  such that for any $n \in \nset$, $u_n, v_n, w_n \geq 0$ and
  $u_{n+1} \leq (1 + v_n) u_n + w_n$. Then for any $n \in \nset$
  \begin{equation}
    u_n \leq \exp \parentheseDeux{\sum_{k=0}^{n-1} v_k} \parenthese{u_0 + \sum_{k=0}^{n-1} w_k} \eqsp .
  \end{equation}
\end{lemma_colt}
\begin{proof}
  The proof is a straightforward consequence of the discrete Grönwall's lemma.
\end{proof}
The sums appearing in \Cref{lemma:gronwall} will be controlled with the following lemma.
\begin{lemma_colt}
  \label{lemma:sum}
  Let $r > 0$, $\gamma >0$, $\alpha \in \coint{0,1}$ and
  $\gua = \gamma^{1/(1-\alpha)}$. Then for any $T \geq 0$, there exists
  $\Aar \geq 0$ such that for any $N \in \nset$ with $N\gua \leq T$ we have
  \begin{equation}
    \gamma^r \sum_{k=0}^{N-1} (k+1)^{-\alpha r} \leq \left\lbrace \begin{aligned}
        & \Aar \gamma^r (1 + \log(\gamma^{-1}))(1 + \log(T)) \eqsp , \qquad \text{if } \alpha \geq  1/r \eqsp ,\\
        & \Aar \gamma^r \gua^{\alpha r-1 } T^{1 - \alpha r }\eqsp , \qquad \qquad  \eqsp  \qquad  \qquad \text{otherwise} \eqsp .
      \end{aligned} \right.
  \end{equation}
\end{lemma_colt}
\begin{proof}
  Let $r > 0$, $\gamma >0$ and $\alpha \in \coint{0, 1}$. If $\alpha > 1/r$ then there exists $\Aar \geq 0$  such that
\begin{equation}
  \gamma^{r} \sum_{k =0}^{N-1} (k+1)^{-\alpha r} \leq \Aar \gamma^r \eqsp .
\end{equation}
If $\alpha < 1/r$ then there exists $\Aar \geq 0$  such that
\begin{equation}
  \gamma^{r} \sum_{k=0}^{N-1} (k+1)^{-\alpha r } \leq \Aar \gamma^r N^{-\alpha r + 1} \leq \Aar \gamma^r \gua^{\alpha r-1} T^{1 - \alpha r }  \eqsp .
\end{equation}
if $\alpha = 1/r$ then there exists $\Aar \geq 0$  such that
\begin{equation}
  \gamma^r \sum_{k=0}^{N-1}(k+1)^{-\alpha r} \leq \gamma^r(1+\log(N))\leq \Aar \gamma^r (1+\log(T))(1 + \log(\gamma^{-1})) \eqsp .
\end{equation}
\end{proof}
Note that if $r = 1$ then
$\gamma^r \sum_{k=0}^{N-1} (k+1)^{-\alpha r} \leq \Capprox_{\alpha,1}
T^{1 - \alpha}$.  Using a slight modification of \Cref{lemma:sum} we
also obtain that there exists $\tilde{\Capprox}$ such that if $r = 1$
then
$\gamma^r \sum_{k=0}^{N-1} (k+1)^{-\alpha r} \leq T^{1 - \alpha} + \tilde{\Capprox}$.

The following lemma derives upper-bound from the regularity assumption
\Cref{assum:f_lip} and \Cref{assum:lip_sigma}.

\begin{lemma_colt}
  \label{lemma:bound_f_sig}
  Assume \rref{assum:f_lip} and \rref{assum:lip_sigma}. Then there exists
  $C \geq 0$ such that for any $x \in \rset^d$,
  \begin{equation}
    \norm{\nabla f(x)} \leq C (1 + \norm{x}) \eqsp , \qquad \normLigne{\Sigma^{1/2}(x)} \leq C ( 1 + \norm{x}) \eqsp , \qquad \norm{\Sigma(x)} \leq C ( 1 + \norm{x}^2) \eqsp . 
  \end{equation}
\end{lemma_colt}

\begin{proof}
  First, we have for any $x \in \rset^d$ using \rref{assum:f_lip}
  \begin{equation}
    \norm{\nabla f(x)} \leq \norm{\nabla f(0)} + \Lip \norm{x} \leq (\norm{\nabla f(0)} + \Lip)(1 + \norm{x}) \eqsp . 
  \end{equation}
  Similarly, we have for any $x \in \rset^d$ using \rref{assum:lip_sigma},
  \begin{equation}
    \label{eq:sig_sqrt}
    \normLigne{\Sigma^{1/2}(x)} \leq (\norm{\nabla f(0)} + \Mip)(1 + \norm{x}) \eqsp . 
  \end{equation}
  Denote for any $x \in \rset^d$, $(a_{i,j}(x))_{1 \leq i,j \leq d} = \Sigma(x)$
  and $(b_{i,j}(x))_{1 \leq i,j \leq d} = \Sigma(x)^{1/2}$. Using the fact that for
  any $u, v\in \rset$, $2uv \leq u^2 + v^2$ we get that for any $x \in \rset^d$
  \begin{equation}
    \norm{\Sigma(x)} \leq \sum_{i,j=1}^d \abs{a_{i,j}(x)} \leq \sum_{i,j, k=1}^d \abs{b_{i,j}(x) b_{j,k}(x)} \leq 2d \normLigne{\Sigma^{1/2}(x)}^2 \eqsp .
  \end{equation}
  We conclude the proof upon combining this result and \eqref{eq:sig_sqrt}.
\end{proof}

\subsection{Construction of the coupling}
\label{sec:coupling}

In this section, we describe and prove the existence of an appropriate coupling
between the discrete-time and continuous-time process. In the following
sections, we always assume that $(\bfB_t)_{t \geq 0}$ and $(Z_n)_{n \in \nset}$
are given by \Cref{thm:coupling}. The proof of \Cref{thm:coupling} is based on
an abstract construction of an appropriate measure on a joint space. In order to
construct such a measure we use the gluing lemma \cite[Lemma
5.3.2]{ambrosio2008gradient} and tools from the optimal transport theory to
impose the desired properties on the marginals.

\begin{theorem}
  \label{thm:coupling}
  Assume \rref{assum:f_lip} and \rref{assum:lip_sigma}. Let
    $\alpha \in \coint{0,1}$, $\bgamma > 0$ and $\gamma \in \ocint{0,
      \bgamma}$. Then, there exists $(\bfB_t)_{t \geq 0}$ a $d$-dimensional
    Brownian motion and $(Z_n)_{n \in \nset}$ such that the following hold:
  \begin{enumerate}[wide, labelwidth=!, labelindent=0pt, label=(\alph*)]
  \item For any $k \in \nset$, $Z_{k+1}$ has distribution $\pi^Z$ and is
    independent from $\mck_k$, where for any $k \in \nset$
    \begin{equation}
    \mck_k = \sigma \parentheseLigne{\ensembleLigne{\bfB_t, Z_j}{t \in
        \ccint{0, k \gua}, \ j \in \{1, \dots, k\}}}   \eqsp ,    
  \end{equation}
  with $\mck_{0} = \{\emptyset, \Omega\}$. Similarly, for any $k \in \nset$,
  $(\bfB_t - \bfB_{k \gua})_{t \geq 0}$ is independent from $\mck_k$.
  \item For any $k \in \nset$, there exists $\msa_k \in \mcb{\rset^d}$ such that
    $\proba{\bfX_{k \gua} \in \msa_k} = 1$ and for any
    $\tilde{x} \in \msa_k$.
    \begin{equation}
      \wassersteinD[2]^2(\nu_k^{\rmd}(\tilde{x}), \nu_k^{\rmcc}(\tilde{x})) = \expe{\norm{H(\tilde{x}, Z_{k+1}) - \nabla f(\tilde{x}) - \gua^{-1/2} \Sigma^{1/2}(\tilde{x}) \int_{k \gua}^{(k+1)\gua} \rmd \bfB_s}^2}  \eqsp ,      
    \end{equation}
    where $(\bfX_t)_{t \geq 0}$ is a unique strong solution of \eqref{eq:sde}
    starting from $\bfX_0 = X_0 \in \rset^d$, $\nu^{\rmd}(\tilde{x})$ is the distribution
    of $H(\tilde{x}, Z_{0})$ and $\nu^{\rmcc}(\tilde{x})$ is the distribution of
    $\nabla f(\tilde{x}) + \Sigma^{1/2}(\tilde{x}) G$ with $G$ a Gaussian random
    variable with zero mean and identity covariance matrix.
  \end{enumerate}
\end{theorem}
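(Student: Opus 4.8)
My plan is to assemble the coupling from three ingredients: strong well-posedness of the SDE \eqref{eq:sde}, a lifting of optimal transport plans to the sample spaces $\msz$ and $\rset^d$, and the gluing lemma \citep[Lemma 5.3.2]{ambrosio2008gradient}. First I would record that, by \citep[Chapter 5, Theorem 2.5]{karatzas1991brownian} (the drift and diffusion of \eqref{eq:sde} are globally Lipschitz in space with linear growth, uniformly in $t \geq 0$, by \rref{assum:f_lip}, \rref{assum:lip_sigma} and \Cref{lemma:bound_f_sig}), \eqref{eq:sde} has a pathwise unique strong solution, so that $\bfX_{k\gua}$ can be written as a Borel function of $X_0$ and the path $(\bfB_s)_{s\leq k\gua}$; in particular $\bfX_{k\gua}$ is always $\sigma((\bfB_s)_{s\leq k\gua})$-measurable. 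I would work on the product of the Wiener space with $[0,1]^{\nsets}$, carrying $(\bfB_t)_{t\geq0}$ together with an independent i.i.d.\ sequence $(U_n)_{n\in\nsets}$ of $\mathrm{Unif}([0,1])$ variables, and define the $Z_n$ inductively as deterministic functions of the past Brownian path and the $U_n$'s.

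The core step is a pointwise lifting. Fix $\tilde x\in\rset^d$ and set $\Phi^{\rmd}_{\tilde x}:\msz\to\rset^d$, $z\mapsto H(\tilde x,z)$, and $\Phi^{\rmcc}_{\tilde x}:\rset^d\to\rset^d$, $g\mapsto\nabla f(\tilde x)+\gua^{-1/2}\Sigma^{1/2}(\tilde x)g$; then $\nu^{\rmd}(\tilde x)$ is the pushforward of $\pi^Z$ by $\Phi^{\rmd}_{\tilde x}$ and, since $\bfB_{(k+1)\gua}-\bfB_{k\gua}\sim\mathrm{N}(0,\gua\Idd)$, the law $\nu^{\rmcc}(\tilde x)$ is the pushforward of $\mathrm{N}(0,\gua\Idd)$ by $\Phi^{\rmcc}_{\tilde x}$. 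If $\wassersteinD[2](\nu^{\rmd}(\tilde x),\nu^{\rmcc}(\tilde x))<\infty$ I would pick an optimal plan $\varpi_{\tilde x}\in\Gamma(\nu^{\rmd}(\tilde x),\nu^{\rmcc}(\tilde x))$ realising it (otherwise the right-hand side of the claimed identity is infinite for every coupling and there is nothing to prove); then disintegrate $\pi^Z$ along $\Phi^{\rmd}_{\tilde x}$ and $\mathrm{N}(0,\gua\Idd)$ along $\Phi^{\rmcc}_{\tilde x}$ — the one place where the Polish structure of $(\msz,\mcz,\pi^Z)$ is used — and glue these disintegrations against $\varpi_{\tilde x}$ to obtain a coupling $\Gamma_{\tilde x}$ of $\pi^Z$ and $\mathrm{N}(0,\gua\Idd)$ on $\msz\times\rset^d$ whose image under $(\Phi^{\rmd}_{\tilde x},\Phi^{\rmcc}_{\tilde x})$ is $\varpi_{\tilde x}$, whence
\begin{equation}
  \textstyle{\int_{\msz\times\rset^d}\normLigne{H(\tilde x,z)-\nabla f(\tilde x)-\gua^{-1/2}\Sigma^{1/2}(\tilde x)g}^2\,\rmd\Gamma_{\tilde x}(z,g)=\wassersteinD[2]^2(\nu^{\rmd}(\tilde x),\nu^{\rmcc}(\tilde x))\eqsp .}
\end{equation}
A measurable selection argument (using that $\nabla f$, $\Sigma^{1/2}$ and $H$ are Borel) would make $\tilde x\mapsto\Gamma_{\tilde x}$, hence a disintegration $\Gamma_{\tilde x}(\rmd z,\rmd g)=Q_{\tilde x}(g,\rmd z)\,\mathrm{N}(0,\gua\Idd)(\rmd g)$, jointly Borel; I then fix a Borel map $\Psi:\rset^d\times\rset^d\times[0,1]\to\msz$ such that the pushforward of $\mathrm{Unif}([0,1])$ by $\Psi(\tilde x,g,\cdot)$ is $Q_{\tilde x}(g,\cdot)$.

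Next I would run the induction. Starting from $\mck_0=\{\emptyset,\Omega\}$, I maintain the invariant $\mck_k\subseteq\sigma((\bfB_s)_{s\leq k\gua})\vee\sigma(U_1,\dots,U_k)$; under it, $\Delta_k:=\bfB_{(k+1)\gua}-\bfB_{k\gua}$ is $\mathrm{N}(0,\gua\Idd)$-distributed and independent of $\mck_k$, and $U_{k+1}$ is independent of $\mck_k\vee\sigma(\Delta_k)$. Setting $Z_{k+1}:=\Psi(\bfX_{k\gua},\Delta_k,U_{k+1})$, which is $\sigma((\bfB_s)_{s\leq(k+1)\gua})\vee\sigma(U_{k+1})$-measurable, the invariant propagates. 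Integrating out $\Delta_k$ and $U_{k+1}$, the conditional law of $Z_{k+1}$ given $\sigma((\bfB_s)_{s\leq k\gua})\vee\sigma(U_1,\dots,U_k)$ is $\int Q_{\bfX_{k\gua}}(g,\cdot)\,\mathrm{N}(0,\gua\Idd)(\rmd g)=\pi^Z$, a fixed measure; hence $Z_{k+1}$ is independent of $\mck_k$ with law $\pi^Z$, and $(\bfB_t-\bfB_{k\gua})_{t\geq0}$ is independent of $\mck_k$ since it is independent of $\sigma((\bfB_s)_{s\leq k\gua})\vee\sigma(U_1,\dots,U_k)$. This is assertion~(a).

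Finally, for assertion~(b), conditionally on $\bfX_{k\gua}=\tilde x$ the pair $(Z_{k+1},\Delta_k)$ has law $Q_{\tilde x}(g,\rmd z)\,\mathrm{N}(0,\gua\Idd)(\rmd g)=\Gamma_{\tilde x}(\rmd z,\rmd g)$, because $\Delta_k$ is independent of $\bfX_{k\gua}$ and $Z_{k+1}$ has conditional law $Q_{\bfX_{k\gua}}(\Delta_k,\cdot)$ given $(\bfX_{k\gua},\Delta_k)$; combining this with $\int_{k\gua}^{(k+1)\gua}\rmd\bfB_s=\Delta_k$ and the displayed identity gives
\begin{equation}
  \expec{\normLigne{H(\tilde x,Z_{k+1})-\nabla f(\tilde x)-\gua^{-1/2}\Sigma^{1/2}(\tilde x)\Delta_k}^2}{\bfX_{k\gua}=\tilde x}=\wassersteinD[2]^2(\nu^{\rmd}(\tilde x),\nu^{\rmcc}(\tilde x))
\end{equation}
for every $\tilde x$ in a Borel set on which $\Gamma_{\tilde x}$ and $Q_{\tilde x}$ are defined and which carries $\bfX_{k\gua}$; taking $\msa_k$ to be such a set yields $\proba{\bfX_{k\gua}\in\msa_k}=1$ and, since $\nu^{\rmd}$ and $\nu^{\rmcc}$ do not depend on $k$, the asserted formula ($\nu_k^{\rmd}=\nu^{\rmd}$, $\nu_k^{\rmcc}=\nu^{\rmcc}$). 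The hard part will be the measurable-selection and disintegration bookkeeping: making $\varpi_{\tilde x}$, its lift $\Gamma_{\tilde x}$ and the kernel $Q_{\tilde x}$ jointly Borel in $\tilde x$, and checking that the one-step constructions assemble on a single probability space so that \emph{all} the conditional-independence statements in (a) hold simultaneously — this is exactly where the Polish assumption on $(\msz,\mcz,\pi^Z)$ and the gluing lemma enter.
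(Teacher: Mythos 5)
Your proposal is correct in substance but takes a genuinely different route from the paper. The paper works entirely on the side of joint laws: it defines Markov kernels $\Qker_k$ whose values are the optimal transference plans between the pushforwards of $\pi^B$ and $\pi^Z$ (measurable in the state variable by \cite[Corollary 5.22]{villani2009optimal}), builds by two successive applications of the gluing lemma a measure $\eta$ on a seven-fold product whose marginals force the identities $U_3 = \Fens_0(U_1,U_2)$, $U_5 = \Hens_0(U_1,U_6)$, etc., and then stitches the Brownian motion together from the increments $(\bfB_t^k)_{t \in \ccint{0,\gua}}$ at the very end. Your construction instead \emph{fixes} the Brownian motion and an independent i.i.d.\ sequence $(U_n)$ of uniforms at the outset, and realises $Z_{k+1}$ as a deterministic Borel function $\Psi(\bfX_{k\gua},\Delta_k,U_{k+1})$, i.e.\ by conditional randomization (noise outsourcing) against the disintegration kernel $Q_{\tilde x}(g,\cdot)$ of the lifted optimal coupling $\Gamma_{\tilde x}$. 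This is arguably more transparent: the independence statements in part~(a) become immediate from the invariant $\mck_k \subseteq \sigma((\bfB_s)_{s\leq k\gua}) \vee \sigma(U_1,\dots,U_k)$, whereas the paper has to argue them via the structure of $\eta$. The price you pay, which you identify correctly, is in the measurable-selection bookkeeping: you need joint Borel measurability not only of $\tilde x \mapsto \varpi_{\tilde x}$ (which the paper also needs, via Villani) but also of the disintegration $(\tilde x, g) \mapsto Q_{\tilde x}(g,\cdot)$ and of the noise-outsourcing map $\Psi$, which is one layer more than the paper uses explicitly (the paper's $\Qker_k$ plays the role of your $\varpi_{\tilde x}$ and the gluing lemma takes care of the rest without ever needing the conditional kernel $Q_{\tilde x}$ in explicit form). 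Both routes lean equally on the Polish assumption; neither is more elementary, but yours makes the random variables concrete from the start rather than extracting them from an abstract joint law.
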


\begin{proof}
  Let $\alpha \in \coint{0,1}$, $\bgamma > 0$ and
  $\gamma \in \ocint{0, \bgamma}$.  By recursion, we show that there exists
  $((\bfB_t^k)_{t \in \ccint{0, \gua}}, Z_k)_{k \in \nset}$ such that for any
  $k \in \nset$, the following assertion \Cref{assum:recu}($k$)
  is true.
  \begin{assumptionH}[$k$]
    \label{assum:recu}
    We have that $(\bfB_t^{k+1})_{t \in \ccint{0, \gua}}$ and $Z_{k+1}$ are independent
    from
    $\mch_{k} = \sigma \parentheseLigne{\ensembleLigne{\bfB_t^j, Z_j}{t \in
        \ccint{0, \gua}, \ j \in \{1, \dots, k-1\}}}$ (with
    $\mch_{0} = \{\emptyset, \Omega\}$) and there exists $\msa_k \in \mcb{\rset^d}$ such
    that $\proba{\bfY_{k \gua} \in \msa_k} = 1$ and for any $\tilde{x} \in \msa_k$.
  \begin{equation}
    \label{eq:coupling}
      \wassersteinD[2](\nu^{\rmd}(\tilde{x}), \nu^{\rmcc}(\tilde{x})) = \expe{\norm{H(\tilde{x}, Z_{k+1}) - \nabla f(\tilde{x}) - \gua^{-1/2} \Sigma^{1/2}(\tilde{x}) \int_{0}^{\gua} \rmd \bfB_s^{k+1}}^2}  \eqsp ,      
    \end{equation}
    where $\nu^{\rmd}(\tilde{x})$ is the distribution of $H(\tilde{x}, Z_{0})$
    and $\nu^{\rmcc}(\tilde{x})$ is the distribution of
    $\nabla f(\tilde{x}) + \Sigma^{1/2}(\tilde{x}) G$ with $G$ a Gaussian random
    variable with zero mean and identity covariance matrix, and for any
    $t \in \ccint{0 , k\gua}$
    \begin{multline}
      \label{eq:solution}
      \bfY_{t} = \bfX_0 + \sum_{j=0}^{n_t-1} \left( - \int_{0}^{\gua}
        ((j+1) \gua +s)^{-\alpha} \nabla f (\bfY_{j \gua +s}) \rmd s \right. \\ \left. -
        \gua^{1/2} \int_{0}^{\gua} ((j+1) \gua +s)^{-\alpha}
        \Sigma^{1/2}(\bfY_{j \gua + s}) \rmd \bfB_s^{j+1} \right) \\
       - \int_{0}^{t - n_t \gua}
        ((n_t+1) \gua +s)^{-\alpha} \nabla f (\bfY_{n_t \gua + s}) \rmd s  \\ -
        \gua^{1/2} \int_{0}^{t -n_t \gua} ((n_t+1) \gua +s)^{-\alpha}
        \Sigma^{1/2}(\bfY_{n_t \gua + s}) \rmd \bfB_s^{n_t+1}  \eqsp ,
      \end{multline}
      where $n_t = \floor{t / \gua}$. Denote $\mu_k$ the distribution of $((\bfB_t^j)_{t \in \ccint{0, \gua}}, Z_j)_{j \in \{0, \dots, k\}}$.
  \end{assumptionH}

  We denote $\pi^B \in \Pens(\rmc_{\gua}) = \Pens(\rmc(\ccint{0, \gua}, \rset^d))$ the
  distribution of the Brownian motion up to time $\gua$. For any $k \in \nset$
  we denote $\mse^k = (\rmc_{\gua} \times \msz)^k$ and $\mse =
  \mse^1$. Similarly, we denote $\pi^Z \in \Pens(\msz)$ the distribution of $Z$. For any
  $k \in \nset$, let $F: \ \rset^d \times \rmc_{\gua}$ such that for any
  $x \in \rset^d$ and $\pi^B$-almost every $\rmw \in \rmc_{\gua}$ we have
  \begin{equation}
    F_{k}(x, \rmw) = \nabla f(x) + \gua^{-1/2} \Sigma^{1/2}(x) \int_{0}^{\gua}  \rmd \rmw_s \eqsp . 
  \end{equation}
  Since $x \mapsto F(x, \rmw)$ is continuous for $\pi^B$-almost every
  $\rmw \in \rmc_{\gua}$ by \Cref{assum:f_lip} and \Cref{assum:lip_sigma}, and
  for any $x \in \rset^d$, $\rmw \mapsto F(x, \rmw)$ is measurable, we get that
  $F$ is measurable using \cite[Lemma 4.51]{charalambos2006infinite}. In
  addition, using \Cref{assum:f_lip}, \Cref{assum:lip_sigma} and \cite[Theorem
  10.4]{rogers2000diffusions}, for any $k \in \nset$, there exists a measurable
  mapping $\tilde{S}_{k+1}: \ \rset^d \times \rmc_{\gua} \to \rset^d$ such that
  for any Brownian motion $(\bfB_t)_{t \in \ccint{0, \gua}}$,
  $\tilde{S}_{k+1}(\tilde{x}, (\bfB_t)_{t \in \ccint{0,\gua}}) = \bfY_{\gua}^k$,
  where $(\bfY_{t}^k)_{t \in \ccint{0, \gua}}$ is the unique strong solution to
  the following SDE: for any $t \in \ccint{0, \gua}$
  \begin{equation}
    \bfY_t^k = \tilde{x} - \int_0^{t} ((k+1) \gua + s)^{-\alpha} \nabla f(\bfY_s^k) \rmd s - \gua^{1/2} \int_0^{t}((k+1) \gua +s)^{-\alpha} \Sigma^{1/2}(\bfY_s^k) \rmd \bfB_s \eqsp . 
  \end{equation}
  In addition, let $\tilde{S}_{0}: \ \rset^d \times \rmc_{\gua} \to \rset^d$
  such that for any $\rmw \in \rmc_{\gua}$,
  $\tilde{S}_{0}(\tilde{x}, \rmw) = \tilde{x}$.  For any $k \in \nset$, denote
  $S_{k} : \ \mse^{k+1} \to \rset^d$ such that for any
  $\{(\rmw_t^j)_{t \in \ccint{0, \gua}}, Z_j\}_{j=0}^{k} \in \mse^{k}$, we have
  \begin{multline}
    S_{k}(\{(\rmw_t^j)_{t \in \ccint{0, \gua}}, Z_j\}_{j=1}^{k}) \\=
    \tilde{S}_{k}(\tilde{S}_{k-1}(\dots(\tilde{S}_1(\tilde{S}_0(\bfX_0,
    (\rmw_t^0)_{t \in \ccint{0, \gua}}), (\rmw_t^1)_{t \in \ccint{0, \gua}}))
    \dots, (\rmw_t^{k-1})_{t \in \ccint{0, \gua}}), (\rmw_t^{k})_{t \in
      \ccint{0, \gua}})) \eqsp .
  \end{multline}
  Finally, for any $k \in \nset$, let
  $\Fens_{k}: \ \mse^{k+1} \times \rmc_{\gua} \to \rset^d$ and
  $\Hens_{k}: \ \mse^{k+1} \times \msz \to \rset^d$ such that for any $k \in \nset$,
  $\{(\rmw_t^j)_{t \in \ccint{0, \gua}}, Z_j\}_{j=0}^k \in \mse^{k+1}$,
  $(\rmw_t)_{t \in \ccint{0, \gua}} \in \rmc_{\gua}$ and $z \in \msz$
  \begin{align}
    \Fens_{k}(\{(\rmw_t^j)_{t \in \ccint{0, \gua}}, Z_j\}_{j=0}^{k}, (\rmw_t)_{t \in \ccint{0, \gua}} ) &= F(S_{k}(\{(\rmw_t^j)_{t \in \ccint{0, \gua}}, Z_j\}_{j=0}^k), (\rmw_t)_{t \in \ccint{0, \gua}}) \eqsp , \\
    \Hens_{k}(\{(\rmw_t^j)_{t \in \ccint{0, \gua}}, Z_j\}_{j=0}^{k}, z ) &= H(S_{k}(\{(\rmw_t^j)_{t \in \ccint{0, \gua}}, Z_j\}_{j=0}^k), z) \eqsp .
  \end{align}
  Note that for any $k \in \nset$, $\Fens_k$ and $\Hens_k$ are
  measurable. For any $k \in \nset$, let
  $\Qker_k: \ \mse \times \mcb{\rset^d} \to \ccint{0,1}$, the Markov kernel
  given for any $u \in \mse^{k+1}$ and $\msa \in \mcb{\rset^d \times \rset^d}$ by
  \begin{equation}
    \Qker_k(u, \msa) = \mathtt{Opt}(\Fens_k(u, \cdot)_{\#} \pi^B, \Hens_k(u, \cdot)_{\#} \pi^Z)(\msa)  \eqsp ,
  \end{equation}
  where for all $u \in \mse^{k+1}$,
  $\mathtt{Opt}(\Fens_k(u, \cdot)_{\#} \pi^B, \Hens_k(u, \cdot)_{\#} \pi^Z)$ is
  the optimal transference plan between $\Fens_k(u, \cdot)_{\#} \pi^B$ and
  $\Hens_k(u, \cdot)_{\#} \pi^Z$ w.r.t. to the $\wassersteinD[2]$, which exists
  by \cite[Theorem 4.1]{villani2009optimal}.  Note that for any $k \in \nset$,
  $\Qker_k$ is well-defined since
  $u \mapsto \mathtt{Opt}(\Fens_k(u, \cdot)_{\#} \pi^B, \Hens_k(u, \cdot)_{\#}
  \pi^Z)$ is measurable, \cite[Corollary 5.22]{villani2009optimal}.

  We divide the rest of the proof into two parts. First, we show by recursion
  that for any $k \in \nset$ the assertion \Cref{assum:recu}($k$) is
  true. Second, we show that we can construct a Brownian motion from the random
  variables introduced in \Cref{assum:recu}($k$) for any $k \in \nset$ such that
  the proposition holds.
  \begin{enumerate}[wide, labelwidth=!, labelindent=0pt, label=(\alph*)]
  \item We start by proving that \Cref{assum:recu}($0$) holds. Let
    $\mu_0 \in \Pens(\rmc_{\gua} \times \msz)$ be any coupling between
    $\pi^B$ and $\pi^Z$. Let
    $\eta^0 \in \Pens(\mse \times \rmc_{\gua} \times \rset^d \times \mse)$,
    $\eta^1 \in \Pens(\rset^d \times \mse \times \rset^d)$ and
    $\eta^2 \in \Pens(\mse \times \rset^d \times \mse \times \msz)$ such
    that
  \begin{equation}
    \eta^0 = (\Id, (\Fens_0, \Pi_1))_{\#} (\mu_0 \otimes \pi^B) \eqsp , \qquad \eta^2 = ((\Pi_1, \Hens_0), \Id)_{\#} (\mu_0 \otimes \pi^Z) \eqsp ,
  \end{equation}
  where $\Pi_1$ is the projection on the first variable. In addition, let
  $\eta^1 = \mu_0 \otimes \Qker_0$, \ie, for any $\msa \in \mcb{E}$ and $\msb_1, \msb_2 \in \mcb{\rset^d}$ we have
  \begin{equation}
    \eta^1(\msb_1 \times \msa \times \msb_2) = \int_{\msa} \Qker_0(x, \msb_1 \times \msb_2) \rmd \mu_0(x) \eqsp .  
  \end{equation}
  Note that $\eta^0_1 = (\Fens_0, \Id)_\# (\mu_0 \otimes \pi^B) =
  \eta^1_{12}$. Therefore, using the gluing lemma \cite[Lemma
  5.3.2]{ambrosio2008gradient} (which is valid since $\mse$, $\rmc_{\gua}$ and
  $\msz$ are Polish spaces), there exists a probability measure
  $\tilde{\eta}\in \Pens(\mse \times \rmc_{\gua} \times \rset^d \times \mse
  \times \rset^d)$ such that $\tilde{\eta}_{1234} = \eta^0$ and
  $\tilde{\eta}_{345} = \eta^1$. In addition note that
  $\tilde{\eta}_{45} = (\Id, \Hens_0)_{\#}(\mu_0 \otimes \pi^Z) =
  \eta^2_{12}$. Therefore, using the gluing lemma, there exists a probability
  measure
  $\eta \in \Pens(\mse \times \rmc_{\gua} \times \rset^d \times \mse \times
  \rset^d \times \msz \times \mse)$ such that $\eta_{12345} = \tilde{\eta}$ and
  $\eta_{4567} = \eta^1$. In particular, $\eta^{1234} = \eta^0$ and
  $\eta^{4567} = \eta^1$.  Let $(U_i)_{i \in \{1, \dots, 7\}}$ be a random
  variable with distribution $\eta$. Then, using that $\eta^{1234} = \eta^0$ and
  $\eta^{4567} = \eta^1$, we have almost surely
  \begin{equation}
    U_3 = \Fens_0(U_1, U_2) \eqsp , \qquad U_4 = U_1 \eqsp, \qquad U_4 = U_7 \eqsp , \qquad U_5 = \Hens_0(U_7, U_6) \eqsp .
  \end{equation}
  Therefore, we get that
  \begin{equation}
    (U_1, \dots, U_7) = (U_1, U_2, \Fens_0(U_1, U_2), U_1, \Hens_0(U_1, U_6), U_6, U_1) \eqsp .
  \end{equation}
  Since $U_2$ is independent from $U_1$ and $U_6$ is independent from $U_7$, we
  get that $U_6$ is independent from $U_1$.  Hence, there exists
  $\mu_1 \in \Pens(\mse \times \rmc_{\gua} \times \msz)$ such that
  \begin{equation}
    \eta = (\Pi_1, \Pi_2, \Fens_0(\Pi_1, \Pi_2), \Pi_1, \Hens_0(\Pi_1, \Pi_3), \Pi_3, \Pi_1)_{\#} \mu_1 \eqsp . 
  \end{equation}
  Let
  $((\bfB_t^0)_{t \in \ccint{0, \gua}}, Z_0, (\bfB_t^1)_{t \in \ccint{0, \gua}},
  Z_1)$ be a random variable with distribution $\mu_1$. Then
  $(\bfB_t^1)_{t \in \ccint{0, \gua}}$ and $Z_1$ are independent from
  $((\bfB_t^0)_{t \in \ccint{0, \gua}}, Z_0)$,
  $(\bfB_t^1)_{t \in \ccint{0, \gua}}$ has distribution $\pi^B$ and $Z_1$ has
  distribution $\pi^Z$. Hence, $(\bfB_t^1)_{t \in \ccint{0, \gua}}$ and $Z_1$
  are independent from $\mch_0$. Finally, we show that \eqref{eq:coupling} holds.
  Denote by $\Rker_0$ the Markov kernel
  given for any $u \in \mse$ and $\msa \in \mcb{\rset^d \times \rset^d}$ by
  \begin{equation}
    \Rker_0(u, \msa) = \int_{\rset^d} (\Fens_0(u, \Pi_2(\cdot)), \Hens_0(u, \Pi_3(\cdot))))_{\#} \mu_1  \rmd \mu_0(u) \eqsp ,
  \end{equation}
  Note that $\mu_1 = \mu_0 \otimes \Rker_0$. But by definition of $\mu_1$ we
  also have that $\mu_1 = \mu_0 \otimes \Qker_0$. Therefore, $\mu_0$ almost
  surely we have $\Rker_0(x, \cdot) = \Qker_0(x, \cdot)$, which concludes the
  proof of \eqref{eq:coupling}. Therefore \Cref{assum:recu}($0$) holds. Assume
  that \Cref{assum:recu}($k$) is true with $k \in \nset$. Then
  \Cref{assum:recu}($k+1$) holds. The proof is similar to the one for
  \Cref{assum:recu}($0$) upon replacing $\mu_0$ by $\mu_k$, $\Fens_0$ by
  $\Fens_k$, $\Hens_0$ by $\Hens_k$ and $\Qker_0$ by $\Qker_k$. We conclude by
  recursion.
\item Finally, it remains to define a Brownian motion $(\bfB_t)_{t \geq 0}$ such
  that for any $k \in \nset$, $\mck_k =\mch_k$ and
  $(\bfX_t)_{t \geq 0} = (\bfY_t)_{t \geq 0}$. For any $t \geq 0$, let
  $n_t = \floor{t / \gua}$ and $(\bfB_t)_{t \geq 0}$ such that for any
  $t \geq 0$
  \begin{equation}
    \bfB_t = \bfB^{n_t}_{(t - n_t)\gua} + \sum_{k=0}^{n_t - 1} \bfB_{\gua}^k \eqsp .
  \end{equation}
  Since $((\bfB_t^k)_{t \in \ccint{0, \gua}})_{k \in \nset}$ is a sequence of
  independent Brownian motion, we get that $(\bfB_t)_{t \geq 0}$ is a Brownian
  motion. In addition, there exists a measurable bijection mapping
  $(\bfB_t)_{t \geq 0}$ to $((\bfB_t^k)_{t \in \ccint{0, \gua}})_{k \in \nset}$
  and therefore for any $k \in \nset$, $\mck_k =\mch_k$. Finally, we have that
  $(\bfY_t)_{t \geq 0}$ solution to \eqref{eq:solution} is a solution to
  \eqref{eq:sde} with initial condition $\bfY_0 = \bfX_0$, \ie,
  $(\bfY_t)_{t \geq 0} = (\bfX_t)_{t \geq 0}$ which concludes the proof.
\end{enumerate}

\end{proof}

\subsection{Moment bounds and one-step approximation}
\label{sec:moment-bounds}

The following result is well-known in the field of \SDE \ but its proof is given
for completeness. For any $t \geq 0$ and $k \in \nset$, denote
$\mcf_t = \sigma(\ensembleLigne{\bfX_s}{s \in \ccint{0, t}})$ and
$\mcg_k = \sigma(\ensembleLigne{Z_j}{j \in \{0, \dots, k\}})$. We derive
classical moment bounds in \Cref{lemma:bound_moments}. This bounds are then used
in \Cref{lemma:mean_square_one} in order to provide one-step approximations. The
proof of these lemmas is based on the repeated application of the Gr\"{o}nwall's
lemma (both discrete and continuous) and It\^{o}'s formula.

\begin{lemma_colt}
  \label{lemma:bound_moments}
  Let $ p \in \nset$, $\bgamma >0$ and $\alpha \in \coint{0,1}$.  Assume
  \tup{\Cref{assum:f_lip}}, \tup{\Cref{assum:grad_sto}} and
  \rref{assum:lip_sigma}. Then for any $T \geq 0$, there exists $\ctun \geq 0$,
  such that for any $s \geq 0$ and $t \in \ccint{s, s + T}$,
  $\gamma \in \ocint{0, \bgamma}$, we have
  \begin{equation}
    \CPELigne{1 + \| \bfX_t \|^{2p}}{\mcf_s} \leq
\ctun (1+ \norm{\bfX_s}^{2p}) \eqsp ,
\end{equation}
where $(\bfX_t)_{t \geq 0}$ is the solution of \eqref{eq:sde}.  In addition, if
there exists $x^\star$ such that
$\int_{\msz} \norm{H(x^\star, z)}^{2p} \rmd \pi^Z(z) < +\infty$,
then 
for any $T \geq 0$, there exists $\cttun \geq 0$, such that for any $k_0 \geq 0$, $\gamma \in \ocint{0, \bgamma}$ and $k \in \{k_0, \dots, k_0 + N\}$ with $N \gua \leq T$, we have
  \begin{equation}
    \CPE{1 + \| X_k \|^{2p}}{\mcg_{k_0}} \leq
\cttun (1+ \norm{X_{k_0}}^{2p}) \eqsp ,
\end{equation}
where $(X_k)_{k \in \nset}$ satisfies the recursion \eqref{eq:sgd}.
\end{lemma_colt}

\begin{proof}
  We prove the result under \tup{\Cref{assum:grad_sto}-\ref{item:ml}}. The proof
  under \tup{\Cref{assum:grad_sto}-\ref{item:approx_sto}} is similar and left to
  the reader.  Let $p \in \nset$, $\alpha \in \coint{0,1}$,
  $s, T \in \coint{0, +\infty}$, $t \in \ccint{s, s +T}$, and
  $g_p \in \rmc^2(\rset^{\dim}, \coint{0, +\infty})$ such that for any
  $x \in \rset^{\dim}$, $g_p(x) = 1 + \norm{x}^{2p}$. Let $\bgamma>0$ and
  $\gamma \in \ocint{0,\bgamma}$.

We divide the proof into two parts.
  \begin{enumerate}[wide, labelwidth=!, labelindent=0pt, label=(\alph*)]
  \item Let $(\bfX_t)_{t \geq 0}$ be a solution to \eqref{eq:sde}. We have for any $x \in \rset^{\dim}$
\begin{equation}
  \label{eq:derivee_p}
  \nabla g_p(x) = 2p \norm{x}^{2(p-1)}x \eqsp , \qquad \nabla^2 g_p(x) = 4p(p-1) \norm{x}^{2(p-2)} x x^{\transpose} + 2p \norm{x}^{2(p-1)} \Id \eqsp .
\end{equation}
Let $n \in \nset$, and set $\tau_n = \inf \ensembleLigne{u \geq 0}{g_p(\bfX_u) > n}$. Applying Itô's lemma and using \eqref{eq:sde} and \eqref{eq:derivee_p} we get
\begin{multline}
  \CPE{g_p(\bfX_{t \wedge \tau_n})}{\mcf_s} - \CPE{g_p(\bfX_{s \wedge \tau_n})}{\mcf_s} \\ =  \CPE{\int_{s \wedge \tau_n}^{t \wedge \tau_n} -(\gua + u)^{-\alpha} \langle \nabla f(\bfX_u), \nabla g_p(\bfX_u) \rangle}{\mcf_s} \rmd u  \\ + (\gua /2) \CPE{ \int_{s \wedge \tau_n}^{t \wedge \tau_n} (\gua + u)^{-2\alpha} \langle \Sigma(\bfX_u), \nabla^2 g_p(\bfX_u) \rangle \rmd u }{\mcf_s} \eqsp .                                                                                                                                                  \label{eq:ito_approx}                                                                   
\end{multline}
Using \Cref{assum:f_lip}, \eqref{eq:derivee_p} and the Cauchy-Schwarz inequality we get that for any $u \in \ccintLigne{s, s + T}$
\begin{align}
  |\langle \nabla f(\bfX_u), \nabla g_p(\bfX_u) \rangle| &\leq 2p \| \bfX_u \|^{2(p-1)}\defEns{|\langle \nabla f(\bfX_u) - \nabla f(0) , \bfX_u\rangle| + \| \nabla f(0) \| \| \bfX_u \|} \\
                                                         &\leq 2p(\Lip + \| \nabla f(0) \|) g_p(\bfX_u) \eqsp .
  \label{eq:majo_un}                                                           
\end{align}
In addition, using \Cref{assum:f_lip}, \Cref{lemma:bound_f_sig},
\eqref{eq:derivee_p} and the Cauchy-Schwarz inequality we get that for any
$u \in \ccintLigne{s, s + T}$
\begin{align}
  \label{eq:majo_deux}
  \abs{\langle \Sigma(\bfX_u), \nabla^2 g_p(\bfX_u) \rangle} &\leq C (1 + \norm{\bfX_u}^2) \norm{\nabla^2 g_p(\bfX_u)} \\
                                                             &\leq C (1+\norm{\bfX_u}^2) (8p(p-1)d + 2p d)\norm{\bfX_u}^{2(p-1)} \\
  &\leq 4Cdp(4(p-1) + 1) g_p(\bfX_u) \eqsp .
\end{align}
Combining \eqref{eq:majo_un} and \eqref{eq:majo_deux} in \eqref{eq:ito_approx} we get for large enough $n \in \nset$
\begin{align}
  &\CPE{g_p(\bfX_{t \wedge \tau_n})}{\mcf_s} - g_p(\bfX_s) \\
  & \quad \leq  2p (\Lip + \| \nabla f(0) \|)  \CPE{\int_{s}^{t \wedge \tau_n} g_p(\bfX_u)  \rmd u}{\mcf_s}  + \bgua p(2p - 1)  \CPE{ \int_{s}^{t\wedge \tau_n}  g_p(\bfX_u)  \rmd u }{\mcf_s} \\
  & \quad \leq \defEns{2p (\Lip + \| \nabla f(0)\|) + 4\gua Cdp(4(p-1) + 1)} \int_{s}^{t}  \CPE{g_p(\bfX_{\wedge \tau_n})}{\mcf_s}  \rmd u \eqsp .
\end{align}
Using Grönwall's lemma we obtain
\begin{equation}
\CPE{g_p(\bfX_{t \wedge \tau_n})}{\mcf_s} \leq g_p(\bfX_s) \exp \parentheseDeux{ T \defEns{2p (\Lip + \| \nabla f(0) \|) + 4\gua Cdp(4(p-1) + 1)} } \eqsp .
 \end{equation}
 We conclude upon using Fatou's lemma and remarking that $\lim_n \tau_n = +\infty$, since $\bfX_t$ is well-defined for any $t \geq 0$.
\item Let $(X_k)_{k \in \nset}$ be a sequence which satisfies the recursion
  \eqref{eq:sgd}.  Let $A_k = X_k \gamma (k+1)^{-\alpha} $ and
  $B_k = - \gamma (k+1)^{-\alpha} \{\nabla f(X_k) - H(X_k, Z_{k+1})\}$.  We
  have, using Cauchy-Schwarz inequality and the binomial formula,
  \begin{align}
    \label{eq:first_bound}
    \norm{X_{k+1}}^{2p} &= \norm{ A_k + B_k}^{2p} = \defEns{\norm{A_k}^2 + 2\langle A_k, B_k \rangle + \norm{B_k}^2}^p \\
                        &\leq \sum_{i=0}^p \sum_{j=0}^i {p \choose i} {i \choose j} \norm{A_k}^{2(p-i)+j} \norm{B_k}^{2 i- j} \times 2^j \\
    &\leq \norm{A_k}^{2p} + 2^p \sum_{i=1}^p \sum_{j=0}^i {p \choose i} {i \choose j} \norm{A_k}^{2(p-i)+j} \norm{B_k}^{2 i- j} \eqsp .
  \end{align}
Using \Cref{assum:f_lip}, there exists $\cttun^{(a)}, \cttun^{(b)}, \cttun^{(c)} \geq 0$ such that for any $\ell \in \{0, \dots, 2p\}$
\begin{align}
  \norm{A_k}^{\ell}  &\leq \sum_{m=0}^{\ell} {\ell \choose m} (1 + \gamma (k+1)^{-\alpha}\Lip)^m \norm{X_k}^m \parenthese{\gamma (k+1)^{-\alpha} \norm{\nabla f(0) }}^{\ell -m} \\
                    &\leq (1 + \gamma (k+1)^{-\alpha} \cttun^{(a)} ) \norm{X_k}^{\ell} + \gamma (k+1)^{-\alpha} \cttun^{(b)} (1 + \norm{X_k}^{2p}) \\
                    &\leq (1 + \gamma (k+1)^{-\alpha} \cttun^{(c)}) (1 + \norm{X_k}^{2p}) \eqsp .
\end{align}
  In addition, we have that for any $\ell \in \{1, \dots, p\}$, $x \in \rset^d$
  and $z \in \msz$,
\begin{equation}
  \norm{H(x,z)}^{\ell} \leq (\norm{H(x, z)} + \Lip \norm{x})^{2\ell} \leq 2^{2\ell-1} \norm{H(x,z)}^{2\ell} + 2^{2\ell-1} \Lip^{2\ell} \norm{x}^{2\ell} \eqsp . 
\end{equation}
Therefore, there exists $\eta_{\ell} > 0$ such that for any $\ell \in \nset$,
$\CPE{\|B_k\|^{2\ell}}{\mcg_{k_0}} \leq \gamma^{2\ell} (k+1)^{-2\alpha \ell}
\eta_{\ell} (1 + \norm{X}^{2\ell})$.  Combining this result,
\eqref{eq:first_bound}, Jensen's inequality and that f we have
\begin{align}
  & \CPE{\|X_{k+1}\|^{2p}}{\mcg_{k}}  \leq  \norm{X_k}^{2p} + \gamma (k+1)^{-\alpha} (\cttun^{(a)} + \cttun^{(b)}) (1 + \norm{X_k}^{2p}) \\ & \qquad + 2^{p+1} (1 + \gamma (k+1)^{-\alpha} \cttun^{(c)}) (1 + \norm{X_k}^{2p}) \sum_{i=1}^p \sum_{j=0}^i {p \choose i} {i \choose j} \eta_{2i-j}^{1/2} \gamma^{2i - j}(k+1)^{-\alpha(2i-j)} \eqsp .
\end{align}
Therefore, there exists $\cttun^{(d)} \geq 0$ such that
\begin{equation}
  \CPE{1 + \norm{X_{k+1}}^{2p}}{\mcg_{k_0}} \leq (1 + \cttun^{(d)} \gamma (k+1)^{-\alpha}) \CPE{1 + \norm{X_{k}}^{2p}}{\mcg_{k_0}} + \cttun^{(d)} \gamma (k+1)^{-\alpha} \eqsp .
\end{equation}
We conclude combining this result, \Cref{lemma:gronwall} and \Cref{lemma:sum}.
 \end{enumerate}
\end{proof}

\begin{lemma_colt}
  \label{lemma:mean_square_one}
  Let $p \in \nset$, $\bgamma >0$ and $\alpha \in \coint{0,1}$. Assume
  \tup{\Cref{assum:f_lip}}, \tup{\Cref{assum:grad_sto}-\ref{item:ml}},
  \rref{assum:lip_sigma} and that there exists $x^\star$ such that
  $\int_{\msz} \norm{H(x^\star, z)}^{2p} \rmd \pi^Z(z) < +\infty$. Then for any
  $T \geq 0$, there exists $\ctdeux \geq 0$ such that for any
  $\gamma \in \ocint{0, \bgamma}$, $k \in \nset$ with $(k+1)\gua \leq T$,
  $t \in \ccintLigne{k \gua, (k+1)\gua}$, we have
  \begin{align}
    &\CPE{\| X_{k+1} - X_k\|^{2p}}{\mcg_k} \leq \ctdeux (k+1)^{-2\alpha p}\gamma^{2p} (1 + \norm{X_k}^{2p}) \eqsp , \\
    &\CPE{\| \bfX_{t} - \bfX_{k \gua} \|^{2p}}{\mcf_{k \gua}} \leq \ctdeux (k+1)^{-2\alpha p}\gamma^{2p} (1 + \norm{\bfX_{k \gua}}^{2p} \eqsp .
  \end{align}
  where $(X_k)_{k \in \nset}$ satisfies the recursion and  $(\bfX_t)_{t \geq 0}$ is the solution of \eqref{eq:sde}.
\end{lemma_colt}
\begin{proof}
  Let $p \in \nset$, $\alpha \in \coint{0,1}$, $\bgamma > 0$,
  $\gamma \in \ocint{0, \bgamma}$, $k \in \nset$,
  $t \in \ccintLigne{k\gua, (k+1)\gua}$. We divide the rest of the proof into
  two parts.
  \begin{enumerate}[wide, labelwidth=!, labelindent=0pt, label=(\alph*)]
  \item
    Let $(\bfX_s)_{s \geq 0}$ be a solution to \eqref{eq:sde}.
Using \Cref{assum:f_lip}, \Cref{assum:grad_sto}, Jensen's inequality, Burkholder-Davis-Gundy's inequality \citep[Theorem 42.1]{rogers2000diffusions} and \Cref{lemma:bound_moments} there exists $B_p \geq 0$ such that
\begin{align}
  \label{eq:numero_duo}
  & \CPE{\norm{\bfX_{t} - \bfX_{k \gua}}^{2p}}{\mcf_{k \gua}}  \\ & \leq 2^{2p -1} \CPE{\norm{\int_{k \gua}^{t}(\gua + s)^{-\alpha} \nabla f(\bfX_s) \rmd s}^{2p}}{\mcf_{k \gua}}
                                          \\  & \qquad + 2^{2p  -1} \gua^p \CPE{\norm{\int_{k \gua}^{t}(\gua + s)^{-\alpha} \Sigma(\bfX_s)^{1/2} \rmd \bfB_s}^{2p}}{\mcf_{k \gua}}  \\
  & \leq 2^{2p - 1} \gua^{2 p - 1} \int_{k \gua}^{t}(\gua + s)^{-2\alpha p} \CPE{\norm{\nabla f(\bfX_s)}^{2p}}{\mcf_{k \gua}}\rmd s  \\
  & \qquad + B_p 2^{2p - 1} \gua^{p } \parenthese{\int_{k \gua}^{t} (\gua + s)^{-2 \alpha } \CPE{\trace(\Sigma(\bfX_s))}{\mcf_{k \gua}} \rmd s}{\mcf_{k \gua}}^p \\
  & \leq 2^{2p -1}  \gua^{2p-1 - 2\alpha p} (k+1)^{-2\alpha p}(B_p+1)  \left\lbrace \int_{k \gua}^{t} \CPE{\norm{\nabla f(\bfX_s)}^{2p}}{\mcf_{k \gua}}\rmd s  \right. \\
  & \qquad \left. + \int_{k \gua}^{t} \CPE{\trace(\Sigma(\bfX_s))}{\mcf_{k \gua}}^p \rmd s \right\rbrace \\
  &\leq 2^{4p} (1+\Lip^{2p}) \gamma^{2p} \gua\pinv (k+1)^{-2\alpha p}  (B_p+1)  \int_{k \gua}^{t} C^{2p}\parenthese{1 + \CPE{\norm{\bfX_s}^{2p}}{\mcf_{k \gua}}} \rmd s    \\
  &\leq  2^{4p} (1+ \Lip^{2p})  \gamma^{2p} (k+1)^{-2\alpha p}  (B_p+1) \parenthese{1 +  \sup_{s \in \ccint{k\gua, t}}\CPE{\norm{\bfX_s}^{2p}}{\mcf_{k \gua}}} \\
  &\leq 2^{4p} (1+\Lip^{2p})  \gamma^{2p} (k+1)^{-2 \alpha p}(B_p+1)  \parenthese{1 +  \ctun} g_p(\bfX_{k \gua}) \eqsp .
\end{align}
\item Let $(X_n)_{n \in \nset}$ which satisfies the recursion
  \eqref{eq:sgd}. Using \Cref{assum:f_lip} and \Cref{assum:grad_sto}-\ref{item:ml} we get that
  \begin{align}
    \label{eq:inter_duo}
    &\CPE{\| X_{k+1} - X_k \|^{2p}}{\mcg_k} \\
    &\qquad= \CPE{\norm{-\gamma(k+1)^{-\alpha}(H(X_k, Z_{k+1}) - H(x^\star, Z_{k+1}) + H(x^\star, Z_{k+1}))}^{2p}}{\mcg_k} \\
    &\qquad \leq 2^{2p} \gamma^{2p} (k+1)^{-2\alpha p} \parenthese{\Lip^{2p} \norm{X_k - x^\star}^{2p} + \CPE{\norm{H(x^\star, Z_{k+1})}^{2p}}{\mcg_k}} \\
    &\qquad \leq 2^{4p} \gamma^{2p} (k+1)^{-2\alpha p} \parenthese{\Lip^{2p} \norm{X_k}^{2p} + \Lip^{2p}\norm{x^\star}^{2p}+ \CPE{\norm{H(x^\star, Z_{k+1})}^{2p}}{\mcg_k}} \eqsp ,
  \end{align}
  which concludes the proof.
\end{enumerate}
\end{proof}

\subsection{Mean-square approximation}
\label{sec:mean-square-appr}

In this section, we introduce an auxiliary process $(\bfXd_t)_{t \geq 0}$. This
process is a continuous interpolation of the discrete-time process
$(\bar{X}_k)_{k \in \nset}$ such that for any $k \in \nset$,
\begin{equation}
  \bar{X}_{k+1} = \bar{X}_k - \gamma(1+k)^{-\alpha} \defEns{\nabla f(\bar{X}_k) + \gua^{1/2} \Sigma^{1/2}(\bar{X}_k) G_{k+1}} \eqsp ,
\end{equation}
where $(G_k)_{k \in \nset}$ is a sequence of \iid \ Gaussian random variables
with zero mean. For any $x \in \rset^d$ and $k \in \nset$,
$\nabla f(x) + \gua^{1/2} \Sigma^{1/2}(x) G_{k+1}$ is a Gaussian approximation
of the true noise term $H(x, Z_{k+1})$. Using \Cref{thm:coupling}, $G_{k+1}$ and
$Z_{k+1}$ will be coupled in order to minimize the distance between the two
discrete-time processes.

We now introduce the continuous-time process $(\bfXd_t)_{t \geq 0}$. Consider
the stochastic process $(\bfXd_t)_{t \geq 0}$ defined by $\bfXd_0 = X_0$ and
solution of the following \SDE
\begin{equation}
  \label{eq:sde_disc}
  \rmd \bfXd_t = -\gua^{-1}\sum_{k=0}^{+\infty} \1_{\coint{k\gua, (k+1)\gua}}(t) (1 + k)^{-\alpha} \gamma \defEns{\nabla f(\bfXd_{k \gua}) \rmd t +  \gua^{1/2}\Sigma(\bfXd_{k \gua})^{1/2 }\rmd \bfB_t} \eqsp .
\end{equation}
Note that for any $k \in \nset$, we have
\begin{equation}
  \bfXd_{(k+1)\gua} = \bfXd_{k \gua} - \gamma (k+1)^{-\alpha} \defEns{\nabla f(\bfXd_{k \gua}) + \Sigma(\bfXd_{k \gua})^{1/2} G_k} \eqsp ,
\end{equation}
with $G_k = \gua^{-1/2} \intk \rmd \bfB_s$. Hence, for any $k \in \nset$,
$\bfXd_{k \gua}$ has the same distribution as $X_k$ given by \eqref{eq:sgd} with
$H(x,z) = \nabla f(x) + \Sigma(x)^{1/2}z$,
$(\msz, \mcz) = (\rset^{\dim}, \mcb{\rset^{\dim}})$ and $\muz$ the Gaussian
probability distribution with zero mean and covariance matrix identity. In our
proof, we will not consider this process but a similar version whose initial
point is given by the continuous-time process, see for instance. However, we
found that introducing $(\bfXd_t)_{t \geq 0}$ and its discrete-time counterpart
provides intuition for our derivation.

In \Cref{lemma:sgd_diff}, we bound the one-step difference between the
continuous-time auxiliary process $(\bfXd_t)_{t \geq 0}$ and the discrete-time
process $(X_k)_{k \in \nset}$. In \Cref{lemma:mean_square_diff}, we bound the
one-step difference between the continuous-time auxiliary process
$(\bfXd_t)_{t \geq 0}$ and the continuous-time process $(\bfX_t)_{t \geq 0}$. We
combine these estimates in \Cref{prop:mean_square_diff_fin}. We conclude by
proving \Cref{prop:strong_approx_appendix} which is a restatement of
\Cref{prop:strong_approx}.

Recall that $n_T = \floorLigne{T / \gua}$. In what follows, we denote
\begin{equation}
  \label{eq:vareps_def}
  \vareps^2 = \sup_{k \in \{0, \dots, n_T\}} \expe{\wassersteinD[2]^2(\nu_k^{\rmd}(\bfX_{k \gua}), \nu_k^{\rmcc}(\bfX_{k \gua}))} \eqsp , 
\end{equation}
where for any $\tilde{x} \in \rset^d$, $\nu_k^{\rmd}(\tilde{x})$ is
  the distribution of $H(\tilde{x}, Z_{n+1})$, $\nu_k^{\rmcc}(\tilde{x})$ is the
  distribution of
  $\nabla f(\tilde{x}) + \gua^{-1/2} \Sigma^{1/2}(\tilde{x}) \int_{k \gua}^{(k+1)
    \gua}(\gua + s)^{-\alpha} \rmd \bfB_s$.

\begin{lemma_colt}
  \label{lemma:sgd_diff}
  Assume \rref{assum:f_lip} and \rref{assum:lip_sigma}. Let $\bgamma >0$ and
  $\alpha \in \coint{0,1}$.  Then for any $T \geq 0$, there exists
  $\cttrois \geq 0$ such that for any $\gamma \in \ocint{0, \bgamma}$,
  $k \in \nset$ with $(k+1)\gua \leq T$ and $X_0 \in \rset^{\dim}$ we have
  \begin{equation}
    \expe{\| \bfXt_{(k+1)\gua}^k - \tilde{X}_{k+1} \|^{2}} \leq \cttrois \gamma^{2} (k+1)^{-2\alpha} \vareps^2  \eqsp ,
  \end{equation}  
  where for any $k \in \nset$ and $t \in \ccint{k \gua, (k+1) \gua}$
  \begin{align}
    &\tilde{X}_{k+1} = \bfX_{k \gua} - \gamma (k+1)^{-\alpha} H(\bfX_{k \gua}, Z_{k+1}) \eqsp , \\
    &\bfXt_{t} = \bfX_{k \gua} -  \gua^{-1}\gamma(1+k)^{-\alpha} \defEns{(t - k \gua) \nabla f(\bfX_{k \gua}) + \gua^{1/2} \Sigma^{1/2}(\bfX_{k \gua}) \int_{k \gua}^t \rmd \bfB_s}   \eqsp ,
  \end{align}
  We recall that for any $\tilde{x} \in \rset^d$, $\nu_k^{\rmd}(\tilde{x})$ is
  the distribution of $H(\tilde{x}, Z_{n+1})$, $\nu_k^{\rmcc}(\tilde{x})$ is the
  distribution of
  $\nabla f(\tilde{x}) + \gua^{-1/2} \Sigma^{1/2}(\tilde{x}) \int_{k \gua}^{(k+1)
    \gua}(\gua + s)^{-\alpha} \rmd \bfB_s$.
\end{lemma_colt}
\begin{proof}
  Let $\alpha \in \coint{0,1}$, $\bgamma > 0$, $\gamma \in \ocint{0, \bgamma}$,
  $k \in \nset$, $t \in \ccintLigne{k\gua, (k+1)\gua}$ and
  $X_0 \in \rset^{\dim}$. Using \Cref{thm:coupling} we
  have
  \begin{align}
    &\expe{\| \bfXd_{(k+1)\gua} - X_{k+1} \|^{2}} \\ & \qquad = \gamma^{2}(k+1)^{-2\alpha } \expe{\norm{\nabla f(\bfX_{k \gua}) + \Sigma^{1/2}(X_0)G_k - H(\bfX_{k \gua}, Z_k)}^{2}} \\
    & \qquad \leq 2\gamma^{2}(k+1)^{-2\alpha } \expe{\wassersteinD[2]^2(\nu_k^{\rmd}(\bfX_{k \gua}), \nu_k^{\rmcc}(\bfX_{k \gua}))}  \eqsp .
  \end{align}
which concludes the proof upon using \eqref{eq:vareps_def}.
\end{proof}

\begin{lemma_colt}
  \label{lemma:mean_square_diff}
  Let $\bgamma >0$ and $\alpha \in \coint{0,1}$. Assume
  \tup{\Cref{assum:f_lip}} and
  \tup{\Cref{assum:lip_sigma}}.  Then for any $T \geq 0$, there exists
  $\ctquatre \geq 0$ such that for any $\gamma \in \ocint{0, \bgamma}$,
  $k \in \nset$ with $(k+1)\gua \leq T$ and $X_0 \in \rset^{\dim}$ we have
  \begin{equation}
    \CPE{\| \bfX_{(k+1)\gua} - \bfXt_{(k+1)\gua} \|^{2}}{\mcf_{k \gua}} \leq \ctquatre \defEns{\gamma^{4}(k+1)^{-4\alpha } + \gamma^{2 } (k+1)^{-2(1+\alpha)}} (1 + \norm{\bfX_{k \gua}}^{2}) \eqsp ,
  \end{equation}
  where $(\bfX_t)_{t \geq 0}$ be the solution of \eqref{eq:sde} and for any
  $t \in \ccint{k \gua, (k+1) \gua}$ we have 
  \begin{equation}
    \label{eq:bfxt}
    \bfXt_{t} = \bfX_{k \gua} -  \gua^{-1}\gamma(1+k)^{-\alpha} \defEns{(t - k \gua) \nabla f(\bfX_{k \gua}) + \gua^{1/2} \Sigma^{1/2}(\bfX_{k \gua}) \int_{k \gua}^t \rmd \bfB_s}   \eqsp .
  \end{equation}
\end{lemma_colt}

\begin{proof}
  Let $\alpha \in \coint{0,1}$, $\bgamma > 0$, $\gamma \in \ocint{0, \bgamma}$,
  $k \in \nset$ and $t \in \ccintLigne{k\gua, (k+1)\gua}$.  Let
  $(\bfX_t)_{t \geq 0}$ is the solution of \eqref{eq:sde} and
  $(\bfXt_t)_{t \in \ccint{k \gua, (k+1) \gua}}$ given by \eqref{eq:bfxt}. Using
  Jensen's inequality and that $\gua \gamma^{-1} = \gua^{\alpha}$ we have
  \begin{align}
    \label{eq:strong_uno}
    &\CPE{\norm{\bfX_{(k+1)\gua} - \bfXt_{(k+1)\gua}}^{2}}{\mcf_{k\gua}} \\ 
    & \eqsp \leq \mathbb{E}\left[ \left\|-\intk (\gua + s)^{-\alpha} \nabla f (\bfX_s) \rmd s - \gua^{1/2} \intk (\gua + s)^{-\alpha}\Sigma(\bfX_s)^{1/2} \rmd \bfB_s \right. \right. \\ 
    & \left. \left. \qquad \qquad \left. + \gamma(k+1)^{-\alpha} \nabla f (\bfX_{k \gua}) + \gamma \gua^{-1/2} (k+1)^{-\alpha} \Sigma(\bfX_{k \gua})^{1/2}\intk \rmd \bfB_s \right\|^{2} \middle| \mcf_{k \gua} \right. \right] \\
    & \eqsp \leq 2\CPE{ \norm{-\gua^{-\alpha}\intk (1 + \gua^{-1}s)^{-\alpha} \nabla f (\bfX_s) \rmd s + \gamma(k+1)^{-\alpha} \nabla f (\bfX_{k \gua})}^{2}}{\mcf_{k \gua}} \\ 
    & \eqsp \qquad  +  2\mathbb{E}\left[ \left\|- \gua^{1/2 - \alpha} \intk (1 + \gua^{-1}s)^{-\alpha} \Sigma(\bfX_s)^{1/2} \rmd \bfB_s  \right. \right. \\ 
    &\eqsp \qquad  \qquad \left. \left. \left. + \gamma \gua^{-1/2} (k+1)^{-\alpha} \Sigma(\bfX_{k \gua})^{1/2}\intk \rmd \bfB_s \right\|^{2} \middle| \mcf_{k \gua} \right.  \right] \\
    & \eqsp \leq 2\gua^{-2\alpha}\CPE{ \norm{\intk \defEns{(k+1)^{-\alpha} \nabla f (\bfX_{k \gua})-(1 + \gua^{-1}s)^{-\alpha} \nabla f (\bfX_s)}\rmd s }^{2}}{\mcf_{k \gua}} \\ & \quad  +  2\gua^{1-2\alpha }\CPE{\norm{ \intk \defEns{ (k+1)^{-\alpha}\Sigma(\bfX_{k \gua})^{1/2} - (1 + \gua^{-1}s)^{-\alpha}\Sigma(\bfX_s)^{1/2}} \rmd \bfB_s }^{2} }{\mcf_{k \gua}} \eqsp .
  \end{align}
  We now treat each term separately.

Using Jensen's inequality, Itô isometry, Fubini-Tonelli's theorem, \Cref{assum:f_lip}, \Cref{assum:lip_sigma} and \Cref{lemma:mean_square_one} we have

\begin{align}
  \label{eq:strong_trio}
  &\CPE{\norm{ \intk \defEns{ (k+1)^{-\alpha}\Sigma(\bfX_{k \gua})^{1/2} - (1 + \gua^{-1}s)^{-\alpha}\Sigma(\bfX_s)^{1/2}} \rmd \bfB_s }^{2}}{\mcf_{k \gua}}  \\
  & \qquad \leq 2 \left[(k+1)^{-2\alpha }\abs{\intk \CPE{\| \Sigma(\bfX_{k \gua})^{1/2} - \Sigma(\bfX_s)^{1/2}\|^{2}}{\mcf_{k \gua}} \rmd s } \right. \\ 
  &\qquad \left.+ \eta \abs{\intk \{ (k+1)^{-\alpha} - (1+\gua^{-1}s)\}^2 \rmd s } \right] \\
  & \qquad \leq 2 \gua \parentheseDeux{(k+1)^{-2\alpha }\Mtt^2 \sup_{s \in [k\gua, (k+1)\gua]} \CPE{\| \bfX_s - \bfX_{k \gua} \|^{2}}{\mcf_{k \gua}} + \eta \alpha^{2} (k+1)^{-2(1+\alpha)} } \\
  & \qquad \leq 2\gua \parentheseDeux{(k+1)^{-4\alpha }\Mtt^2 \ctdeux \gamma^{2} + \eta \alpha^{2} (k+1)^{-2(1+\alpha)} } (1+\|\bfX_{k \gua}\|^{2}) \eqsp .
\end{align}
  
Using Jensen's inequality, Fubini-Tonelli's theorem, the fact that for any $u >0$, $u^{-\alpha} -(u+1)^{-\alpha} \leq \alpha u^{-(\alpha +1)}$, \Cref{assum:f_lip} and \Cref{lemma:mean_square_one} we get that
  \begin{align}
    \label{eq:strong_duo}
    &\CPE{ \norm{\intk \defEns{(k+1)^{-\alpha} \nabla f (\bfX_{k \gua})-(1 + \gua^{-1}s)^{-\alpha} \nabla f (\bfX_s)}\rmd s }^2}{\mcf_{k \gua}}
      \\ & \qquad \leq \gua^{2} \sup_{s \in [k\gua, (k+1)\gua]} \defEns{\expe{\norm{(k+1)^{-\alpha} \nabla f (\bfX_{k \gua})-(1 + \gua^{-1}s)^{-\alpha} \nabla f (\bfX_s)}^2}{\mcf_{k \gua}}\rmd s} \\
    & \qquad \leq 2\gua^{2} \sup_{s \in [k\gua, (k+1)\gua]} \left\lbrace\| \nabla f(\bfX_{k \gua}) \|^{2} |(k+1)^{-\alpha} - (1+\gua^{-1}s)^{-\alpha}|^{2} \right. \\
    &\qquad \qquad \left.+ (1+\gua s^{-1})^{-2\alpha }\expe{\| \nabla f(\bfX_s) - \nabla f(\bfX_{k \gua}) \|^{2}}{\mcf_{k \gua}}\right\rbrace \\
    & \qquad \leq 2\gua^{2} \left( \alpha^{2} \| \nabla f(\bfX_{k \gua}) \|^{2}(k+1)^{-2(1+\alpha)} \right. \\
    & \qquad \qquad \left. + (k+1)^{-2\alpha } \Lip^{2}\sup_{s \in [k\gua, (k+1)\gua]} \CPE{\| \bfX_s - \bfX_{k \gua} \|^{2}}{\mcf_{k \gua}}\right) \\
    & \qquad  \leq 2\gua^{2} \parentheseDeux{\alpha^{2} \| \nabla f(\bfX_{k \gua}) \|^{2}(k+1)^{-2(1+\alpha)} + (k+1)^{-4\alpha } \Lip^{2} \ctdeux \gamma^{2} (1+\norm{\bfX_{k \gua}}^{2})} \\
    & \qquad  \leq 2\gua^{2} \parentheseDeux{\alpha^{2} (\| \nabla f(0)\|^{2} + \Lip^{2}) (k+1)^{-2(1+\alpha)} + (k+1)^{-4\alpha } \Lip^{2} \ctdeux \gamma^{2} } (1+\norm{\bfX_{k \gua}}^{2}) \eqsp .
  \end{align}

Combining \eqref{eq:strong_uno}, \eqref{eq:strong_duo} and \eqref{eq:strong_trio} concludes the proof upon setting
  \begin{equation}
    \ctquatre = 4  \parentheseDeux{\Mtt^2 \ctdeux  + \eta \alpha^{2} + \alpha^{2} (\| \nabla f(0)\|^{2} + \Lip^{2}) + \Lip^{2} \ctdeux  } \eqsp .
  \end{equation}

\end{proof}

\begin{proposition_colt}
  \label{prop:mean_square_diff_fin}
  Let $\bgamma >0$ and $\alpha \in \coint{0,1}$. Assume
  \tup{\Cref{assum:f_lip}}, \tup{\Cref{assum:grad_sto}-\ref{item:ml}} and
  \tup{\Cref{assum:lip_sigma}}.  Then for any $T \geq 0$, there exists
  $\ctcinq \geq 0$ such that for any $\gamma \in \ocint{0, \bgamma}$,
  $k \in \nset$ with $(k+1)\gua \leq T$ and $X_0 \in \rset^{\dim}$ we have
  \begin{equation}
    \CPE{\| \bfX_{(k+1)\gua} - \tilde{X}_{k+1} \|^{2}}{\mcf_{k \gua}} \leq \ctcinq \defEns{\gamma^{4}(k+1)^{-4\alpha } + \gamma^{2}(k+1)^{-2\alpha} \vareps^2} (1 + \norm{\bfX_{k \gua}}^{2}) \eqsp ,
  \end{equation}
  where $(\bfX_t)_{t \geq 0}$ is the solution of \eqref{eq:sde} and for any $k \in \nset$
  \begin{equation}
    \tilde{X}_{k+1} = \bfX_{k \gua} - \gamma (k+1)^{-\alpha} H(\bfX_{k \gua}, Z_{k+1}) \eqsp .
  \end{equation}

\end{proposition_colt}

\begin{proof}
  The proof is straightforward upon combining \Cref{lemma:sgd_diff} and \Cref{lemma:mean_square_diff}.
\end{proof}

\begin{proposition_colt}
  \label{prop:strong_approx_appendix}
  Let $\bgamma >0$, $\alpha \in \coint{0,1}$ and
  $\gamma \in \ocint{0, \bgamma}$. Assume \tup{\Cref{assum:f_lip}},
  \tup{\Cref{assum:grad_sto}-\ref{item:ml}} and \tup{\Cref{assum:lip_sigma}}.
  Then there exists a coupling $((\bfB_t)_{t \geq 0}, (Z_n)_{n \in \nset})$.
  such that the following hold:
  \begin{enumerate}[wide, labelwidth=!, labelindent=0pt, label=(\alph*)]
  \item $(Z_{n})_{n \in \nset}$ is a sequence of independent random
    variables such that for any $n \in \nset$, $Z_n$ is distributed according to $\pi^Z$.
  \item For any
        $T \geq 0$, there exists $C \geq 0$ such that for any
        $\gamma \in \ocint{0, \bgamma}$, $n \in \nset$ with
        $\gua = \gamma^{1/(1-\alpha)}$, $n\gua \leq T$ we have
  \begin{equation}
    \label{eq:strong_approx}
      \expesq{\| \bfX_{n\gua} - X_{n} \|^{2}} \leq C (\gamma^{\delta} \vareps + \gamma) (1 + \log(\gamma^{-1}))  \eqsp , \quad \text{with $\delta = \min(1, (2-2\alpha)^{-1})$,}
    \end{equation}
    where $(\bfX_{t})_{t \geq 0}$ is solution of \eqref{eq:sde}, 
    $(X_n)_{n \in \nset}$ is defined by \eqref{eq:sgd} with
    $\bfX_0 = X_0 \in \rset^d$ and
    \begin{equation}
  \label{eq:vareps_def}
  \vareps^2 = \sup_{k \in \{0, \dots, n_T\}} \expe{\wassersteinD[2]^2(\nu_k^{\rmd}(\bfX_{k \gua}), \nu_k^{\rmcc}(\bfX_{k \gua}))} \eqsp , 
\end{equation}
where for any $\tilde{x} \in \rset^d$, $\nu_k^{\rmd}(\tilde{x})$ is the
distribution of $H(\tilde{x}, Z_{n+1})$, $\nu_k^{\rmcc}(\tilde{x})$ is the
distribution of
$\nabla f(\tilde{x}) + \gua^{-1/2} \Sigma^{1/2}(\tilde{x}) \int_{k \gua}^{(k+1)
  \gua}(\gua + s)^{-\alpha} \rmd \bfB_s$.
  \end{enumerate}  
\end{proposition_colt}

\begin{proof}
  Let $\alpha \in \coint{0,1}$, $\bgamma > 0$, $\gamma \in \ocint{0, \bgamma}$,
  $k \in \nset$, and $X_0 \in \rset^{\dim}$.  The first part of the proof is a
  direct consequence of \Cref{thm:coupling}. We now turn to the second part of
  the proof.  Let $(E_k)_{k \in \nset}$ such that for any $k \in \nset$,
  $E_k = \expeLigne{\normLigne{\bfX_{k\gua} - X_k}^2}$. Note that $E_0= 0$. Let
  $\tilde{X}_{k+1} = \bfX_{k \gua} - \gamma(k+1)^{-\alpha} H(\bfX_{k \gua},
  Z_{k+1})$. We have
\begin{align}
  \label{eq:recurrence}
  E_{k+1} &= \expe{\norm{\bfX_{(k+1)\gua} - X_{k+1}}^2} \\
          &= \expe{\norm{\bfX_{(k+1)\gua} - \tilde{X}_{k+1} + \tilde{X}_{k+1} - X_{k+1}}^2} \\
          &= \expe{\norm{\bfX_{(k+1)\gua} - \tilde{X}_{k+1}}^2}   +
            2 \expe{\langle \bfX_{(k+1)\gua} - \tilde{X}_{k+1} , \tilde{X}_{k+1} - X_{k+1} \rangle } \\ &\qquad   + \expe{\norm{\tilde{X}_{k+1} - X_{k+1}}^2} \\
          &= \expe{\norm{\bfX_{(k+1)\gua} - \tilde{X}_{k+1}}^2}   +  \expe{\norm{\tilde{X}_{k+1} - X_{k+1}}^2}
  \\ &\qquad + 2 \expe{\langle \bfX_{(k+1)\gua} - \tilde{X}_{k+1} , \bfX_{k \gua} - X_k }
  \\ & \qquad + 2 \gamma(k+1)^{-\alpha} \expe{\langle \bfX_{(k+1)\gua} - \tilde{X}_{k+1}, H(X_k, Z_{k+1}) - H(\bfX_{k\gua}, Z_{k+1})\rangle}  \eqsp .
\end{align}
Let $a_k = \gamma^{4}(k+1)^{-4\alpha} + \gamma^{2} (k+1)^{-2\alpha}$ and $a_k^{\vareps} = \gamma^{4}(k+1)^{-4\alpha} + \vareps^2 \gamma^{2} (k+1)^{-2\alpha}$.
We now bound each of the four terms appearing in \eqref{eq:recurrence}
\begin{enumerate}[wide, labelwidth=!, labelindent=0pt, label=(\alph*)]
\item First, using \Cref{prop:mean_square_diff_fin} and \Cref{lemma:bound_moments} we have 
\begin{align}
  \label{eq:recu_un}
  &\expe{\norm{\bfX_{(k+1)\gua} - \tilde{X}_{k+1}}^2} = \expe{\CPE{\norm{\bfX_{(k+1)\gua} - \tilde{X}_{k+1}}^2}{\mcf_{k \gua}}} \\
  & \qquad \leq \expe{\ctcinq (\gamma^{4}(k+1)^{-4\alpha } + \vareps^2 \gamma^{2} (k+1)^{-2\alpha}) \parenthese{1 + \norm{\bfX_{k \gua}}^{2}}} \\
  & \qquad \leq \ctun \ctcinq (\gamma^{4}(k+1)^{-4\alpha} + \vareps^2 \gamma^{2} (k+1)^{-2\alpha}) \parenthese{1 + \norm{X_0}^{2}} \leq \ctsix^{(a)} a_k^\vareps \eqsp ,
\end{align}
with $\ctsix^{(a)} \geq 0$ which does not depend on $\gamma$ and $k$.
\item Second, using \Cref{assum:f_lip},
  \Cref{assum:grad_sto}-\ref{item:approx_sto} and that for any $a,b \geq 0$,
  $(a+b)^2 \leq 2a^2 + 2b^2$ we have
  \begin{align}
  \expe{\norm{\tilde{X}_{k+1} - X_{k+1}}^2} &= \expe{\norm{\bfX_{k \gua} - X_k - \gamma(k+1)^{-\alpha}(H(\bfX_{k \gua}, Z_{k+1}) - H(X_k, Z_{k+1})) }^2} \\
                                             &\leq (1 + \gamma \Lip (k+1)^{-\alpha})^2 \expeLigne{\normLigne{\bfX_{k\gua} - X_k}^2} 
  \\
                                             & \leq (1 + 2\gamma \Lip(k+1)^{-\alpha} + \gamma^2 \Lip^2(k+1)^{-2\alpha}) E_k 
  \leq (1 + \ctsix^{(b)} a_k^{1/2}) E_k 
  \eqsp ,     \label{eq:recu_deux}
\end{align}
with $\ctsix^{(b)} \geq 0$ which does not depend on $\gamma$ and $k$.
\item In what follows, let 
  $\bfXt_{(k+1)\gua} = \bfX_{k \gua} - \gamma (k+1)^{-\alpha} \defEns{\nabla
    f(\bfX_{k \gua}) + \Sigma(\bfX_{k \gua})^{1/2}G_k}$, with
  $G_k = \gua^{-1/2} \int_{k \gua}^{(k+1)\gua} \rmd \bfB_s$.  Let
  $b_k = \gamma^3(k+1)^{-3\alpha} + \gamma(k+1)^{-2(1 + \alpha/2)}$.
  
  \noindent Using
  \Cref{assum:grad_sto} we have
  $\CPELigne{\bfXt_{(k+1)\gua}}{\mck_k} =
  \CPELigne{\tilde{X}_{k+1}}{\mck_k}$.  Combining this result, the
  Cauchy-Schwarz inequality, \Cref{lemma:mean_square_diff},
  \Cref{lemma:bound_moments} and that for any $a, b \geq 0$,
  $(a+b)^{1/2} \leq a^{1/2} + b^{1/2}$ and $2ab \leq a^2 + b^2$ we obtain
  \begin{align}
    &\expe{\langle \bfX_{(k+1)\gua} - \tilde{X}_{k+1} , \bfX_{k \gua} - X_k \rangle} \\
    &\quad= \expe{\langle \CPELigne{\bfX_{(k+1)\gua} - \tilde{X}_{k+1}}{\mck_k} , \bfX_{k \gua} - X_k \rangle } \\
                                                                              & \quad = \expe{\langle \CPELigne{\bfX_{(k+1)\gua} - \bfXt_{(k+1)\gua}}{\mck_k} , \bfX_{k \gua} - X_k \rangle} \\
                                                                              & \quad \leq  \expe{ \CPEsq{\norm{\bfX_{(k+1)\gua} - \bfXt_{(k+1)\gua}}^2}{\mck_k}  \norm{\bfX_{k \gua} - X_k} } \\
                                                                              & \quad \leq  \expesq{\norm{\bfX_{(k+1)\gua} - \bfXt_{(k+1)\gua}}^2}  \expesq{\norm{\bfX_{k \gua} - X_k}^2} \\
    & \quad \leq \ctun^{1/2} \ctquatre^{1/2} \defEns{\gamma^{4}(k+1)^{-4\alpha } + \gamma^{2 } (k+1)^{-2(1+\alpha)}}^{1/2}
      (1+ \norm{X_0}^{2}) E_k^{1/2} \\
    & \quad \leq \ctun^{1/2} \ctquatre^{1/2} \defEns{\gamma^{3/2}(k+1)^{-3\alpha/2} + \gamma^{1/2} (k+1)^{-(1+\alpha/2)}}
      (1+ \norm{X_0}^{2}) \gamma^{1/2}(k+1)^{-\alpha/2}E_k^{1/2} \\
    & \quad \leq \ctsix^{(c)} \defEns{\gamma^{3}(k+1)^{-3\alpha} + \gamma (k+1)^{-2(1+\alpha/2)}} / 2
      +  a_k^{1/2}E_k/2 \leq \ctsix^{(c)} b_k 
      +  a_k^{1/2}E_k \eqsp .     \label{eq:recu_trois}
  \end{align}
with $\ctsix^{(c)} \geq 0$ which does not depend on $\gamma$ and $k$.

\item Finally, using the Cauchy-Schwarz inequality, \eqref{eq:recu_un}, \Cref{assum:grad_sto} and \Cref{assum:f_lip} and that for any $a, b \geq 0$, $(a+b)^{1/2} \leq a^{1/2} + b^{1/2}$, we have
  \begin{align}
    \label{eq:recu_quatre}
    &\gamma(k+1)^{-\alpha} \expe{\langle \bfX_{(k+1)\gua} - \tilde{X}_{k+1}, H(X_k, Z_{k+1}) - H(\bfX_{k\gua}, Z_{k+1})\rangle} \\ & \ \leq \gamma(k+1)^{-\alpha} \expesq{\norm{\bfX_{(k+1)\gua} - \tilde{X}_{k+1}}^2} \expesq{\norm{H(X_k, Z_{k+1}) - H(\bfX_{k\gua}, Z_{k+1})}^2} \\
    & \ \leq (\ctsix^{(a)})^{1/2} \gamma(k+1)^{-\alpha} a_k^{1/2} \Lip  E_k \leq \ctsix^{(d)}  a_k E_k \eqsp . 
  \end{align}
with $\ctsix^{(d)} \geq 0$ which does not depend on $\gamma$ and $k$.
\end{enumerate}
Let $\bctsix = \ctsix^{(a)} + \ctsix^{(b)} + \ctsix^{(c)} + \ctsix^{(d)}$.
Finally, we have using \eqref{eq:recu_un}, \eqref{eq:recu_deux},
\eqref{eq:recu_trois} and \eqref{eq:recu_quatre} in \eqref{eq:recurrence}
\begin{equation}
  \label{eq:pre_gronwall}
  E_{k+1} \leq \bctsix (a_k + a_k^{1/2}) E_k + \bctsix (a_k^\vareps + b_k) \eqsp .
\end{equation}
We denote $v_k = \bctsix (a_k^{1/2} + a_k)$ and
  $w_k = \bctsix (a_k^\vareps + b_k)$.
  Using \Cref{lemma:sum} and that $a_k^{1/2} \leq \gamma(k+1)^{-\alpha} + \gamma^2(k+1)^{-2\alpha}$, there exists $\ctsix^{(e)} \geq 0$ which does not depend on $\gamma$ and $k$ such that
  \begin{equation}
    \label{eq:bound_one}
     \sum_{k=0}^{N-1} v_k \leq \ctsix^{(e)} \eqsp .
   \end{equation}
   In addition, we have that for any $k \in \nset$,
   \begin{equation}
     v_k \leq \bctsix (\gamma^2(k+1)^{-2\alpha} \vareps^2 + \gamma^3(k+1)^{-3\alpha} + \gamma^4(k+1)^{-4\alpha} + \gamma(k+1)^{-2(1+ \alpha/2)}) \eqsp .
   \end{equation}
   Using that $\gamma \gua^{\alpha} = \gua$ and \Cref{lemma:sum} there exists $\ctsix^{(f)} \geq 0$ which does not depend on $\gamma$ and $k$ such that
\begin{equation}
  \label{eq:bound_two}
  \sum_{k=0}^{N-1} v_k \leq \left\lbrace
    \begin{aligned}
       &\ctsix^{(f)} \gamma^2 (1 + \log(\gamma^{-1})) & \text{if } \alpha \geq 1/2 \eqsp ,\\
       &\ctsix^{(f)} \gua \vareps^2  & \text{if } \alpha < 1/2 \eqsp . \\
    \end{aligned}
    \right.
  \end{equation}

  Using \eqref{eq:pre_gronwall} and \Cref{lemma:gronwall} we obtain that
\begin{align}
  E_k &\leq \sum_{k=0}^{N-1} w_k  + \exp \parentheseDeux{\sum_{k=0}^{N-1} v_k} \sum_{k=0}^{N-1}v_k w_k \\
  &\leq \sum_{k=0}^{N-1} w_k  + \exp \parentheseDeux{\sum_{k=0}^{N-1} v_k} \parenthese{\sum_{k=0}^{N-1}v_k}\parenthese{ \sum_{k=0}^{N-1} w_k} \eqsp .   \label{eq:gronwall}
\end{align}
Combining \eqref{eq:bound_one}, \eqref{eq:bound_two} and \eqref{eq:gronwall} concludes the first part of the proof.
\end{proof}

\subsection{The case of batch noise}
\label{sec:case-batch-noise}

In this section, we refine our results in the specific case of a batch noise. We
recall our main result in this setting in \Cref{coro:batch_noise_supp}. The
proof is based on quantitative bounds in the CLT w.r.t. to $\wassersteinD[2]$,
see \cite{bonis2020stein}. In \Cref{prop:grad_flow}, we show that contrary to
the SDE setting the gradient flow has an error of order at least
$\bigO(M^{-1})$.

\begin{corollary_colt}
  \label{coro:batch_noise_supp}
  Let $\bgamma >0$ and $\alpha \in \coint{0,1}$. Assume
  \tup{\Cref{assum:f_lip}}, \tup{\rref{assum:grad_sto}-\ref{item:ml}} and
  \tup{\Cref{assum:lip_sigma}} (with respect to $(\msy, \mcy, \pi)$). Let $H$ be
  given by \eqref{eq:H-def}.  Assume that there exists
  $x^\star \in \rset^d$, $C, p \geq 0$ such that for any $x \in \rset^d$ and
  $y \in \msy$
  \begin{equation}
    \textstyle{\int_{\msy} \normLigne{\nabla \tilde{f}(x^\star, y)}^4 \rmd \pi(y) <
  +\infty \eqsp, \quad \normLigne{\Sigma_f(x)^{-1/2}}\leq C (1 + \norm{x}^p) \eqsp .}
\end{equation}
Then, there exists a random variable
  $((\bfB_t)_{t \geq 0}, (Z_n)_{n \in \nset})$ such that for any $T \geq 0$,
  there exists $C \geq 0$ such that for any $\gamma \in \ocint{0, \bgamma}$,
  $n \in \nset$ with $n\gua \leq T$ $\gua = \gamma^{1/(1-\alpha)}$ we have
  \begin{equation}
      \expesq{\| \bfX_{n\gua} - X_{n} \|^{2}} \leq C (\gamma^{\delta}M^{-1}  + \gamma) (1 + \log(\gamma^{-1}))  \eqsp , \quad \text{with $\delta = \min(1, (2-2\alpha)^{-1})$ .}
    \end{equation}    
  \end{corollary_colt}

  \begin{proof}
    Let $\bgamma >0$, $\alpha \in \coint{0,1}$ and $M \in \nset$. Applying \Cref{prop:strong_approx}, there exists a random variable
  $((\bfB_t)_{t \geq 0}, (Z_n)_{n \in \nset})$ such that or any $T \geq 0$,
  there exists $C \geq 0$ such that for any $\gamma \in \ocint{0, \bgamma}$,
  $n \in \nset$ with $n\gua \leq T$ $\gua = \gamma^{1/(1-\alpha)}$ we have
  \begin{equation}
    \expesq{\| \bfX_{n\gua} - X_{n} \|^{2}} \leq C (\gamma^{\delta}\vareps  + \gamma) (1 + \log(\gamma^{-1}))  \eqsp , \quad \text{with $\delta = \min(1, (2-2\alpha)^{-1})$ ,}
  \end{equation}
    where $(\bfX_{t})_{t \geq 0}$ is solution of \eqref{eq:sde}, 
    $(X_n)_{n \in \nset}$ is defined by \eqref{eq:sgd} with
    $\bfX_0 = X_0 \in \rset^d$ and
    \begin{equation}
  \label{eq:vareps_def}
  \vareps^2 = \sup_{k \in \{0, \dots, n_T\}} \expe{\wassersteinD[2]^2(\nu^{\rmd}(\bfX_{k \gua}), \nu^{\rmcc}(\bfX_{k \gua}))} \eqsp , 
\end{equation}
where for any $\tilde{x} \in \rset^d$, $\nu^{\rmd}(\tilde{x})$ is the
distribution of $H(\tilde{x}, Z_{n+1})$ and $\nu^{\rmcc}(\tilde{x})$ is the
distribution of
$\nabla f(\tilde{x}) + \gua^{-1/2} \Sigma^{1/2}(\tilde{x}) \int_{k \gua}^{(k+1)
  \gua}(\gua + s)^{-\alpha} \rmd \bfB_s$. Our goal is now to control $\vareps$
in this specific setting. Let $x \in \rset^d$, $k \in \nset$ and
$(X_1^x, X_2^x)$ be an optimal coupling between $\nu_k^{\rmd}$ and
$\nu_k^{\rmcc}$. Note that $X_2^x$ is a Gaussian random variable with mean
$\nabla f(x)$ and covariance matrix $\Sigma(x)$, where
$\Sigma(x) = (1/M) \Sigma_f(x)$ with
$\Sigma_f = \pi[(\tilde{\nabla} f(x, \cdot) - \nabla f (x))(\tilde{\nabla}f (x,
\cdot) - \nabla f (x))^\top]$. In particular, we get that
\begin{equation}
  \label{eq:upper_wass}
  \wassersteinD[2]^2(\nu^{\rmd}(x), \nu^{\rmcc}(x)) \leq M^{-1} \normLigne{\Sigma_f^{1/2}(x)}^2 \wassersteinD[2]^2(\tilde{\nu}(x), \nu(x)) \eqsp ,
\end{equation}
where $\tilde{\nu}(x)$ is the distribution of
$M^{-1} \sum_{k=1}^M \Sigma_f(x)^{-1/2}\{ \nabla \tilde{f}(x, Y_i) - \nabla
f(x)\}$ with $\{Y_i\}_{i=1}^M$ distributed according to $\pi^{\otimes M}$ and
$\nu$ the distribution of a Gaussian random variable with zero mean and identity
covariance matrix. Denote
$\{Y_i\}_{i=1}^M = \{\Sigma_f(x)^{-1/2}(\nabla \tilde{f}(x, Y_i) - \nabla
f(x))\}_{i=1}^M$. The random variables $\{Y_i\}_{i=1}^M$ are \iid ,
$\expeLigne{Y_i} = 0$ and $\expeLigne{Y_i Y_i^\top} = \Id$. In addition, using
\Cref{assum:f_lip} and \Cref{assum:grad_sto}-\ref{item:ml} we have that
\begin{align}
  \norm{Y_1} &\leq 8 \normLigne{\Sigma_f(x)^{-1/2}}^4 (\norm{\nabla f(x, U_1)}^4 + \norm{\nabla f(x)}^4) \\
  &\leq 216  C^4 (1 + \norm{x}^p)^4 (\norm{\nabla f(x^\star, U_1)}^4 + \Lip^4 \norm{x}^4 + \Lip^4 \norm{x^\star}^4 + \norm{\nabla f(0)}^4 + \Lip^4 \norm{x}^4)\eqsp . 
\end{align}
Combining this result and the fact
$\int_{\msz} \normLigne{\nabla \tilde{f}(x^\star, y)}^4 \rmd \pi(y) <
+\infty$, there exists $q \in \nset$ and $C \geq 0$ such that
\begin{equation}
  \normLigne{\Sigma_f^{1/2}(x)}^2 \expeLigne{\norm{Y_1}^4} \leq C (1 + \norm{x}^{2q}) \eqsp . 
\end{equation}
Therefore combining \cite[Theorem 1]{bonis2020stein} and \eqref{eq:upper_wass}, there
exists $C \geq 0$ such that for any $x \in \rset^d$
\begin{equation}
  \wassersteinD[2]^2(\nu_k^{\rmd}(x), \nu_k^{\rmcc}(x)) \leq C M^{-1/2} (1 + \norm{x}^{2q}) \eqsp . 
\end{equation}
Using \Cref{lemma:bound_moments}, there exists $C \geq 0$ such that
$\vareps \leq C M^{-1}$, which concludes the proof.
  \end{proof}

  \begin{proposition_colt}
    \label{prop:grad_flow}
    Let $\alpha \in \coint{0, 1/2}$, $T \geq 0$ and $\bgamma > 0$ such that for
    any $\gamma \in \ocint{0, \bgamma}$,
    $(T - \gua)^{1 - 2\alpha} - \gua^{1- 2 \alpha} \geq (T/2)^{1 - 2\alpha}$.  Let
    $f = 0$ and $\tilde{f}: \ \rset^d \to \rset$ such that for any
    $x \in \rset^d$ and $z \in \rset^d$,
    $\tilde{f}(x, z) = \langle x, z \rangle$,
    $(\msz, \mcz) = ((\rset^d)^M, \mcb{\rset^d}^{\otimes M})$,
    $\pi^Z = \pi^{\otimes M}$ with $M \in \nset$, $\pi$ a Gaussian distribution
    with zero mean and identity covariance matrix. In this case, for any
    $\gamma \in \ocint{0, \bgamma}$ we have for $n = \floor{T / \gua}$
  \begin{equation}
    \label{eq:strong_approx_lower}
      \expesq{\| \bfX_{n\gua} - X_{n} \|^{2}} \geq  M^{-1/2} \gamma^{\delta} (1 - 2 \alpha)^{-1/2} (T/2)^{1/2 - \alpha} \eqsp , \quad \text{with $\delta = \min(1, (2-2\alpha)^{-1})$ ,}
    \end{equation}
    where $(\bfX_t)_{t \geq 0}$ is the solution of
    $\rmd \bfX_t = -\nabla f(\bfX_t) \rmd t$ and $(X_n)_{n \in \nset}$ is a
    solution of \eqref{eq:sgd}.
  \end{proposition_colt}

  \begin{proof}
    Note that for any $t \geq 0$, $\bfX_t = 0$.  In addition, for any
    $n \in \nset$, we have $X_n = (1/M) \gamma \sum_{k=0}^{n-1} (k+1)^{-\alpha} \sum_{m=1}^M Z^{k,m}$,
    where $\ensemble{Z^{k,m}}{k,m \in \nset}$ is a collection of independent
    Gaussian random variables with zero mean and identity covariance matrix.
    Therefore we get that
    \begin{align}
      \expe{\norm{X_n}^2} &= (1/M) \gamma^2 \sum_{k=0}^{n-1} (k+1)^{-2\alpha} \\
      &\geq (1/M) \gamma^2 \int_1^n t^{-2 \alpha} \rmd t \geq M^{-1} \gamma^2 \gua^{2 \alpha -1} (1 - 2 \alpha)^{-1/2} ((T - \gua)^{1 - 2\alpha} - \gua^{1 - 2\alpha}) \eqsp ,
    \end{align}
which concludes the proof.
  \end{proof}
  
\subsection{Weak approximation}
\label{sec:weak-approximation}

We also derive weak approximation estimates of order 1. Note that in the case
where $\alpha \geq 1/2$, these weak results are a direct consequence of
\Cref{prop:strong_approx}. Denote by $\gfun_{p, k}$ the set of $k$-times
continuously differentiable functions $g$ such that there exists $\Ktt \geq 0$
such that for any $x \in \rset^{\dim}$,
$\max(\normLigne{\nabla g(x)}, \dots, \normLigne{\nabla^kg(x)}) \leq \Ktt (1 +
\norm{x}^{p})$. We state our main result in \Cref{prop:weak_approx}.

\begin{proposition_colt}
  \label{prop:weak_approx}
  Let $\bgamma >0$, $\alpha \in \coint{0,1}$ and $p \in \nset$. Assume that
  $f \in \gfun_{p, 4}$, $\Sigma^{1/2} \in \gfun_{p, 3}$,
  \tup{\Cref{assum:f_lip}}, \tup{\Cref{assum:grad_sto}-\ref{item:ml}} and
  \tup{\Cref{assum:lip_sigma}}.  Let $g \in \gfun_{p,2}$. In addition, assume
  that for any $m \in \nset$ there exists $x^\star \in \rset^d$ such that
  $\int_{\msz} \norm{H(x^\star, z)}^{2m} \rmd \pi^Z(z) < +\infty$.  Then for any
  $T \geq 0$, there exists $C \geq 0$ such that for any
  $\gamma \in \ocint{0, \bgamma}$, $n \in \nset$ with
  $\gua = \gamma^{1/(1-\alpha)}$, $n\gua \leq T$ we have
  \begin{equation}
    \label{eq:weak_approx}
      |\expe{g(\bfX_{n\gua}) - g(X_n)}| \leq C \gamma (1 + \log(\gamma^{-1}))  \eqsp .
      \end{equation}
\end{proposition_colt}
These results extend \citep[Theorem 1.1 (a)]{li2017sme} to the
non-increasing stepsize case. Once again, the result obtained in
\Cref{prop:weak_approx} must be compared to similar weak error
controls for SDEs. For example, under appropriate conditions,
\citep{talay1990expansion} shows that the EM discretization
$ Y_{n+1} = Y_n + \gamma \rmb(Y_n) + \sqrt{\gamma} \upsigma(Y_n)
G_{n+1}$ is a weak approximation of order $1$ of
\eqref{eq:sde_homo_y}.

We now turn to the proof of \Cref{prop:weak_approx}. We start with a useful
technical lemma in \Cref{lemma:g_fun_ineq}. Then, before giving the proof of
\Cref{prop:weak_approx}, we highlight that the result is straightforward for
$\alpha \in \coint{1/2, 1}$ in \Cref{prop:weak_error}. We provide a one-step
approximation error bound in \Cref{prop:weak_one_step} and conclude in
\Cref{prop:weak_approx_appendix}.  We recall that $\gfun_p$ is the set of twice
continuously differentiable functions from $\rset^{\dim}$ to $\rset$ such that
for any $g \in \gfun_p$, there exists $\Ktt \geq 0$ such that for any
$x \in \rset^{\dim}$
\begin{equation}
  \label{eq:g_fun}
  \max\defEns{\norm{\nabla g(x)}, \norm{\nabla^2 g(x)}} \leq \Ktt (1 + \norm{x}^{p}) \eqsp ,
\end{equation}
with $p \in \nset$.

\begin{lemma_colt}
  \label{lemma:g_fun_ineq}
  Let $p \in \nset$, $g \in \gfun_p$ and let $\Ktt \geq 0$ as in \eqref{eq:g_fun}. Then, for any $x,y \in \rset^{\dim}$
  \begin{equation}
    |g(y)- g(x) - \langle \nabla g(x), y - x \rangle | \leq \Ktt (1 + \norm{x}^p + \norm{y}^p) \norm{x-y}^2 \eqsp .
  \end{equation}
\end{lemma_colt}

\begin{proof}
      Using that for any $x \mapsto \| x \|^p$ is convex, and Cauchy-Schwarz inequality  we get for any $x,y \in \rset^{\dim}$
\begin{align}
  |g(x) - g(y) - \langle \nabla g(x), y- x \rangle | &\leq \int_0^1 |\nabla^2 g(x + t(y-x)) (y-x)^{\otimes 2} | \rmd t \\
                                                     &\leq \norm{x-y}^2 \int_0^1 |\nabla^2 g(x + t(y-x)) (y-x)^{\otimes 2} | \rmd t \\
  &\leq \Ktt (1 + \norm{x}^p + \norm{y}^p) \norm{x-y}^2 \eqsp .
\end{align}
\end{proof}

\begin{proposition_colt}
  \label{prop:weak_error}
  Let $\bgamma >0$ and $\alpha \in \coint{1/2,1}$ and $p \in \nset$. Assume
  \tup{\Cref{assum:f_lip}}, \tup{\Cref{assum:grad_sto}-\ref{item:ml}} and
  \tup{\Cref{assum:lip_sigma}}.  In addition, assume
  that for any $m \in \nset$ there exists $x^\star \in \rset^d$ such that
  $\int_{\msz} \norm{H(x^\star, z)}^{2m} \rmd \pi^Z(z) < +\infty$. Then for any $T \geq 0$ and $g \in \gfun_p$, there exists
  $\ctsept \geq 0$ such that for any $\gamma \in \ocint{0, \bgamma}$,
  $k \in \nset$ with $k\gua \leq T$ and $X_0 \in \rset^{\dim}$ we have
    \begin{equation}
      \CPE{|g(\bfX_{k \gua}) - g(X_k)|}{\mcf_k} \leq \ctsept \gamma (1 + \log(\gamma^{-1}))  \eqsp ,
      \end{equation}
  where $(X_k)_{k \in \nset}$ satisfies the recursion \eqref{eq:sgd} and $(\bfX_t)_{t \geq 0}$ is the solution of \eqref{eq:sde} with $\bfX_{0} = X_0$
\end{proposition_colt}

\begin{proof}
  Let $p \in \nset$, $g \in \gfun_p$, $\alpha \in \coint{1/2,1}$, $\bgamma > 0$,
  $\gamma \in \ocint{0, \bgamma}$, $k \in \nset$, and $X_0 \in \rset^{\dim}$.
    Using that for any $x \mapsto \| x \|^p$ is convex, for any $x, y \in \rset^{\dim}$ we get
\begin{align}
  |g(x) - g(y)| &\leq \int_{0}^1 |\langle \nabla g(x + t(y-x)), y - x\rangle | \rmd t
                \leq \| y - x \| \int_0^1 \| \nabla g(x + t(y-x)) \| \rmd t  \\
  &\leq \| y - x \| \Ktt (1 + \norm{x}^{p} + \norm{y}^{p}) \eqsp .
\end{align}
  Combining this result, \Cref{prop:strong_approx_appendix},
  \Cref{lemma:bound_moments} and the Cauchy-Schwarz inequality we get that
\begin{equation}
  \expe{|g(\bfX_{k \gua}) - g(X_k)|} \leq \Ktt \ctsix \gamma (1 + \log(\gamma^{-1})) (\ctun + \cttun)^{1/2} (1 + \norm{X_0}^{2p})^{1/2} \eqsp ,
\end{equation}
which concludes the proof.
\end{proof}

\begin{proposition_colt}
  \label{prop:weak_one_step}
  Let $p \in \nset$ and $g \in \gfun_p$. Let $\bgamma >0$ and
  $\alpha \in \coint{0,1}$. Assume \tup{\Cref{assum:f_lip}},
  \tup{\Cref{assum:grad_sto}-\ref{item:ml}}, \tup{\Cref{assum:lip_sigma}} and that for any $m \in \nset$ there exists $x^\star \in \rset^d$ such that
  $\int_{\msz} \norm{H(x^\star, z)}^{2m} \rmd \pi^Z(z) < +\infty$.  Then for any $T \geq 0$, there exists $\cthuit \geq 0$ such
  that for any $\gamma \in \ocint{0, \bgamma}$, $k \in \nset$ with
  $(k+1)\gua \leq T$ and $X_0 \in \rset^{\dim}$ we have
  \begin{equation}
    \abs{\CPE{g(\bfX_{(k+1)\gua}) - g(X_{k+1})}{\mcg_k}} \leq \cthuit  \defEns{\gamma^2(k+1)^{-2\alpha} + \gamma (k+1)^{-(1+\alpha)}} (1 + \norm{\bfX_{k \gua}}^{p+2}) \eqsp ,
  \end{equation}
  where $(X_k)$ is the solution of \eqref{eq:sgd} and $\bfX_t$ is the solution of
  \begin{equation}
    \bfX_t = X_k - \int_{k \gua}^t (s + \gua)^{-\alpha} \nabla f (\bfX_s) \rmd s + \gua^{1/2} \int_{k \gua}^t(s + \gua)^{-\alpha} \Sigma(\bfX_s)^{1/2} \rmd \bfB_s \eqsp .
  \end{equation}
\end{proposition_colt}

\begin{proof}
  Let
  $\bfXd_{(k+1)\gua} = X_k - \gamma (k+1)^{-\alpha} \defEns{\nabla f(\bfX_{k
      \gua}) + \Sigma(X_k)^{1/2}G_k}$, with
  $G_k = \gua^{-1/2} \int_{k \gua}^{(k+1)\gua} \rmd \bfB_s$.  Using
  \Cref{assum:grad_sto} we have $\CPE{\bfXd_{(k+1)\gua}}{\mcg_k} = \CPE{X_{k+1}}{\mcg_k}$.
  Using \Cref{lemma:bound_moments}, \Cref{lemma:mean_square_one}, \Cref{lemma:mean_square_diff}, \Cref{lemma:g_fun_ineq} and the Cauchy-Schwarz inequality we have
  \begin{align}
    &\abs{\CPE{g(\bfX_{(k+1)\gua}) - g(X_{k+1})}{\mcg_k}} \\ & \quad \leq \abs{\CPE{\langle \nabla g(X_k), \bfX_{(k+1)\gua} - X_{k+1} \rangle}{\mcg_k}} \\
    & \quad \quad + \Ktt \CPE{\norm{\bfX_{(k+1)\gua} - X_k}^2(1 + \norm{X_k}^{p} + \norm{\bfX_{(k+1)\gua}}^{p})}{\mcg_{k}} \\
    & \quad \quad  + \Ktt \CPE{\norm{X_{k+1} - X_k}^2(1 + \norm{X_k}^{p} + \norm{X_{k+1}}^{p})}{\mcg_k} \\
    &\quad \leq \abs{\langle \nabla g(X_k), \CPE{\bfX_{(k+1)\gua} - \bfXd_{k+1} } \rangle}{\mcg_k} \\ & \quad \quad+ 3^{1/2}\Ktt\CPE{\norm{\bfX_{(k+1)\gua} - X_k}^4}{\mcg_k}^{1/2} \CPE{(1 + \norm{X_k}^{2p} + \norm{X_{k+1}}^{2p})}{\mcg_k}^{1/2}  \\ & \quad \quad + 3^{1/2}\Ktt\CPE{\norm{X_{k+1} - X_k}^4}{\mcg_k}^{1/2}\CPE{(1 + \norm{X_k}^{2p} + \norm{\bfX_{(k+1)\gua}}^{2p})}{\mcg_k}^{1/2} \\
    &\quad \leq \Ktt(1 + \norm{X_k}^p) \CPE{\norm{\bfX_{(k+1)\gua} - \bfXd_{k+1}}^2}{\mcg_k}^{1/2}  \\ & \quad \quad+ 3^{1/2}\Ktt\CPE{\norm{\bfX_{(k+1)\gua} - X_k}^4}{\mcg_k}^{1/2} (1 + \ctun)^{1/2}(1 + \norm{X_k}^{p})  \\ & \quad \quad + 3^{1/2}\Ktt\CPE{\norm{X_{k+1} - X_k}^4}{\mcg_k}^{1/2}(1 + \cttun)^{1/2}(1 + \norm{X_k}^{p}) \\
    &\quad \leq \Ktt(1 + \norm{X_k}^p) \ctquatre^{1/2} \defEns{\gamma^2 (k+1)^{-2\alpha} + \gamma(k+1)^{-(1+\alpha)}}(1 + \norm{X_k})  \\ & \quad \quad + 3^{1/2}\Ktt\ctdeux^{1/2}\gamma^2(k+1)^{-2\alpha}(1 + \norm{X_k}^2) (1 + \ctun)^{1/2}(1 + \norm{X_k}^{p})  \\ & \quad \quad + 3^{1/2}\Ktt\ctdeux^{1/2}\gamma^2(k+1)^{-2\alpha}(1 + \norm{X_k}^2)(1 + \cttun)^{1/2}(1 + \norm{X_k}^{p}) \eqsp ,
  \end{align}
  which concludes the proof.
\end{proof}

\begin{proposition_colt}
  \label{prop:weak_approx_appendix}
  Let $\bgamma >0$ and $\alpha \in \coint{0,1}$. Assume that
  $f \in \gfun_{p, 4}$, $\Sigma^{1/2} \in \gfun_{p, 3}$
  \tup{\Cref{assum:f_lip}}, \tup{\Cref{assum:grad_sto}-\ref{item:ml}} and
  \tup{\Cref{assum:lip_sigma}}.  Let $p \in \nset$ and $g \in \gfun_{p,2}$. In
  addition, assume that for any $m \in \nset$ there exists $x^\star \in \rset^d$
  such that $\int_{\msz} \norm{H(x^\star, z)}^{2m} \rmd \pi^Z(z) < +\infty$.
  Then for any $T \geq 0$, there exists $\ctneuf \geq 0$ such that for any
  $\gamma \in \ocint{0, \bgamma}$, $k \in \nset$ with $k\gua \leq T$ and
  $X_0 \in \rset^{\dim}$ we have
  \begin{equation}
    \label{eq:weak_approx_appendix}
      |\expe{g(\bfX_{k\gua}) - g(X_k)}| \leq \ctneuf \gamma (1 + \log(\gamma^{-1}))  \eqsp ,
      \end{equation}
\end{proposition_colt}
where $(X_k)_{k \in \nset}$ satisfies the recursion \eqref{eq:sgd} and $(\bfX_t)_{t \geq 0}$ is the solution of \eqref{eq:sde} with $\bfX_{0} = X_0$.

\begin{proof}
  For any $k \in \nset$ with $k \gua \leq T$, let $g_k(x) = \expe{g(\bfX_{k \gua})}$ with $\bfX_0 = x$.
  Since $f \in \gfun_{p, 4}$, $\Sigma^{1/2} \in \gfun_{p, 3}$ and $g \in \gfun_{p, 2}$ one can show, see \citep{blago1961some} or \citep[Proposition 2.1]{kunita1981decomposition}, that there exists $m \in \nset$ and $\Ktt \geq 0$ such that for any $k \in \nset$ $g_k \in \rmc^m(\rset^{\dim}, \rset)$ and
  \begin{equation}
    \max \defEns{\norm{g_k(x)}, \dots ,\norm{\nabla^m g_k(x)}} \leq \Ktt (1 + \norm{x}^p) \eqsp .
  \end{equation}
  Therefore, $g_k \in \gfun_{p, m}$ with constants uniform in $k \in \nset$.
  In addition, for any $k \in \nset$ with $k \gua \leq T$, let $h^{(1)}_{k}(x) = \expe{g_k(X_{k+1})}$ with $X_k = x$ and
  $h^{(2)}_k(x) = \expe{g_k(\bfX_{(k+1)\gua})}$ with $\bfX_{k \gua} = x$. Using \Cref{prop:weak_one_step} we have for any $k \in \nset$, $k \gua \leq T$
  \begin{equation}
   \abs{h_{k}^{(1)}(x) - h_k^{(2)}(x)} \leq \cthuit  \defEns{\gamma^2(k+1)^{-2\alpha} + \gamma (k+1)^{-(1+\alpha)}} (1 + \norm{x}^{m+2}) \eqsp .
 \end{equation}
Therefore, using \Cref{lemma:bound_moments} we have for any $k \in \nset$ with $k \gua \leq T$ and $j \leq k$,
\begin{equation}
  \label{eq:h_diff}
  \abs{\expe{h_{k-j-1}^{(1)}(X_j) - h_{k-j-1}^{(2)}(X_j)}} \leq \cttun \cthuit  \defEns{\gamma^2(k+1)^{-2\alpha} + \gamma (k+1)^{-(1+\alpha)}} (1 + \norm{X_0}^{m+2}) \eqsp .
\end{equation}

  Now, let $k \in \nset$ with $k \gua \leq T$ and consider the family
  $\ensembleLigne{(X_{\ell}^j)_{\ell \in \nset}}{j =0, \dots, N}$, defined by the following recursion: for any
  $j \in \{0, \dots, N\}$ $X_0^j = X_0$ and for any $\ell \in \nset$:
  \begin{enumerate}[wide, labelwidth=!, labelindent=0pt, label=(\alph*)]
  \item if $\ell \geq  j$,
    \begin{equation}
      X_{\ell+1}^j = X_{\ell}^j - \gamma (k+1)^{-\alpha} H(X_{\ell}^j, Z_{\ell+1}) \eqsp ,
    \end{equation}
  \item if $\ell < j$, $X_{\ell+1}^j = \bfX_{(\ell +1)\gua}^j$, where $\bfX_{\ell \gua}^j = X_{\ell}^j$ and for any $t \in \ccint{\ell \gua, (\ell +1)\gua}$ we have
    \begin{equation}
      \bfX_{t}^j = X_{\ell}^j - \int_{\ell \gua}^t (\gua + s)^{-\alpha} \nabla f(\bfX_s^j) \rmd s - \gua^{1/2} \int_{\ell \gua}^t (\gua + s)^{-\alpha} \Sigma^{1/2}(\bfX_s^j) \rmd \bfB_s \eqsp .
    \end{equation}
  \end{enumerate}
  We have
  \begin{equation}
    \label{eq:summation}
    \abs{\expe{g(\bfX_{k \gua}) - g(X_k)}} = \abs{\expe{g(X_k^k) - g(X_k^{0})}} = \sum_{j=0}^{k-1} \abs{\expe{g(X_k^{j+1}) - g(X_k^{j})}} \eqsp .
  \end{equation}
Using \eqref{eq:h_diff} we get
\begin{align}
  \label{eq:cond_exp}
  \abs{\expe{g(X_k^{j+1}) - g(X_k^{j})}} &= \abs{\expe{\CPE{g(X_k^j) - g(X_k^{j+1})}{X_k^j}}} \\
                                         &= \abs{\expe{h_{k-j-1}^{(1)}(X_j) - h_{k-j-1}^{(2)}(X_j)}} \\
                                         &\leq \cttun \cthuit  \defEns{\gamma^2(k+1)^{-2\alpha} + \gamma (k+1)^{-(1+\alpha)}} (1 + \norm{X_0}^{m+2}) \\
  &\leq \ctneuf^{(a)}\gamma^2(k+1)^{-2\alpha} + \gamma (k+1)^{-(1+\alpha)} \eqsp ,
\end{align}
with $\ctneuf^{(a)} \geq 0$ which does not depend on $k$ or $\gamma$
In addition, using \Cref{lemma:sum} there exists $\ctneuf^{(b)} \geq 0$ such that
\begin{equation}
  \sum_{k=0}^{N-1} \defEns{\gamma^2(k+1)^{-2\alpha} + \gamma (k+1)^{-(1+\alpha)}} \leq \ctneuf^{(b)} \gamma \eqsp .
\end{equation}
Combining these last two results concludes the proof.
\end{proof}

\section{Strongly-Convex case (under \Cref{assum:grad_sto}-\ref{item:approx_sto})}
\label{sec:strongly-convex_appendix}

In this section, we gather the proofs for the study of the long-time behavior of
SGD in the strongly convex case. Note that all of our proofs are derived under
\Cref{assum:grad_sto}-\ref{item:approx_sto}. We refer to \Cref{app:fz_strongly}
for similar results under \Cref{assum:grad_sto}-\ref{item:ml}. First, we start
by deriving and proving our main results in the strongly convex case both for
the continuous-time and the discrete-time dynamics in
\Cref{sec:conv-strongly-conv}. Then, we refine our study to explicit the
dependency of the constant w.r.t. to the parameters of the problem in
\Cref{sec:dependency-constant}. Finally, we show that our results can be extended
to cover the case where the strongly convex assumption is replaced by a weaker
Kurdyka-\LL condition, in \Cref{sec:conv-results-under}.

\subsection{Convergence results in the strongly convex case}
\label{sec:conv-strongly-conv}

First, we begin by deriving \Cref{thm:strong_continuous_f} which is a
consequence of \Cref{thm:strong_continuous} and provides convergence rates for
$(\expe{f(\bfX_t)}-\min_{\rset^d} f)_{t \geq 0}$. Then, we turn to the study of
the discrete-time setting. We start by giving the proof of
\Cref{lemma:ode_discrete}. The discrete analogous of
\Cref{thm:strong_continuous} is given in \Cref{thm:strongly_discrete}. Similarly
the discrete-time counterpart to \Cref{thm:strong_continuous_f} is given in
\Cref{thm:strongly_discrete_f}.

\begin{corollary_colt}
	\label{thm:strong_continuous_f}
	Let $\alpha, \gamma \in \ooint{0,1}$ and $(\bfX_t)_{t\geq0}$ be given by
        \eqref{eq:sde}.  Assume
        \rref{assum:f_lip},\tup{\rref{assum:grad_sto}-\ref{item:approx_sto}},
        \rref{assum:lip_sigma} and
        \tup{\rref{assum:strongly_cvx}-\ref{item:f_str_conv}}. Then there exists
        $C\geq 0$ such that for any $T>0$,
        $\expe{f(\bfX_T)}-\min_{\rset^d} f \leq C T^{-\alpha}$.
\end{corollary_colt}
\begin{proof}
  The proof is a direct consequence of \rref{assum:f_lip}, \citep[Lemma
  1.2.3]{nesterov2004introductory} and \Cref{thm:strong_continuous}.
\end{proof}
\begin{proof}[Proof of \Cref{lemma:ode_discrete}]
    Assume that there exists $n \in \bN$ such that $u_n>B$, and let
  $n_1= \inf \ensemble{n \geq 0 }{ u_n > B }$.  By definition of $B$ we have
  $n_1 \geq n_0+1$. Moreover we have $u_{n_1}-u_{n_1-1}\leq
  F(n_1-1,u_{n_1-1})$. Since $n_1-1 \geq n_0$ we get that
  $u_{n_1}-u_{n_1-1} \leq A_2$ and $u_{n_1-1} \geq u_{n_1}-A_2 \geq A_1$.
  Consequently, $F(n_1-1, u_{n_1-1})<0$ and $u_{n_1}<u_{n_1-1}$, which is a
  contradiction.
\end{proof}
We state a discrete analogous of \Cref{thm:strong_continuous}. Note that the
proof is considerably simpler than the one of~\citep{bachmoulines2011}.
\begin{theorem}
	\label{thm:strongly_discrete}
	Let $\gamma \in \ooint{0,1}$ and $\alpha \in \ocint{0,1}$. Let
        $(X_n)_{n\geq 0}$ be given by \eqref{eq:sgd}.
        Assume \tup{\rref{assum:grad_sto}-\ref{item:approx_sto}}
        and \tup{\rref{assum:strongly_cvx}-\ref{item:f_str_conv}}. Then there
        exists $\Cstrongdisc>0$ such that for all $N\geq 1$,
	\begin{equation}
		\expeLigne{\norm{X_N-x\st}^2} \leq \Cstrongdisc N^{-\alpha} \eqsp .
	\end{equation}
	In the case where $\alpha=1$ we have to assume additionally that $\gamma>1/(2\mu)$.
\end{theorem}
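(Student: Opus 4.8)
The argument mirrors the continuous-time proof of \Cref{thm:strong_continuous}, with a one-step expansion in place of Dynkin's formula and \Cref{lemma:ode_discrete} in place of \Cref{lemma:ode}. Write $\gamma_{n+1} = \gamma(n+1)^{-\alpha}$ and $e_n = \expeLigne{\normLigne{X_n - x\st}^2}$. Expanding the square in \eqref{eq:sgd}, taking conditional expectation with respect to $\mcg_n = \sigma(Z_1, \dots, Z_n)$ and using that $Z_{n+1}$ is independent of $\mcg_n$ with $\CPE{H(X_n, Z_{n+1})}{\mcg_n} = \nabla f(X_n)$ (by \Cref{assum:grad_sto}-\ref{item:approx_sto}), I would get
\[
\CPE{\normLigne{X\nn - x\st}^2}{\mcg_n} = \normLigne{X_n - x\st}^2 - 2\gamma_{n+1}\langle \nabla f(X_n), X_n - x\st \rangle + \gamma_{n+1}^2 \CPE{\normLigne{H(X_n, Z_{n+1})}^2}{\mcg_n} \eqsp .
\]
Then $\mu$-strong convexity (\Cref{assum:strongly_cvx}-\ref{item:f_str_conv}, with $\nabla f(x\st) = 0$) lower bounds the middle inner product by $\mu \normLigne{X_n - x\st}^2$, while orthogonality of the centered noise, $\trace(\Sigma(X_n)) \leq \eta$, and $\normLigne{\nabla f(X_n)} = \normLigne{\nabla f(X_n) - \nabla f(x\st)} \leq \Lip \normLigne{X_n - x\st}$ (\Cref{assum:f_lip}) give $\CPE{\normLigne{H(X_n, Z_{n+1})}^2}{\mcg_n} \leq \Lip^2 \normLigne{X_n - x\st}^2 + \eta$. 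Taking expectations yields, for every $n \geq 0$, $e\nn \leq (1 - 2\gamma_{n+1}\mu + \gamma_{n+1}^2\Lip^2) e_n + \gamma_{n+1}^2\eta$; in particular all $e_n$ are finite.

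Following the continuous proof I would then track the discrete energy $u_n = (n+1)^\alpha e_n$, the analogue of $t \mapsto (t+\gua)^\alpha \expeLigne{\normLigne{\bfX_t - x\st}^2}$. Using $(1+s)^\alpha \leq 1 + \alpha s$ for $s \geq 0$, $\alpha \in \ccint{0,1}$, the recursion above gives, for all $n$,
\[
u\nn - u_n \leq \left[ (1 + \alpha/(n+1))(1 - 2\gamma_{n+1}\mu + \gamma_{n+1}^2\Lip^2) - 1 \right] u_n + (n+2)^\alpha \gamma_{n+1}^2\eta \eqsp .
\]
Since $\gamma_{n+1} = \gamma(n+1)^{-\alpha}$ and $\gamma < 1$, the bracket is at most $C_0(n+1)^{-1} - 2\gamma\mu(n+1)^{-\alpha} + C_1(n+1)^{-2\alpha}$ for constants $C_0, C_1$ depending only on $\alpha$ and $\Lip$, while $(n+2)^\alpha \gamma_{n+1}^2\eta \leq 2^\alpha \gamma^2\eta (n+1)^{-\alpha}$ for all $n \geq 0$. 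When $\alpha \in \ocint{0,1}$ the term $-2\gamma\mu(n+1)^{-\alpha}$ has the slowest decay, hence dominates the bracket for $n$ large; when $\alpha = 1$ the bracket behaves like $(C_0 - 2\gamma\mu)(n+1)^{-1}$, which is eventually negative exactly under the extra hypothesis $\gamma > 1/(2\mu)$. In either case there exist $n_0 \in \nset$ and $c_1 > 0$ such that $u\nn - u_n \leq F(n, u_n)$ for all $n$, where $F(n,x) = (n+1)^{-\alpha}(-c_1 x + 2^\alpha\gamma^2\eta)$ for $n \geq n_0$ and $F(n,\cdot)$ is set equal to the finite constant $u\nn - u_n$ for $n < n_0$.

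The map $F$ is $\rmc^1$ in its second argument; for $n \geq n_0$ one has $F(n,x) < 0$ whenever $x > A_1 := 2^\alpha\gamma^2\eta/c_1$, and $F(n,x) \leq A_2 := 2^\alpha\gamma^2\eta$ for $x \geq 0$. Applying \Cref{lemma:ode_discrete} gives $u_n \leq B$ for all $n$ with $B = \max(\max_{n \leq n_0+1} u_n, A_1) + A_2$, so $e_N = (N+1)^{-\alpha} u_N \leq B(N+1)^{-\alpha} \leq B N^{-\alpha}$ for $N \geq 1$, which is the claim with $\Cstrongdisc = B$ (unrolling $B$ through the previous paragraph makes it explicit). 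The only delicate point is the middle step: choosing the weight $(n+1)^\alpha$ in the energy and locating $n_0$, and in particular seeing that the balance between the multiplicative decay $-2\gamma_{n+1}\mu$ and the growth of the weight $(n+1)^\alpha$ forces $\gamma > 1/(2\mu)$ precisely in the critical case $\alpha = 1$; the rest is routine one-step bookkeeping, entirely parallel to the continuous-time computation.
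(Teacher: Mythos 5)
Your proof is correct and follows essentially the same route as the paper's: the same one-step conditional-expectation expansion combined with strong convexity and the noise bound to obtain $e_{n+1} \leq (1 - 2\gamma(n+1)^{-\alpha}\mu + \gamma^2(n+1)^{-2\alpha}\Lip^2)e_n + \gamma^2(n+1)^{-2\alpha}\eta$, the same choice of energy $v_n \asymp n^{\alpha}e_n$ controlled via Bernoulli's inequality, and the same application of \Cref{lemma:ode_discrete}. The only differences (using $(n+1)^{\alpha}$ rather than $n^{\alpha}$ as the weight, and spelling out the auxiliary $F$ for the lemma a bit more explicitly) are cosmetic.
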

\begin{proof}
	Let $\gamma \in \ooint{0,1}$ and $\alpha \in \ocint{0,1}$. Let $(X_n)_{n\geq 0}$ be given by \eqref{eq:sgd}.
	Using \rref{assum:strongly_cvx}-\ref{item:f_str_conv} we get for all $n \geq 0$,
	\begin{align}		
		\expen{\norm{X_{n+1}-x\st}^2} &= \expe{\norm{X_n-x\st-\gamma(n+1)^{-\alpha}H(X_n,Z_{n+1})}^2} \\
		&= \norm{X_n-x\st}^2+\gamma^2(n+1)^{-2\alpha}\expen{\norm{H(X_n,Z_{n+1})}^2} \\
		&\quad-2\gamma(n+1)^{-\alpha}\expen{\la X_n-x\st, H(X_n, Z_{n+1}) \ra} \\
		&\leq \norm{X_n-x\st}^2+\gamma^2(n+1)^{-2\alpha}\parentheseDeux{\eta+\norm{\nabla f(X_n)}^2}\\
                                              &\quad -2\gamma(n+1)^{-\alpha}\la X_n-x\st, \nabla f(X_n) \ra \eqsp .
        \end{align}
        Therefore, we have
        \begin{multline}
		\expe{\norm{X_{n+1}-x\st}^2} \\ \leq \expe{\norm{X_n-x\st}^2}\parentheseDeux{1-2\gamma (n+1)^{-\alpha}\mu + \gamma^2(n+1)^{-2\alpha}\Lip^2} + \eta \gamma^2(n+1)^{-2\alpha} \eqsp . \label{eq:strong_cvx_disc_base}
	\end{multline}
	We note now $u_n \doteq \expe{\norm{X_n-x\st}^2}$ and
        $v_n \doteq n^{\alpha} u_n$. Using \eqref{eq:strong_cvx_disc_base} and
        Bernoulli's inequality we have, for all $n \geq 0$
	\begin{align}
		v_{n+1}-v_n &= (n+1)^{\alpha} u\nn - n^{\alpha}u_n \\
		&= (n+1)^{\alpha} (u\nn-u_n))+u_n((n+1)\pal - n\pal) \\
		&\leq \parentheseDeux{-2\gamma \mu + \gamma^2\Lip^2 (n+1)\pmal}u_n+\eta\gamma^2(n+1)\pmal+u_n n\pal \parentheseDeux{(1+1/n)\pal - 1} \\
		&\leq \parentheseDeux{-2\gamma \mu + \gamma^2\Lip^2 (n+1)\pmal+\alpha n^{\alpha-1}}u_n+\eta\gamma^2(n+1)\pmal \eqsp .
	\end{align}
	Therefore, in the case where $\alpha<1$, there exists $n_0 \geq 0$ such that for all $n \geq n_0$,
	\begin{align}
		v\nn-v_n &\leq -\gamma\mu u_n+\eta\gamma^2(n+1)\pmal \\
		&\leq -\gamma\mu n\pmal v_n+\eta\gamma^2(n+1)\pmal\leq (n+1)\pmal(-\gamma\mu v_n +\eta\gamma^2) \eqsp .
	\end{align}
	And in the case where $\alpha=1$, if $\gamma>1/(2\mu)$ we have the existence of $n_1 \geq 0$ such that for all $n \geq n_1$,
	\begin{equation}
		v_{n+1}-v_n\leq \parentheseDeux{(1/2-\gamma \mu) + \gamma^2\Lip^2 (n+1)\pmal+\alpha n^{\alpha-1}}u_n+\eta\gamma^2(n+1)\pmal \eqsp .
	\end{equation}
	Using \Cref{lemma:ode_discrete} this shows that, for $\alpha \in \ocint{0,1}$, there exists a constant $\Cstrongdisc > 0$ such that for all $n \geq 0$,
	$v_n \leq \Cstrongdisc$. This proves the result.
\end{proof}
Using \rref{assum:f_lip} and the descent lemma~\citep[Lemma
  1.2.3]{nesterov2004introductory} we have the immediate corollary

\begin{corollary_colt}
	\label{thm:strongly_discrete_f}
	Let $\alpha \in \ocint{0,1}$ and $\gamma \in \ooint{0,1}$. Let
        $(X_n)_{n\geq 0}$ be given by \eqref{eq:sgd}.  Assume
        \rref{assum:f_lip},\tup{\rref{assum:grad_sto}-\ref{item:approx_sto}} and
        \tup{\rref{assum:strongly_cvx}-\ref{item:f_str_conv}}. Then there exists
        $\Cstrongdiscf>0$ such that for all $N\geq 1$,
	\begin{equation}
		\expe{f(X_N)-f\st} \leq \Cstrongdiscf N^{-\alpha} \eqsp .
	\end{equation}
	If $\alpha=1$ we have also assumed that $\gamma>1/(2\mu)$.
      \end{corollary_colt}

\subsection{Quantitative constants in the strongly convex setting}
\label{sec:dependency-constant}

We first state in \Cref{lemma:ode_plus} a specific version of \Cref{lemma:ode}
in the case where there exists $t_0 > 0$ such that for any $t \geq 0$ and
$F(t, x) \geq -\rmf(x) \rmg(t)$ with $\rmf$ superlinear. In particular, this
lemma allows to obtain (i) an exponential forgetting of the initial conditions,
(ii) a more explicit expression of the constant appearing in
\Cref{lemma:ode}. The improved version of \Cref{thm:strong_continuous} with
explicit constants is stated \Cref{thm:explicit_bounds}.

\begin{lemma_colt}
  \label{lemma:ode_plus}
  Let $F \in \rmc^1(\bR_+\times \bR, \bR)$ and
  $v \in \rmc^1(\bR_+,\bR_+)$ such that for all $t \geq 0$,
  $\rmd v(t)/\rmd t \leq F(t,v(t))$. Assume that there exist
  $\rmf: \ \rset \to \rset$, 
  $\rmg \in \rmc(\rset_+, \rset_+)$, $t_0 >0$,
  $A \geq 0$ and $\upbeta >0$ such that the following conditions hold.
  \begin{enumerate}[label=(\alph*)]
  \item \label{item:a_opt} For any $t \geq t_0$, $r \in \ocint{0, 1}$ and $u \geq 0$, $r F(t,u) \leq F(t, ru)$.
  \item \label{item:b_opt} For any $t \geq t_0$ and $u \geq 0$, $F(t,u) \leq -\rmf(u) \rmg(t)$.
  \item \label{item:c_opt} For any $u \geq A$, $\rmf(u) > \upbeta u$.
  \end{enumerate}
  Then, for any $t \geq 0$,
  \begin{equation}
    v(t) \leq \max\defEnsLigne{A, \exp[\upbeta(G(t_0) -G(t))]\max_{s \in \ccint{0, t_0}}v(s)} \eqsp,
    \end{equation}
    with $G(t) = \int_0^t \rmg(s) \rmd s$.
\end{lemma_colt}

\begin{proof}
  Let $T \geq 0$ and $v_T(t) = v(t) \exp[\upbeta(G(t) - G(T))]$. Using
  condition \ref{item:a_opt} and that $G$ is
  non-decreasing since for any $t \geq 0$, $\rmg(t) \geq 0$, we have
  for any $t \in \ocint{0,T}$
  \begin{equation}
    \rmd v_T(t) / \rmd t \leq \exp[\upbeta(G(t) - G(T))] F(t, v(t)) + \upbeta \rmg(t) v_T(t) \leq F(t, v_T(t)) + \upbeta \rmg(t) v_T(t) \eqsp .
  \end{equation}
  Using this result and conditions \ref{item:b_opt}-\ref{item:c_opt},
  we have for any $t \geq t_0$ such that $v_T(t) \geq A$
  \begin{equation}
    \label{eq:ineq_yT}
     \rmd v_T(t) / \rmd t \leq -\rmf(v_T(t)) \rmg(t) + \upbeta v_T(t) \rmg(t) < 0 \eqsp .
   \end{equation}
   Let $B = \max(A, \max_{s \in \ccint{0,t_0}} v_T(s))$. Assume that
   $\msa = \ensembleLigne{t \in \ccint{0,T}}{v_T(t) > B} \neq \emptyset$
   and let $t_1 = \inf \msa$. Note that $t_1 \geq t_0$ and
   $v_T(t_1) \geq A$. Therefore, using \eqref{eq:ineq_yT} we have
   $ \rmd v_T(t_1) / \rmd t < 0$ and therefore, there exists
   $0 < t_2 < t_1$ such that $v_T(t_2) > v_T(t_1)$ but then
   $t_2 \in \msa$ and $t_2 < \inf \msa$. Hence, $\msa = \emptyset$ and
   we get that for any $t \in \ccint{0,T}$, $v_T(t) \leq B$.
   Therefore, we get that for any $t \geq 0$,
   \begin{equation}
     v(t) = v_t(t) \leq \max\defEnsLigne{A, \exp[\upbeta(G(t_0) - G(t))] \max_{s \in \ccint{0,t_0}} v(s)} \eqsp ,
   \end{equation}
   which concludes the proof.
\end{proof}

\begin{theorem}
  \label{thm:explicit_bounds}
	\label{prop:constante}
	Let $\alpha, \gamma \in \ooint{0,1}$ and $(\bfX_t)_{t\geq0}$ be given by
        \eqref{eq:sde}.  Assume \rref{assum:f_lip},
        \tup{\rref{assum:grad_sto}-\ref{item:approx_sto}},
        \rref{assum:lip_sigma} and
        \tup{\rref{assum:strongly_cvx}-\ref{item:approx_sto}}. Then for any
        $T \geq 0$,
        \begin{equation}
          \expeLigne{\norm{\bfX_T-x\st}^2} \leq \max \defEns{4 \gua \eta / \mu, C \expeLigne{\normLigne{\bfX_0 - x^{\star}}^2} \exp[-\mu (\gua + T)^{1-\alpha}/(2-2\alpha)]} ( \gua + T)^{-\alpha} \eqsp ,
        \end{equation}
        with
        \begin{equation}
          C = (1 + \eta \Psi(\alpha, t_0)) \exp[\mu (\gua +
t_0)^{1-\alpha}/(2-2\alpha)](\gua + t_0)^{\alpha} \eqsp .
        \end{equation}

\end{theorem}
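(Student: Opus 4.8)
The plan is to re-run the proof of \Cref{thm:strong_continuous}, this time invoking the quantitative \Cref{lemma:ode_plus} in place of \Cref{lemma:ode}. Fix $\alpha,\gamma\in\ooint{0,1}$, set $\gua=\gamma^{1/(1-\alpha)}$ and consider, as there, $\mce(t)=\expeLigne{(t+\gua)^{\alpha}\normLigne{\bfX_t-x\st}^2}$, which lies in $\rmc^1(\rset_+,\rset_+)$ by the moment bounds used in that proof. Applying Dynkin's formula (\Cref{lemma:dynkin}) to $x\mapsto\normLigne{x-x\st}^2$ along \eqref{eq:sde} as in \eqref{eq:strong_ito}, then differentiating in $t$ and using \rref{assum:strongly_cvx}-\ref{item:f_str_conv} together with $\trace(\Sigma(x))\leq\eta$ from \rref{assum:grad_sto}-\ref{item:approx_sto}, yields the same differential inequality obtained there,
\begin{equation*}
  \rmd\mce(t)/\rmd t\leq F(t,\mce(t))\eqsp,\qquad F(t,u)=\alpha u(t+\gua)^{-1}-2\mu u(t+\gua)^{-\alpha}+\gua\eta(t+\gua)^{-\alpha}\eqsp .
\end{equation*}

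Next I would verify that $F$ meets the hypotheses of \Cref{lemma:ode_plus} with $t_0=(\alpha/\mu)^{1/(1-\alpha)}>0$, $\rmg(t)=(t+\gua)^{-\alpha}$, $\rmf(u)=\mu u-\gua\eta$, $\upbeta=\mu/2$ and $A=4\gua\eta/\mu$. Condition \ref{item:a_opt} holds for every $t\geq0$ since for $r\in\ocint{0,1}$ one has $F(t,ru)-rF(t,u)=(1-r)\gua\eta(t+\gua)^{-\alpha}\geq0$; this is the single point where the precise affine-in-$u$ shape of $F$ — in particular nonnegativity of the $u$-free term — is essential. Condition \ref{item:b_opt} follows because for $t\geq t_0$ one has $\alpha(t+\gua)^{-1}=\alpha(t+\gua)^{\alpha-1}(t+\gua)^{-\alpha}\leq\mu(t+\gua)^{-\alpha}$, whence $F(t,u)\leq(t+\gua)^{-\alpha}(\gua\eta-\mu u)=-\rmf(u)\rmg(t)$; and condition \ref{item:c_opt} holds since $\rmf(u)-\upbeta u=(\mu/2)u-\gua\eta>0$ for $u\geq A$. \Cref{lemma:ode_plus} then gives, with $G(t)=\int_0^t(s+\gua)^{-\alpha}\rmd s=[(t+\gua)^{1-\alpha}-\gua^{1-\alpha}]/(1-\alpha)$ so that $\upbeta(G(t_0)-G(T))=\mu[(t_0+\gua)^{1-\alpha}-(T+\gua)^{1-\alpha}]/(2-2\alpha)$,
\begin{equation*}
  \mce(T)\leq\max\Bigl\{4\gua\eta/\mu,\ \exp\bigl[\upbeta(G(t_0)-G(T))\bigr]\,{\textstyle\max_{s\in\ccint{0,t_0}}}\mce(s)\Bigr\}\eqsp .
\end{equation*}

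It then remains to control the transient term $\max_{s\in\ccint{0,t_0}}\mce(s)$. I would divide the displayed differential inequality by $(t+\gua)^{\alpha}$ and pass to $w(t)=\expeLigne{\normLigne{\bfX_t-x\st}^2}$: discarding the nonpositive drift $-2\mu u(t+\gua)^{-\alpha}$, the term $\alpha u(t+\gua)^{-1}$ cancels against the derivative of $(t+\gua)^{\alpha}$, leaving $\rmd w(t)/\rmd t\leq\gua\eta(t+\gua)^{-2\alpha}$, hence $w(t)\leq w(0)+\gua\eta\int_0^t(s+\gua)^{-2\alpha}\rmd s$. Since $(s+\gua)^{\alpha}\leq(t_0+\gua)^{\alpha}$ on $\ccint{0,t_0}$, this bounds $\max_{s\in\ccint{0,t_0}}\mce(s)$ by $(t_0+\gua)^{\alpha}\bigl(\expeLigne{\normLigne{\bfX_0-x\st}^2}+\eta\Psi(\alpha,t_0)\bigr)$, where $\Psi(\alpha,t_0)=\gua\int_0^{t_0}(s+\gua)^{-2\alpha}\rmd s$ is finite (a short case split on the sign of $1-2\alpha$). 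Substituting into the previous display, absorbing the additive $\eta\Psi$ term into a multiplicative factor, and dividing by $(T+\gua)^{\alpha}$ yields the announced bound with $C=(1+\eta\Psi(\alpha,t_0))\exp[\mu(\gua+t_0)^{1-\alpha}/(2-2\alpha)](\gua+t_0)^{\alpha}$. I do not expect a genuine obstacle here: the long-time contraction and the exponential forgetting of the initial condition are delivered wholesale by \Cref{lemma:ode_plus}, so the only real work is the verification of its hypothesis \ref{item:a_opt} and the bookkeeping of constants in the transient regime $\ccint{0,t_0}$.
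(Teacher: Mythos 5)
Your argument is correct and follows the paper's proof essentially step for step: derive the differential inequality for $\mce(t)=(t+\gua)^{\alpha}\expeLigne{\normLigne{\bfX_t-x\st}^2}$ via Dynkin's formula, verify hypotheses \ref{item:a_opt}--\ref{item:c_opt} of \Cref{lemma:ode_plus} with the same $\rmg$, $\rmf$, $\upbeta$, $A$, then control the transient $\max_{s\in\ccint{0,t_0}}\mce(s)$ by integrating a second Dynkin inequality. The only cosmetic differences are your choice $t_0=(\alpha/\mu)^{1/(1-\alpha)}$ versus the paper's $t_0=\max((\alpha/\mu)^{1/(1-\alpha)}-\gua,\gua)$ (both work, the paper's makes $t_0+\gua\geq(\alpha/\mu)^{1/(1-\alpha)}$ tight), and the fact that you define $\Psi(\alpha,t_0)$ as the exact integral $\gua\int_0^{t_0}(s+\gua)^{-2\alpha}\rmd s$ where the paper uses a slightly looser closed-form upper bound; neither affects validity. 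You also correctly flag that the paper's multiplicative form $C\expeLigne{\normLigne{\bfX_0-x\st}^2}$ is an absorption of the additive $\eta\Psi$ term, which the paper glosses over exactly as you do.
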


\begin{proof}
  Let $\alpha, \gamma \in \ocint{0,1}$ and consider $\mce : \rset_+ \to \rset_+$
  defined for $t \geq 0$ by
  $\mce(t) = \expeLigne{(t+\gua)^{\alpha} \normLigne{\bfX_t - x^{\star}}^2}$,
  with $\gua = \gamma^{1/(1-\alpha)}$.  Using Dynkin's formula, see
  \Cref{lemma:dynkin}, we have for any $t\geq 0$,
\begin{align}
  \mce(t)=\mce(0) + \alpha \int_0^t  \frac{\mce(s)}{s+\gua}\dd s   + \int_0^t \gua \frac{\expe{\trace (\Sigma(\bfX_s))}}{ (s+\gua)^{\alpha}} \dd s -2 \int_0^t \expe{\la \nabla f(\bfX_s), \bfX_s-x\st \ra} \dd s
           \eqsp . \label{eq:strong_ito}
\end{align}
We now differentiate this expression with  respect to $t$ and
using \Cref{assum:strongly_cvx} and \Cref{assum:grad_sto}, we get for any $t > 0$,
\begin{align}
\rmd   \mce(t)/\rmd t &=\alpha \mce(t)(t+\gua)^{-1} -2 \expe{\la \nabla f(\bfX_t), \bfX_t-x\st \ra} + \gua \expe{\trace (\Sigma(\bfX_t))} (t+\gua)^{-\alpha} \\
  &\leq \alpha \mce(t)/(t+\gua) -2\mu \expeLigne{\normLigne{\bfX_t-x\st}^2} + \gua \eta/ (t+\gua)^{\alpha} \\
  &\leq F(t,\mce(t)) = \alpha \mce(t)(t+\gua)^{-1} -2\mu \mce(t)(t+\gua)^{-\alpha} + \gua \eta (t+\gua)^{-\alpha} \eqsp,
\end{align}
where we have used in the penultimate line that
$\trace(\Sigma(x)) \leq \eta$ for any $x \in \rset^d$ by
\Cref{assum:grad_sto}. Let
$t_0 = \max((\alpha/\mu)^{1/(1-\alpha)}-\gua, \gua)$. We have for any $t \geq t_0$, and $u \geq 0$
\begin{equation}
  F(t,u) \leq -\rmf(u) \rmg(t) \eqsp, \qquad \rmg(t) = (t+ \gua)^{-\alpha} \eqsp , \qquad \rmf(u) =\mu u - \gua \eta \eqsp .
\end{equation}
Hence the conditions \ref{item:a_opt} and \ref{item:b_opt} of
\Cref{lemma:ode_plus} are satisfied.  Let $\upbeta = \mu/2$ and
$A = 4 \gua \eta / \mu$. We obtain that for any $t \geq t_0$ and
$u \geq A$, $\rmf(u) > \mu u / 2$ and therefore condition
\ref{item:c_opt} of \Cref{lemma:ode_plus} is satisfied. Applying
\Cref{lemma:ode_plus}, we obtain that for any $t \geq 0$
\begin{equation}
  \mce(t) \leq \max( 4 \gua \eta / \mu, \exp[-\mu (\gua + t)^{1-\alpha}/(2-2\alpha)] B) \eqsp ,
\end{equation}
with
$B = \exp[\mu (\gua + t_0)^{1-\alpha}/(2-2\alpha)] \max_{s \in
  \ccint{0,t_0}} \mce(s)$.  We have that
$\max_{s \in \ccint{0,t_0}} \mce(s) \leq (t_0 + \gua)^{\alpha} \max_{s
  \in \ccint{0,t_0}} \expeLigne{\normLigne{\bfX_s -
    x^{\star}}}^2$. Using Dynkin's formula, see \Cref{lemma:dynkin},
we have for any $t\geq 0$,
\begin{equation}
  \expe{\norm{\bfX_t- x^{\star}}}^2 \leq \expe{\norm{\bfX_0- x^{\star}}}^2 + \eta \Psi(\alpha, t_0) \eqsp ,
\end{equation}
with
\begin{equation}
  \Psi(\alpha, t_0) =  \left \lbrace
  \begin{aligned}
    & \gamma^2 / (2\alpha - 1) & \text{if $2\alpha >1$ } \eqsp , \\
    & \gua \log(\gua^{-1} (t_0 + \gua))& \text{if $2\alpha = 1$ } \eqsp , \\
    & \gua (t_0 + \gua)^{1-2\alpha}/(1 - 2\alpha) & \text{otherwise} \eqsp .
  \end{aligned}
  \right.
\end{equation}
We conclude the proof upon setting
$C = (1 + \eta \Psi(\alpha, t_0)) \exp[\mu (\gua +
t_0)^{1-\alpha}/(2-2\alpha)](\gua + t_0)^{\alpha}$.
\end{proof}

      \subsection{Convergence results under Kurdyka-\LL \ conditions}
\label{sec:conv-results-under}

We state now an equivalent result of \Cref{thm:strong_continuous_f} under weaker
assumptions, namely the \LL inequality with $r=2$, that we restate as it is
usually given, with $c>0$, \ie, for any $x \in \rset^d$,
\begin{equation}
	\label{eq:loj2}
	 f(x)-f(x\st) \leq c\normLigne{\nabla f(x)}^2 \eqsp .
\end{equation}
Note that \eqref{eq:loj2} is verified for all strongly convex
functions~\citep{karimi}. The equivalent of \Cref{thm:strong_continuous_f} is
stated in \Cref{prop:kl2} (for the continuous-time process). The equivalent of
\Cref{thm:strongly_discrete_f} is given in \Cref{prop:kl2_discrete} (for the
discrete-time process).

\begin{proposition_colt}
	\label{prop:kl2}
	Let $\alpha, \gamma \in \ooint{0,1}$ and $(\bfX_t)_{t\geq0}$ be given by
        \eqref{eq:sde}.  Assume \rref{assum:f_lip}, \tup{\rref{assum:grad_sto}-\ref{item:approx_sto}}, \rref{assum:lip_sigma} and that $f$ verifies \eqref{eq:loj2}. Then there exists
        $\Cstrongloj>0$ such that for any $T > 0$,
        \begin{equation}
        	\expe{f(\bfX_T)-f\st} \leq \Cstrongloj T^{-\alpha} \eqsp .
        \end{equation}
\end{proposition_colt}

\begin{proof}
Let $\alpha, \gamma \in \ooint{0,1}$ and $(\bfX_t)_{t\geq0}$ be given by \eqref{eq:sde}. Without loss of generality we can assume that $f\st=\min_{x\in\bRd}f(x)=0$.
We note $\mce(t)=(t+\gua)^{\alpha}\expe{f(\bfX_t)}$
and we apply Lemma~\ref{lemma:dynkin} to the stochastic process
$((t+\gua)^{\alpha}f(\bfX_t))_{t \geq 0}$, and using \rref{assum:f_lip}, \rref{assum:grad_sto}-\ref{item:approx_sto}, \rref{assum:lip_sigma}, \eqref{eq:loj2} and Lemma~\ref{lemma:bound_scal} this gives, for all $t>0$,
\begin{align}
	\mce(t)-\mce(0) &= \int_0^t \alpha (s+\gua)^{\alpha-1} \expe{f(\bfX_s)}\dd s - \int_0^t \expe{\norm{\nabla f(\bfX_s)}^2} \dd s \\
	&\quad+ (\gua/2)\int_0^t (s+\gua)^{-\alpha}\expe{\trace(\nabla^2f(\bfX_s)\Sigma(\bfX_s)) }\dd s \\
	\dd \mce(t)/\dd t&\leq\alpha \mce(t)(t+\gua)^{-1}-(1/c)\mce(t)(t+\gua)^{-\alpha} + (\gua/2)\boundLSig (t+\gua)^{-\alpha} \eqsp .
\end{align}
We can now apply Lemma~\ref{lemma:ode} to $F(t,x)=\alpha x(t+\gua)^{-1}-(1/c)x(t+\gua)^{-\alpha} + (\gua/2)\boundLSig (t+\gua)^{-\alpha}$ with $t_0=(2c\alpha)^{1/(1-\alpha)}$ and $A=2\gua c\boundLSig$, which shows the existence of $\Cstrongloj>0$ such that for all $t>0$, $\mce(t)\leq \Cstrongloj$, concluding the proof.
\end{proof}

And we now state its discrete counterpart, which is an equivalent of \Cref{thm:strongly_discrete_f}.
\begin{proposition_colt}
	\label{prop:kl2_discrete}
	Let $\alpha \in \ocint{0,1}$ and $\gamma \in \ooint{0,1}$. Let
        $(X_n)_{n\geq 0}$ be given by \eqref{eq:sgd}.  Assume
        \rref{assum:f_lip}, \tup{\rref{assum:grad_sto}-\ref{item:approx_sto}}
        and that $f$ verifies \eqref{eq:loj2}. Then there exists
        $\Cstronglojdisc>0$ such that for all $N\geq 1$,
	\begin{equation}
		\expe{f(X_N)-f\st} \leq \Cstronglojdisc N^{-\alpha} \eqsp .
	\end{equation}
	In the case where $\alpha=1$ we have to assume additionally that $\gamma>2/c$.
\end{proposition_colt}

\begin{proof}
  Let $\alpha \in \ocint{0,1}$ and $\gamma \in \ooint{0,1}$. Let
  $(X_n)_{n\geq 0}$ be given by \eqref{eq:sgd}. Let $n\geq 0$. Applying the
  descent lemma \citep[Lemma 1.2.3]{nesterov2004introductory} (using
  \rref{assum:f_lip}) we get
	\begin{align}
		\expen{f(X\nn)}&=\expen{f(X_n-\gamma/(n+1)\pal H(X_n,Z\nn))} \\
		&\leq f(X_n)-\gamma/(n+1)\pal\expen{\la \nabla f(X_n), H(X_n,Z\nn) \ra} \\
		&\quad+ \gamma^2/(n+1)^{2\alpha} (\Lip/2)\expenLigne{\norm{H(X_n,Z\nn)}^2} \\
		&\leq f(X_n)-\gamma/(n+1)\pal\norm{\nabla f(X_n)}^2+(\Lip\gamma^2/2)(n+1)^{-2\alpha}\parentheseDeux{\eta+\norm{\nabla f(X_n)}^2} \\
		\expe{f(X\nn)}-f\st &\leq \expe{f(X_n)}-f\st + \gamma(n+1)\pmal\expeLigne{\normLigne{\nabla f(X_n)}^2}\parentheseDeux{-1 + (\Lip\gamma/2)(n+1)^{-\alpha}} \\
		&\quad+(\Lip\gamma^2/2)(n+1)^{-2\alpha} \eta \eqsp .
	\end{align}
	This shows the existence of $n_2 \geq 0$ such that using \eqref{eq:loj2} we have for all $n \geq n_2$,
	\begin{align}
		\expe{f(X\nn)}-f\st &\leq \expe{f(X_n)}-f\st - (\gamma/2)(n+1)\pmal\expe{\norm{\nabla f(X_n)}^2} +(\Lip\gamma^2/2)(n+1)^{-2\alpha} \eta \\
		&\leq (\expe{f(X_n)}-f\st)\parentheseDeux{1 - (\gamma c\pinv/2)(n+1)\pmal}+(\Lip\gamma^2/2)(n+1)^{-2\alpha} \eta \eqsp .
	\end{align}
	We note now for all $n \geq 0$, $u_n=\expe{f(X_n)}-f\st$ and $v_n=n\pal u_n$. We have
	\begin{align}
		v_{n+1}-v_n &= (n+1)^{\alpha} u\nn - n^{\alpha}u_n \\
		&= (n+1)^{\alpha} (u\nn-u_n))+u_n((n+1)\pal - n\pal) \\
		&\leq -(\gamma c\pinv/2) u_n +(\Lip\gamma^2\eta/2)(n+1)\pmal +u_n n\pal \parentheseDeux{(1+1/n)\pal - 1} \\
		&\leq u_n(-(\gamma c\pinv/2) + \alpha n^{\alpha-1})+(\Lip\gamma^2\eta/2)(n+1)\pmal \eqsp .
	\end{align}
	If $\alpha <1$, or if $1-\gamma c\pinv/2<0$ we have the existence of $n_3 \geq n_2$ and $\Cstrongtilde>0$ such that for all $n \geq n_3$,
	\begin{align}
		v\nn-v_n &\leq -\Cstrongtilde u_n + (\Lip\gamma^2\eta/2)(n+1)\pmal \\
		&\leq \defEns{-\Cstrongtilde v_n + (\Lip\gamma^2\eta/2)}(n+1)\pmal
	\end{align}
	This proves the existence of $\Cstronglojdisc > 0$ such that for all
        $n \geq 0$, $v_n \leq \Cstronglojdisc$, which concludes the proof.
\end{proof}


\section{Strongly convex case (under \Cref{assum:grad_sto}-\ref{item:ml})}
\label{app:fz_strongly}

This section gather the proofs for the study of the strongly convex case under
\Cref{assum:grad_sto}-\ref{item:ml}. It is the counterpart of
\Cref{sec:strongly-convex_appendix}. We start by establishing useful lemmas
under \Cref{assum:grad_sto}-\ref{item:ml} in \Cref{sec:technical-results}. Then
we present the counterpart of the results obtained in
\Cref{sec:conv-strongly-conv} in \Cref{sec:equiv-crefs-results}.

\subsection{Technical results}
\label{sec:technical-results}

We begin by several lemmas to control $\trace \Sigma$ and $\expeLigne{\normLigne{\nabla \f}^2}$.
We will note $\trcst = 6\Lip^2+4\Lip+3\eta$.

\begin{lemma_colt}
	\label{lemma:gradfz_norm}
	Assume \tup{\rref{assum:grad_sto}-\ref{item:ml}}. Then, for all $x \in
        \bR^d$, we have
	\begin{equation}
		\int_{\msz} \normLigne{\nabla \f(x,z)}^2 \dd \muz(z) \leq \trcst(\norm{x-x\st}^2 + 1) \eqsp .
	\end{equation}
\end{lemma_colt}

\begin{proof}
  Let $x\in \bR^d$. Using \tup{\rref{assum:grad_sto}-\ref{item:ml}} we have 
\begin{align}
	\int_{\msz} \normLigne{\nabla \f(x,z)}^2 \dd \muz(z) &= \int_{\msz} \normLigne{\nabla \f(x,z) - \nabla \f(x\st,z)+\nabla \f(x\st,z)}^2 \dd \muz(z) \\
	& \leq 2\int_{\msz} \normLigne{\nabla \f(x,z)-\nabla \f(x\st,z)}^2 + \int_{\msz} \normLigne{\nabla \f(x\st,z)}^2 \dd \muz(z) \dd \muz(z) \\
	& \leq 2 \Lip^2 \normLigne{x-x\st}^2 + 2\eta \eqsp ,
\end{align}
which concludes the proof.
\end{proof}

\begin{lemma_colt}
	\label{lemma:trace_norm}
	Assume \tup{\rref{assum:grad_sto}-\ref{item:ml}}. Then, for all $x \in
        \bR^d$, we have
	\begin{equation}
		\trace(\Sigma(x)) \leq \trcst \parenthese{1+\norm{x-x\st}^2} \eqsp .
	\end{equation}
\end{lemma_colt}

\begin{proof}
Let $x \in \bR^d$. Using \tup{\rref{assum:grad_sto}-\ref{item:ml}}  we have
\begin{align}
  \trace(\Sigma(x))&=\trace \parenthese{\int_{\msz} (\nabla \f(x,z)-\nabla f(x))(\nabla \f(x,z)-\nabla f(x))\T \dd \muz(z)} \\
                   &=\int_{\msz} \normLigne{\nabla \f(x,z)-\nabla f(x)}^2 \dd \muz(z) \\
                   &=\int_{\msz} \normLigne{\nabla \f(x,z)-\nabla \f(x\st,z)+\nabla \f(x\st,z)-\nabla f(x)}^2 \dd \muz(z) \\
                   &\leq 3\int_{\msz} \parenthese{\normLigne{\nabla \f(x,z)-\nabla \f(x\st,z)}^2 + \normLigne{\nabla \f(x\st,z)}^2 + \normLigne{\nabla f(x)}^2} \dd \muz(z) \\
                   &\leq 6 \Lip^2 \normLigne{x-x\st}^2+3\eta \eqsp \leq \trcst \parenthese{1+\normLigne{x-x\st}^2} \eqsp ,
\end{align}
which concludes the proof.
\end{proof}

\begin{lemma_colt}
	\label{lemma:ab}
	Let $a,b \in \bR^d$. Then $\norm{a+b}^2 \geq \norm{a}^2/2-\norm{b}^2 \eqsp .$
\end{lemma_colt}
\begin{proof}
Let $a,b \in \bR^d$. Using the fact that $2xy \leq 2y^2+x^2/2$ for all $x,y \in \bR$, we have
\begin{align}
  \norm{a+b}^2 &=\norm{a}^2+\norm{b}^2+2\la a,b \ra \\
               &\geq \norm{a}^2+\norm{b}^2-2\norm{a}\norm{b}\geq \norm{a}^2+\norm{b}^2 -2\norm{b}^2-\norm{a}^2/2 \geq \norm{a}^2/2-\norm{b}^2 \eqsp .
\end{align}
\end{proof}

\begin{lemma_colt}
  \label{lemma:kolmo}
  Let $f \in \rmc^1(\rset^d, \rset)$. Assume that there exists $\Lip \geq 0$ such that for any $x,y \in \rset^d$, $\nabla f$ is $\Lip$-Lipschitz. Then for any $x \in \rset^{\dim}$
  \begin{equation}
    \label{eq:majo}
    \norm{\nabla f(x)}^2 \leq 2 \Lip (f(x) - \inf_{\rset^d} f) \eqsp .
  \end{equation}
\end{lemma_colt}
\begin{proof}
  Using \cite[Lemma 1.2.3]{nesterov2004introductory}, we have for any $x,y \in \rset^d$
  \begin{equation}
    f(y) - f(x) \leq \langle \nabla f(x), y - x \rangle + (\Lip/2) \norm{y-x}^2 \eqsp .
  \end{equation}
  We obtain \eqref{eq:majo} by minimizing both side of the previous inequality
  w.r.t. $y$.
\end{proof}

\begin{lemma_colt}
	\label{lemma:gradfz}
	Assume \tup{\rref{assum:grad_sto}-\ref{item:ml}}. In addition, assume
        that $\f(\cdot,z)$ is convex for all $z\in \msz$.  For all
        $x \in \bR^d$, we have
	\begin{equation}
\int_{\msz} \normLigne{\nabla \f(x,z)}^2 \dd \muz(z) \leq \trcst \parenthese{f(x)-f(x\st) + 1} \eqsp .
\end{equation}
\end{lemma_colt}

\begin{proof}
Let $x\in \bR^d$ and $z\ \in \msz$. Using the smoothness and convexity of $\f(\cdot,z)$, taking the expectation and using \Cref{lemma:ab} we have
\begin{align}
	\f(x,z)-\f(x\st,z) &\geq \la \nabla \f(x\st,z), x-x\st \ra + (1/2\Lip)\normLigne{\nabla \f(x,z)-\nabla \f(x\st,z)}^2 \\
	f(x)-f(x\st) &\geq (1/2\Lip) \expeLigne{\normLigne{\nabla \f(x,z)-\nabla \f(x\st,z)}^2} \\
	& \geq (1/4\Lip)\expeLigne{\normLigne{\nabla \f(x,z)}^2}-(1/2\Lip) \expeLigne{\normLigne{\nabla f(x)}^2} \eqsp ,
\end{align}
We conclude upon combining this result with \Cref{lemma:kolmo}.
\end{proof}

\begin{lemma_colt}
	\label{lemma:trace_f}
	Assume \rref{assum:f_lip}, \tup{\rref{assum:grad_sto}-\ref{item:ml}} and
        \rref{assum:lip_sigma}. Assume additionally that $\f(\cdot,z)$ is convex
        for all $z\in \msz$.  For all $x \in \bR^d$, we have
	\begin{equation}
		\trace(\Sigma(x)) \leq  \trcst \parenthese{f(x)-f(x\st) + 1} \eqsp .
	\end{equation}
\end{lemma_colt}

\begin{proof}
Let $x\in \bR^d$. Then using \Cref{assum:grad_sto}-\ref{item:ml} and \Cref{lemma:gradfz} we have
\begin{align}
  \trace\parenthese{\Sigma(x)} &=\int_{\msz} \normLigne{\nabla \f(x,z)-\nabla f(x)}^2 \dd \muz(z) \\
                               &=\int_{\msz}  \normLigne{\nabla \f(x,z)}^2 + \normLigne{\nabla f(x)}^2 -2\la \nabla \f(x,z), \nabla f(x) \ra \dd \muz(z) \\
                               &=\int_{\msz}  \normLigne{\nabla \f(x,z)}^2 \dd \muz(z) - \normLigne{\nabla f(x)}^2 \\
  &\leq \int_{\msz}  \normLigne{\nabla \f(x,z)}^2 \dd \muz(z) \leq \trcst \parenthese{f(x)-f(x\st) + 1} \eqsp ,
\end{align}
which concludes the proof.
\end{proof}

\subsection{Equivalent to \Cref{sec:conv-strongly-conv}}
\label{sec:equiv-crefs-results}

The equivalent of \Cref{thm:strong_continuous} and \Cref{thm:strongly_discrete} are given in
\Cref{thm:strongly_b_2} and \Cref{thm:strongly_b_1} respectively.

\begin{theorem}
  \label{thm:strongly_b_2}
  Let $\alpha, \gamma \in \ooint{0,1}$ and $(\bfX_t)_{t\geq0}$ be given by
  \eqref{eq:sde}.  Assume \rref{assum:f_lip},
  \tup{\rref{assum:grad_sto}-\ref{item:ml}}, \rref{assum:lip_sigma} and
  \tup{\rref{assum:strongly_cvx}-\ref{item:ftilde_str_conv}}. Then there exists $C\geq0$ (explicit in the proof)
  such that for any $T \geq 1$,
  $ \expeLigne{\norm{\bfX_T-x\st}^2} \leq C T^{-\alpha}$.
\end{theorem}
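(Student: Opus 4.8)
The plan is to follow the proof of \Cref{thm:strong_continuous} almost verbatim, the only genuine change being that the uniform bound $\trace(\Sigma(x))\le\eta$ — which is available under \rref{assum:grad_sto}-\ref{item:approx_sto} — is no longer at our disposal and must be replaced by the quadratic growth bound on $\trace\Sigma$ provided by \Cref{lemma:trace_norm}, valid under \rref{assum:grad_sto}-\ref{item:ml}. Concretely, I would introduce the energy function $\mce(t)=\expeLigne{(t+\gua)^{\alpha}\normLigne{\bfX_t-x\st}^2}$ with $\gua=\gamma^{1/(1-\alpha)}$, apply Dynkin's formula (\Cref{lemma:dynkin}) to the process $((t+\gua)^{\alpha}\norm{\bfX_t-x\st}^2)_{t\ge0}$ using the \SDE\ \eqref{eq:sde}, and differentiate in $t$. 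Exactly as in the proof of \Cref{thm:strong_continuous}, this gives
\[
\rmd\mce(t)/\rmd t = \alpha\mce(t)(t+\gua)^{-1} - 2\expe{\la\nabla f(\bfX_t),\bfX_t-x\st\ra} + \gua\expe{\trace(\Sigma(\bfX_t))}(t+\gua)^{-\alpha}\eqsp.
\]
Since \rref{assum:strongly_cvx}-\ref{item:ftilde_str_conv} implies that $f$ is $\mu$-strongly convex, we have $\la\nabla f(x),x-x\st\ra\ge\mu\norm{x-x\st}^2$, and by \Cref{lemma:trace_norm}, $\trace(\Sigma(x))\le\trcst(1+\norm{x-x\st}^2)$. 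Substituting these and using $\expe{\norm{\bfX_t-x\st}^2}=\mce(t)(t+\gua)^{-\alpha}$ yields $\rmd\mce(t)/\rmd t\le F(t,\mce(t))$ with
\[
F(t,u)=\alpha u(t+\gua)^{-1} - 2\mu u(t+\gua)^{-\alpha} + \gua\trcst u(t+\gua)^{-2\alpha} + \gua\trcst(t+\gua)^{-\alpha}\eqsp.
\]

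Next I would verify the hypotheses of \Cref{lemma:ode}. The function $F$ is $\rmc^1$ on $\bR_+\times\bR$ (since $\gua>0$) and $\mce\in\rmc^1(\bR_+,\bR_+)$ by Dynkin's formula. Because $\alpha\in\ooint{0,1}$, both $(t+\gua)^{-1}$ and $(t+\gua)^{-2\alpha}$ decay strictly faster than $(t+\gua)^{-\alpha}$, so there is $t_0>0$ such that for all $t\ge t_0$ one has $\alpha(t+\gua)^{-1}+\gua\trcst(t+\gua)^{-2\alpha}\le\mu(t+\gua)^{-\alpha}$; for such $t$ and any $u\ge0$, $F(t,u)\le(t+\gua)^{-\alpha}(-\mu u+\gua\trcst)$, which is negative as soon as $u>A:=\gua\trcst/\mu$. \Cref{lemma:ode} then gives $\mce(t)\le B:=\max(\max_{s\in[0,t_0]}\mce(s),A)$ for all $t\ge0$, the finiteness of $\max_{s\in[0,t_0]}\mce(s)$ following from the moment bound \Cref{lemma:bound_moments} with $p=1$ (which bounds $\expe{\norm{\bfX_t}^2}$ uniformly on $[0,t_0]$ in terms of $\norm{\bfX_0}$). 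Finally, since $(T+\gua)^{-\alpha}\le T^{-\alpha}$, this gives $\expeLigne{\norm{\bfX_T-x\st}^2}=\mce(T)(T+\gua)^{-\alpha}\le B\,T^{-\alpha}$ for all $T\ge1$, so $C=B$ works, with $t_0$ (hence $B$) explicit in $\mu,\trcst,\alpha,\gua$ and the initial condition.

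The only real obstacle relative to \Cref{thm:strong_continuous} is the extra positive term $\gua\trcst u(t+\gua)^{-2\alpha}$ in $F$, absent when $\trace\Sigma$ is bounded; the point that makes the argument survive unchanged is simply $2\alpha>\alpha$, so this term is eventually dominated by the strong-convexity dissipation $-2\mu u(t+\gua)^{-\alpha}$ and the overall shape of $F$ required by \Cref{lemma:ode} is preserved. Everything else is routine bookkeeping. As an alternative one could bound $\trace\Sigma(x)\le\trcst(f(x)-f(x\st)+1)$ via \Cref{lemma:trace_f} (legitimate since $\mu$-strong convexity of $\f(\cdot,z)$ implies convexity) and then use $\Lip$-smoothness to get $f(x)-f(x\st)\le(\Lip/2)\norm{x-x\st}^2$, arriving at a function $F$ of the same structure.
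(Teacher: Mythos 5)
Your proposal is correct and follows essentially the same route as the paper's proof of \Cref{thm:strongly_b_2}: the same energy function $\mce(t)=\expeLigne{(t+\gua)^{\alpha}\normLigne{\bfX_t-x\st}^2}$, the same use of Dynkin's formula, the same replacement of the bounded-noise estimate by \Cref{lemma:trace_norm}, and the same application of \Cref{lemma:ode} with the observation that the extra dissipative term $\gua\trcst u(t+\gua)^{-2\alpha}$ is eventually dominated by $-2\mu u(t+\gua)^{-\alpha}$. If anything you are slightly more explicit than the paper in checking the hypotheses of \Cref{lemma:ode} and in justifying the finiteness of $\max_{s\in[0,t_0]}\mce(s)$ via \Cref{lemma:bound_moments}.
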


\begin{proof}
  Let $\alpha, \gamma \in \ocint{0,1}$ and consider $\mce : \rset_+ \to \rset_+$
  defined for $t \geq 0$ by
  $\mce(t) = \expeLigne{(t+\gua)^{\alpha} \normLigne{\bfX_t - x^{\star}}^2}$,
  with $\gua = \gamma^{1/(1-\alpha)}$.  Using Dynkin's formula, see
  \Cref{lemma:dynkin}, we have for any $t\geq 0$,
\begin{align}
  \mce(t)=\mce(0) + \alpha \int_0^t  \frac{\mce(s)}{s+\gua}\dd s   + \int_0^t \gua \frac{\expe{\trace (\Sigma(\bfX_s))}}{ (s+\gua)^{\alpha}} \dd s -2 \int_0^t \expe{\la \nabla f(\bfX_s), \bfX_s-x\st \ra} \dd s
           \eqsp .
\end{align}
We now differentiate this expression with  respect to $t$ and
using \Cref{assum:strongly_cvx}, \Cref{assum:grad_sto} and \Cref{lemma:trace_norm}, we get for any $t > 0$,
\begin{align}
&\rmd   \mce(t)/\rmd t =\alpha \mce(t)(t+\gua)^{-1} -2 \expe{\la \nabla f(\bfX_t), \bfX_t-x\st \ra} + \gua \expe{\trace (\Sigma(\bfX_t))} (t+\gua)^{-\alpha} \\
  & \eqsp \leq \alpha \mce(t)/(t+\gua) -2\mu \expeLigne{\normLigne{\bfX_t-x\st}^2} + \gua \trcst/ (t+\gua)^{\alpha} + \gua \trcst \expeLigne{\normLigne{\bfX_t-x\st}^2}  (t+\gua)^{-\alpha} \\
  & \eqsp \leq \alpha \mce(t)(t+\gua)^{-1} -2\mu \mce(t)(t+\gua)^{-\alpha} + \gua \trcst (t+\gua)^{-\alpha} + \gua\trcst \mce(t)(t+\gua)^{-2\alpha} \eqsp .
\end{align}
Hence, using \Cref{lemma:ode} we get, for any $t \geq 0$, $\mce(t) \leq B$, which concludes the proof.
\end{proof}

\begin{theorem}
  \label{thm:strongly_b_1}
  Let $\gamma \in \ooint{0,1}$ and $\alpha \in \ocint{0,1}$. Let
  $(X_n)_{n\geq 0}$ be given by \eqref{eq:sgd}.  Assume
  \tup{\rref{assum:grad_sto}-\ref{item:ml}} and
  \tup{\rref{assum:strongly_cvx}-\ref{item:ftilde_str_conv}}. Then there exists
  $\Cstrongdisc>0$ such that for all $N\geq 1$,
	\begin{equation}
		\expeLigne{\normLigne{X_N-x\st}^2} \leq \Cstrongdisc N^{-\alpha} \eqsp .
	\end{equation}
	In the case where $\alpha=1$ we have to assume additionally that $\gamma>1/(2\mu)$.
\end{theorem}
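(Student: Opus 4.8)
The plan is to follow the proof of \Cref{thm:strongly_discrete} almost verbatim; the only genuine change is that under \rref{assum:grad_sto}-\ref{item:ml} the conditional second moment of the stochastic gradient $H = \nabla\tilde f$ is no longer bounded by a constant but grows quadratically in $\norm{X_n - x\st}$, and this growth will be controlled using \Cref{lemma:gradfz_norm}. First I would record that \rref{assum:strongly_cvx}-\ref{item:ftilde_str_conv} implies that $f$ is $\mu$-strongly convex, since $\nabla f = \int_{\msz}\nabla\tilde f(\cdot, z)\,\rmd\pi^Z(z)$; writing $x\st$ for its (unique) minimizer, this gives $\langle\nabla f(x), x - x\st\rangle \geq \mu\norm{x - x\st}^2$ for every $x \in \bR^d$.

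Next I would condition on $\mcf_n$ in the recursion \eqref{eq:sgd} with $H = \nabla\tilde f$ and expand the square:
\[
  \expen{\norm{X\nn - x\st}^2} = \norm{X_n - x\st}^2 - 2\gamma(n+1)\pmal\langle\nabla f(X_n), X_n - x\st\rangle + \gamma^2(n+1)^{-2\alpha}\expen{\norm{\nabla\tilde f(X_n, Z\nn)}^2}\eqsp.
\]
Applying strong convexity to the inner-product term and \Cref{lemma:gradfz_norm} (which gives $\expen{\norm{\nabla\tilde f(X_n, Z\nn)}^2} \leq \trcst(\norm{X_n - x\st}^2 + 1)$), then taking total expectations and setting $u_n = \expe{\norm{X_n - x\st}^2}$, I obtain
\[
  u\nn \leq u_n\bigl(1 - 2\gamma\mu(n+1)\pmal + \gamma^2\trcst(n+1)^{-2\alpha}\bigr) + \gamma^2\trcst(n+1)^{-2\alpha}\eqsp,
\]
which is exactly the analogue of \eqref{eq:strong_cvx_disc_base} with $\Lip^2$ and $\eta$ both replaced by $\trcst$. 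I would then set $v_n = n\pal u_n$ and, using Bernoulli's inequality $(1 + 1/n)\pal - 1 \leq \alpha/n$, bound
\[
  v\nn - v_n \leq \bigl(-2\gamma\mu + \gamma^2\trcst(n+1)\pmal + \alpha n^{\alpha-1}\bigr)u_n + \gamma^2\trcst(n+1)\pmal\eqsp.
\]

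When $\alpha < 1$ the bracket converges to $-2\gamma\mu < 0$, so there is $n_0$ with bracket $\leq -\gamma\mu$ for $n \geq n_0$; substituting $u_n = n\pmal v_n$ and $(n+1)\pmal \leq n\pmal$ yields $v\nn - v_n \leq F(n, v_n)$ with $F(n,x) = n\pmal(-\gamma\mu x + \gamma^2\trcst)$. When $\alpha = 1$ the bracket converges to $1 - 2\gamma\mu$, which is negative precisely under the extra hypothesis $\gamma > 1/(2\mu)$, and the same bound then holds with $\gamma\mu$ replaced by $\mu_\gamma = \gamma\mu - 1/2 > 0$. In both cases $F$ meets the hypotheses of \Cref{lemma:ode_discrete} (it is negative for $x$ large, and $F(n,x) \leq \gamma^2\trcst$ for all $n \geq 1$), so $(v_n)_n$ is bounded by some $\Cstrongdisc > 0$, i.e.\ $\expeLigne{\norm{X_N - x\st}^2} = u_N \leq \Cstrongdisc N\pmal$ for all $N \geq 1$.

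The only point beyond routine bookkeeping — and it is a mild one — is handling the extra term $\gamma^2\trcst(n+1)^{-2\alpha}\norm{X_n - x\st}^2$ coming from the quadratic growth of the noise: after dividing by $n\pal$ it decays like $(n+1)\pmal$ relative to the leading $-2\gamma\mu$ term, so it does not change the asymptotic sign of the coefficient of $v_n$. This is the same mechanism already exploited in the continuous-time counterpart \Cref{thm:strongly_b_2}, so no new idea is required.
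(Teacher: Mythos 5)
Your proof is correct and is essentially identical to the paper's: it uses \Cref{lemma:gradfz_norm} to bound the conditional second moment of $\nabla\tilde f(X_n,\cdot)$, obtains the same recursion \eqref{eq:strong_cvx_disc_base_fz}, introduces $v_n = n^{\alpha}u_n$, applies Bernoulli's inequality, and then invokes \Cref{lemma:ode_discrete} with the same function $F$. The only cosmetic difference is that you state explicitly up front that \rref{assum:strongly_cvx}-\ref{item:ftilde_str_conv} implies $\mu$-strong convexity of $f$ (and hence the variational inequality $\langle\nabla f(x),x-x\st\rangle\geq\mu\norm{x-x\st}^2$), which the paper uses implicitly in the corresponding display.
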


\begin{proof}
	Let $\gamma \in \ooint{0,1}$ and $\alpha \in \ocint{0,1}$. Let $(X_n)_{n\geq 0}$ be given by \eqref{eq:sgd}.
	Using \rref{assum:strongly_cvx}-\ref{item:ftilde_str_conv} and \Cref{lemma:gradfz_norm} we get for all $n \geq 0$,
	\begin{align}		
		\expen{\norm{X_{n+1}-x\st}^2} &= \expen{\norm{X_n-x\st-\gamma(n+1)^{-\alpha}\nabla \f(X_n,Z_{n+1})}^2} \\
		&= \norm{X_n-x\st}^2+\gamma^2(n+1)^{-2\alpha}\expen{\norm{\nabla \f(X_n,Z_{n+1})}^2} \\
		&\quad-2\gamma(n+1)^{-\alpha}\expen{\la X_n-x\st, \nabla \f(X_n, Z_{n+1}) \ra} \\
		&\leq \norm{X_n-x\st}^2+\trcst\gamma^2(n+1)^{-2\alpha}\parentheseDeux{1+\norm{X_n-x\st}^2}\\
    &\quad -2\gamma(n+1)^{-\alpha}\la X_n-x\st, \nabla f(X_n) \ra \eqsp .
   \end{align}
        Therefore, we have
        \begin{multline}
		\expe{\norm{X_{n+1}-x\st}^2} \\ \leq \expe{\norm{X_n-x\st}^2}\parentheseDeux{1-2\gamma (n+1)^{-\alpha}\mu + \gamma^2(n+1)^{-2\alpha}\trcst} + \trcst \gamma^2(n+1)^{-2\alpha} \eqsp . \label{eq:strong_cvx_disc_base_fz}
	\end{multline}
	We note now $u_n \doteq \expe{\norm{X_n-x\st}^2}$ and $v_n \doteq n^{\alpha} u_n$. Using \eqref{eq:strong_cvx_disc_base_fz} and Bernoulli's inequality we have, for all $n \geq 0$
	\begin{align}
		v_{n+1}-v_n &= (n+1)^{\alpha} u\nn - n^{\alpha}u_n \\
		&= (n+1)^{\alpha} (u\nn-u_n))+u_n((n+1)\pal - n\pal) \\
		&\leq \parentheseDeux{-2\gamma \mu + \gamma^2\trcst (n+1)\pmal}u_n+\trcst\gamma^2(n+1)\pmal+u_n n\pal \parentheseDeux{(1+1/n)\pal - 1} \\
		&\leq \parentheseDeux{-2\gamma \mu + \gamma^2\trcst (n+1)\pmal+\alpha n^{\alpha-1}}u_n+\trcst\gamma^2(n+1)\pmal \eqsp .
	\end{align}
	Therefore, in the case where $\alpha<1$, there exists $n_0 \geq 0$ such that for all $n \geq n_0$,
	\begin{align}
		v\nn-v_n &\leq -\gamma\mu u_n+\trcst\gamma^2(n+1)\pmal \\
		&\leq -\gamma\mu n\pmal v_n+\trcst\gamma^2(n+1)\pmal \\
		&\leq (n+1)\pmal(-\gamma\mu v_n +\trcst\gamma^2) \eqsp .
	\end{align}
	And in the case where $\alpha=1$, if $\gamma>1/(2\mu)$ we have the existence of $n_1 \geq 0$ such that for all $n \geq n_1$,
	\begin{equation}
		v_{n+1}-v_n\leq \parentheseDeux{(1/2-\gamma \mu) + \gamma^2\trcst (n+1)\pmal+\alpha n^{\alpha-1}}u_n+\trcst\gamma^2(n+1)\pmal \eqsp .
	\end{equation}
	Using \Cref{lemma:ode_discrete} this shows that, for $\alpha \in \ocint{0,1}$, there exists a constant $\Cstrongdisc > 0$ such that for all $n \geq 0$,
	$v_n \leq \Cstrongdisc$. This proves the result.
\end{proof}

\subsection{Equivalent to 
  \Cref{sec:conv-results-under}}
\label{sec:equiv-crefs-results}

The equivalent of \Cref{prop:kl2} and \Cref{prop:kl2_discrete} are given in
\Cref{thm:strongly_b_3} and \Cref{thm:strongly_b_4} respectively.

\begin{proposition_colt}
  \label{thm:strongly_b_3}
  Let $\alpha, \gamma \in \ooint{0,1}$ and $(\bfX_t)_{t\geq0}$ be given by
  \eqref{eq:sde}.  Assume \rref{assum:f_lip},
  \tup{\rref{assum:grad_sto}-\ref{item:ml}}, \rref{assum:lip_sigma} and that $f$
  verifies \eqref{eq:loj2}. Then there exists $\Cstrongloj>0$ such that for any
  $T > 0$,
        \begin{equation}
        	\expe{f(\bfX_T)-f\st} \leq \Cstrongloj T^{-\alpha} \eqsp .
        \end{equation}
\end{proposition_colt}

\begin{proof}
  Let $\alpha, \gamma \in \ooint{0,1}$ and $(\bfX_t)_{t\geq0}$ be given by
  \eqref{eq:sde}. Without loss of generality we can assume that
  $f\st=\min_{x\in\bRd}f(x)=0$.  We note
  $\mce(t)=(t+\gua)^{\alpha}\expe{f(\bfX_t)}$ and we apply
  Lemma~\ref{lemma:dynkin} to the stochastic process
  $((t+\gua)^{\alpha}f(\bfX_t))_{t \geq 0}$, and using \rref{assum:f_lip},
  \rref{assum:grad_sto}-\ref{item:ml}, \eqref{eq:loj2} and
  Lemma~\ref{lemma:trace_f} this gives, for all $t>0$,
\begin{align}
	\mce(t)-\mce(0) &= \int_0^t \alpha (s+\gua)^{\alpha-1} \expe{f(\bfX_s)}\dd s - \int_0^t \expe{\norm{\nabla f(\bfX_s)}^2} \dd s \\
	&\quad+ (\gua/2)\int_0^t (s+\gua)^{-\alpha}\expe{\trace(\nabla^2f(\bfX_s)\Sigma(\bfX_s)) }\dd s \\
	\dd \mce(t)/\dd t&\leq\alpha \mce(t)(t+\gua)^{-1}-(1/c)\mce(t)(t+\gua)^{-\alpha} + (\gua/2)\Lip \trcst\parenthese{1+\expe{f(\bfX_t)}} (t+\gua)^{-\alpha} \\
	& \leq \alpha \mce(t)(t+\gua)^{-1}-(1/c)\mce(t)(t+\gua)^{-\alpha} + (\gua/2)\Lip\trcst \mce(t)(t+\gua)^{-2\alpha} \\
	&\qquad + (\gua/2)\Lip \trcst(t+\gua)\pmal \eqsp .
\end{align}
We can now apply Lemma~\ref{lemma:ode}, concluding the proof.
\end{proof}
\begin{proposition_colt}
  \label{thm:strongly_b_4}
  Let $\alpha \in \ocint{0,1}$ and $\gamma \in \ooint{0,1}$. Let
  $(X_n)_{n\geq 0}$ be given by \eqref{eq:sgd}.  Assume \rref{assum:f_lip},
  \tup{\rref{assum:grad_sto}-\ref{item:ml}} and that $f$ verifies
  \eqref{eq:loj2}. Then there exists $\Cstronglojdisc>0$ such that for all
  $N\geq 1$,
	\begin{equation}
		\expe{f(X_N)-f\st} \leq \Cstronglojdisc N^{-\alpha} \eqsp .
	\end{equation}
	In the case where $\alpha=1$ we have to assume additionally that $\gamma>2/c$.
\end{proposition_colt}

\begin{proof}
	Let $\alpha \in \ocint{0,1}$ and $\gamma \in \ooint{0,1}$. Let $(X_n)_{n\geq 0}$ be given by \eqref{eq:sgd}. Let $n\geq 0$. Applying the descent lemma (using \rref{assum:f_lip}) and \Cref{lemma:gradfz} gives
	\begin{align}
		\expen{f(X\nn)}&=\expen{f(X_n-\gamma/(n+1)\pal \nabla \f(X_n,Z\nn)} \\
		&\leq f(X_n)-\gamma/(n+1)\pal\expen{\la \nabla f(X_n), \nabla \f(X_n,Z\nn) \ra} \\
		&\quad+ \gamma^2/(n+1)^{2\alpha} (\Lip/2)\expen{\norm{\nabla \f(X_n,Z\nn)}^2} \\
		&\leq f(X_n)-\gamma/(n+1)\pal\norm{\nabla f(X_n)}^2+(\Lip\gamma^2/2)(n+1)^{-2\alpha}\trcst\parentheseDeux{1+f(X_n)} \\
		\expe{f(X\nn)}-f\st &\leq \expe{f(X_n)}-f\st - \gamma(n+1)\pmal\expe{\norm{\nabla f(X_n)}^2}\\
		&\quad+(\Lip\trcst\gamma^2/2)(n+1)^{-2\alpha} + (\Lip\trcst\gamma^2/2)(n+1)^{-2\alpha} \parenthese{\expe{f(X_n)}-f\st} \eqsp .
	\end{align}
	This shows the existence of $n_2 \geq 0$ such that using \eqref{eq:loj2} we have for all $n \geq n_2$,
	\begin{align}
		\expe{f(X\nn)}-f\st &\leq (\expe{f(X_n)}-f\st)\parentheseDeux{1 - (\gamma c\pinv/2)(n+1)\pmal}+(\Lip\trcst\gamma^2/2)(n+1)^{-2\alpha} \eta \eqsp .
	\end{align}
	We note now for all $n \geq 0$, $u_n=\expe{f(X_n)}-f\st$ and $v_n=n\pal u_n$. We have
	\begin{align}
		v_{n+1}-v_n &= (n+1)^{\alpha} u\nn - n^{\alpha}u_n \\
		&= (n+1)^{\alpha} (u\nn-u_n))+u_n((n+1)\pal - n\pal) \\
		&\leq -(\gamma c\pinv/2) u_n +(\Lip\gamma^2\trcst/2)(n+1)\pmal +u_n n\pal \parentheseDeux{(1+1/n)\pal - 1} \\
		&\leq u_n(-(\gamma c\pinv/2) + \alpha n^{\alpha-1})+(\Lip\gamma^2\trcst/2)(n+1)\pmal \eqsp .
	\end{align}
	If $\alpha <1$, or if $1-\gamma c\pinv/2<0$ we have the existence of $n_3 \geq n_2$ and $\Cstrongtilde>0$ such that for all $n \geq n_3$,
	\begin{align}
		v\nn-v_n &\leq -\Cstrongtilde u_n + (\Lip\gamma^2\trcst/2)(n+1)\pmal \\
		&\leq \defEns{-\Cstrongtilde v_n + (\Lip\gamma^2\trcst/2)}(n+1)\pmal
	\end{align}
	This proves the existence of $\Cstronglojdisc > 0$ such that for all $n \geq 0$,
	\begin{equation}
	v_n \leq \Cstronglojdisc \eqsp ,
	\end{equation}
	concluding the proof.
\end{proof}



\section{Convex case (under \Cref{assum:grad_sto}-\ref{item:approx_sto})}
\label{sec:convex-case_appendix}

In this section we gather our results about the long-time behavior of SGD and
its continuous-time counterpart in
\Cref{assum:grad_sto}-\ref{item:approx_sto}. In \Cref{sec:technical-results-2},
we derive technical results. In \Cref{app:shamir_cont} we provide the proof of
\Cref{thm:shamir_continuous} (continuous-time setting). In
\Cref{app:shamir_discrete}, we give the proof of \Cref{thm:shamir_discrete}
(discrete-time setting).

\subsection{Technical Results}
\label{sec:technical-results-2}

\begin{lemma_colt}
  \label{lemma:bound_scal}
  Let $f \in \rmc^2(\rset^{\dim}, \rset)$. Assume \rref{assum:f_lip} and
  \tup{\rref{assum:grad_sto}-\ref{item:approx_sto}}. Then for any
  $x \in \rset^{\dim}$ we have
  \begin{equation}
    \abs{\langle \nabla^2 f(x) , \Sigma(x) \rangle} \leq \boundLSig \eqsp , \qquad \absLigne{\langle \nabla f(x) \nabla f(x)^{\top}, \Sigma(x) \rangle} \leq \eta^2 \norm{\nabla f(x) }^2 \eqsp .
  \end{equation}
  Similarly, assume \rref{assum:f_lip} and
  \tup{\rref{assum:grad_sto}-\ref{item:ml}}. Then there exists $C \geq 0$ such tha for any $x \in \rset^{\dim}$
  we have
  \begin{equation}
    \abs{\langle \nabla^2 f(x) , \Sigma(x) \rangle} C (1 + \norm{x}^2) \eqsp , \qquad \absLigne{\langle \nabla f(x) \nabla f(x)^{\top}, \Sigma(x) \rangle} \leq C \norm{\nabla f(x) }^2(1 + \norm{x}^2) \eqsp .
  \end{equation}
\end{lemma_colt}

\begin{proof}
  Let $x\in \rset^{\dim}$. Using Cauchy-Schwarz's inequality, we have
  $\abs{\langle \nabla^2 f(x), \Sigma(x) \rangle} \leq \normLigne{\nabla^2f(x)}
  \normLigne{\Sigma(x)}_{\ast}$, where $\norm{\cdot}$ is the operator norm and
  $\norm{\cdot}_{\ast}$ is the nuclear norm.  Using \rref{assum:f_lip} we have
  $\normLigne{\nabla^2f(x)}\leq \Lip$ for all $x\in \bRd$. In addition, denoting
  $(\lambda_i)_{i \in \{1, \dots, d\}}$ the eigenvalues of $\Sigma(x)$, using
  that $\Sigma$ is positive semi-definite and \Cref{assum:grad_sto} we have
  \begin{equation}
    \textstyle{
      \norm{\Sigma(x)}_{\ast} = \sum_{i=1}^{d} \abs{\lambda_i} = \sum_{i=1}^{d} \lambda_i = \trace(\Sigma(x)) \leq \eta \eqsp .
      }
  \end{equation}
This concludes the first part of the proof. For the second part we have
\begin{equation}
  \abs{\langle \nabla f(x) \nabla f(x)^{\top}, \Sigma(x) \rangle} \leq \sup_{i \in \{1, \dots, d\}} \lambda_i \norm{\nabla f(x)}^2 \leq \eta^2 \norm{\nabla f(x)}^2 \eqsp ,  
\end{equation}
which concludes the first part of the proof. The last part of the proof is an
immediate consequence of \Cref{lemma:bound_f_sig}.
\end{proof}
The following lemma consists into taking the expectation in Itô's formula.
\begin{lemma_colt}
  \label{lemma:dynkin}
  Let $\alpha \in \coint{0, 1}$ and $\gamma >0$. Assume
  $f, g \in \rmc^2(\rset^{\dim}, \rset)$, \rref{assum:f_lip},
  \tup{\rref{assum:grad_sto}} and \rref{assum:lip_sigma}
  and let $(\bfX_t)_{t \geq 0}$ solution of \eqref{eq:sde}.  Then for any
  $\varphi \in \rmc^1(\coint{0, +\infty}, \rset)$, $Y \in \mcf_0$ and
  $\expe{\normLigne{Y}^2+ \abs{g(Y)}} < +\infty$, we have the following results:
  \begin{enumerate}[wide, labelwidth=!, labelindent=0pt, label=(\alph*)]
  \item For any $t \geq 0$, \begin{align} &\expe{\norm{\bfX_t - Y}^2\varphi(t)}
      = \expe{\norm{\bfX_0 - Y}^2\varphi(0)} \\ & \qquad- 2 \int_0^t (\gua +
      s)^{-\alpha} \varphi(s) \expe{\langle \nabla f(\bfX_s), \bfX_s - Y
        \rangle} \rmd s \\ & \qquad + \gua \int_0^t (\gua + s)^{-2\alpha}
      \varphi(s) \expe{ \trace(\Sigma(\bfX_s))} \rmd s + \int_0^t \varphi'(s)
      \expeLigne{\normLigne{\bfX_s - Y}^2} \rmd s \eqsp .
                                                                                                                                                                                                                          \label{eq:norme_carre}
  \end{align}
  \item  For any $t \geq 0$   \begin{align}
    &\expe{(f(\bfX_t) - g(Y))\varphi(t)} = \expe{(f(\bfX_0) - g(Y))\varphi(0)} \\ & \quad - \int_0^t (\gua + s)^{-\alpha} \varphi(s) \expeLigne{\normLigne{\nabla f(\bfX_s)}^2} \rmd s  \\ & \quad +  (\gua/2) \int_0^t (\gua + s)^{-2\alpha} \varphi(s) \expe{ \langle \nabla^2 f(\bfX_s) , \Sigma(\bfX_s) \rangle} \rmd s  \\ & \quad + \int_0^t  \varphi'(s) \expe{f(\bfX_s) - g(Y)} \rmd s  \eqsp .
      \end{align}
  \item If $\expeLigne{\normLigne{Y}^{2p}}< +\infty$, then for any $t \geq 0$
      \begin{align}
        &\expe{\norm{\bfX_t - Y}^{2p}\varphi(t)} = \expe{\norm{\bfX_0 - Y}^{2p}\varphi(0)} \\ & \ - 2p \int_0^t (\gua + s)^{-\alpha} \varphi(s) \expe{\langle \nabla f(\bfX_s), \bfX_s - Y \rangle \norm{\bfX_t - Y}^{2(p-1)}} \rmd s   \\ & \ +  \gua p  \int_0^t (\gua + s)^{-2\alpha} \varphi(s) \expe{ \trace(\Sigma(\bfX_s)) \norm{\bfX_s-Y}^{2(p-1)}} \rmd s \\ & \  +
\gua 2p(p-1)  \int_0^t (\gua + s)^{-2\alpha} \varphi(s) \expe{ \langle \Sigma(\bfX_s) , (\bfX_t-Y)(\bfX_t - Y)^{\top} \rangle  \norm{\bfX_s-Y}^{2(p-2)}} \rmd s  \\ & \ + \int_0^t \varphi'(s) \expe{(f(\bfX_s) - g(Y))^{2p}} \rmd s  \eqsp .
      \end{align}
  \item If $\expeLigne{\abs{g(Y)}^{p}}< +\infty$, then for any $t \geq 0$
      \begin{align}
      &\expe{(f(\bfX_t) - g(Y))^{p}\varphi(t)} = \expe{(f(\bfX_0) - g(Y))^{p}\varphi(0)} \\ & \ - p \int_0^t (\gua + s)^{-\alpha} \varphi(s) \expe{\norm{\nabla f(\bfX_s)}^2(f(\bfX_s)-g(Y))^{p-1}} \rmd s   \\ & \ +  \gua (p/2)  \int_0^t (\gua + s)^{-2\alpha} \varphi(s) \expe{ \langle \nabla^2 f(\bfX_s) , \Sigma(\bfX_s) \rangle (f(\bfX_s)-g(Y))^{p-2}} \rmd s \\ & \  + \gua p(p-1)/2 \int_0^t (\gua + s)^{-2\alpha}\varphi(s) \expe{ \langle \nabla f(\bfX_s) \nabla f(\bfX_s)^{\top}, \Sigma(\bfX_s) \rangle (f(\bfX_s)-g(Y))^{p-2}} \\ & \ + \int_0^t \varphi'(s) \expe{(f(\bfX_s) - g(Y))^{p}} \rmd s  \eqsp .
      \end{align}
  \end{enumerate}
    \end{lemma_colt}

    \begin{proof}
      Let $\alpha \in \coint{0,1}$, $\gamma >0$ and $(\bfX_t)_{t \geq 0}$ the solution of \eqref{eq:sde}.
      Note that for any $t \geq 0$, we have
      \begin{equation}
        \label{eq:mean_quad}
 \langle \bfX \rangle_t = \gua \int_0^t (\gua + s)^{-2\alpha} \trace(\Sigma(\bfX_s))\rmd s \eqsp .
      \end{equation}
      We divide the rest of the proof into our parts.
      \begin{enumerate}[wide, labelwidth=!, labelindent=0pt, label=(\alph*)]
      \item First, let $y \in \rset^{\dim}$ and
        $F_y: \coint{0, +\infty} \times \rset^{\dim}$ such that for any
        $t \in \coint{0, +\infty}$, $x \in \rset^{\dim}$,
        $F_y(t, x) = \varphi(t) \normLigne{x -y}^2$. Since
        $(\bfX_t)_{t \geq 0}$ is a strong solution of \eqref{eq:sde} we have
        that $(\bfX_t)_{t \geq 0}$ is a continuous semi-martingale. Using this
        result, the fact that $F \in \rmc^{1,2}(\coint{0, +\infty}, \rset^{d})$
        and Itô's lemma \citep[Chapter 3, Theorem 3.6]{karatzas1991brownian} we
        obtain that for any $t \geq 0$ almost surely
        \begin{align}
          F_y(t, \bfX_t) &= F_y(0, \bfX_0) + \int_0^t \partial_1 F_y(s, \bfX_s) \rmd s + \int_0^t \langle \partial_2 F_y(s, \bfX_s), \rmd \bfX_s \rangle \\
          &\quad+ (1/2) \int_0^t \langle \partial_{2, 2} F_y(s, \bfX_s), \rmd \langle \bfX \rangle_s \rangle  \\
                       & = F_y(0, \bfX_0) + \int_0^t \varphi'(s) \norm{\bfX_s - y}^2 \rmd s + \int_0^t \langle \partial_2 F_y(s, \bfX_s),  \rmd \bfX_s \rangle   \\
                       &\quad+ (1/2) \int_0^t \langle \partial_{2, 2} F_y(s, \bfX_s),  \rmd \langle \bfX \rangle_s \rangle \\
                         & = F_y(0, \bfX_0) + \int_0^t \varphi'(s) \norm{\bfX_s - y}^2 \rmd s - 2 \int_0^t  (\gua + s)^{-\alpha}\varphi(s)  \langle \nabla f(\bfX_s), \bfX_s - y \rangle \rmd s \\ & \qquad + 2 \gua^{1/2}\int_0^t(\gua +s)^{-\alpha} \varphi(s) \langle \bfX_s -y , \Sigma(\bfX_s)^{1/2} \rmd \bfB_s \rangle  \\
          & \qquad + \gua  \int_0^t (\gua + s)^{-2\alpha}\varphi(s)\trace(\Sigma(\bfX_s)) \rmd s \eqsp .          \label{eq:ito_carre}
        \end{align}
        Using \Cref{assum:f_lip} have for any $x \in \rset^{\dim}$,
        \begin{equation}
          \abs{\langle \nabla f(x), x -y \rangle} \leq \norm{\nabla f(0)}\norm{x-y} + \Lip\norm{x}\norm{x-y} \eqsp .
        \end{equation}
Therefore, using this result \Cref{lemma:bound_moments}, Cauchy-Schwarz's inequality and that $\expeLigne{\normLigne{Y}^2} < +\infty$, we obtain that for any $t \geq 0$ there exists $\btta \geq 0$ such that
\begin{equation}
  \label{eq:bound_sup}
  \sup_{s \in \ccint{0, t}} \expeLigne{\normLigne{\bfX_s - Y}^2} \leq \btta \eqsp , \qquad \sup_{s \in \ccint{0, t}} \expe{\abs{\langle \nabla f(\bfX_s), \bfX_s -Y \rangle}} \leq \btta \eqsp .
\end{equation}
In addition, we have using \Cref{lemma:bound_f_sig} that for any $t \geq 0$,
$\expeLigne{|\trace(\Sigma(\bfX_s))|} = \expeLigne{\trace(\Sigma(\bfX_s))} \leq
C (1 + \btta)$ if \Cref{assum:grad_sto}-\ref{item:ml} holds or
$\expeLigne{|\trace(\Sigma(\bfX_s))|} = \expeLigne{\trace(\Sigma(\bfX_s))} \leq
\eta$ if \Cref{assum:grad_sto}-\ref{item:approx_sto} holds. Combining these
results, \eqref{eq:bound_sup}, \eqref{eq:ito_carre}, that
$(\int_0^t(\gua + t)^{-\alpha}\varphi(t)\langle \bfX_t- Y, \Sigma(\bfX_t)^{1/2}
\rmd \bfB_t \rangle)_{t \geq 0}$ is a martingale and Fubini-Lebesgue's theorem
we obtain for any $t \geq 0$
\begin{align}
  \expe{\varphi(t) \norm{\bfX_t -Y}^2} &= \expe{\CPE{F_Y(t, \bfX_t)}{\mcf_0}} \\
  &=\expe{\varphi(0) \norm{\bfX_0 -Y}^2} + \int_0^t \varphi'(s) \expe{\norm{\bfX_s - Y}^2} \rmd s \\ &\quad - 2 \int_0^t  (\gua + s)^{-\alpha}\varphi(s)  \expe{\langle \nabla f(\bfX_s), \bfX_s - Y \rangle} \rmd s  \\
  &\quad+ \gua  \int_0^t (\gua + s)^{-2\alpha}\varphi(s)\expe{\trace(\Sigma(\bfX_s))} \rmd s \eqsp ,
\end{align}
which concludes the proof of \eqref{eq:norme_carre}.
      \item Second,  let $y \in \rset^{\dim}$ and $F: \coint{0, +\infty} \times \rset^{\dim} $ such that for any $t \in \coint{0, +\infty}$, $x \in \rset^{\dim}$, $F_y(t, x) = \varphi(t) (f(x) - g(y))$.  Using that $(\bfX_t)_{t \geq 0}$ is a continuous semi-martingale, the fact that $F \in \rmc^{1,2}(\coint{0, +\infty}, \rset^{d})$ and Itô's lemma \citep[Chapter 3, Theorem 3.6]{karatzas1991brownian} we obtain that for any $t \geq 0$ almost surely
        \begin{align}
          \label{eq:ito_f}
          F_y(t, \bfX_t) &= F_y(0, \bfX_0) + \int_0^t \partial_1 F_y(s, \bfX_s) \rmd s + \int_0^t \langle \partial_2 F_y(s, \bfX_s), \rmd \bfX_s \rangle \\ & \quad + (1/2) \int_0^t \langle \partial_{2, 2} F_y(s, \bfX_s), \rmd \langle \bfX \rangle_s \rangle  \\
                       & = F_y(0, \bfX_0) + \int_0^t \varphi'(s) (f(\bfX_s) - g(y)) \rmd s + \int_0^t \langle \partial_2 F_y(s, \bfX_s),  \rmd \bfX_s \rangle   \\ 
                       &\quad + (1/2) \int_0^t \langle \partial_{2, 2} F_y(s, \bfX_s),  \rmd \langle \bfX \rangle_s \rangle \\
          & = F_y(0, \bfX_0) + \int_0^t \varphi'(s) (f(\bfX_s) - g(y)) \rmd s - \int_0^t  (\gua + s)^{-\alpha}\varphi(s)   \norm{\nabla f (\bfX_s)}^2 \rmd s \\ 
          & \qquad + \gua^{1/2}\int_0^t (\gua + s)^{-\alpha} \varphi(s) \langle \nabla f(\bfX_s) , \Sigma(\bfX_s)^{1/2} \rmd \bfB_s \rangle  \\
          &\quad + (\gua/2)  \int_0^t (\gua + s)^{-2\alpha}\varphi(s)\langle \nabla^2 f(\bfX_s) , \Sigma(\bfX_s) \rangle \rmd s \eqsp .
        \end{align}
        Using \Cref{assum:f_lip} and that for any $a, b \geq 0$,
        $(a+b)^2 \leq 2(a^2+b^2)$ we have for any $x,y \in \rset^{\dim}$,
        \begin{equation}
          |f(x) -g(y)| \leq |f(0)| + \|  \nabla f(0)\| \| x \| + (\Lip/2) \| x \|^2 + |g(y)| \eqsp , \quad \norm{\nabla f(x)}^2 \leq 2 \norm{\nabla f(0)}^2 + 2 \Lip^2 \norm{x}^2 \eqsp .
        \end{equation}
Therefore, using this result \Cref{lemma:bound_moments}, Cauchy-Schwarz's inequality and that $\expeLigne{g(Y)^2} < +\infty$, we obtain that for any $t \geq 0$ there exists $\btta \geq 0$ such that
\begin{equation}
  \sup_{s \in \ccint{0, t}} \expe{\abs{f(\bfX_s) - g(Y)}} \leq \btta \eqsp , \quad \sup_{s \in \ccint{0, t}} \expeLigne{\normLigne{\nabla f(\bfX_s)}^2} \leq \btta \eqsp , \quad \sup_{s \in \ccint{0,t}} \expeLigne{\abs{\langle \nabla^2 f(\bfX_s), \Sigma(\bfX_s) \rangle}} \leq \btta \eqsp .
\end{equation}
Combining this result, \Cref{lemma:bound_scal}, the fact that $(\int_0^t\varphi(s) \langle \nabla f(\bfX_s) , \Sigma(\bfX_s)^{1/2} \rmd \bfB_s \rangle)_{t \geq 0}$ is a martingale and Fubini-Lebesgue's theorem we obtain that for any $t \geq 0$
\begin{align}
  \expe{F_y(t, \bfX_t) } &= \expe{\CPE{F_Y(t, \bfX_t)}{\mcf_0}} \\
                         &= \expe{\varphi(0)(f(\bfX_0) - g(Y))} + \int_0^t \varphi'(s) \expe{(f(\bfX_s) - g(Y))} \rmd s \\
                         &\quad- \int_0^t  (\gua + s)^{-\alpha}\varphi(s)   \expe{\norm{\nabla f (\bfX_s)}^2} \rmd s \\
                          & \qquad   + (\gua/2)  \int_0^t (\gua + s)^{-2\alpha}\varphi(s) \expe{ \langle \nabla^2 f(\bfX_s) , \Sigma(\bfX_s) \rangle } \rmd s \eqsp .
\end{align}
\item Let $y \in \rset^{\dim}$ and $F_y: \coint{0, +\infty} \times \rset^{\dim} $ such that for any $t \in \coint{0, +\infty}$, $x, y\in \rset^{\dim}$, $F_y(t, x) = \varphi(t) \norm{x - y}^{2p}$.  Using that $(\bfX_t)_{t \geq 0}$ is a continuous semi-martingale, that $F_y \in \rmc^{1,2}(\coint{0, +\infty}, \rset^{d})$ and Itô's lemma \citep[Chapter 3, Theorem 3.6]{karatzas1991brownian} we obtain that for any $t \geq 0$ almost surely
        \begin{align}
          \label{eq:ito_fp}
          F_y(t, \bfX_t) &=F_y(0, \bfX_0) + \int_0^t \partial_1 F_y(s, \bfX_s) \rmd s + \int_0^t \langle \partial_2 F_y(s, \bfX_s), \rmd \bfX_s \rangle \\ & \quad + (1/2) \int_0^t \langle \partial_{2, 2} F_y(s, \bfX_s), \rmd \langle \bfX \rangle_s \rangle  \\
                       & = F_y(0, \bfX_0) + \int_0^t \varphi'(s) \norm{\bfX_s - y}^{2p} \rmd s + \int_0^t \langle \partial_2 F_y(s, \bfX_s),  \rmd \bfX_s \rangle   \\
                       &\quad +(1/2) \int_0^t \langle \partial_{2, 2} F_y(s, \bfX_s),  \rmd \langle \bfX \rangle_s \rangle \\
                       & = F_y(0, \bfX_0) + \int_0^t \varphi'(s) \norm{\bfX_s - y}^{2p} \rmd s \\
                       &\quad - 2p \int_0^t  (\gua + s)^{-\alpha}\varphi(s)   \langle \nabla f(\bfX_s), \bfX_s - y \rangle \norm{\bfX_s) - y}^{2(p-1)} \rmd s \\ 
                       & \quad + 2p \gua^{1/2}\int_0^t (\gua + s)^{-\alpha} \varphi(s) \langle \bfX_s - y , \Sigma(\bfX_s)^{1/2} \norm{\bfX_s - y}^{2(p-1)} \rmd \bfB_s \rangle   \\ & \quad + p \gua  \int_0^t (\gua + s)^{-2\alpha}\varphi(s)\trace(\Sigma(\bfX_s)) \norm{\bfX_s - y}^{2(p-1)} \rmd s \\
          & \quad + 2p(p-1)  \int_0^t (\gua + s)^{-2\alpha}\varphi(s)\langle (\bfX_s - y) (\bfX_s -y)^{\top}, \Sigma(\bfX_s)  \rangle \norm{\bfX_s - y}^{2(p-2)} \rmd s \eqsp .
        \end{align}
        Using \Cref{assum:f_lip} and that for any $a, b \geq 0$,
        $(a+b)^2 \leq 2(a^2+b^2)$ we have for any $x,y \in \rset^{\dim}$,
        Therefore, using this result \Cref{lemma:bound_moments}, Cauchy-Schwarz's inequality and that $\expeLigne{\normLigne{Y}^2} < +\infty$, we obtain that for any $t \geq 0$ there exists $\btta \geq 0$ such that
\begin{equation}
  \label{eq:bound_sup2}
  \sup_{s \in \ccint{0, t}} \expe{\norm{\bfX_s - Y}^{2p}} \leq \btta \eqsp , \qquad \sup_{s \in \ccint{0, t}} \expe{\abs{\langle \nabla f(\bfX_s), \bfX_s -Y \rangle\norm{\bfX_s - Y}^{2(p-1)}}} \leq \btta \eqsp ,
\end{equation}
and
\begin{equation}
  \label{eq:bound_sup3}
  \sup_{s \in \ccint{0, t}} \expe{\normLigne{\Sigma^{1/2}(\bfX_s)}\norm{\bfX_s - Y}^{2p-1}} \leq \btta \eqsp , \quad \sup_{s \in \ccint{0, t}} \expe{\norm{\Sigma(\bfX_s)}\norm{\bfX_s - y}^{2(p-1)}} \leq \btta \eqsp .
\end{equation}
Combining these results, \Cref{lemma:bound_scal}, that $(\int_0^t\varphi(s) \langle \nabla f(\bfX_s) , \Sigma(\bfX_s)^{1/2} (f(\bfX_s) - g(Y))^{p-1}\rmd \bfB_s \rangle)_{t \geq 0}$ is a martingale and Fubini-Lebesgue's theorem we obtain that for any $t \geq 0$
\begin{align}
  &\expe{F_y(t, \bfX_t) } = \expe{\CPE{F_Y(t, \bfX_t)}{\mcf_0}} \\
                         &\quad = \expe{\varphi(0)\norm{\bfX_0 - Y}^{2p}} + \int_0^t \varphi'(s) \expe{\norm{\bfX_s - Y}^{2p}} \rmd s \\ & \quad - 2p \int_0^t  (\gua + s)^{-\alpha}\varphi(s)   \expe{\langle \nabla f(\bfX_s), \bfX_s - y \rangle \norm{\bfX_s) - y}^{2(p-1)}} \rmd s \\ & \quad   + \gua p  \int_0^t (\gua + s)^{-2\alpha}\varphi(s) \expe{\trace(\Sigma(\bfX_s)) \norm{\bfX_s - y}^{2(p-1)} } \rmd s \\
                         & \quad + 2\gua p(p-1)  \int_0^t (\gua + s)^{-2\alpha}\varphi(s) \expe{ \langle (\bfX_s - y) \nabla (\bfX_s -y)^{\top}, \Sigma(\bfX_s)  \rangle \norm{\bfX_s - y}^{2(p-2)}} \rmd s \eqsp .
\end{align}
      \item Let $y \in \rset^{\dim}$ and $F: \coint{0, +\infty} \times \rset^{\dim} $ such that for any $t \in \coint{0, +\infty}$, $x, y\in \rset^{\dim}$, $F_y(t, x) = \varphi(t) (f(x) - g(y))^{2p}$.  Using that $(\bfX_t)_{t \geq 0}$ is a continuous semi-martingale, the fact that $F \in \rmc^{1,2}(\coint{0, +\infty}, \rset^{d})$ and Itô's lemma \citep[Chapter 3, Theorem 3.6]{karatzas1991brownian} we obtain that for any $t \geq 0$ almost surely
        \begin{align}
          \label{eq:ito_fp}
          &F_y(t, \bfX_t) =F_y(0, \bfX_0) + \int_0^t \partial_1 F_y(s, \bfX_s) \rmd s + \int_0^t \langle \partial_2 F_y(s, \bfX_s), \rmd \bfX_s \rangle \\ & \ + (1/2) \int_0^t \langle \partial_{2, 2} F_y(s, \bfX_s), \rmd \langle \bfX \rangle_s \rangle  \\
                       & = F_y(0, \bfX_0) + \int_0^t \varphi'(s) (f(\bfX_s) - g(y))^{2p} \rmd s \\
                       & \ + \int_0^t \langle \partial_2 F_y(s, \bfX_s),  \rmd \bfX_s \rangle   + (1/2) \int_0^t \langle \partial_{2, 2} F_y(s, \bfX_s),  \rmd \langle \bfX \rangle_s \rangle \\
                       & = F_y(0, \bfX_0) + \int_0^t \varphi'(s) (f(\bfX_s) - g(y))^{2p} \rmd s \\
                       & \ - 2p \int_0^t  (\gua + s)^{-\alpha}\varphi(s)   \norm{\nabla f (\bfX_s)}^2(f(\bfX_s) - g(y))^{2(p-1)} \rmd s \\ & \ + 2p \gua^{1/2}\int_0^t (\gua + s)^{-\alpha} \varphi(s) \langle \nabla f(\bfX_s) , \Sigma(\bfX_s)^{1/2} (f(\bfX_s) - g(y))^{2(p-1)} \rmd \bfB_s \rangle   \\ & \ + p \gua  \int_0^t (\gua + s)^{-2\alpha}\varphi(s)\langle \nabla^2 f(\bfX_s) , \Sigma(\bfX_s)  \rangle (f(\bfX_s) - g(y))^{2(p-1)} \rmd s \\
          & \ + 2p(p-1)  \int_0^t (\gua + s)^{-2\alpha}\varphi(s)\langle \nabla f(\bfX_s) \nabla f(\bfX_s)^{\top}, \Sigma(\bfX_s)  \rangle (f(\bfX_s) - g(y))^{2(p-2)} \rmd s
        \end{align}
        Using \Cref{assum:f_lip} and that for any $a, b \geq 0$,
        $(a+b)^2 \leq 2(a^2+b^2)$ we have for any $x,y \in \rset^{\dim}$,
        \begin{equation}
          \begin{aligned}
            & |f(x) -g(y)|^{2p} \leq 4^{2p-1}|f(0)|^{2p} + 4^{2p-1} \| \nabla f(0)\|^{2p} \| x \|^{2p} + (4^{2p-1}\Lip/2) \| x \|^{4p} + 4^{2p-1}|g(y)|^{2p} \eqsp , \\
            & \norm{\nabla f(x)}^2 \leq 2 \norm{\nabla f(0)}^2 + 2 \Lip^2 \norm{x}^2 \eqsp .
          \end{aligned}
        \end{equation}
Therefore, using this result \Cref{lemma:bound_moments}, \Cref{lemma:bound_scal}, Hölder's inequality and that $\expeLigne{g(Y)^2} < +\infty$, we obtain that for any $t \geq 0$ there exists $\btta \geq 0$ such that
\begin{equation}
  \begin{aligned}
    &\sup_{s \in \ccint{0, t}} \expe{\abs{f(\bfX_s) - g(Y)}^{2p}} \leq \btta \eqsp , \qquad \sup_{s \in \ccint{0, t}} \expe{\norm{\nabla f(\bfX_s)}^2\abs{f(\bfX_s) - g(Y)}^{2(p-1)}} \leq \btta \eqsp , \\
    &\sup_{s \in \ccint{0, t}} \expe{\abs{\langle \nabla f(\bfX_s) \nabla f(\bfX_s)^{\top}, \Sigma(\bfX_s)  \rangle (f(\bfX_s) - g(Y))^{2(p-2)}}} \leq \btta \eqsp .
    \end{aligned}
\end{equation}
Combining this result, \Cref{lemma:bound_scal}, that $(\int_0^t\varphi(s) \langle \nabla f(\bfX_s) , \Sigma(\bfX_s)^{1/2} (f(\bfX_s) - g(Y))^{p-1}\rmd \bfB_s \rangle)_{t \geq 0}$ is a martingale and Fubini-Lebesgue's theorem we obtain that for any $t \geq 0$
\begin{align}
  &\expe{F_y(t, \bfX_t) } = \expe{\CPE{F_Y(t, \bfX_t)}{\mcf_0}} \\
  &= \expe{\varphi(0)(f(\bfX_0) - g(Y))^{2p}} + \int_0^t \varphi'(s) \expe{(f(\bfX_s) - g(Y))^{2p}} \rmd s \\ & \quad - 2p \int_0^t  (\gua + s)^{-\alpha}\varphi(s)   \expe{\norm{\nabla f (\bfX_s)}^2(f(\bfX_s) - g(y))^{2(p-1)}} \rmd s \\ & \quad   + \gua p  \int_0^t (\gua + s)^{-2\alpha}\varphi(s) \expe{ \langle \nabla^2 f(\bfX_s) , \Sigma(\bfX_s) \rangle (f(\bfX_s) - g(Y))^{2(p-1)}} \rmd s \\
  & \quad + 2\gua p(p-1)  \int_0^t (\gua + s)^{-2\alpha}\varphi(s) \expe{ \langle \nabla f(\bfX_s) \nabla f(\bfX_s)^{\top}, \Sigma(\bfX_s) \rangle (f(\bfX_s) - g(Y))^{2(p-2)}} \rmd s \eqsp .
\end{align}
      \end{enumerate}
    \end{proof}
    The following lemma is a useful tool that converts results on $\rmc^2$
    functions to $\rmc^1$ functions.
\begin{lemma_colt}
	\label{lemma:eps}
	Assume \rref{assum:f_lip}, \tup{\rref{assum:f}-\ref{item:approx_sto}},
        \rref{assum:lip_sigma} and that $\argmin_{x \in \rset^{\dim}} f$ is
        bounded. Then there exists $(\feps)_{\veps>0}$ such that for any
        $\vareps >0$, $\feps$ is convex, $\rmc^2$ with $\Lip$-Lipschitz
        continuous gradient. In addition, there exists $\Db \geq 0$ such that
        the following properties are satisfied.
	\begin{enumerate}[wide, labelwidth=!, labelindent=0pt, label=(\alph*)]
        \item For all $\veps>0$, $\feps$ admits a minimizer $x\st_{\veps}$ and
          $\limsup_{\veps\to0}\feps(x_{\veps}\st)\leq f(x\st)$.
        \item $\liminf_{\vareps \to 0} \| x_{\vareps}^{\star} \| \leq \Db$.
        \item for any $T \geq 0$,
          $\lim_{\vareps \to 0}  \expe{|\feps(\bfX_{T, \vareps}) - f(\bfX_T)|}  = 0$ , where
          $(\bfX_{t, \vareps})_{t \geq 0}$ is the solution of \eqref{eq:sde}
          replacing $f$ by $f_{\vareps}$.
	\end{enumerate}
\end{lemma_colt}

\begin{proof}
Let $\varphi \in \rmc^{\infty}_c(\bR^d,\bR_+)$ be an even compactly-supported function such that $\int_{\rset^{\dim}} \varphi(z) \rmd z = 1$. For any $\varepsilon>0$ and $x\in \bRd$, let $\phe(x)=\varepsilon^{-\dim}\varphi(x/\varepsilon)$ and $\feps=\phe\ast f$. Since $\varphi \in \rmc^{\infty}_c(\bR^d,\bR_+)$ and is compactly-supported, we have $\feps \in \rmc^{\infty}(\rset^{\dim}, \rset)$. In addition, we have for any $\vareps >0$, $(\nabla f)_{\vareps} = \nabla \feps$.

First, we show that for any $\vareps$, $f_{\vareps}$ is convex and
$\nabla \feps$ is $\Lip$-Lipschitz continuous.  Let $\vareps >0$, $x,y \in \bRd$
and $t \in \ccint{0,1}$. Using \tup{\Cref{assum:f}-\ref{item:approx_sto}} we
have
\begin{align}
	\feps(tx+(1-t)y)&=\intd f(tx+(1-t)y-z)\phe(z) \dd z
	\\ &\leq \intd \defEns{tf(x-z)+(1-t)f(y-z)}\phe(z) \dd z \\
	& \leq t\feps(x)+(1-t)\feps(y) \eqsp .
\end{align}
Hence, $\feps$ is convex. In addition, using \Cref{assum:f_lip} and that $\int_{\rset^{\dim}} \phe(z) \rmd z = 1$ we have
\begin{align}
	\norm{\nabla \feps(x)-\nabla \feps(y)}
	\leq \intd \norm{\nabla f(x-z)-\nabla f(y-z)}\phe(z) \dd z
	\leq \Lip \norm{x-y} \eqsp,
\end{align}
which proves that $\nfeps$ is $\Lip$-Lipschitz continuous.

Second we show that $\feps$ and $\nfeps$ converge uniformly towards $f$ and $\nabla f$. Let $\veps >0$, $x \in \bRd$. Using the convexity of $f$ and that $\phe$ is even, we get
\begin{align}
	\feps(x)-f(x)&=\intd (f(x-z)-f(x))\phe(z) \dd z \\
	&\geq -\intd \la \nabla f(x),z\ra \phe(z) \dd z \\
	&\geq -\la \nabla f(x), \intd z\phe(z) \dd z \ra
	\geq 0 \eqsp , 	\label{eq:tech_geq}
\end{align}
Conversely, using the descent lemma \citep[Lemma 1.2.3]{nesterov2004introductory} and that $\phe$ is even, we have
\begin{align}
	\feps(x)-f(x)&=\intd (f(x-z)-f(x))\phe(z) \dd z \\
	&\leq \intd \parenthese{- \la \nabla f(x), z \ra + (\Lip/2)\norm{z}^2}\phe(z)  \dd z \\
	&\leq (\Lip/2)\intd \veps^2\norm{z/\veps}^2\veps^{-d}\varphi(z/\veps)\dd z \leq (\Lip/2)\veps^2\intd \norm{u}^2\varphi(u)\dd u \eqsp . \label{eq:tech_leq}
\end{align}
Combining \eqref{eq:tech_geq} and \eqref{eq:tech_leq} we get that $\lim_{\vareps \to 0} \| f - \feps \|_{\infty} =0$.
Using \Cref{assum:f_lip} we have for any $x \in \rset^{\dim}$
\begin{multline}
  \norm{\nfeps(x) - \nabla f(x)} \leq \norm{(\nabla f)_{\vareps}(x) - \nabla f(x)} \\ \leq \int_{\rset^{\dim}} \| \nabla f(x -z ) - \nabla f(x) \| \phe(z) \rmd z \leq \Lip \vareps \int_{\rset^{\dim}} \|z \| \varphi(z) \rmd z \eqsp ,
\end{multline}
Hence, we obtain that $\lim_{\vareps \to 0} \| \nabla \feps - \nabla f \|_{\infty} = 0$.
Finally, since $f$ is coercive \citep[Proposition B.9]{bertsekas} and $(f_{\vareps})_{\vareps >0}$ converges uniformly towards $f$ we have that for any $\vareps >0$, $\feps$ is coercive.

We divide the rest of the proof into three parts.
\begin{enumerate}[wide, labelwidth=!, labelindent=0pt, label=(\alph*)]
\item Let $\vareps >0$. Since $\feps$ is coercive and continuous it admits a minimizer $x_{\vareps}^{\star}$.
  In addition, we have
  \begin{equation}
    \label{eq:tech_ineq}
    \feps(x_{\vareps}^{\star}) \leq \feps(x^{\star}) \leq f(x^{\star}) + \| \feps - f \|_{\infty} \eqsp .
  \end{equation}
  Therefore, $\limsup_{\vareps \to 0} \feps(x_{\vareps}^{\star}) \leq f(x^{\star})$.
\item Let $\vareps \in \ocint{0,1}$. Using \eqref{eq:tech_ineq}, we obtain that $| \feps(x^{\star}) | \leq |f(x^{\star})| + \sup_{\vareps \in \ocint{0,1}} \| \feps - f \|_{\infty}$. Since $f$ is coercive, we obtain that $(x_{\vareps}^{\star})_{\vareps \in \ocint{0,1}}$ is bounded and therefore there exists $\Db \geq 0$ such that $\liminf_{\vareps \to 0} \| x_{\vareps}^{\star} \| \leq \Db$.
\item Let $\vareps >0$, $T \geq 0$ and $(\bfX_{t, \vareps})_{t \geq 0}$ be the solution of
  \eqref{eq:sde} replacing $f$ by $f_{\vareps}$. Using \eqref{eq:sde}, the fact that $\lim_{\vareps \to 0} \| \nabla f -  \nabla f_{\vareps} \|_{\infty} = 0$, \Cref{assum:f_lip} and Grönwall's inequality \citep[Theorem 1.2.2]{pachpatte1998ineq} we have
  \begin{align}
    \expe{\norm{\bfX_{T, \vareps} - \bfX_T}^2} &\leq \expe{\norm{ \int_{0}^T  (\gua +s)^{-\alpha} \defEns{-\nabla \feps(\bfX_{t, \vareps}) + \nabla f(\bfX_t)}\rmd t }^2} \\
                                               &\leq 2 \gua^{-2\alpha} T \int_{0}^T \expe{\norm{\nabla f(\bfX_{t, \vareps}) - \nabla f(\bfX_t) }^2} \rmd t + 2\gua^{-2\alpha} T^2 \| \nabla f - \nabla \feps \|_{\infty}^2 \\
                                               &\leq 2 \Lip \gua^{-2\alpha} T \int_{0}^T \expe{\norm{\bfX_{t, \vareps} - \bfX_t }^2} \rmd t + 2\gua^{-2\alpha} T^2 \| \nabla f - \nabla \feps \|_{\infty}^2 \\
    &\leq 2\gua^{-2\alpha} T^2 \| \nabla f - \nabla \feps \|_{\infty}^2 \exp\parentheseDeux{2\Lip\gua^{-2\alpha} T^2} \eqsp .     \label{eq:bound_norm}
  \end{align}
  Therefore $\lim_{\vareps \to 0} \expe{\norm{\bfX_{T, \vareps} - \bfX_T}^2} = 0$. In addition, using the Cauchy-Schwarz inequality, \rref{assum:f_lip} and \Cref{lemma:bound_moments} we have
  \begin{align}
    \expe{| f(\bfX_{T, \vareps}) - f(\bfX_T) |} &\leq \expe{\int_{0}^1 \| \nabla f(\bfX_{T} + t(\bfX_{T, \vareps} - \bfX_T)) \| \| \bfX_{T, \vareps} - \bfX_T \| \rmd t } \\
                                                  &\leq \expe{ (\|\bfX_{T, \vareps} \| + \| \bfX_T \| + \| x^{\star} \|) \| \bfX_{T, \vareps} - \bfX_T \| } \\
                                                  &\leq 3^{1/2} \parenthese{\|x^{\star}\|^2 + \expe{\norm{\bfX_T}^2} + \expe{\norm{\bfX_{T, \vareps}}^2}}^{1/2} \expe{\norm{\bfX_{T, \vareps} - \bfX_T}^2}^{1/2} \\
    &\leq 3^{1/2} (\norm{x^{\star}} + 2 \ctun)^{1/2} (1 + \norm{x_0}^2)^{1/2} \expe{\norm{\bfX_{T, \vareps} - \bfX_T}^2}^{1/2} \eqsp .     \label{eq:bound_fun}
  \end{align}
Therefore, using \eqref{eq:bound_norm}, \eqref{eq:bound_fun} and the fact that $\lim_{\vareps \to 0} \| f - \feps \|_{\infty} = 0$ we obtain that
\begin{equation}
  \lim_{\vareps \to 0}  \expe{|\feps(\bfX_{T, \vareps}) - f(\bfX_T)|} \leq  \lim_{\vareps \to 0}  \expe{|f(\bfX_{T, \vareps}) - f(\bfX_T)|} + \lim_{\vareps \to 0} \|f - f_{\vareps} \|_{\infty} = 0 \eqsp ,
\end{equation}
which concludes the proof.
\end{enumerate}
\end{proof}

\begin{lemma_colt}
	\label{lemma:alpha}
	Let $x,y \geq 1$. Let $\alpha \in \ocint{0,1/2}$.
	If $y<x$ then $x^{\alpha}-y^{\alpha} \leq x^{1-\alpha} - y^{1-\alpha}$.
\end{lemma_colt}

\begin{proof}
Let $\lambda \in \ooint{0,1}$ such that $y=\lambda x$. Then $x^{\alpha}-y^{\alpha}=x^{\alpha}(1-\lambda^{\alpha}) \leq x^{1-\alpha}(1-\lambda^{1-\alpha})=x^{1-\alpha}-y^{1-\alpha}$ because $x>1$, $\lambda <1$ and $\alpha \leq 1-\alpha$.
\end{proof}



\label{app:shamir}
\subsection{Proof of \Cref{thm:shamir_continuous}}
\label{app:shamir_cont}

In this section we prove \Cref{thm:shamir_continuous}.  We begin with
\Cref{lemma:D2} which is a useful result to bound
$\expeLigne{\normLigne{\bfX_t-x\st}^2}$. Then, we introduce the averaging
process in \eqref{eq:S_def}. The study of this process is central in our
proof. First we establish \Cref{lemma:control_derivative} which allows to
control the time-derivative of the process $S$. We show that the difference
$\expeLigne{f(\bfX_T)} -f^\star$ can be rewritten as the sum of three terms
involving $S$. We bound each one of these three terms in \Cref{lemma:s1_sT},
\Cref{lemma:st-fstar} and \Cref{lemma:s0-s1}, concluding the proof of
\Cref{thm:shamir_continuous}. We finish this section with a proof of
\Cref{thm:shamir_continuous_C1} which extends our result to the case where
$f \in \rmc^1(\rset^d, \rset)$.

\begin{lemma_colt}
	\label{lemma:D2}
        Assume \tup{\rref{assum:f}-\ref{item:f_conv}}. Let $(\bfX_t)_{t \geq 0}$
        be given by \eqref{eq:sde}. Then, for any
        $\alpha, \gamma \in \ooint{0,1}$, there exists $\Cconvcontun \geq0$ and
        $\Cconvcontdeux\geq0$ and a function $\Cconvcont:\bR_+ \to \bR_+$ such
        that, for any $t \geq 0$,
\begin{equation}
	\expeLigne{\normLigne{\bfX_t-x\st}^2}\leq \Cconvcontun\Cconvcont(t+\gua)+\Cconvcontdeux  \eqsp .
\end{equation}
And we have
\begin{equation}
	\Cconvcont(t)=
	\begin{cases}
		t^{1-2\alpha}&\mbox{if }\alpha<1/2 \eqsp ,\\
		\log(t)&\mbox{if }\alpha=1/2 \eqsp ,\\
		0&\mbox{if }\alpha>1/2 \eqsp .
	\end{cases}
\end{equation}
The values of the constants are given by
\begin{align}
	\Cconvcontun&=
		\begin{cases}
			\gua\sigb(1-2\alpha)\pinv&\mbox{if }\alpha<1/2 \eqsp, \\
			\gua\sigb&\mbox{if }\alpha=1/2 \eqsp ,\\
			0&\mbox{if }\alpha>1/2 \eqsp .
		\end{cases}\\
	\Cconvcontdeux&=
	\begin{cases}
		\norm{X_0-x\st}^2&\mbox{if }\alpha<1/2 \eqsp, \\
		\norm{X_0-x\st}^2-\gua\sigb\log(\gua)&\mbox{if }\alpha=1/2 \eqsp ,\\
		\norm{X_0-x\st}^2+(2\alpha-1)\pinv\gua^{2-2\alpha}\sigb&\mbox{if }\alpha>1/2 \eqsp ,
	\end{cases}
\end{align}
\end{lemma_colt}
\begin{proof}
	Let $\alpha, \gamma \in \ooint{0,1}$ and $t \geq 0$. Let $(\bfX_t)_{t \geq 0}$ be given by \eqref{eq:sde}. We consider the function $F:\bR \times \bRd \to \bR_+$ defined as follows
	\begin{equation}
		\forall (t,x)\in\bR\times\bRd, \, F(t,x)=\norm{x-x\st}^2 \eqsp .
	\end{equation}
Applying Lemma~\ref{lemma:dynkin} to the stochastic process $(F(t,\bfX_t))_{t\geq 0}$ and using \tup{\rref{assum:f}-\ref{item:f_conv}} and \tup{\rref{assum:grad_sto}-\ref{item:approx_sto}} gives that for all $t\geq 0$,
	\begin{align}
          &\expe{\norm{\bfX_t-x\st}^2}-\expe{\norm{\bfX_0-x\st}^2} \\
          & \qquad =-2\int_0^T (t+\gua)^{-\alpha}\la \bfX_t-x\st, \nabla f(\bfX_t)\ra \dd t + \int_0^T \gua(t+\gua)^{-2\alpha}\trace(\Sigma(\bfX_t)) \dd t \\
          &\qquad \leq \gua\sigb\int_0^T (t+\gua)^{-2\alpha} \dd t \eqsp .
	\end{align}

	We now distinguish three cases:
	\begin{enumerate}[wide, labelwidth=!, labelindent=0pt, label=(\alph*)]
		\item If $\alpha < 1/2$, then we have
			\begin{align}
				\expe{\norm{\bfX_t-x\st}^2} &\leq \norm{X_0-x\st}^2 +\gua\sigb(1-2\alpha)\pinv((T+\gua)^{1-2\alpha}-\gua^{1-2\alpha}) \\
				&\leq \norm{X_0-x\st}^2+\gua\sigb(1-2\alpha)\pinv(T+\gua)^{1-2\alpha} \eqsp .
			\end{align}
		\item If $\alpha = 1/2$, then we have 
			\begin{align}
				\expe{\norm{\bfX_t-x\st}^2} &\leq \norm{X_0-x\st}^2 + \gua\sigb(\log(T+\gua)-\log(\gua)) \\
				&\leq \gua \sigb \log(T+\gua)+ \norm{X_0-x\st}^2-\gua\sigb\log(\gua) \eqsp .
			\end{align}
		\item If $\alpha > 1/2$, then we have 
			\begin{align}
				\expe{\norm{\bfX_t-x\st}^2} &\leq \norm{X_0-x\st}^2 +\gua\sigb(1-2\alpha)\pinv((T+\gua)^{1-2\alpha}-\gua^{1-2\alpha}) \\
				&\leq \norm{X_0-x\st}^2 + (2\alpha-1)\pinv \gua^{2-2\alpha}\sigb \eqsp .
			\end{align}
	\end{enumerate}

\end{proof}
We now turn to the proof of \Cref{thm:shamir_continuous}.  Let
$f\in \rmc^2(\R^{\dim},\R)$.  Let $\gamma \in \ooint{0,1}$ and
$\alpha \in \ocint{0,1/2}$ and $T \geq 1$. Let $(\bfX_t)_{t \geq 0}$ be given by
\eqref{eq:sde}.

Let $S: \ \ccint{0,T} \to \coint{0,+\infty}$ defined by
\begin{equation}
  \label{eq:S_def}
  \left\lbrace
  \begin{aligned}
    &S(t) = \textstyle{t^{-1} \int_{T-t}^T \defEns{\expe{f(\bfX_s)} -f^{\star}} \rmd s \eqsp , \qquad \text{if } t > 0 \eqsp ,} \\
    &S(0) = \expe{f(\bfX_T)} \eqsp , \qquad \qquad  \qquad \qquad \quad \eqsp \eqsp \text{otherwise.}
  \end{aligned}
  \right.
\end{equation}
With this notation we have
\begin{equation}
  \expe{f(\bfX_T)} - f^{\star} = S(0) - S(1) + S(1) - S(T) + S(T) - f^{\star} \eqsp .
\end{equation}
We are now going to control each one of the three terms $(S(0) - S(1))$,
$(S(1) - S(T))$, $(S(T) - f^\star)$ as follows:
\begin{enumerate}[wide, labelwidth=!, labelindent=0pt, label=(\alph*)]
\item Case $S(1) - S(T)$ (\Cref{lemma:s1_sT}): we adapt the idea of suffix
  averaging of \cite{shamirzhang} to the continuous-time setting. In particular,
  we control the time-derivative of $S$ in \Cref{lemma:control_derivative}.
\item Case $S(T) - f^\star$ (\Cref{lemma:st-fstar}): this result is known and
  corresponds to the optimal convergence rate of the averaged sequence towards
  the minimum of $f$. We provide its proof for completeness.
\item Case $S(0) - S(1)$ (\Cref{lemma:s0-s1}): this last term is specific to the
  continuous-time setting and is a necessary modification to the classic
  averaging control of $S(\vareps) - S(T)$, established in \Cref{lemma:s1_sT} for
  $\vareps = 1$, which diverges for $\vareps$ close to $0$.
\end{enumerate}
Before controlling each one of these terms we state the following useful lemma,
which will allow us to control the derivative of $S$.

\begin{lemma_colt}
  \label{lemma:control_derivative}
  Assume \rref{assum:f_lip}, \tup{\rref{assum:grad_sto}-\ref{item:approx_sto}},
  \rref{assum:lip_sigma}, and \tup{\rref{assum:f}-\ref{item:f_conv}}. Then, for
  any $\alpha, \gamma \in \ooint{0,1}$, $T \geq 0$, $u \in \ccint{0,T}$ and $Y$
  any $\rset^d$-valued random variable such that
  $\expeLigne{\normLigne{Y -x^\star}^2} \leq \Cconvcontun \Cconvcont
  (T+\gua)+\Cconvcontdeux$ with $\Cconvcontun$ and $\Cconvcontdeux$ given in
  \Cref{lemma:D2}, we have
  \begin{align}
  \label{eq:shamir_master_cons}
	\begin{split}
		\int_{T-u}^T \expe{f(\bfX_t)-f(\xo)}\dd t&\leq (\Csham/2) \parenthese{(T+\gua)^{\alpha}-(T-u+\gua)^{\alpha}}\\
		&\quad+(1/2) (T-u+\gua)^{\alpha} \expeLigne{\normLigne{\bfX_{T-u}-\xo}^2}\\
		&\quad+(\Csham/2)\parenthese{(T+\gua)^{1-\alpha}-(T-u+\gua)^{1-\alpha}}\log(T+\gua) \eqsp ,
	\end{split}
\end{align}
with
$\Csham = \max(4 \Cconvcontdeux,(\gua \sigb+4\alpha\Cconvcontun)
(1-\alpha)\pinv)$, with $\Cconvcontun$ and $\Cconvcontdeux$ given in
\Cref{lemma:D2}.
\end{lemma_colt}

\begin{proof}
  For any $\xoo \in \rset^{\dim}$ we define the function
  $F_{\xoo}:\bR_+ \times \bR^{\dim} \to \bR$ by
  \begin{equation}
    \label{eq:fun_x0}
		F_{\xoo}(t,x)=(t+\gua)^{\alpha}\norm{x- \xoo}^2 \eqsp .
	\end{equation}
        Using \Cref{lemma:D2} and that for any $a,b \geq 0$,
        $(a+b)^2 \leq 2a^2 + 2b^2$, we have
	\begin{align}
		\expe{\norm{\bfX_t-\xo}^2}&=\expe{\norm{(\bfX_t-x\st)+(x\st-\xo)}^2}\\
		&\leq 2\expe{\norm{\bfX_t-x\st}^2}+2\expe{\norm{\xo-x\st}^2}\\
                                          &\leq 2\Cconvcontun \Cconvcont
                                            (t+\gua)+4\Cconvcontdeux+2\Cconvcontun \Cconvcont(T+\gua) \\
          &\leq 2\Cconvcontun \Cconvcont
                                            (t+\gua) + 2\Cconvcontun \Cconvcont(T+\gua) + \Cconvconttrois \eqsp .
	\end{align}
	with $\Cconvconttrois = 4\Cconvcontdeux$. This gives in particular, for every $t\in\ccint{0,T}$,
	\begin{align}
		\label{eq:shamir_bound_esp}
		(t+\gua)^{\alpha-1}\expe{\norm{\bfX_t-\xo}^2} &\leq \defEns{\Cconvconttrois+2\Cconvcontun(T+\gua)^{1-2\alpha}\log(T+\gua)}(t+\gua)^{\alpha-1}\\
		&\quad+2\Cconvcontun\log(T+\gua)(t+\gua)^{-\alpha} \eqsp ,
	\end{align}
with $\Cconvcontun=0$ if $\alpha>1/2$. Notice that the additional $\log(T+\gua)$ term is only needed in the case where $\alpha=1/2$. For any $(t,x)\in \bR_+ \times \bRd$, we have
	\begin{align}
		&\partial_t F_{\xoo}(t,x)=\alpha(t+\gua)^{\alpha-1} \norm{x-\xoo}^2 \eqsp, \\
		&\partial_x F_{\xoo}(t,x)=2(t+\gua)^{\alpha} (x-\xoo) \eqsp, \quad
		\partial_{xx} F_{\xoo}(t,x)=2(t+\gua)^{\alpha} \eqsp .
	\end{align}
	Using Lemma~\ref{lemma:dynkin} on the stochastic process
        $(F_{\xo}(t,\bfX_t))_{t \geq 0}$, we have that for any $u\in\ccint{0,T}$
\begin{align}
	\label{eq:shamir_master}
	\begin{split}
          &\expe{F_{\xo}(T,\bfX_T)}-\expe{F_{\xo}(T-u,\bfX_{T-u})}\\
          & \qquad =\int_{T-u}^T\alpha(t+\gua)^{\alpha-1}\expe{\norm{\bfX_t-\xo}^2}\dd t -2\int_{T-u}^T\expe{\la \bfX_t-\xo, \nabla f(\bfX_t) \ra} \dd t \\
		&\quad \qquad + \int_{T-u}^T\gua(t+\gua)^{-\alpha}\expe{\trace (\Sigma(\bfX_t))}\dd t \eqsp .
              \end{split}
\end{align}
Combining this result, \tup{\rref{assum:f}-\ref{item:f_conv}},
\tup{\rref{assum:grad_sto}-\ref{item:approx_sto}}, \eqref{eq:fun_x0},
\eqref{eq:shamir_bound_esp} and~\eqref{eq:shamir_master} we obtain for any
$u \in \ccint{0,T}$
\begin{align}
		&-(T-u+\gua)^{\alpha}\expe{\norm{\bfX_{T-u}-\xo}^2} \\
		&\quad \leq \Cconvconttrois \int_{T-u}^T\alpha(t+\gua)^{\alpha-1} \dd t + \sigb\gua \int_{T-u}^T (t+\gua)^{-\alpha} \dd t\\
		&\qquad  + 2\alpha\Cconvcontun\log(T+\gua)\defEns{\int_{T-u}^T(t+\gua)^{-\alpha} \dd t+(T+\gua)^{1-2\alpha} \int_{T-u}^T(t+\gua)^{\alpha-1} \dd t} \\
		&\qquad- 2\int_{T-u}^T \expe{f(\bfX_t)-f(\xo)}\dd t \\
		&\quad \leq \Cconvconttrois\parenthese{(T+\gua)^{\alpha}-(T-u+\gua)^{\alpha}}-2\int_{T-u}^T \expe{f(\bfX_t)-f(\xo)}\dd t\\
		&\qquad+(\gua \sigb+2\alpha\Cconvcontun) (1-\alpha)\pinv \parenthese{(T+\gua)^{1-\alpha}-(T-u+\gua)^{1-\alpha}}\log(T+\gua) \\
		&\qquad +2\Cconvcontun\log(T+\gua)\defEns{(T+\gua)^{\alpha}-(T-u+\gua)^{\alpha}}(T+\gua)^{1-2\alpha} \eqsp .
\end{align}
Therefore, we get for any $u \in \ccint{0, T}$
\begin{align}
  \label{eq:shamir_master_cons}
	\begin{split}
		\int_{T-u}^T \expe{f(\bfX_t)-f(\xo)}\dd t&\leq (\Csham/2) \parenthese{(T+\gua)^{\alpha}-(T-u+\gua)^{\alpha}}\\
		&\quad+(1/2) (T-u+\gua)^{\alpha} \expe{\norm{\bfX_{T-u}-\xo}^2}\\
		&\quad+(\Csham/2)\parenthese{(T+\gua)^{1-\alpha}-(T-u+\gua)^{1-\alpha}}\log(T+\gua) \eqsp ,
	\end{split}
\end{align}
with $\Csham = \max(\Cconvconttrois,(\gua \sigb+4\alpha\Cconvcontun) (1-\alpha)\pinv)$.
\end{proof}

\begin{lemma_colt}
  \label{lemma:s1_sT}
    Assume \rref{assum:f_lip},
    \tup{\rref{assum:grad_sto}-\ref{item:approx_sto}}, \rref{assum:lip_sigma},
    and \tup{\rref{assum:f}-\ref{item:f_conv}}. Then, for any
    $\alpha, \gamma \in \ooint{0,1}$ and $T \geq 0$ we have
    \begin{equation}
    S(1)-S(T) \leq 2\Csham\log(T+\gua) \log(1 + T)(T+\gua)^{-\min(\alpha,1-\alpha)} \eqsp , 
  \end{equation}
  with $S$ given in \eqref{eq:S_def}.
  \end{lemma_colt}

\begin{proof}  
In the case where $\alpha \leq 1/2$, \Cref{lemma:alpha} gives that for all $u \in \ccint{0,T}$:
\begin{equation}
	\parenthese{(T+\gua)^{\alpha}-(T-u+\gua)^{\alpha}} \leq \parenthese{(T+\gua)^{1-\alpha}-(T-u+\gua)^{1-\alpha}} \eqsp ,
\end{equation}
and we also have, for all $u \in \ccint{0,T}$:
\begin{align}
  &(T+\gua)^{1-\alpha}-(T+\gua-u)^{1-\alpha}\\
  &\quad= \parenthese{((T+\gua)^{1-\alpha}-(T+\gua-u)^{1-\alpha})((T+\gua)^{\alpha} +(T+\gua-u)^{\alpha})} \\ & \qquad \times \parenthese{(T+\gua)^{\alpha}+(T+\gua-u)^{-\alpha}}^{-1}  \\
  &\quad\leq \left((T+\gua)-(T+\gua-u)+(T+\gua)^{1-\alpha}(T+\gua-u)^{\alpha}  \right. \\ & \qquad \left. -(T+\gua)^{\alpha}(T+\gua-u)^{1-\alpha} \right)  \times (T+\gua)^{-\alpha} \leq 2u/(T+\gua)^{\alpha} \eqsp . 	\label{eq:shamir_qte_conj}
\end{align}

And in the case where $\alpha > 1/2$, for all $u \in \ccint{0,T}$:
\begin{equation}
	\parenthese{(T+\gua)^{1-\alpha}-(T-u+\gua)^{1-\alpha}} \leq \parenthese{(T+\gua)^{\alpha}-(T-u+\gua)^{\alpha}} \eqsp ,
\end{equation}
and we also have, for all $u \in \ccint{0,T}$:
\begin{align}
	\label{eq:shamir_qte_conj_2}
  &(T+\gua)^{\alpha}-(T+\gua-u)^{\alpha}\\
  &\quad= \parenthese{((T+\gua)^{\alpha}-(T+\gua-u)^{\alpha})((T+\gua)^{1-\alpha}+(T+\gua-u)^{1-\alpha})} \\
  & \qquad \parenthese{(T+\gua)^{1-\alpha}+(T+\gua-u)^{1-\alpha}}^{-1} \\
  &\quad\leq \left( (T+\gua)-(T+\gua-u)+(T+\gua)^{\alpha}(T+\gua-u)^{1-\alpha}\right. \\ & \qquad\left. -(T+\gua)^{1-\alpha}(T+\gua-u)^{\alpha}\right)  \times (T+\gua)^{-1+\alpha}\leq 2u/(T+\gua)^{1-\alpha} \eqsp .
\end{align}
Now, using \Cref{lemma:control_derivative} with $\xo = \bfX_{T-u}$ we obtain,
for all $u \in \ccint{0,T}$:
\begin{equation}
	\label{eq:shamir_int}
	\expe{\int_{T-u}^T f(\bfX_t)-f(\bfX_{T-u}) \dd t} \leq 2\Csham\log(T+\gua) (T+\gua)^{-\min(\alpha,1-\alpha)}u \eqsp .
\end{equation}
Since $S$ is a differentiable function and using \eqref{eq:shamir_int}, we have for all $u \in (0,T)$,
\begin{equation}
	\label{eq:shamir_derivative}
	S'(u)=-u^{-2}\int_{T-u}^T \expe{f(\bfX_t)}\dd t  + u\pinv \expe{f(\bfX_{T-u})}
  =-u\pinv(S(u)-\expe{f(\bfX_{T-u})}) \eqsp .
\end{equation}
This last result implies $-S'(u)\leq 2\Csham\log(T+\gua) /(T+\gua)^{-\min(\alpha,1-\alpha)} u\pinv$ and integrating we get
\begin{equation}
  S(1)-S(T) \leq 2\Csham\log(T+\gua) \log(T)(T+\gua)^{-\min(\alpha,1-\alpha)} \eqsp .
\end{equation}
\end{proof}

\begin{lemma_colt}
  \label{lemma:st-fstar}
    Assume \rref{assum:f_lip},
    \tup{\rref{assum:grad_sto}-\ref{item:approx_sto}}, \rref{assum:lip_sigma},
    and \tup{\rref{assum:f}-\ref{item:f_conv}}. Then, for any
    $\alpha, \gamma \in \ooint{0,1}$ and $T \geq 0$ we have
    \begin{equation}
  S(T)-f\st 
   \leq 2\Csham
              T^{-\min(\alpha,
              1-\alpha)}\log(T+\gua)\eqsp ,
\end{equation}
with $S$ given in \eqref{eq:S_def}.
  \end{lemma_colt}

\begin{proof}
  Using \Cref{lemma:control_derivative}, with $u=T$ and $\xo=x\st$, and
  $\normLigne{\bfX_0-x\st} \leq \Csham$ we obtain
\begin{align}
	\label{eq:shamir_ST}
	\begin{split}
		\int_0^T \expe{f(X_s)}\dd s - Tf\st &\leq (\Csham/2)\parenthese{(T+\gua)^\alpha - \gua^{\alpha}+\defEns{(T+\gua)^{1-\alpha}-\gamma}\log(T+\gua)} \\ & \qquad +(1/2)\gua \expe{\norm{\bfX_0-x\st}^2} \eqsp .
              \end{split}
\end{align}
Using this result we have
\begin{align}
  S(T)-f\st &\leq T^{-1}\Csham(T+\gua)^{\max(1-\alpha, \alpha)}\log(T+\gua) \\
  & \qquad + \Csham \gua T^{-1}  /2
   \leq 2\Csham
              T^{-\min(\alpha,
              1-\alpha)}\log(T+\gua)\eqsp .
\end{align}
\end{proof}

\begin{lemma_colt}
  \label{lemma:s0-s1}
    Assume \rref{assum:f_lip},
    \tup{\rref{assum:grad_sto}-\ref{item:approx_sto}}, \rref{assum:lip_sigma},
    and \tup{\rref{assum:f}-\ref{item:f_conv}}. Then, for any
    $\alpha, \gamma \in \ooint{0,1}$ and $T \geq 0$ we have
    \begin{equation}
      S(0) - S(1) \leq \Csham\Lip (T-1)^{-2\alpha} \eqsp ,
\end{equation}
with $S$ given in \eqref{eq:S_def}.
\end{lemma_colt}
\begin{proof}
We have
\begin{align}
	\label{eq:shamir_S1}
	S(0)-S(1)&=\expe{f(\bfX_T)}-S(1)=\int_{T-1}^T \parenthese{\expe{f(\bfX_T)}-\expe{f(\bfX_s)}} \dd s \eqsp .
\end{align}
Using Lemma~\ref{lemma:dynkin} on the stochastic process $f(\bfX_t)_{t\geq 0}$ and \rref{assum:f_lip}, we have for all $s\in[T-1,T]$
\begin{align}
	&\expe{f(\bfX_T)}-\expe{f(\bfX_s)}\\ & \qquad = -\int_s^T (\gua + t)^{-\alpha} \expeLigne{\norm{\nabla f(\bfX_t)}^2} \dd t + (\Lip/2)\gua \int_s^T (t+\gua)^{-2\alpha}\expe{\trace({\Sigma(\bfX_t)})}\dd t \\
	& \qquad \leq (\sigb\Lip/2)\gua \int_{s}^T (t+\gua)^{-2\alpha}\dd t  \leq (\Csham\Lip/2) (s+\gua)^{-2\alpha}(T-s)\eqsp .
\end{align}
Plugging this result into \eqref{eq:shamir_S1} yields
\begin{align}
	\label{eq:shamir_second}
	S(0)-S(1)&\leq (\Csham\Lip/2)\int_{T-1}^T (T-s)(s+\gua)^{-2\alpha} \dd s
	\leq \Csham\Lip (T-1+\gua)^{-2\alpha} \leq  \Csham\Lip (T-1)^{-2\alpha} \eqsp .
\end{align}
\end{proof}
We now give the extension of \Cref{thm:shamir_continuous} to the case where the function $f$ is only continuously differentiable and such that $\argmin_{\rset^{\dim}} f$ is bounded, see \Cref{thm:shamir_continuous_C1}.

\begin{proof}
  Let $\alpha, \gamma \in \ocint{0,1}$ and $T \geq 0$. $(\feps)_{\vareps > 0}$ be given by \Cref{lemma:eps}. Let $\delta=\min(\alpha, 1-\alpha)$. We can apply, \Cref{thm:shamir_continuous} to $f_{\vareps}$ for each $\vareps >0$. Therefore there exists $\Ccont_{\vareps}$ such that
  \begin{equation}
    \label{eq:eps_thm}
    \expe{f(\bfX_{T, \vareps})}-f(x_{\vareps}^{\star}) \leq \Ccont_{\vareps} \parentheseDeux{\log(T)^2T^{-\delta}+\log(T)T^{-\delta}+T^{-\delta}+(T-1)^{-2\alpha}}  \eqsp ,
  \end{equation}
  where $(\bfX_{t, \vareps})_{t \geq 0}$ is given by \eqref{eq:sde} with $\bfX_t = x_0$ (upon replacing $f$ by $\feps$) and
  \begin{equation}
    \Ccont_{\vareps} = 4 \max(2\Cconvcontdeux+2\norm{x_0 -x_{\vareps}^{\star}}^2 ,(\gua \sigb+2\alpha\Cconvcontun) (1-\alpha)\pinv) \eqsp .
  \end{equation}
  Using \eqref{eq:eps_thm} and \Cref{lemma:eps} we have
  \begin{align}
    \expe{f(\bfX_T)} - f\st &\leq \liminf_{\vareps \to 0} \expe{\feps(\bfX_{t, \vareps})} - \limsup_{\vareps \to 0} \feps(x_{\vareps}^{\star}) \\
                            &\leq \liminf_{\vareps \to 0} \defEns{\expe{\feps(\bfX_{t, \vareps})} -  \feps(x_{\vareps}^{\star})} \\
                            &\leq \liminf_{\vareps \to 0} \Ccont_{\vareps} \parentheseDeux{\log(T)^2T^{-\delta}+\log(T)T^{-\delta}+T^{-\delta}+(T-1)^{-2\alpha}} \\
    &\leq \Ccont_1 \parentheseDeux{\log(T)^2T^{-\delta}+\log(T)T^{-\delta}+T^{-\delta}+(T-1)^{-2\alpha}} \eqsp ,
  \end{align}
  with $\Ccont_1 = 3 \max(2\Cconvcontdeux+4\norm{x_0}^2+4\maxnorm^2 ,(\gua \sigb+2\Cconvcontun) (1-\alpha)\pinv)$, where $\maxnorm=\max_{y \in \argmin_{\bRd} f} \norm{y}$.
\end{proof}

\subsection{Proof of \Cref{thm:shamir_discrete}}
\label{app:shamir_discrete}

In this section we prove \Cref{thm:shamir_discrete}.  The proof is clearly more
involved than the one of \Cref{thm:shamir_continuous}. We will follow a similar
way as in the proof of \Cref{thm:shamir_continuous}, with more
technicalities. Again, one of the main argument of the proof is the suffix
averaging technique that was introduced in~\citep{shamirzhang}. We begin by the
discrete counterpart of \Cref{lemma:D2} in
\Cref{lemma:D2_discrete}. \Cref{prop:shamir_discrete_intermediate} is a first
step towards proving \Cref{thm:shamir_discrete}. It provides suboptimal bounds
for $\expeLigne{f(X_n)} -f^\star$. In order to prove this proposition, as in the
continuous-time case, we introduce the averaged process in
\eqref{eq:S_def_disc}. First, we control its derivative in
\Cref{lemma:control_derivative_discrete} (which is the discrete-time counterpart
of \Cref{lemma:control_derivative}). Then, we rewrite
$\expeLigne{f(X_n)} -f^\star$ as a sum of two terms involving $S$, which we
bound in \Cref{lemma:s0-st-discrete} (discrete counterpart of \Cref{lemma:s1_sT} and
\Cref{lemma:s0-s1}) and \Cref{lemma:st-fstar-discrete} (discrete counterpart of
\Cref{lemma:st-fstar}). This concludes the proof of \Cref{thm:shamir_discrete}
using our original bootstrapping technique. Finally, we conclude this section
with an extension of our result to the case where $\nabla f$ is bounded and no
longer Lipschitz continuous in \Cref{cor:shamir_bounded}.

\begin{lemma_colt}
	\label{lemma:D2_discrete}
	Assume \rref{assum:f_lip}, \tup{\rref{assum:f}-\ref{item:f_conv}}, \tup{\rref{assum:grad_sto}-\ref{item:approx_sto}}. Then for any $\alpha, \gamma \in \ooint{0,1}$, there exists $\Cconvdiscun\geq 0$, $\Cconvdiscdeux\geq 0$ and a function $\Cconvdisc:\bR_+ \to \bR_+$ such that, for any $n \geq 0$,
\begin{equation}
	\expe{\norm{X_n-x\st}^2}\leq \Cconvdiscun \Cconvdisc(n+1)+\Cconvdiscdeux \eqsp .
\end{equation}
And we have
\begin{equation}
	\Cconvdisc(t)=
	\begin{cases}
		t^{1-2\alpha}&\mbox{if }\alpha<1/2 \eqsp ,\\
		\log(t)&\mbox{if }\alpha=1/2 \eqsp ,\\
		0&\mbox{if }\alpha>1/2 \eqsp .
	\end{cases}
\end{equation}
The values of the constants are given by
\begin{align}
	\Cconvdiscun&=
	\begin{cases}
		2\gamma^2\eta(1-2\alpha)\pinv&\mbox{if }\alpha<1/2 \eqsp, \\
		\gamma^2\eta&\mbox{if }\alpha=1/2 \eqsp ,\\
		0&\mbox{if }\alpha>1/2 \eqsp .
	\end{cases}\\
	\Cconvdiscdeux&=
	\begin{cases}
		2\max_{k \leq (\gamma \Lip/2)^{1/\alpha}} \expe{\norm{X_k-x\st}^2}&\mbox{if }\alpha<1/2 \eqsp, \\
		2\max_{k \leq (\gamma \Lip/2)^{1/\alpha}} \expe{\norm{X_k-x\st}^2}+2\gamma^2\eta&\mbox{if }\alpha=1/2 \eqsp ,\\
		2\max_{k \leq (\gamma \Lip/2)^{1/\alpha}} \expe{\norm{X_k-x\st}^2}+\gamma^2\eta(2\alpha-1)\pinv&\mbox{if }\alpha>1/2 \eqsp ,
	\end{cases}
\end{align}
\end{lemma_colt}

\begin{proof}
	Let $f:\bRd \to \bR$ verifying assumptions \rref{assum:f_lip} and \tup{\rref{assum:f}-\ref{item:f_conv}}. We consider $(X_n)_{n \geq 0}$ satisfying \eqref{eq:sgd}. Let $x\st \in \bRd$ be given by \tup{\rref{assum:f}-\ref{item:f_conv}}. We have, using \eqref{eq:sgd} and \tup{\rref{assum:grad_sto}-\ref{item:approx_sto}} that for all $n\geq (\gamma \Lip/2)^{1/\alpha}$,
	\begin{align}
		&\expec{\norm{X_{n+1}-x\st}^2}{\cF_n}=\expec{\norm{X_n-x\st-\gamma(n+1)^{-\alpha}H(X_n,Z_{n+1})}^2}{\cF_n}\\
		& \qquad =\norm{X_n-x\st}^2-2\gamma/(n+1)^{\alpha}\la X_n-x\st, \expec{H(X_n,Z_{n+1})}{\cF_n}\ra \\
		& \qquad \quad +\gamma^2(n+1)^{-2\alpha}\expec{\norm{H(X_n,Z_{n+1})}^2}{\cF_n} \\
			& \qquad =\norm{X_n-x\st}^2-2\gamma/(n+1)^{\alpha}\la X_n-x\st, \nabla f(X_n)\ra\\
			& \qquad \quad+\gamma^2(n+1)^{-2\alpha}\expec{\norm{H(X_n,Z_{n+1})-\nabla f(X_n)+\nabla f(X_n)}^2}{\cF_n} \\
			& \qquad =\norm{X_n-x\st}^2-2\gamma/(n+1)^{\alpha}\la X_n-x\st, \nabla f(X_n)\ra \\
			& \qquad \quad+\gamma^2(n+1)^{-2\alpha}\expec{\norm{H(X_n,Z_{n+1})-\nabla f(X_n)}^2}{\cF_n}\\
			& \qquad \quad+\gamma^2(n+1)^{-2\alpha}\Big(\expec{\norm{\nabla f(X_n)}^2}{\cF_n} \\
			& \qquad \qquad + 2\expec{\la H(X_n,Z_{n+1})-\nabla f(X_n),\nabla f(X_n) \ra}{\cF_n}\Big)\\
		& \qquad =\norm{X_n-x\st}^2-2\gamma/(n+1)^{\alpha}\la X_n-x\st, \nabla f(X_n)\ra+\gamma^2\eta(n+1)^{-2\alpha} \\
		& \qquad \quad+\gamma^2(n+1)^{-2\alpha}\norm{\nabla f(X_n)}^2 \\
		& \qquad \leq\norm{X_n-x\st}^2-2\gamma/\Lip(n+1)^{-\alpha}\norm{\nabla f(X_n)}^2+\gamma^2\eta(n+1)^{-2\alpha}\\
		& \qquad \quad+\gamma^2(n+1)^{-2\alpha}\norm{\nabla f(X_n)}^2 \\
		& \qquad \leq\norm{X_n-x\st}^2+\gamma/(n+1)^{\alpha}\norm{\nabla f(X_n)}^2\parentheseDeux{\gamma/(n+1)^{\alpha}-2/\Lip}+\gamma^2\eta(n+1)^{-2\alpha} \\
		& \qquad \leq\norm{X_n-x\st}^2+\gamma^2\eta(n+1)^{-2\alpha} \\
		&  \qquad \leq \expe{\norm{X_n-x\st}^2}+\gamma^2\eta(n+1)^{-2\alpha} \eqsp , 		\label{eq:D2_discrete_main}
	\end{align}
where we used the co-coercivity of $f$.
Summing the previous inequality leads to
\begin{equation}
	\expe{\norm{X_n-x\st}^2}-\expe{\norm{X_0-x\st}^2}\leq \gamma^2\eta \sum_{k=1}^n k^{-2\alpha} \eqsp .
\end{equation}
As in the previous proof we now distinguish three cases:

\begin{enumerate}[wide, labelwidth=!, labelindent=0pt, label=(\alph*)]
		\item If $\alpha < 1/2$, we have 
			\begin{align}
                          \expe{\norm{X_n-x\st}^2}&\leq \norm{X_0-x\st}^2+\gamma^2\eta(1-2\alpha)\pinv(n+1)^{1-2\alpha} \\ &\leq \norm{X_0-x\st}^2+2\gamma^2\eta(1-2\alpha)\pinv n^{1-2\alpha} \eqsp .
			\end{align}
		\item If $\alpha = 1/2$, we have 
			\begin{align}
				\expe{\norm{X_n-x\st}^2}\leq \norm{X_0-x\st}^2+\gamma^2\eta(\log(n)+2) \eqsp .
			\end{align}
                      \item If $\alpha > 1/2$, we have
			\begin{align}
				\expe{\norm{X_n-x\st}^2}\leq \norm{X_0-x\st}^2+\gamma^2\eta(2\alpha-1)\pinv \eqsp .
			\end{align}
	\end{enumerate}
\end{proof}

We now turn to the proof of \Cref{thm:shamir_discrete} by stating an
intermediate result where we assume a condition bounding
$\expeLigne{\normLigne{\nabla f(X_n)}^2}$. This proposition provides non-optimal
convergence rates for SGD but will be used as a central tool to improve them via
a bootstrapping technique and obtain optimal convergence rates.

\begin{proposition_colt}
	\label{prop:shamir_discrete_intermediate}
	Let $\gamma, \alpha \in \ooint{0,1}$ and $x_0 \in \rset^{\dim}$ and
        $(X_n)_{n\geq 0}$ be given by \eqref{eq:sgd}. Assume \rref{assum:f_lip},
        \tup{\rref{assum:f}-\ref{item:f_conv}},
        \tup{\rref{assum:grad_sto}-\ref{item:approx_sto}}.  Suppose additionally
        that there exists $\alpha\st\in\ccint{0,1/2}$, $\beta>0$ and
        $\Cshamdisc\geq 0$ such that for all $n \in \nset$
        \begin{equation}
        	\label{eq:assum_shamir_int}
    				\expeLigne{\normLigne{\nabla f(X_n)}^2} \leq
    				\begin{cases}
    					\Cshamdisc (n+1)^{\beta}\log(n+1)&\mbox{if }\alpha \leq \alpha\st \eqsp ,\\
    					\Cshamdisc &\mbox{if }\alpha>\alpha\st \eqsp .
    				\end{cases}
        \end{equation}

        Then there exists $\Cshamt \geq 0$
        such that, for all $N\geq 1$,
		\begin{equation}
			\expe{f(X_N)}-f\st \leq \Cshamt \defEns{ (1+\log(N+1))^2 /(N+1)^{\min(\alpha,1-\alpha)}\Psial(N+1) +1/(N+1)} \eqsp ,
	\end{equation}
	where for any $n \in \nset$
	\begin{equation}
 		\Psial(n)=
 		\begin{cases}
 			n^{\beta}(1+\log(n))&\mbox{if }\alpha \leq \alpha\st \eqsp , \\
 			1 &\mbox{if }\alpha>\alpha\st \eqsp .
 		\end{cases}
 \end{equation}
\end{proposition_colt}

\begin{proof}
  Let $\alpha, \gamma \in \ooint{0,1}$ and $N \geq 1$. Let $(X_n)_{n \geq 0}$ be given by \eqref{eq:sgd}.
  The proof is a straightforward application of \Cref{lemma:s0-st-discrete} and \Cref{lemma:st-fstar-discrete} below with 
  $\Cshamt = 2\max((2\gamma)\pinv\norm{X_0-x\st}^2,2\Cdisc)$.
\end{proof}
Let $(S_k)_{k \in \defEns{0,\cdots,N}}$ be given for any $k \in \nset$ by
\begin{equation}
  \label{eq:S_def_disc}
	S_k = (k+1)\pinv \sum_{t=N-k}^N \expe{f(X_t)} \eqsp .
      \end{equation}
      Note that $\expeLigne{f(X_N)} - f^\star = (S_N - S_0) + (S_0 - f^\star)$.
      We are now going to control each one of the two terms $(S_0 - S_N)$ and
      $(S_N - f^\star)$ as follows:
\begin{enumerate}[wide, labelwidth=!, labelindent=0pt, label=(\alph*)]
\item Case $S_n -S_0$ (\Cref{lemma:s0-st-discrete}): this is an adaption of the
  idea of suffix averaging of \cite{shamirzhang} to our setting (one of crucial
  difference lies into the control of the sequence
  $(\expeLigne{\nabla f(X_n)}^2)_{n \in \nset}$ which is assumed to be uniformly
  bounded in \cite{shamirzhang}). In particular, we control the (discrete)
  time-derivative of $S$ in \Cref{lemma:control_derivative_discrete}.
\item Case $S(T) - f^\star$ (\Cref{lemma:st-fstar-discrete}): this result is known and
  corresponds to the optimal convergence rate of the averaged sequence towards
  the minimum of $f$. We provide its proof for completeness.
\end{enumerate}
Before controlling each one of these terms we state the following useful lemma,
which will allow us to control the derivative of $S$.

\begin{lemma_colt}
  \label{lemma:control_derivative_discrete}
  Assume \rref{assum:f_lip}, \tup{\rref{assum:grad_sto}-\ref{item:approx_sto}},
  and \tup{\rref{assum:f}-\ref{item:f_conv}}. In addition, assume that
  \eqref{eq:assum_shamir_int} holds. Then, for any
  $\alpha, \gamma \in \ooint{0,1}$, $N \in \nset$, $u \in \{0, \dots, N\}$ and
  $Y$ any $\rset^d$-valued random variable such that
  $\expeLigne{\normLigne{Y -x^\star}^2} \leq
  \Cconvdiscun(N+1)^{1-2\alpha}\log(N+1)+4\Cconvdiscdeux$ with $\Cconvdiscun$
  and $\Cconvdiscdeux$ given in \Cref{lemma:D2_discrete}, we have
\begin{align}
	&\expe{\sum_{k=N-u}^N f(X_k)-f(X_{N-u})} \\ & \qquad \leq 2\Cdisc (u+1)/(N+1)^{\min(\alpha, 1-\alpha)} (1+\log(N+1))\Psial(N+1) \\
                                                   &\qquad \qquad +(2\gamma)\pinv (N-u+1)^{\alpha} \expeLigne{\normLigne{X_{N-u}-\xo}^2} \eqsp ,
                                                     	\label{eq:shamir_discrete_master}
\end{align}
with
$\Cdisc\doteq 4\Cconvddeux + (2\gamma)\pinv( 4\Cconvdiscdeux+4\Cconvdiscun)$
with $\Cconvdiscun$ and $\Cconvdiscun$ given in \Cref{lemma:D2_discrete} and
 \begin{equation}
 		\Psial(n)=
 		\begin{cases}
 			n^{\beta}(1+\log(n))&\mbox{if }\alpha \leq \alpha\st \eqsp , \\
 			1 &\mbox{if }\alpha>\alpha\st \eqsp .
 		\end{cases}
 \end{equation}  
\end{lemma_colt}

\begin{proof}
Let $\ell \in \defEns{0,\cdots,N}$,
let $k \geq \ell$, let $\xo \in \cF_{\ell}$. Using \tup{\rref{assum:f}-\ref{item:f_conv}} we have
\begin{align}
	\expek{\norm{X_{k+1}-\xo}^2}&=\expek{\norm{X_k-\xo-\gamma (k+1)^{-\alpha} H(X_k, Z\kk)}^2}\\
	&=\norm{X_k-\xo}^2+\gamma^2(k+1)^{-2\alpha}\expek{\norm{H(X_k,Z_{k+1})}^2}\\
	&\quad-2\gamma(k+1)^{-\alpha}\la X_k-\xo, \nabla f(X_k)\ra\\
		\expe{f(X_k)-f(\xo)} &\leq (2\gamma)\pinv(k+1)^{\alpha}\parenthese{\expe{\norm{X_k-\xo}^2}-\expe{\norm{X_{k+1}-\xo}^2}}\\
		&\quad+(\gamma/2)(k+1)^{-\alpha}\expe{\expek{\norm{H(X_k,Z_{k+1})}^2}}\\
		\expe{f(X_k)-f(\xo)} &\leq (2\gamma)\pinv(k+1)^{\alpha}\parenthese{\expe{\norm{X_k-\xo}^2}-\expe{\norm{X_{k+1}-\xo}^2}}\\
		&\quad+(\gamma/2)(k+1)^{-\alpha}\parenthese{\eta+\expe{\norm{\nabla f(X_k)}^2}} \eqsp . 	\label{eq:shamir_discrete_before_sum}
\end{align}
Let $u \in \defEns{0,\cdots,N}$. Summing now \eqref{eq:shamir_discrete_before_sum} between $k=N-u$ and $k=N$ gives
\begin{align}	
		\expe{\sum_{k=N-u}^N f(X_k)-f(\xo)} &\leq \Cconvdun \sum_{k=N-u}^N (k+1)^{-\alpha} \\
		&\quad+ (2\gamma)\pinv \sum_{k=N-u+1}^N \expe{\norm{X_k-\xo}^2}\parenthese{(k+1)^{\alpha}-k^{\alpha}} \\
	&\quad+\Cconvddeux  \sum_{k=N-u}^N \expe{\norm{\nabla f(X_k)}^2} (k+1)^{-\alpha}\\
	&\quad+(2\gamma)\pinv (N-u+1)^{\alpha} \expe{\norm{X_{N-u}-\xo}^2}\eqsp . \label{eq:shamir_discrete_sum}
\end{align}
We now have to conduct separate analyses depending on the value of $\alpha$.

\begin{enumerate}[wide, labelwidth=!, labelindent=0pt, label=(\alph*)]
\item First assume that $\alpha \leq \alpha^\star$.  In that case
  \eqref{eq:assum_shamir_int} gives that
		\begin{equation}
			\expe{\norm{\nabla f(X_k)}^2} \leq \Cshamdisc (N+1)^{\beta}\log(N+1),
		\end{equation}
		and Lemma~\ref{lemma:D2_discrete} gives that for all $k \in \defEns{0,\dots,N}$,
		\begin{align}
			\expe{\norm{X_k-\xo}^2}&\leq 2\expe{\norm{X_k-x\st}^2}+2\expe{\norm{\xo-x\st}^2} \\
			&\leq 2\Cconvdiscun (k+1)^{1-2\alpha}\log(k+1)+2\Cconvdiscun(N+1)^{1-2\alpha}\log(N+1)+4\Cconvdiscdeux \\
			&\leq 4\Cconvdiscun (N+1)^{1-2\alpha}\log(N+1)+4\Cconvdiscdeux \eqsp .
		\end{align}
		We note $\Cconvdisctrois \doteq 4\Cconvdiscdeux$.  Combining
                \eqref{eq:shamir_discrete_sum} and
                $\Cconvdtrois=(\Cconvdun+\Cconvddeux\Cshamdisc)(1-\alpha)\pinv$
                we get that
		\begin{align}
				&\expe{\sum_{k=N-u}^N f(X_k)-f(\xo)} \leq  \Cconvdun (1-\alpha)\pinv \parenthese{(N+1)^{1-\alpha}-(N-u)^{1-\alpha}} \\
				&\quad +(2\gamma)\pinv (N-u+1)^{\alpha} \expe{\norm{X_{N-u}-\xo}^2} \\
				&\quad+ (2\gamma)\pinv\parenthese{\Cconvdisctrois+4\Cconvdiscun(N+1)^{1-2\alpha}\log(N+1)}\parenthese{(N+1)^{\alpha}-(N-u+1)^{\alpha}} \\
				&\quad + \Cconvddeux\Cshamdisc(N+1)^{\beta}\log(N+1) (1-\alpha)\pinv \parenthese{(N+1)^{1-\alpha}-(N-u)^{1-\alpha}} \\
				&\quad \leq  \Cconvdtrois (N+1)^{\beta}(1+\log(N+1))^2  \parenthese{(N+1)^{1-\alpha}-(N-u)^{1-\alpha}} \\
				&\quad +(2\gamma)\pinv (N-u+1)^{\alpha} \expe{\norm{X_{N-u}-\xo}^2} \\
				&\quad+ (2\gamma)\pinv\Cconvdisctrois\parenthese{(N+1)^{\alpha}-(N-u)^{\alpha}}\\
				&\quad+(2\gamma)\pinv 4\Cconvdiscun\parenthese{(N+1)^{1-\alpha}-(N-u)^{1-\alpha}} \\
			& \quad\leq \Cdisc (N+1)^{\beta}(1+\log(N+1))^2\parenthese{(N+1)^{1-\alpha}-(N-u)^{1-\alpha}} \\
			&\quad +(2\gamma)\pinv (N-u+1)^{\alpha} \expe{\norm{X_{N-u}-\xo}^2} \eqsp , 			\label{eq:shamir_discrete_leq}
		\end{align}
		where we used Lemma~\ref{lemma:alpha}.

	Notice now that, similarly to \eqref{eq:shamir_qte_conj} we have
	\begin{align}
    \label{eq:alpha_conj}
		&(N+1)^{1-\alpha}-(N-u)^{1-\alpha} \\
		&\qquad= \defEns{\parenthese{(N+1)^{1-\alpha}-(N-u)^{1-\alpha}} \parenthese{(N+1)^{\alpha}+(N-u)^{\alpha}}}\parenthese{(N+1)^{\alpha}+(N-u)^{\alpha}}\pinv \\
		&\qquad\leq 2(u+1)/(N+1)^{\alpha} \eqsp .
	\end{align}

      \item Second, assume that $\alpha \in \ocint{\alpha^\star, 1/2}$. Using
        Lemma~\ref{lemma:D2_discrete}, we have for all
        $k \in \defEns{0,\dots,N}$,
		\begin{align}
			\expe{\norm{X_k-\xo}^2}&\leq 2\expe{\norm{X_k-x\st}^2}+2\expe{\norm{\xo-x\st}^2} \\
			&\leq 2\Cconvdiscun (k+1)^{1-2\alpha}\log(k+1)+2\Cconvdiscun(N+1)^{1-2\alpha}\log(N+1)+4\Cconvdiscdeux \\
			&\leq 4\Cconvdiscun (N+1)^{1-2\alpha}\log(N+1)+4\Cconvdiscdeux \eqsp .
		\end{align}
		Using \eqref{eq:assum_shamir_int}, \eqref{eq:shamir_discrete_sum} rewrites
		\begin{align}
			\label{eq:shamir_discrete_eq}
				&\expe{\sum_{k=N-u}^N f(X_k)-f(\xo)} \leq \Cconvdun (1-\alpha)\pinv \parenthese{(N+1)^{1-\alpha}-(N-u)^{1-\alpha}} \\
				&\quad+(2\gamma)\pinv (N-u+1)^{\alpha} \expe{\norm{X_{N-u}-\xo}^2} \\
				&\quad+ (2\gamma)\pinv\parenthese{\Cconvdisctrois+4\Cconvdiscun\log(N+1)(N+1)^{1-2\alpha}}\parenthese{(N+1)^{\alpha}-(N-u+1)^{\alpha}} \\
				&\quad + \Cconvddeux\Cshamdisc(1-\alpha)\pinv \parenthese{(N+1)^{1-\alpha}-(N-u)^{1-\alpha}} \\
				&\leq  \Cconvdtrois \parenthese{(N+1)^{1-\alpha}-(N-u)^{1-\alpha}} +(2\gamma)\pinv (N-u+1)^{\alpha} \expe{\norm{X_{N-u}-\xo}^2} \\
				&\quad+ (2\gamma)\pinv\parenthese{\Cconvdisctrois+4\Cconvdiscun}(1+\log(N+1))\parenthese{(N+1)^{\alpha}-(N-u)^{\alpha}} \\
			&\leq\Cdisc (1+ \log(N+1)) \parenthese{(N+1)^{1-\alpha}-(N-u)^{1-\alpha}} \\
			&\quad+(2\gamma)\pinv (N-u+1)^{\alpha} \expe{\norm{X_{N-u}-\xo}^2} \eqsp .
		\end{align}
              \item Finally, assume that $\alpha > 1/2$.  In that case,
                $\alpha>\alpha\st$ and Lemma~\ref{lemma:D2_discrete} gives
		\begin{equation}
			\forall k \in \defEns{0,\dots,N}, \, \expe{\norm{X_k-\xo}^2}\leq 2\expe{\norm{X_k-x\st}^2}+2\expe{\norm{\xo-x\st}^2} \leq 4\Cconvdiscdeux = \Cconvdisctrois \eqsp .
		\end{equation}
		Using Lemma~\ref{lemma:alpha} and \eqref{eq:assum_shamir_int} we rewrite \eqref{eq:shamir_discrete_sum} as
		\begin{align}
			\label{eq:shamir_discrete_geq}
			\begin{split}
				&\expe{\sum_{k=N-u}^N f(X_k)-f(\xo)} \leq  (\Cconvdun+\gamma\Cshamdisc/2) (1-\alpha)\pinv \parenthese{(N+1)^{1-\alpha}-(N-u)^{1-\alpha}} \\
				&\quad+(2\gamma)\pinv (N-u+1)^{\alpha} \expe{\norm{X_{N-u}-\xo}^2} \\
				&\quad+ (2\gamma)\pinv\Cconvdisctrois\parenthese{(N+1)^{\alpha}-(N-u+1)^{\alpha}}
			\end{split} \\
			\begin{split}
				&\leq  \Cconvdtrois \parenthese{(N+1)^{1-\alpha}-(N-u)^{1-\alpha}} +(2\gamma)\pinv (N-u+1)^{\alpha} \expe{\norm{X_{N-u}-\xo}^2} \\
				&\quad+ (2\gamma)\pinv\Cconvdisctrois\parenthese{(N+1)^{\alpha}-(N-u)^{\alpha}}
			\end{split} \\
			&\leq\Cdisc \parenthese{(N+1)^{\alpha}-(N-u)^{\alpha}} +(2\gamma)\pinv (N-u+1)^{\alpha} \expe{\norm{X_{N-u}-\xo}^2} \eqsp .
		\end{align}

	Notice now that, similarly to \eqref{eq:shamir_qte_conj} we have
	\begin{align}
		&(N+1)^{\alpha}-(N-u)^{\alpha} \\
		&\qquad= \defEns{\parenthese{(N+1)^{\alpha}-(N-u)^{\alpha}}  \parenthese{(N+1)^{1-\alpha}+(N-u)^{1-\alpha}}}\\ & \qquad \quad \times \parenthese{(N+1)^{1-\alpha}+(N-u)^{1-\alpha}}\pinv \leq 2(u+1)/(N+1)^{1-\alpha} \eqsp .
	\end{align}

\end{enumerate}

Finally, putting the three cases above together we obtain

\begin{align}
	&\expe{\sum_{k=N-u}^N f(X_k)-f(X_{N-u})} \\ & \qquad \leq 2\Cdisc (u+1)/(N+1)^{\min(\alpha, 1-\alpha)} (1+\log(N+1))\Psial(N+1) \\
                                                   &\qquad \qquad +(2\gamma)\pinv (N-u+1)^{\alpha} \expe{\norm{X_{N-u}-\xo}^2} \eqsp ,
                                                     	\label{eq:shamir_discrete_master}
\end{align}
with
 \begin{equation}
 		\Psial(n)=
 		\begin{cases}
 			n^{\beta}(1+\log(n))&\mbox{if }\alpha \leq \alpha\st \eqsp , \\
 			1 &\mbox{if }\alpha>\alpha\st \eqsp .
 		\end{cases}
              \end{equation}
Note that the additional $\log(N+1)$ factor can be removed if $\alpha \neq 1/2$.
              \end{proof}

\begin{lemma_colt}
  \label{lemma:s0-st-discrete}
  Assume \rref{assum:f_lip}, \tup{\rref{assum:grad_sto}-\ref{item:approx_sto}}
  and \tup{\rref{assum:f}-\ref{item:f_conv}}. In addition, assume that
  \eqref{eq:assum_shamir_int} holds. Then, for any
  $\alpha, \gamma \in \ooint{0,1}$ and $N \in \nset$ we have
    \begin{equation}
      S_0-S_N \leq 2\Cdisc (N+1)^{-\min(\alpha, 1-\alpha)} (1+\log(N+1))^2 \Psial(N+1) \eqsp .
  \end{equation}
  with $S$ given in \eqref{eq:S_def_disc}.
  \end{lemma_colt}

  \begin{proof}
    Let $u \in \defEns{0,\dots, N}$. Using
    \Cref{lemma:control_derivative_discrete} with the choice $\xo=X_{N-u}$ gives
\begin{align}
	\label{eq:shamir_discrete_sum2}
	\expe{\sum_{k=N-u}^N f(X_k)-f(X_{N-u})}\leq 2\Cdisc (u+1)/(N+1)^{\min(\alpha, 1-\alpha)} (1+\log(N+1)) \Psial(N+1) \eqsp .
\end{align}

		And then,
		\begin{align}			
			S_u&=(u+1)\pinv \sum_{k=N-u}^N \expe{f(X_k)} \\
			&\leq 2\Cdisc (N+1)^{-\min(\alpha, 1-\alpha)} (1+\log(N+1))\Psial(N+1) + \expe{f(X_{N-u})} \eqsp . \label{eq:shamir_discrete_Su}
		\end{align}
		We have now, using \eqref{eq:shamir_discrete_Su},
		\begin{align}
			uS_{u-1}&=(u+1)S_u-\expe{f(X_{N-u})} \\
			&=uS_u+S_u-\expe{f(X_{N-u})}\\
			&\leq uS_u+2\Cdisc (N+1)^{-\min(\alpha, 1-\alpha)} (1+\log(N+1)) \Psial(N+1) \\
			S_{u-1}-S_u &\leq  2\Cdisc u\pinv (N+1)^{-\min(\alpha, 1-\alpha)} \log(N+1) \\
			S_0-S_N &\leq 2\Cdisc (N+1)^{-\min(\alpha, 1-\alpha)} (1+\log(N+1)) \Psial(N+1) \sum_{u=1}^N(1/u)  \\
                  S_0-S_N &\leq 2\Cdisc (N+1)^{-\min(\alpha, 1-\alpha)} (1+\log(N+1))^2 \Psial(N+1) \eqsp .
                            			\label{eq:shamir_discrete_a}
		\end{align}
              \end{proof}

\begin{lemma_colt}
  \label{lemma:st-fstar-discrete}
  Assume \rref{assum:f_lip}, \tup{\rref{assum:grad_sto}-\ref{item:approx_sto}}
  and \tup{\rref{assum:f}-\ref{item:f_conv}}. In addition, assume that
  \eqref{eq:assum_shamir_int} holds. Then, for any
  $\alpha, \gamma \in \ooint{0,1}$ and $N \in \nset$ we have
  \begin{align}
    S_N-f\st&\leq 2\Cdisc (1+\log(N+1))^2 (N+1)^{-\min(\alpha,1-\alpha)}\Psial(N+1)\\
		&\quad+(2\gamma)\pinv(N+1)\pinv\norm{X_0-x\st}^2 \eqsp .
  \end{align}
  with $S$ given in \eqref{eq:S_def_disc}.
  \end{lemma_colt}
              \begin{proof}
                Using \Cref{lemma:control_derivative_discrete} with the choice
                $\xo=x\st$ and $u=N$ gives
	\begin{align}
		(N+1)\pinv\expe{\sum_{k=0}^N f(X_k)-f(x\st)} &\leq 2\Cdisc (1+\log(N+1)) (N+1)^{-\min(\alpha,1-\alpha)}\Psial(N+1)\\
                                                             &\quad+(2\gamma)\pinv(N+1)\pinv\norm{X_0-x\st}^2
        \end{align}
        Therefore,
        \begin{align}
		S_N-f\st&\leq 2\Cdisc (1+\log(N+1))^2 (N+1)^{-\min(\alpha,1-\alpha)}\Psial(N+1)\\
		&\quad+(2\gamma)\pinv(N+1)\pinv\norm{X_0-x\st}^2 \eqsp .		\label{eq:shamir_discrete_b}
	\end{align}
        \end{proof}
We can finally conclude the proof of \Cref{thm:shamir_discrete}.
\begin{proof}
  We begin by proving by induction over $m \in \bN^*$ that the following
  assertion \Cref{assum:recu_conv}($m$) is true.
  \begin{assumptionH}[$m$]
    \label{assum:recu_conv}
    For any $\alpha > 1/(m+1)$, there exists $\Cshamaplus>0$ such that for all
    $n\in\nset,\ \expeLigne{\normLigne{\nabla f(X_n)}^2} \leq \Cshamaplus$. In
    addition, for any $\alpha \leq 1/(m+1)$, there exists $\Cshamamoins>0$ such
    that for all
    $n\in\nset,\ \expeLigne{\normLigne{\nabla f(X_n)}^2} \leq \Cshamamoins
    n^{1-(m+1)\alpha}(1+\log(n))^3$.
\end{assumptionH}
For $m=1$, \Cref{assum:recu_conv}$(1)$ is an immediate consequence of
\rref{assum:f_lip} and Lemma~\ref{lemma:D2_discrete}, with
$\Cshamaplus=\Lip^2\Cconvdiscdeux$ and
$\Cshamamoins=\Lip^2\max(\Cconvdiscun, \Cconvdiscdeux)$.  Now, let $m\in \bN^*$
and suppose that \Cref{assum:recu_conv}$(m)$ holds.  Let
$\alpha \in \ooint{0,1}$. Setting $\alpha\st=1/(m+1)$ we have that
\eqref{eq:assum_shamir_int} is verified with $\beta=1-(m+1)\alpha$.
Consequently, using \rref{assum:f_lip}, \tup{\rref{assum:f}-\ref{item:f_conv}}
and \tup{\rref{assum:grad_sto}-\ref{item:approx_sto}} we can apply
Proposition~\ref{prop:shamir_discrete_intermediate} and for any
$\alpha \leq 1/(m+1)$ we have
\begin{align}
	\expe{f(X_N)}-f\st &\leq \Cshamt \defEns{ (1+\log(N+1))^2 /(N+1)^{\min(\alpha,1-\alpha)}\Psial(N+1) +1/(N+1)} \\
	&\leq \Cshamt \defEns{ (1+\log(N+1))^3 (N+1)^{-\alpha}(N+1)^{1-(m+1)\alpha} +1/(N+1)} \\
	&\leq \Cshamt \defEns{ (1+\log(N+1))^3 (N+1)^{1-(m+2)\alpha} +1/(N+1)}\eqsp . 	\label{eq:shamir_rec}
\end{align}
In particular, if $\alpha >1/(m+2)$ we have the existence of
$\bar{\Cconv}_{\alpha}>0$ such that for all $n \in \nset$,
$\expeLigne{f(X_n)}-f\st \leq \bar{\Cconv}_{\alpha}$. And using
\rref{assum:f_lip} and Lemma~\ref{lemma:kolmo} we get that, for all
$n \in \nset$
\begin{equation}
\expeLigne{\normLigne{\nabla f(X_n)}^2} \leq 2\Lip \expe{f(X_n)-f\st} \leq 2\Lip \bar{\Cconv}_{\alpha} \eqsp ,
\end{equation}
Combining this result with \eqref{eq:shamir_rec}, we get that
\Cref{assum:recu_conv}$(m+1)$ holds with
$\Cshamaplus=2\Lip \bar{\Cconv}_{\alpha}$ and $\Cshamamoins=2\Cshamt$. We
conclude by recursion.

Now, let $\alpha \in \ooint{0,1}$. Since $\rset$ is archimedean, there exists
$m \in \bN^*$ such that $\alpha>1/(m+1)$ and therefore
\Cref{assum:recu_conv}($m$) shows the existence of $\Cshamdisc>0$ such that
$\expeLigne{\normLigne{\nabla f(X_n)}^2} \leq \Cshamdisc$ for all $n \in
\bN^*$. Applying Proposition~\ref{prop:shamir_discrete_intermediate} gives the
existence of $\Cdisc >0$ such that for all $N\geq 1$
\begin{equation}
	\expe{f(X_N)}-f\st \leq \Cdisc (1+\log(N+1))^2/(N+1)^{\min(\alpha, 1-\alpha)} \eqsp ,
\end{equation}
with $\Cdisc = 2 \Cshamt$, concluding the proof.
\end{proof}

We present now a corollary of the previous theorem under a different setting. Let us assume, as in~\citep{shamirzhang}, that $\nabla f$ is not Lipschitz-continuous but bounded instead.

\begin{corollary_colt}
	\label{cor:shamir_bounded}
	Let $\gamma, \alpha \in \ooint{0,1}$ and $x_0 \in \rset^{\dim}$ and
        $(X_n)_{n\geq 0}$ be given by \eqref{eq:sgd}. Assume \tup{\rref{assum:f}-\ref{item:f_conv}}, \tup{\rref{assum:grad_sto}-\ref{item:approx_sto}} and $\nabla f$ bounded. Then there exists $\Cdisc_b \geq 0$
        such that, for all $N\geq 1$,
		\begin{equation}
       \expe{f(X_N)}-f\st \leq \Cdisc_b (1+\log(N+1))^2/(N+1)^{\min(\alpha,1-\alpha)} \eqsp .
     \end{equation}
\end{corollary_colt}

\begin{proof}
	The proof follows the same lines as the ones of Lemma~\ref{lemma:D2_discrete} and Proposition~\ref{prop:shamir_discrete_intermediate}. We show that both conclusions hold under the assumption that $\nabla f$ is bounded instead of being Lipschitz-continuous.

	In order to prove that Lemma~\ref{lemma:D2_discrete} still holds, let us do the following computation. We consider $(X_n)_{n \geq 0}$ satisfying \eqref{eq:sgd}.
	We have, using \eqref{eq:sgd}, \tup{\rref{assum:f}-\ref{item:f_conv}} and \tup{\rref{assum:grad_sto}-\ref{item:approx_sto}} that for all $n\geq 0$,
	\begin{align}
		\expecLigne{\norm{X_{n+1}-x\st}^2}{\cF_n}&=\expec{\norm{X_n-x\st-\gamma(n+1)^{-\alpha}H(X_n,Z_{n+1})}^2}{\cF_n}\\
		&=\norm{X_n-x\st}^2-2\gamma/(n+1)^{\alpha}\la X_n-x\st, \expec{H(X_n,Z_{n+1})}{\cF_n}\ra \\
		&\quad + \gamma^2(n+1)^{-2\alpha}\expec{\norm{H(X_n,Z_{n+1})}^2}{\cF_n} \\
		&=\norm{X_n-x\st}^2-2\gamma/(n+1)^{\alpha}\la X_n-x\st, \nabla f(X_n)\ra\\
		&\quad+\gamma^2\eta(n+1)^{-2\alpha}+\gamma^2(n+1)^{-2\alpha}\norm{\nabla f(X_n)}^2 \\
		\expeLigne{\normLigne{X\nn-x\st}^2} &\leq \expeLigne{\normLigne{X_n-x\st}^2}+\gamma^2(\eta+\norm{\nabla f}_{\infty})(n+1)^{-2\alpha} \eqsp .
	\end{align}
        And we obtain the same equation as in \eqref{eq:D2_discrete_main}, with
        a different constant before the asymptotic term
        $(n+1)^{-2\alpha}$. Hence the conclusions of
        Lemma~\ref{lemma:D2_discrete} still hold, because \rref{assum:f_lip} is
        never used in the remaining of the proof. We can now safely apply
        Proposition~\ref{prop:shamir_discrete_intermediate} (since
        \rref{assum:f_lip} is only used to use Lemma~\ref{lemma:D2_discrete})
        with $\alpha\st=0$. This concludes the proof.
\end{proof}


\section{Convex case (under \Cref{assum:grad_sto}-\ref{item:ml})}
\label{app:fz_convex}

In this section, we prove similar results to the ones of
\Cref{sec:convex-case_appendix} under \Cref{assum:grad_sto}-\ref{item:ml}.  In
\Cref{sec:equiv-cref_cont} we prove the equivalent to \Cref{app:shamir_cont} in
this setting (in particular we recover the optimal rate in the convex setting
under \Cref{assum:grad_sto}-\ref{item:ml} for continuous SGD). Similarly, in
\Cref{sec:equiv-cref_disc} we prove the equivalent to \Cref{app:shamir_discrete} in
this setting (in particular we recover the optimal rate in the convex setting
under \Cref{assum:grad_sto}-\ref{item:ml} for SGD).

\subsection{Equivalent to \Cref{app:shamir_cont}}
\label{sec:equiv-cref_cont}

First, we start with \Cref{lemma:D2_fz} which is an equivalent of
\Cref{lemma:D2}. The discussion conducted at the begin of \Cref{app:shamir_cont}
is still valid here. However, similarly to the discrete-case under
\Cref{assum:grad_sto}-\ref{item:approx_sto} we have to rely on some
bootstrapping technique to conclude. The equivalent to
\Cref{lemma:control_derivative} is given in \Cref{lemma:control_derivative}.
The intermediate result needed to apply our bootstrapping procedure is stated in
\Cref{prop:shamir_cont_int_fz}. \Cref{lemma:s1_sT_fz}, \Cref{lemma:st-fstar_fz}
and \Cref{lemma:s0-s1_fz} are the counterparts to \Cref{lemma:s1_sT},
\Cref{lemma:st-fstar} and \Cref{lemma:s0-s1} respectively. We state and prove
our main result in \Cref{thm:shamir_continuous_fz}.

\begin{lemma_colt}
	\label{lemma:D2_fz}
        Assume \rref{assum:f_lip}, \tup{\rref{assum:grad_sto}-\ref{item:ml}},
        \rref{assum:lip_sigma} and
        \tup{\rref{assum:f}-\ref{item:ftilde_conv}}. Let $(\bfX_t)_{t \geq 0}$
        be given by \eqref{eq:sde}. Then, for any
        $\alpha, \gamma \in \ooint{0,1}$, there exists $\Cconvcontun \geq0$ and
        $\Cconvcontdeux\geq0$ and a function $\Cconvcont:\bR_+ \to \bR_+$ such
        that, for any $t \geq 0$,
\begin{equation}
	\expe{\norm{\bfX_t-x\st}^2}\leq \Cconvcontun\Cconvcont(t+\gua)+\Cconvcontdeux  \eqsp .
\end{equation}
And we have
\begin{equation}
	\Cconvcont(t)=
	\begin{cases}
		t^{1-2\alpha}&\mbox{if }\alpha<1/2 \eqsp ,\\
		\log(t)&\mbox{if }\alpha=1/2 \eqsp ,\\
		0&\mbox{if }\alpha>1/2 \eqsp .
	\end{cases}
\end{equation}
The values of the constants are given by
\begin{align}
	\Cconvcontun&=
		\begin{cases}
			\gua\trcst(1-2\alpha)\pinv&\mbox{if }\alpha<1/2 \eqsp, \\
			\gua\trcst&\mbox{if }\alpha=1/2 \eqsp ,\\
			0&\mbox{if }\alpha>1/2 \eqsp .
		\end{cases}\\
	\Cconvcontdeux&=
	\begin{cases}
		2 \max_{t \leq (\gua\trcst)^{1/\alpha}} \expe{\norm{X_t-x\st}^2}&\mbox{if }\alpha<1/2 \eqsp, \\
		2 \max_{t \leq (\gua\trcst)^{1/\alpha}} \expe{\norm{X_t-x\st}^2}-\gua\trcst\log(\gua)&\mbox{if }\alpha=1/2 \eqsp ,\\
		2 \max_{t \leq (\gua\trcst)^{1/\alpha}} \expe{\norm{X_t-x\st}^2}+(2\alpha-1)\pinv\gua^{2-2\alpha}\trcst&\mbox{if }\alpha>1/2 \eqsp ,
	\end{cases}
\end{align}
\end{lemma_colt}

\begin{proof}
  Let $\alpha, \gamma \in \ooint{0,1}$ and $t \geq 0$. Let $(\bfX_t)_{t \geq 0}$
  be given by \eqref{eq:sde}. We consider the function
  $F:\bR \times \bRd \to \bR_+$ given for any $(t,x)\in\bR\times\bRd$ by
  $F(t,x)=\norm{x-x\st}^2$.  Applying Lemma~\ref{lemma:dynkin} to the stochastic
  process $(F(t,\bfX_t))_{t\geq 0}$ and using \Cref{lemma:trace_f} and
  \tup{\rref{assum:f}-\ref{item:ftilde_conv}} gives that for all
  $t\geq (\gua\trcst)^{1/\alpha}$,
	\begin{align}
		&\expe{\norm{\bfX_t-x\st}^2}-\expe{\norm{\bfX_0-x\st}^2} \\ &\qquad =-2\int_0^T (t+\gua)^{-\alpha}\expe{\la \bfX_t-x\st, \nabla f(\bfX_t)\ra} \dd t+ \int_0^T \gua(t+\gua)^{-2\alpha}\expe{\trace(\Sigma(\bfX_t))} \dd t\\
		&\qquad\leq -2\int_0^T (t+\gua)^{-\alpha}\expe{\la \bfX_t-x\st, \nabla f(\bfX_t)\ra} \dd t \\
          & \qquad \qquad + \int_0^T \gua(t+\gua)^{-2\alpha}\trcst\parentheseDeux{1+\expe{f(\bfX_t)-f(x\st)}} \dd t \\
		&\qquad \leq -2\int_0^T (t+\gua)^{-\alpha}\expe{\la \bfX_t-x\st, \nabla f(\bfX_t)\ra} \dd t \\
          &\qquad \qquad + \int_0^T \gua(t+\gua)^{-2\alpha}\trcst\parentheseDeux{1+\expe{\la \nabla f(\bfX_t), \bfX_t - x\st \ra}} \dd t \\
		&\qquad\leq \gua\trcst\int_0^T (t+\gua)^{-2\alpha} \dd t + \int_0^T (t+\gua)^{-\alpha} \expe{\la \nabla f(\bfX_t), \bfX_t - x\st \ra} \defEns{-2 + \gua\trcst(t+\gua)^{-\alpha}}  \dd t \\
		&\qquad \leq \gua\trcst\int_0^T (t+\gua)^{-2\alpha} \dd t \eqsp .
	\end{align}

	We now distinguish three cases:
	\begin{enumerate}[label= (\alph*),  wide, labelwidth=!, labelindent=0pt]
		\item If $\alpha < 1/2$, then we have 
			\begin{align}
				\expe{\norm{\bfX_t-x\st}^2} &\leq \norm{X_0-x\st}^2 +\gua\trcst(1-2\alpha)\pinv((T+\gua)^{1-2\alpha}-\gua^{1-2\alpha}) \\
				&\leq \norm{X_0-x\st}^2+\gua\trcst(1-2\alpha)\pinv(T+\gua)^{1-2\alpha} \eqsp .
			\end{align}
		\item If $\alpha = 1/2$, then we have 
			\begin{align}
				\expe{\norm{\bfX_t-x\st}^2} &\leq \norm{X_0-x\st}^2 + \gua\trcst(\log(T+\gua)-\log(\gua)) \\
				&\leq \gua \trcst \log(T+\gua)+ \norm{X_0-x\st}^2-\gua\trcst\log(\gua) \eqsp .
			\end{align}
		\item If $\alpha > 1/2$, then we have
			\begin{align}
				\expe{\norm{\bfX_t-x\st}^2} &\leq \norm{X_0-x\st}^2 +\gua\trcst(1-2\alpha)\pinv((T+\gua)^{1-2\alpha}-\gua^{1-2\alpha}) \\
				&\leq \norm{X_0-x\st}^2 + (2\alpha-1)\pinv \gua^{2-2\alpha}\trcst \eqsp .
			\end{align}
	\end{enumerate}

\end{proof}


\begin{proposition_colt}
	\label{prop:shamir_cont_int_fz}
	Let $\gamma, \alpha \in \ooint{0,1}$ and $x_0 \in \rset^{\dim}$
        $(\bfX_t)_{t \geq 0}$ be given by \eqref{eq:sde}. Assume
        \rref{assum:f_lip}, \tup{\rref{assum:grad_sto}-\ref{item:ml}},
        \tup{\rref{assum:f}-\ref{item:ftilde_conv}} and \rref{assum:lip_sigma}.
        In addition, assume that there exists $\alpha\st\in\ccint{0,1/2}$,
        $\beta>0$ and $\Cshamdisc\geq 0$ such that for all $t \in [0,T]$
        \begin{equation}
        	\label{eq:assum_shamir_cont_int_fz}
    				\expe{\trace \Sigma(\bfX_t)} \leq
    				\begin{cases}
    					\Cshamdisc (t+\gua)^{\beta}\log(t+\gua)&\mbox{if }\alpha \leq \alpha\st \eqsp ,\\
    					\Cshamdisc &\mbox{if }\alpha>\alpha\st \eqsp .
    				\end{cases}
        \end{equation}

        Then there exists $\Cshamt \geq 0$
        such that, for all $T\geq 1$,
		\begin{equation}
		\expe{f(\bfX_T)}-f\st \leq \Cshamt\parentheseDeux{\log(T+\gua)^2(T+\gua)^{-\min(\alpha,1-\alpha)}}(1+\Psial(T+\gua)) \eqsp ,
	\end{equation}
	with
	\begin{equation}
 		\Psial(t)=
 		\begin{cases}
 			t^{\beta}&\mbox{if }\alpha \leq \alpha\st \eqsp , \\
 			0 &\mbox{if }\alpha>\alpha\st \eqsp .
 		\end{cases}
 \end{equation}
\end{proposition_colt}

\begin{proof}
  Let $f\in \rmc^2(\R^{\dim},\R)$.  Let $\gamma \in \ooint{0,1}$ and
  $\alpha \in \ocint{0,1/2}$ and $T \geq 1$. Let $(\bfX_t)_{t \geq 0}$ be given
  by \eqref{eq:sde}. Combining \Cref{lemma:s1_sT_fz}, \Cref{lemma:st-fstar_fz}
  and \Cref{lemma:s0-s1_fz} gives the desired result
\begin{equation}
	\expe{f(\bfX_T)}-f\st \leq \Ccontz\parentheseDeux{\log(T+\gua)^2(T+\gua)^{-\min(\alpha,1-\alpha)}}(1+\Psial(T+\gua)) \eqsp ,
\end{equation}
with $\Ccontz=(4/\gua^2)(4\Csham + 2\Cshamz+2\Csham\Lip)$.
        \end{proof}

Let $S: \ \ccint{0,T} \to \coint{0,+\infty}$ defined by
\begin{equation}
  \label{eq:S_def_fz}
  \left\lbrace
  \begin{aligned}
    &S(t) = \textstyle{t^{-1} \int_{T-t}^T \defEns{\expe{f(\bfX_s)} -f^{\star}} \rmd s \eqsp , \qquad \text{if } t > 0 \eqsp ,} \\
    &S(0) = \expe{f(\bfX_T)} \eqsp .
  \end{aligned}
  \right.
\end{equation}
With this notation we have
\begin{equation}
  \expe{f(\bfX_T)} - f^{\star} = S(0) - S(1) + S(1) - S(T) + S(T) - f^{\star} \eqsp .
\end{equation}
We are now going to control each one of the three terms $(S(0) - S(1))$,
$(S(1) - S(T))$, $(S(T) - f^\star)$ as follows:
\begin{enumerate}[wide, labelwidth=!, labelindent=0pt, label=(\alph*)]
\item Case $S(1) - S(T)$ (\Cref{lemma:s1_sT_fz}): we adapt the idea of suffix
  averaging of \cite{shamirzhang} to the continuous-time setting. In particular,
  we control the time-derivative of $S$ in \Cref{lemma:control_derivative_fz}
  (counterpart of \Cref{lemma:control_derivative}). Note that
  \Cref{lemma:s1_sT_fz} is the counterpart to \Cref{lemma:s1_sT}.
\item Case $S(T) - f^\star$ (\Cref{lemma:st-fstar_fz}): this result is known and
  corresponds to the optimal convergence rate of the averaged sequence towards
  the minimum of $f$. We provide its proof for completeness. Note that
  \Cref{lemma:st-fstar_fz} is the counterpart to \Cref{lemma:st-fstar}.
\item Case $S(0) - S(1)$ (\Cref{lemma:s0-s1_fz}): this last term is specific to
  the continuous-time setting and is a necessary modification to the classic
  averaging control of $S(\vareps) - S(T)$, established in \Cref{lemma:s1_sT_fz}
  for $\vareps = 1$, which diverges for $\vareps$ close to $0$. Note that
  \Cref{lemma:s0-s1_fz} is the counterpart to \Cref{lemma:s0-s1}.
\end{enumerate}
Before controlling each one of these terms we state the following useful lemma,
which will allow us to control the derivative of $S$.

\begin{lemma_colt}
  \label{lemma:control_derivative_fz}
  Assume \rref{assum:f_lip}, \tup{\rref{assum:grad_sto}-\ref{item:ml}},
  \rref{assum:lip_sigma}, and \tup{\rref{assum:f}-\ref{item:ftilde_conv}}. In
  addition, assume that \eqref{eq:assum_shamir_cont_int_fz} holds. Then, for any
  $\alpha, \gamma \in \ooint{0,1}$, $T \geq 0$, $u \in \ccint{0,T}$ and $Y$ any
  $\rset^d$-valued random variable such that
  $\expeLigne{\normLigne{Y -x^\star}^2} \leq \Cconvcontun \Cconvcont
  (T+\gua)+\Cconvcontdeux$ with $\Cconvcontun$ and $\Cconvcontdeux$ given in
  \Cref{lemma:D2_fz}, we have
\begin{align}
  \label{eq:shamir_master_cons_fz}
  &\int_{T-u}^T \expe{f(\bfX_t)-f(\yo)}\dd t\\
  & \qquad \leq \Csham \parenthese{(T+\gua)^{\alpha}-(T-u+\gua)^{\alpha}}\\
		&\qquad \qquad+(1/2) (T-u+\gua)^{\alpha} \expe{\norm{\bfX_{T-u}-\yo}^2}\\
		&\qquad \qquad+\Csham\log(T+\gua)\defEns{(T+\gua)^{\alpha}-(T-u+\gua)^{\alpha}}(T+\gua)^{1-2\alpha}\\
		&\qquad \qquad +\Csham\parenthese{(T+\gua)^{1-\alpha}-(T-u+\gua)^{1-\alpha}}\log(T+\gua)(1+\Psial(T+\gua)) \eqsp ,
\end{align}
with
$\Csham = \max(4\Cconvcontdeux/2,(\gua \Cshamdisc/2+\Cconvcontun)
(1-\alpha)\pinv)$ and
$\Psial(t)=0 \mbox{ if }\alpha>\alpha\st\mbox{ and }t^{\beta}\mbox{ if }\alpha
\leq \alpha\st$, with $\Cconvcontun$ and $\Cconvcontdeux$ given in
\Cref{lemma:D2_fz}.
\end{lemma_colt}

\begin{proof}For any $\yo \in \rset^{\dim}$ we define the function
  $F_{\yoo}:\bR_+ \times \bR^{\dim} \to \bR$ by
	\begin{equation}
		F_{\yoo}(t,x)=(t+\gua)^{\alpha}\norm{x- \yoo}^2 \eqsp .
	\end{equation}
        Using Lemma~\ref{lemma:D2}, that $\Cconvcont$ is non-decreasing and that
        for any $a,b \geq 0$, $(a+b)^2 \leq 2a^2 + 2b^2$, we have
	\begin{align}
		\expe{\norm{\bfX_t-\yo}^2}&=\expe{\norm{(\bfX_t-x\st)+(x\st-\yo)}^2}\\
		&\leq 2\expe{\norm{\bfX_t-x\st}^2}+2\expe{\norm{\yo-x\st}^2}\\
                                          &\leq 2\Cconvcontun \Cconvcont
                                            (t+\gua)+4\Cconvcontdeux+2\Cconvcontun \Cconvcont(T+\gua) \\
          &\leq 2\Cconvcontun \Cconvcont
                                            (t+\gua) + 2\Cconvcontun \Cconvcont(T+\gua) + \Cconvconttrois \eqsp .
	\end{align}
	with $\Cconvconttrois = 4\Cconvcontdeux$. This gives in particular, for every $t\in\ccint{0,T}$,
	\begin{align}
		\label{eq:shamir_bound_esp_fz}
		(t+\gua)^{\alpha-1}\expe{\norm{\bfX_t-\yo}^2} &\leq \parentheseDeux{\Cconvconttrois+2\Cconvcontun(T+\gua)^{1-2\alpha}\log(T+\gua)}(t+\gua)^{\alpha-1}\\
		&\quad+2\Cconvcontun\log(T+\gua)(t+\gua)^{-\alpha} \eqsp ,
	\end{align}
with $\Cconvcontun=0$ if $\alpha>1/2$. Notice that the additional $\log(T+\gua)$ term is only needed in the case where $\alpha=1/2$. For any $(t,x)\in \bR_+ \times \bRd$, we have
	\begin{align}
		&\partial_t F_{\yoo}(t,x)=\alpha(t+\gua)^{\alpha-1} \norm{x-\yoo}^2 \eqsp, \\
		&\partial_x F_{\yoo}(t,x)=2(t+\gua)^{\alpha} (x-\yoo) \eqsp, \quad
		\partial_{xx} F_{\yoo}(t,x)=2(t+\gua)^{\alpha} \eqsp .
	\end{align}
	Using Lemma~\ref{lemma:dynkin} on the stochastic process
        $(F_{\yo}(t,\bfX_t))_{t \geq 0}$, we have that for any $u\in\ccint{0,T}$
\begin{align}
	\label{eq:shamir_master_fz}
	\begin{split}
		\expe{F_{\yo}(T,\bfX_T)}-\expe{F_{\yo}(T-u,\bfX_{T-u})}&=\int_{T-u}^T\alpha(t+\gua)^{\alpha-1}\expe{\norm{\bfX_t-\yo}^2}\dd t\\
		&\quad -2\int_{T-u}^T\expe{\la \bfX_t-\yo, \nabla f(\bfX_t) \ra} \dd t \\
		&\quad+ \int_{T-u}^T\gua(t+\gua)^{-\alpha}\expe{\trace (\Sigma(\bfX_t))}\dd t \eqsp .
              \end{split}
\end{align}

We distinguish now several cases depending on the value of $\alpha$.

\begin{enumerate}[label= (\alph*),  wide, labelwidth=!, labelindent=0pt]
\item If $\alpha \leq \alpha^\star$, then combining
  \tup{\rref{assum:f}-\ref{item:ftilde_conv}},
  \rref{assum:grad_sto}-\ref{item:ml}, \eqref{eq:assum_shamir_cont_int_fz},
  \eqref{eq:shamir_bound_esp_fz} and~\eqref{eq:shamir_master_fz} we obtain for
  any $u \in \ccint{0,T}$
\begin{align}
		&-(T-u+\gua)^{\alpha}\expe{\norm{\bfX_{T-u}-\yo}^2} \\
		&\quad \leq \Cconvconttrois \int_{T-u}^T\alpha(t+\gua)^{\alpha-1} \dd t + \Cshamdisc \gua \int_{T-u}^T (t+\gua)^{-\alpha}(t+\gua)^{\beta}\log(t+\gua) \dd t\\
		&\qquad  + 2\alpha\Cconvcontun\log(T+\gua)\defEns{\int_{T-u}^T(t+\gua)^{-\alpha} \dd t+(T+\gua)^{1-2\alpha} \int_{T-u}^T(t+\gua)^{\alpha-1} \dd t} \\
		&\qquad- 2\int_{T-u}^T \expe{f(\bfX_t)-f(\yo)}\dd t \\
		&\quad \leq \Cconvconttrois\parenthese{(T+\gua)^{\alpha}-(T-u+\gua)^{\alpha}}-2\int_{T-u}^T \expe{f(\bfX_t)-f(\yo)}\dd t\\
		&\qquad+(\gua \Cshamdisc(T+\gua)^{\beta}+2\alpha\Cconvcontun) (1-\alpha)\pinv \parenthese{(T+\gua)^{1-\alpha}-(T-u+\gua)^{1-\alpha}}\log(T+\gua) \\
		&\qquad +2\Cconvcontun\log(T+\gua)\defEns{(T+\gua)^{\alpha}-(T-u+\gua)^{\alpha}}(T+\gua)^{1-2\alpha} \eqsp .
\end{align}
\item If $\alpha > \alpha^\star$, then combining
  \tup{\rref{assum:f}-\ref{item:ftilde_conv}},
  \rref{assum:grad_sto}-\ref{item:ml}, \eqref{eq:assum_shamir_cont_int_fz}
  \eqref{eq:shamir_bound_esp_fz} and~\eqref{eq:shamir_master_fz} we obtain for
  any $u \in \ccint{0,T}$
\begin{align}
		&-(T-u+\gua)^{\alpha}\expe{\norm{\bfX_{T-u}-\yo}^2} \\
		&\quad \leq \Cconvconttrois \int_{T-u}^T\alpha(t+\gua)^{\alpha-1} \dd t + \Cshamdisc\gua \int_{T-u}^T (t+\gua)^{-\alpha} \dd t\\
		&\qquad  + 2\alpha\Cconvcontun\log(T+\gua)\defEns{\int_{T-u}^T(t+\gua)^{-\alpha} \dd t+(T+\gua)^{1-2\alpha} \int_{T-u}^T(t+\gua)^{\alpha-1} \dd t} \\
		&\qquad- 2\int_{T-u}^T \expe{f(\bfX_t)-f(\yo)}\dd t \\
		&\quad \leq \Cconvconttrois\parenthese{(T+\gua)^{\alpha}-(T-u+\gua)^{\alpha}}-2\int_{T-u}^T \expe{f(\bfX_t)-f(\yo)}\dd t\\
		&\qquad+(\gua \Cshamdisc+2\alpha\Cconvcontun) (1-\alpha)\pinv \parenthese{(T+\gua)^{1-\alpha}-(T-u+\gua)^{1-\alpha}}\log(T+\gua) \\
		&\qquad +2\Cconvcontun\log(T+\gua)\defEns{(T+\gua)^{\alpha}-(T-u+\gua)^{\alpha}}(T+\gua)^{1-2\alpha} \eqsp .
\end{align}
\end{enumerate}
Putting this together we get for any $u \in \ccint{0, T}$
\begin{align}
  \label{eq:shamir_master_cons_fz}
  &\int_{T-u}^T \expe{f(\bfX_t)-f(\yo)}\dd t\\
  & \qquad \leq \Csham \parenthese{(T+\gua)^{\alpha}-(T-u+\gua)^{\alpha}}\\
		&\qquad \qquad+(1/2) (T-u+\gua)^{\alpha} \expe{\norm{\bfX_{T-u}-\yo}^2}\\
		&\qquad \qquad+\Csham\log(T+\gua)\defEns{(T+\gua)^{\alpha}-(T-u+\gua)^{\alpha}}(T+\gua)^{1-2\alpha}\\
		&\qquad \qquad +\Csham\parenthese{(T+\gua)^{1-\alpha}-(T-u+\gua)^{1-\alpha}}\log(T+\gua)(1+\Psial(T+\gua)) \eqsp ,
\end{align}
with $\Csham = \max(\Cconvconttrois/2,(\gua \Cshamdisc/2+\Cconvcontun) (1-\alpha)\pinv)$ and $\Psial(t)=0 \mbox{ if }\alpha>\alpha\st\mbox{ and }t^{\beta}\mbox{ if }\alpha \leq \alpha\st$.
\end{proof}
We divide the rest of the proof into three parts, to bound the quantities $S(1)-S(T)$, $S(T)-f\st$ and $S(0)-S(1)$.

\begin{lemma_colt}
  \label{lemma:s1_sT_fz}
  Assume \rref{assum:f_lip}, \tup{\rref{assum:grad_sto}-\ref{item:ml}},
  \rref{assum:lip_sigma}, and \tup{\rref{assum:f}-\ref{item:ftilde_conv}}. In
  addition, assume that \eqref{eq:assum_shamir_cont_int_fz} holds. Then, for any
  $\alpha, \gamma \in \ooint{0,1}$ and $T \geq 0$ we have
      \begin{equation}
\label{eq:shamir_derivative_fz}
  S(1)-S(T) \leq 2\Cshamz\log(T+\gua) \log(1 + T)(T+\gua)^{-\min(\alpha,1-\alpha)}(1+\Psial(T+\gua)) \eqsp ,
\end{equation}
  with $S$ given in \eqref{eq:S_def_fz}.
  \end{lemma_colt}

  \begin{proof}
In the case where $\alpha \leq 1/2$, \Cref{lemma:alpha} gives that for all $u \in \ccint{0,T}$:
\begin{equation}
	\parenthese{(T+\gua)^{\alpha}-(T-u+\gua)^{\alpha}} \leq \parenthese{(T+\gua)^{1-\alpha}-(T-u+\gua)^{1-\alpha}} \eqsp ,
\end{equation}
and we also have, for all $u \in \ccint{0,T}$:
\begin{align}
  &(T+\gua)^{1-\alpha}-(T+\gua-u)^{1-\alpha}\\
  &\quad= \parenthese{((T+\gua)^{1-\alpha}-(T+\gua-u)^{1-\alpha})((T+\gua)^{\alpha} +(T+\gua-u)^{\alpha})} \\ & \qquad \times \parenthese{(T+\gua)^{\alpha}+(T+\gua-u)^{-\alpha}}^{-1}  \\
  &\quad\leq \parenthese{(T+\gua)-(T+\gua-u)+(T+\gua)^{1-\alpha}(T+\gua-u)^{\alpha}  -(T+\gua)^{\alpha}(T+\gua-u)^{1-\alpha}} \\ & \qquad \times (T+\gua)^{-\alpha} \leq 2u/(T+\gua)^{\alpha} \eqsp . 	\label{eq:shamir_qte_conj_fz}
\end{align}
And in the case where $\alpha > 1/2$, for all $u \in \ccint{0,T}$:
\begin{equation}
	\parenthese{(T+\gua)^{1-\alpha}-(T-u+\gua)^{1-\alpha}} \leq \parenthese{(T+\gua)^{\alpha}-(T-u+\gua)^{\alpha}} \eqsp ,
\end{equation}
and we also have, for all $u \in \ccint{0,T}$:
\begin{align}
	\label{eq:shamir_qte_conj_2_fz}
	&(T+\gua)^{\alpha}-(T+\gua-u)^{\alpha}\\
	&\quad= \parenthese{((T+\gua)^{\alpha}-(T+\gua-u)^{\alpha})((T+\gua)^{1-\alpha}+(T+\gua-u)^{1-\alpha})} \\
  & \qquad \parenthese{(T+\gua)^{1-\alpha}+(T+\gua-u)^{1-\alpha}}^{-1} \\
	&\quad\leq \parenthese{(T+\gua)-(T+\gua-u)+(T+\gua)^{\alpha}(T+\gua-u)^{1-\alpha}-(T+\gua)^{1-\alpha}(T+\gua-u)^{\alpha}} \\ & \qquad \times (T+\gua)^{-1+\alpha} \leq 2u/(T+\gua)^{1-\alpha} \eqsp .
\end{align}
Now, plugging $\yo = \bfX_{T-u}$ in \Cref{lemma:control_derivative_fz} we obtain, for all $u \in \ccint{0,T}$:
\begin{equation}
	\label{eq:shamir_int_fz}
	\expe{\int_{T-u}^T f(\bfX_t)-f(\bfX_{T-u}) \dd t} \leq 2\Cshamz\log(T+\gua) (T+\gua)^{-\min(\alpha,1-\alpha)}(1+\Psial(T+\gua))u \eqsp ,
\end{equation}
with $\Cshamz=(2\Csham+\Csham(1+\gua^{1-2\alpha}))$.

Since $S$ is a differentiable function and using \eqref{eq:shamir_int_fz}, we have for all $u \in (0,T)$,
\begin{equation}
	S'(u)=-u^{-2}\int_{T-u}^T \expe{f(\bfX_t)}\dd t  + u\pinv \expe{f(\bfX_{T-u})}
  =-u\pinv(S(u)-\expe{f(\bfX_{T-u})}) \eqsp .
\end{equation}
This last result implies $-S'(u)\leq 2\Cshamz\log(T+\gua) /(T+\gua)^{\min(\alpha,1-\alpha)} (1+\Psial(T+\gua)) u\pinv$ and integrating we get
\begin{equation}
\label{eq:shamir_derivative_fz}
  S(1)-S(T) \leq 2\Cshamz\log(T+\gua) \log(T)(T+\gua)^{-\min(\alpha,1-\alpha)}(1+\Psial(T+\gua)) \eqsp .
\end{equation}
\end{proof}

\begin{lemma_colt}
  \label{lemma:st-fstar_fz}
  Assume \rref{assum:f_lip}, \tup{\rref{assum:grad_sto}-\ref{item:ml}},
  \rref{assum:lip_sigma}, and \tup{\rref{assum:f}-\ref{item:ftilde_conv}}. In
  addition, assume that \eqref{eq:assum_shamir_cont_int_fz} holds. Then, for any
  $\alpha, \gamma \in \ooint{0,1}$ and $T \geq 0$ we have
  \begin{equation}
  S(T)-f\st \leq 4\Csham T^{-\min(\alpha,1-\alpha)}(1+\log(T+\gua))(1+\Psial(T+\gua)) \eqsp ,
   \label{eq:shamir_ST_fz}
\end{equation}  
  with $S$ given in \eqref{eq:S_def_fz}.
  \end{lemma_colt}

\begin{proof}
  Using \Cref{lemma:control_derivative_fz} with $u=T$ and $\yo=x\st$, and
  $\normLigne{\bfX_0-x\st} \leq \Csham$ we obtain
\begin{align}
		\int_0^T \expe{f(X_s)}\dd s - Tf\st &\leq (\Csham/2)\parenthese{(T+\gua)^\alpha - \gua^{\alpha}} +(1/2)\gua^{\alpha} \expe{\norm{\bfX_0-x\st}^2} \\
		& \quad +(\Csham/2)\parentheseDeux{(T+\gua)^{\alpha}-\gua^{\alpha}}(T+\gua)^{1-2\alpha}\log(T+\gua) \\ 
		& \quad + (\Csham/2) \log(T+\gua) \parentheseDeux{(T+\gua)^{1-\alpha}-\gua^{1-\alpha}}(1+\Psial(T+\gua)) \\
		&\leq (\Csham/2)(T+\gua)^\alpha +(\Csham/2)\gua^{\alpha} \\
		& \quad+(\Csham/2)(T+\gua)^{1-\alpha}\log(T+\gua) \\ 
		& \quad+ (\Csham/2) \log(T+\gua) (T+\gua)^{1-\alpha}(1+\Psial(T+\gua))
		\eqsp .
\end{align}
Using this result we have
\begin{align}
  S(T)-f\st &\leq T^{-1}2\Csham(T+\gua)^{\max(1-\alpha, \alpha)}(1+\log(T+\gua))(1+\Psial(T+\gua)) + 2\Csham \gua^{\alpha} T^{-1}  /2 \\
   &\leq 4\Csham T^{-\min(\alpha,1-\alpha)}(1+\log(T+\gua))(1+\Psial(T+\gua)) \eqsp .
   \label{eq:shamir_ST_fz}
\end{align}
\end{proof}

\begin{lemma_colt}
  \label{lemma:s0-s1_fz}
  Assume \rref{assum:f_lip}, \tup{\rref{assum:grad_sto}-\ref{item:ml}},
  \rref{assum:lip_sigma}, and \tup{\rref{assum:f}-\ref{item:ftilde_conv}}. In
  addition, assume that \eqref{eq:assum_shamir_cont_int_fz} holds. Then, for any
  $\alpha, \gamma \in \ooint{0,1}$ and $T \geq 0$ we have
\begin{equation}
	\label{eq:shamir_second_fz}
	S(0)-S(1) \leq  \Csham\Lip \log(T+\gua)(1+\Psial(T+\gua)) (T-1)^{-2\alpha} \eqsp ,
\end{equation}  
  with $S$ given in \eqref{eq:S_def_fz}.
  \end{lemma_colt}

  \begin{proof}
We have
\begin{align}
	\label{eq:shamir_S1_fz}
	S(0)-S(1)&=\expe{f(\bfX_T)}-S(1)=\int_{T-1}^T \parenthese{\expe{f(\bfX_T)}-\expe{f(\bfX_s)}} \dd s \eqsp .
\end{align}
Using Lemma~\ref{lemma:dynkin} on the stochastic process $f(\bfX_t)_{t\geq 0}$, \rref{assum:f_lip} and \eqref{eq:assum_shamir_cont_int_fz}, we have for all $s\in[T-1,T]$
\begin{align}
  \expe{f(\bfX_T)}-\expe{f(\bfX_s)} &= -\int_s^T (\gua + t)^{-\alpha} \expeLigne{\normLigne{\nabla f(\bfX_t)}^2}\dd t \\
  & \qquad + (\Lip/2)\gua \int_s^T (t+\gua)^{-2\alpha}\expe{\trace({\Sigma(\bfX_t)})}\dd t \\
	& \leq (\Lip/2)\gua \Cshamdisc \log(T+\gua)(1+\Psial(T+\gua)) \int_{s}^T (t+\gua)^{-2\alpha}\dd t \\
	& \leq \Csham\Lip \log(T+\gua)(1+\Psial(T+\gua)) (s+\gua)^{-2\alpha}(T-s)\eqsp .
\end{align}
Plugging this result into \eqref{eq:shamir_S1_fz} yields
\begin{align}
	\label{eq:shamir_second_fz}
	S(0)-S(1)&\leq \Csham\Lip \log(T+\gua)(1+\Psial(T+\gua))\int_{T-1}^T (T-s)(s+\gua)^{-2\alpha} \dd s \\
	&\leq \Csham\Lip \log(T+\gua)(1+\Psial(T+\gua))(T-1+\gua)^{-2\alpha} \\
	&\leq  \Csham\Lip \log(T+\gua)(1+\Psial(T+\gua)) (T-1)^{-2\alpha} \eqsp .
\end{align}
\end{proof}

\begin{theorem}
	\label{thm:shamir_continuous_fz}
        Let $\alpha, \gamma \in \ooint{0,1}$ and $(\bfX_t)_{t \geq 0}$ be given by \eqref{eq:sde}. Assume $f \in \rmc^2(\rset^{\dim}, \rset)$, \rref{assum:f_lip}, \tup{\rref{assum:grad_sto}-\ref{item:ml}},
         \rref{assum:lip_sigma} and \tup{\rref{assum:f}-\ref{item:ftilde_conv}}. Then, there exists $C \geq 0$ (explicit and given in the proof) such that for any $T \geq 1$
  \begin{equation}
    \expe{f(\bfX_T)}-{ \textstyle\min_{\rset^d}} f
    \leq C(1+\log(T))^2/T^{\alpha\wedge(1-\alpha)}
		\eqsp .
  \end{equation}
\end{theorem}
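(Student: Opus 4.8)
The plan is to follow the bootstrapping scheme used in the proof of \Cref{thm:shamir_discrete}, transposed to the present continuous-time setting under \Cref{assum:grad_sto}-\ref{item:ml}. The point is that here, contrary to the situation in \Cref{thm:shamir_continuous}, $\trace\Sigma$ is no longer uniformly bounded; instead \Cref{lemma:trace_f} gives $\trace(\Sigma(x)) \le \trcst(1 + f(x) - f\st)$. Hence any polynomial-with-logarithm upper bound on $\expe{f(\bfX_t)} - f\st$ automatically produces a bound on $\expe{\trace\Sigma(\bfX_t)}$ of exactly the form required in hypothesis \eqref{eq:assum_shamir_cont_int_fz} of \Cref{prop:shamir_cont_int_fz}, and a successive refinement of these bounds converges to the claimed rate.

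First I would establish the base estimate. By \Cref{lemma:D2_fz}, $\expe{\normLigne{\bfX_t - x\st}^2} \le \Cconvcontun \Cconvcont(t+\gua) + \Cconvcontdeux$, which grows at most like $(t+\gua)^{1-2\alpha}$ up to a logarithmic factor when $\alpha = 1/2$. Since $\nabla f(x\st) = 0$, the descent lemma \citep[Lemma 1.2.3]{nesterov2004introductory} together with \Cref{assum:f_lip} yields $f(x) - f\st \le (\Lip/2)\normLigne{x - x\st}^2$, so $\expe{f(\bfX_t)} - f\st$ has the same growth, hence so does $\expe{\trace\Sigma(\bfX_t)}$ by \Cref{lemma:trace_f}. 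This is the case $m = 1$ of an induction asserting: for every $m \in \nsets$, for $\alpha > 1/(m+1)$ there is a constant with $\expe{f(\bfX_t)} - f\st$ bounded for all $t \ge 1$, while for $\alpha \le 1/(m+1)$ there is a constant such that $\expe{f(\bfX_t)} - f\st \le C_m (t+\gua)^{1-(m+1)\alpha}(1 + \log(t+\gua))^{p}$ for a fixed logarithmic power $p$.

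For the inductive step, assuming the bound at level $m$, take $\alpha \le 1/(m+1)$, set $\alpha\st = 1/(m+1)$ and $\beta = 1 - (m+1)\alpha$; combining the inductive hypothesis with \Cref{lemma:trace_f} shows that \eqref{eq:assum_shamir_cont_int_fz} holds, so \Cref{prop:shamir_cont_int_fz} applies and, since $\min(\alpha, 1-\alpha) = \alpha$ in this regime, gives $\expe{f(\bfX_T)} - f\st$ of order $(T+\gua)^{\beta - \alpha}(1+\log(T+\gua))^{p'} = (T+\gua)^{1-(m+2)\alpha}(1+\log(T+\gua))^{p'}$, so the exponent decreases by $\alpha$; when moreover $\alpha > 1/(m+2)$ this quantity is bounded, which together with the already known bounds for $\alpha > 1/(m+1)$ establishes level $m+1$. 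Since $\rset$ is archimedean, for any $\alpha \in \ooint{0,1}$ there is an $m$ with $\alpha > 1/(m+1)$, hence $\expe{\trace\Sigma(\bfX_t)}$ is uniformly bounded; a final application of \Cref{prop:shamir_cont_int_fz} with $\alpha\st = 0$ then yields $\expe{f(\bfX_T)} - \min_{\rset^d} f \le C(1 + \log(T))^2 / T^{\alpha\wedge(1-\alpha)}$ for $T \ge 1$.

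The main obstacle is bookkeeping rather than any single hard idea: one must check that the induction exponent genuinely decreases by $\alpha$ at each step (so that finitely many iterations suffice), that the accumulating logarithmic powers remain controlled by the fixed power $p$ in the induction hypothesis — each application of \Cref{prop:shamir_cont_int_fz} introduces further logarithmic factors which must be reabsorbed, possibly by enlarging $\beta$ slightly — that the constants $C_m$, although worsening at each step, stay finite, and that the borderline exponents $\alpha = 1/(m+1)$, where \Cref{lemma:D2_fz} and \Cref{prop:shamir_cont_int_fz} produce extra logarithmic corrections and $\beta = 0$ forces a switch to the $\alpha > \alpha\st$ branch, are handled correctly. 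This is precisely why the present proof is more delicate than that of \Cref{thm:shamir_continuous}, in the same way that the discrete case \Cref{thm:shamir_discrete} required this bootstrapping whereas the strongly convex case did not.
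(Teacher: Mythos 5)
Your argument is correct and matches the paper's proof essentially step for step: both establish the base case from \Cref{lemma:D2_fz}, both bootstrap by repeatedly invoking \Cref{prop:shamir_cont_int_fz} with $\alpha^{\star}=1/(m+1)$ and $\beta=1-(m+1)\alpha$, both convert between bounds on $\expe{f(\bfX_t)}-f^{\star}$ and on $\expe{\trace\Sigma(\bfX_t)}$ via \Cref{lemma:trace_f} (resp.\ \Cref{lemma:trace_norm}), and both terminate the recursion by the archimedean property. The only cosmetic difference is that the paper phrases the induction hypothesis \Cref{assum:recu_conv_fz}($m$) directly in terms of $\expe{\trace\Sigma(\bfX_t)}$, whereas you phrase it in terms of $\expe{f(\bfX_t)}-f^{\star}$; since the two are interchangeable by a one-line application of the lemmas just mentioned, this is the same proof.
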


\begin{proof}
  We begin by proving by induction over $m \in \bN^*$ that the following
  assertion \Cref{assum:recu_conv_fz}($m$) is true.
  \begin{assumptionH}[$m$]
    \label{assum:recu_conv_fz}
    For any $\alpha > 1/(m+1)$, there exists $\Cshamaplus>0$ such that for all
    $t \geq 0,\ \expe{\trace(\Sigma(X_t))} \leq \Cshamaplus$. In
    addition, for any $\alpha \leq 1/(m+1)$, there exists $\Cshamamoins>0$ such
    that for all
    $t \geq 0 ,\ \expe{\trace(\Sigma(X_t))} \leq \Cshamamoins
    (t+\gua)^{1-(m+1)\alpha}\log(t+\gua)^2$.
\end{assumptionH}

For $m=1$, \Cref{assum:recu_conv_fz}($1$) is an immediate consequence of
\rref{assum:f_lip} and Lemma~\ref{lemma:D2_fz}.  Now, let $m\in \bN^*$ and
suppose that \Cref{assum:recu_conv_fz}($m$) holds.  Let
$\alpha \in \ooint{0,1}$. Setting $\alpha\st=1/(m+1)$ we see that
\eqref{eq:assum_shamir_cont_int_fz} is verified with $\beta=1-(m+1)\alpha$.
Consequently, using \rref{assum:f_lip},
\tup{\rref{assum:f}-\ref{item:ftilde_conv}}, \rref{assum:grad_sto}-\ref{item:ml}
we can apply Proposition~\ref{prop:shamir_cont_int_fz} which shows that, for
$\alpha\leq 1/(m+1)$, there exists $ \Ccontz >0$ such that for all $T\geq 1$,
\begin{align}
	\expe{f(\bfX_T)}-f\st &\leq \Ccontz \defEns{ \log(T+\gua))^2 /(T+\gua)^{\min(\alpha,1-\alpha)}\Psial(T+\gua)} \\
	&\leq \Ccontz \defEns{  \log(T+\gua))^2 (T+\gua)^{-\alpha}(T+\gua)^{1-(m+1)\alpha}} \\
	&\leq \Ccontz \defEns{ \log(T+\gua))^2  (T+\gua)^{1-(m+2)\alpha}}\eqsp . 	\label{eq:shamir_rec_fz}
\end{align}
In particular, if $\alpha >1/(m+2)$ we have the existence of $\bar{\Cconv}_{\alpha}>0$ such that for all $n \in \defEns{0,\cdots, N}$,
$\expe{f(X_n)}-f\st \leq \bar{\Cconv}_{\alpha}$. And using \rref{assum:f_lip} and Lemma~\ref{lemma:trace_f} we get that, for all $t \in \ccint{0,T}$,
\begin{equation}
\expe{\trace(\Sigma(\bfX_t))} \leq \trcst(1+\expe{f(\bfX_t)-f\st}) \leq \trcst (1 + \bar{\Cconv}_{\alpha}),
\end{equation}
Combining this result with \eqref{eq:shamir_rec_fz}, we get that
\Cref{assum:recu_conv_fz}($m+1$) holds with
$\Cshamaplus=\trcst (1 + \bar{\Cconv}_{\alpha})$ and $\Cshamamoins=\Ccontz$. We
conclude by recursion,

Now, let $\alpha \in \ooint{0,1}$. Since $\bR$ is archimedean, there exists
$m \in \bN^*$ such that $\alpha>1/(m+1)$ and therefore
\Cref{assum:recu_conv_fz}($m$) shows the existence of $\Cshamdisc>0$ such that
$\expe{\trace(\Sigma(\bfX_t))} \leq \Cshamdisc$ for all $t \in \ccint{0,T}$.
Applying Proposition~\ref{prop:shamir_cont_int_fz} gives the existence of
$\Ccont >0$ such that for all $T\geq 1$
\begin{equation}
	\expe{f(\bfX_T)}-f\st \leq \Ccont (1+\log(T))^2/T^{\min(\alpha, 1-\alpha)} \eqsp ,
\end{equation}
concluding the proof.
\end{proof}

\subsection{Equivalent to \Cref{app:shamir_discrete}}
\label{sec:equiv-cref_disc}

First, we start with \Cref{lemma:D2_discrete_fz} which is an equivalent to
\Cref{lemma:D2_discrete}. The discussion conducted at the begin of
\Cref{app:shamir_discrete} is still valid here (with changes in the
bootstrapping used). \Cref{prop:shamir_discrete_intermediate_fz},
\Cref{lemma:control_derivative_discrete_fz}, \Cref{lemma:s0-st-discrete_fz} and
\Cref{lemma:st-fstar-discrete_fz} are the counterparts of
\Cref{prop:shamir_discrete_intermediate},
\Cref{lemma:control_derivative_discrete}, \Cref{lemma:s0-st-discrete} and
\Cref{lemma:st-fstar-discrete} respectively. Finally, our main result is stated
and proven in \Cref{thm:shamir_discrete_fz}.

\begin{lemma_colt}
	\label{lemma:D2_discrete_fz}
	Assume \rref{assum:f_lip}, \tup{\rref{assum:f}-\ref{item:ftilde_conv}},
        \tup{\rref{assum:grad_sto}-\ref{item:ml}}. Then for any
        $\alpha, \gamma \in \ooint{0,1}$, there exists $\Cconvdiscun\geq 0$,
        $\Cconvdiscdeux\geq 0$ and a function $\Cconvdisc:\bR_+ \to \bR_+$ such
        that, for any $n \geq 0$,
\begin{equation}
	\expe{\norm{X_n-x\st}^2}\leq \Cconvdiscun \Cconvdisc(n+1)+\Cconvdiscdeux \eqsp .
\end{equation}
And we have
\begin{equation}
	\Cconvdisc(t)=
	\begin{cases}
		t^{1-2\alpha}&\mbox{if }\alpha<1/2 \eqsp ,\\
		\log(t)&\mbox{if }\alpha=1/2 \eqsp ,\\
		0&\mbox{if }\alpha>1/2 \eqsp .
	\end{cases}
\end{equation}
The values of the constants are given by
\begin{align}
	\Cconvdiscun&=
	\begin{cases}
		2\gamma^2\eta(1-2\alpha)\pinv&\mbox{if }\alpha<1/2 \eqsp, \\
		\gamma^2\eta&\mbox{if }\alpha=1/2 \eqsp ,\\
		0&\mbox{if }\alpha>1/2 \eqsp .
	\end{cases}\\
	\Cconvdiscdeux&=
	\begin{cases}
		2\max_{k \leq (\gamma \Lip/2)^{1/\alpha}} \expe{\norm{X_k-x\st}^2}&\mbox{if }\alpha<1/2 \eqsp, \\
		2\max_{k \leq (\gamma \Lip/2)^{1/\alpha}} \expe{\norm{X_k-x\st}^2}+2\gamma^2\eta&\mbox{if }\alpha=1/2 \eqsp ,\\
		2\max_{k \leq (\gamma \Lip/2)^{1/\alpha}} \expe{\norm{X_k-x\st}^2}+\gamma^2\eta(2\alpha-1)\pinv&\mbox{if }\alpha>1/2 \eqsp ,
	\end{cases}
\end{align}
\end{lemma_colt}

\begin{proof}
	Let $f:\bRd \to \bR$ verifying assumptions \rref{assum:f_lip} and \tup{\rref{assum:f}-\ref{item:ftilde_conv}}. We consider $(X_n)_{n \geq 0}$ satisfying \eqref{eq:sgd}. Let $x\st \in \bRd$. We have, using \eqref{eq:sgd}, \Cref{lemma:gradfz} and \tup{\rref{assum:f}-\ref{item:ftilde_conv}} that for all $n\geq (\gamma \trcst)^{1/\alpha}$,
	\begin{align}
		&\expecLigne{\normLigne{X_{n+1}-x\st}^2}{\cF_n}=\expecLigne{\normLigne{X_n-x\st-\gamma(n+1)^{-\alpha}\nabla \f(X_n,Z_{n+1})}^2}{\cF_n}\\
		& \qquad =\norm{X_n-x\st}^2-2\gamma/(n+1)^{\alpha}\la X_n-x\st, \expecLigne{\nabla \f(X_n,Z_{n+1})}{\cF_n}\ra \\
		& \qquad \quad +\gamma^2(n+1)^{-2\alpha}\expecLigne{\normLigne{\nabla \f(X_n,Z_{n+1})}^2}{\cF_n} \\
			& \qquad \leq \norm{X_n-x\st}^2-2\gamma/(n+1)^{\alpha}\la X_n-x\st, \nabla f(X_n)\ra\\
			& \qquad \quad+\trcst\gamma^2(n+1)^{-2\alpha}\parenthese{f(X_n)-f(x\st)+1} \\
			& \qquad \leq \norm{X_n-x\st}^2-2\gamma/(n+1)^{\alpha}\la X_n-x\st, \nabla f(X_n)\ra\\
			& \qquad \quad+\trcst\gamma^2(n+1)^{-2\alpha}\la \nabla f(X_n), X_n-x\st \ra +\trcst\gamma^2(n+1)^{-2\alpha}  \\
		& \qquad \leq\norm{X_n-x\st}^2+\gamma/(n+1)^{\alpha}\la \nabla f(X_n), X_n-x\st \ra\parentheseDeux{\trcst\gamma/(n+1)^{\alpha}-2}+\gamma^2\trcst(n+1)^{-2\alpha} \\
		& \qquad \leq\norm{X_n-x\st}^2+\gamma^2\trcst(n+1)^{-2\alpha} \\
		&\expeLigne{\norm{X_{n+1}-x\st}^2} \leq \expeLigne{\norm{X_n-x\st}^2}+\gamma^2\trcst(n+1)^{-2\alpha} \eqsp .	\label{eq:D2_discrete_main_fz}
	\end{align}
Summing the previous inequality leads to
\begin{equation}
	\expe{\norm{X_n-x\st}^2}-\expe{\norm{X_0-x\st}^2}\leq \gamma^2\trcst \sum_{k=1}^n k^{-2\alpha} \eqsp .
\end{equation}
As in the previous proof we now distinguish three cases:

\begin{enumerate}[label= (\alph*),  wide, labelwidth=!, labelindent=0pt]
		\item If $\alpha < 1/2$, we have 
			\begin{align}
				\expe{\norm{X_n-x\st}^2}&\leq \norm{X_0-x\st}^2+\gamma^2\trcst(1-2\alpha)\pinv(n+1)^{1-2\alpha}\\
				& \leq \norm{X_0-x\st}^2+2\gamma^2\trcst(1-2\alpha)\pinv n^{1-2\alpha} \eqsp .
			\end{align}
		\item If $\alpha = 1/2$, we have $\expeLigne{\normLigne{X_n-x\st}^2}\leq \norm{X_0-x\st}^2+\gamma^2\trcst(\log(n)+2)$.
		\item If $\alpha > 1/2$, we have $\expeLigne{\normLigne{X_n-x\st}^2}\leq \norm{X_0-x\st}^2+\gamma^2\trcst(2\alpha-1)\pinv$.
	\end{enumerate}
\end{proof}


In order to prove the theorem we will need an intermediate proposition.
\begin{proposition_colt}
	\label{prop:shamir_discrete_intermediate_fz}
	Let $\gamma, \alpha \in \ooint{0,1}$ and $x_0 \in \rset^{\dim}$ and
        $(X_n)_{n\geq 0}$ be given by \eqref{eq:sgd}. Assume \rref{assum:f_lip},
        \tup{\rref{assum:f}-\ref{item:ftilde_conv}}, \rref{assum:grad_sto}-\ref{item:ml}.  In addition,
        assume that there exists $\alpha\st\in\ccint{0,1/2}$, $\beta>0$ and
        $\Cshamdisc\geq 0$ such that for all $n \in \defEns{0,\cdots, N}$
        \begin{equation}
        	\label{eq:assum_shamir_int_fz}
    				\expeLigne{\normLigne{\nabla \f(X_n, Z)}^2} \leq
    				\begin{cases}
    					\Cshamdisc (n+1)^{\beta}\log(n+1)&\mbox{if }\alpha \leq \alpha\st \eqsp ,\\
    					\Cshamdisc &\mbox{if }\alpha>\alpha\st \eqsp .
    				\end{cases}
        \end{equation}

        Then there exists $\Cshamt \geq 0$
        such that, for all $N\geq 1$,
		\begin{equation}
			\expe{f(X_N)}-f\st \leq \Cshamt \defEns{ (1+\log(N+1))^2 /(N+1)^{\min(\alpha,1-\alpha)}\Psial(N+1) +1/(N+1)} \eqsp ,
	\end{equation}
	with
	\begin{equation}
 		\Psial(n)=
 		\begin{cases}
 			n^{\beta}&\mbox{if }\alpha \leq \alpha\st \eqsp , \\
 			1 &\mbox{if }\alpha>\alpha\st \eqsp .
 		\end{cases}
 \end{equation}
\end{proposition_colt}

\begin{proof}
  Let $\alpha, \gamma \in \ooint{0,1}$ and $N \geq 1$. Let $(X_n)_{n \geq 0}$ be
  given by \eqref{eq:sgd}.  And finally, combining
  \eqref{eq:shamir_discrete_a_fz} and~\eqref{eq:shamir_discrete_b_fz} together
  gives, for $\Cshamt \doteq 2\max((2\gamma)\pinv\norm{X_0-x\st}^2,2\Cdisc)$,

  \end{proof}

  Let $(S_k)_{k \in \defEns{0,\cdots,N}}$ defined for any
  $k \in \{0, \dots, N\}$ by
  \begin{equation}
    \label{eq:S_def_disc_fz}
	 S_k = (k+1)\pinv \sum_{t=N-k}^N \expe{f(X_t)} \eqsp .
\end{equation}
      Note that $\expeLigne{f(X_N)} - f^\star = (S_N - S_0) + (S_0 - f^\star)$.
      We are now going to control each one of the two terms $(S_0 - S_N)$ and
      $(S_N - f^\star)$ as follows:
\begin{enumerate}[wide, labelwidth=!, labelindent=0pt, label=(\alph*)]
\item Case $S_n -S_0$ (\Cref{lemma:s0-st-discrete_fz}): this is an adaption of
  the idea of suffix averaging of \cite{shamirzhang} to our setting (one of
  crucial difference lies into the control of the sequence
  $(\expeLigne{\nabla f(X_n)}^2)_{n \in \nset}$ which is assumed to be uniformly
  bounded in \cite{shamirzhang}). In particular, we control the (discrete)
  time-derivative of $S$ in \Cref{lemma:control_derivative_discrete_fz}
  (counterpart to \Cref{lemma:control_derivative_discrete}). Note that
  \Cref{lemma:s0-st-discrete_fz} is the counterpart to
  \Cref{lemma:s0-st-discrete}.
\item Case $S(T) - f^\star$ (\Cref{lemma:st-fstar-discrete_fz}): this result is
  known and corresponds to the optimal convergence rate of the averaged sequence
  towards the minimum of $f$. We provide its proof for completeness. Note that
  \Cref{lemma:st-fstar-discrete_fz} is the counterpart to
  \Cref{lemma:st-fstar-discrete}.
\end{enumerate}
Before controlling each one of these terms we state the following useful lemma,
which will allow us to control the derivative of $S$.

\begin{lemma_colt}
  \label{lemma:control_derivative_discrete_fz}
  Assume \rref{assum:f_lip}, \tup{\rref{assum:grad_sto}-\ref{item:ml}},
  and \tup{\rref{assum:f}-\ref{item:ftilde_conv}}. In addition, assume that
  \eqref{eq:assum_shamir_int_fz} holds. Then, for any
  $\alpha, \gamma \in \ooint{0,1}$, $N \in \nset$, $u \in \{0, \dots, N\}$ and
  $Y$ any $\rset^d$-valued random variable such that
  $\expeLigne{\normLigne{Y -x^\star}^2} \leq
  \Cconvdiscun(N+1)^{1-2\alpha}\log(N+1)+4\Cconvdiscdeux$ with $\Cconvdiscun$
  and $\Cconvdiscdeux$ given in \Cref{lemma:D2_discrete_fz}, we have
\begin{align}
	\expe{\sum_{k=N-u}^N f(X_k)-f(Y)} & \leq 2\Cdisc (u+1)/(N+1)^{\min(\alpha, 1-\alpha)} (1+\log(N+1))\Psial(N+1) \\
                                                   &\quad +(2\gamma)\pinv (N-u+1)^{\alpha} \expeLigne{\normLigne{X_{N-u}-\yo}^2} \eqsp ,
                                                     	\label{eq:shamir_discrete_master_fz}
\end{align}
with
 \begin{equation}
 		\Psial(n)=
 		\begin{cases}
 			n^{\beta}&\mbox{if }\alpha \leq \alpha\st \eqsp , \\
 			1 &\mbox{if }\alpha>\alpha\st \eqsp ,
 		\end{cases}
              \end{equation}
              and $\Cdisc\doteq 4 (\Cconvddeux\Cshamdisc)(1-\alpha)\pinv + (2\gamma)\pinv( \Cconvdiscdeux+4\Cconvdiscun)$.
\end{lemma_colt}

\begin{proof}Let $\ell \in \defEns{0,\cdots,N}$,
let $k \geq \ell$, let $\yo \in \cF_{\ell}$. Using \tup{\rref{assum:f}-\ref{item:ftilde_conv}} we have
\begin{align}
	\expek{\norm{X_{k+1}-\yo}^2}&=\expek{\norm{X_k-\yo-\gamma (k+1)^{-\alpha} \nabla\f(X_k, Z\kk)}^2}\\
	&=\norm{X_k-\yo}^2+\gamma^2(k+1)^{-2\alpha}\expek{\norm{\nabla\f(X_k,Z_{k+1})}^2}\\
	&\quad-2\gamma(k+1)^{-\alpha}\la X_k-\yo, \nabla f(X_k)\ra\\
		\expe{f(X_k)-f(\yo)} &\leq (2\gamma)\pinv(k+1)^{\alpha}\parenthese{\expe{\norm{X_k-\yo}^2}-\expe{\norm{X_{k+1}-\yo}^2}}\\
		&\quad+(\gamma/2)(k+1)^{-\alpha}\expe{\expek{\norm{\nabla\f(X_k,Z_{k+1})}^2}}\\
		\expe{f(X_k)-f(\yo)} &\leq (2\gamma)\pinv(k+1)^{\alpha}\parenthese{\expe{\norm{X_k-\yo}^2}-\expe{\norm{X_{k+1}-\yo}^2}}\\
		&\quad+(\gamma/2)(k+1)^{-\alpha}\expe{\norm{\nabla\f(X_k,Z_{k+1})}^2} \eqsp . 	\label{eq:shamir_discrete_before_sum_fz}
\end{align}
Let $u \in \defEns{0,\cdots,N}$. Summing now \eqref{eq:shamir_discrete_before_sum_fz} between $k=N-u$ and $k=N$ gives
\begin{align}	
		\expe{\sum_{k=N-u}^N f(X_k)-f(\yo)} &\leq (2\gamma)\pinv \sum_{k=N-u+1}^N \expe{\norm{X_k-\yo}^2}\parenthese{(k+1)^{\alpha}-k^{\alpha}} \\
	&\quad+\Cconvddeux  \sum_{k=N-u}^N \expe{\norm{\nabla \f(X_k,Z_{k+1})}^2} (k+1)^{-\alpha}\\
	&\quad+(2\gamma)\pinv (N-u+1)^{\alpha} \expe{\norm{X_{N-u}-\yo}^2}\eqsp . \label{eq:shamir_discrete_sum_fz}
\end{align}
In the following we will take for $\yo$ either $x\st$ or $X_m$ for $m \in \ccint{0,N}$.
We now have to run separate analyses depending on the value of $\alpha$.

\begin{enumerate}[label= (\alph*),  wide, labelwidth=!, labelindent=0pt]
	\item If $\alpha \leq \alpha^\star$, then \eqref{eq:assum_shamir_int_fz} gives that
		\begin{equation}
			\expe{\norm{\nabla f(X_k, Z_{k+1})}^2} \leq \Cshamdisc (N+1)^{\beta}\log(N+1),
		\end{equation}
		and Lemma~\ref{lemma:D2_discrete_fz} gives that for all $k \in \defEns{0,\dots,N}$,
		\begin{align}
			\expe{\norm{X_k-\yo}^2}&\leq 2\expe{\norm{X_k-x\st}^2}+2\expe{\norm{\yo-x\st}^2} \\
			&\leq 2\Cconvdiscun (k+1)^{1-2\alpha}\log(k+1)+2\Cconvdiscun(N+1)^{1-2\alpha}\log(N+1)+4\Cconvdiscdeux \\
			&\leq 4\Cconvdiscun (N+1)^{1-2\alpha}\log(N+1)+4\Cconvdiscdeux \eqsp .
		\end{align}
		We define $\Cconvdisctrois \doteq 4\Cconvdiscdeux$.  Using
                \eqref{eq:shamir_discrete_sum_fz} with
                $\Cconvdtrois=(\Cconvddeux\Cshamdisc)(1-\alpha)\pinv$, we get
		\begin{align}
				&\expe{\sum_{k=N-u}^N f(X_k)-f(\yo)} \leq  (2\gamma)\pinv (N-u+1)^{\alpha} \expe{\norm{X_{N-u}-\yo}^2} \\
				&\qquad+ (2\gamma)\pinv\parenthese{\Cconvdisctrois+4\Cconvdiscun(N+1)^{1-2\alpha}\log(N+1)}\parenthese{(N+1)^{\alpha}-(N-u+1)^{\alpha}} \\
				&\qquad + \Cconvddeux\Cshamdisc(N+1)^{\beta}\log(N+1) (1-\alpha)\pinv \parenthese{(N+1)^{1-\alpha}-(N-u)^{1-\alpha}} \\
				&\quad \leq  \Cconvdtrois (N+1)^{\beta}\log(N+1) \parenthese{(N+1)^{1-\alpha}-(N-u)^{1-\alpha}} \\
				&\qquad +(2\gamma)\pinv (N-u+1)^{\alpha} \expe{\norm{X_{N-u}-\yo}^2} \\
				&\qquad+ (2\gamma)\pinv\Cconvdisctrois\parenthese{(N+1)^{\alpha}-(N-u)^{\alpha}}\\
				&\qquad+(2\gamma)\pinv 4\Cconvdiscun\parenthese{(N+1)^{1-\alpha}-(N-u)^{1-\alpha}}\log(N+1) \\
			& \quad\leq \Cdisc (N+1)^{\beta}(1+\log(N+1))\parenthese{(N+1)^{1-\alpha}-(N-u)^{1-\alpha}} \\
			&\qquad +(2\gamma)\pinv (N-u+1)^{\alpha} \expe{\norm{X_{N-u}-\yo}^2} \eqsp , 			\label{eq:shamir_discrete_leq_fz}
		\end{align}
		where we used Lemma~\ref{lemma:alpha}. Similarly to
                \eqref{eq:shamir_qte_conj_fz} we have
	\begin{align}
    \label{eq:alpha_conj_fz}
		&(N+1)^{1-\alpha}-(N-u)^{1-\alpha} \\
		&\qquad= \defEns{\parenthese{(N+1)^{1-\alpha}-(N-u)^{1-\alpha}} \parenthese{(N+1)^{\alpha}+(N-u)^{\alpha}}}\parenthese{(N+1)^{\alpha}+(N-u)^{\alpha}}\pinv \\
		&\qquad\leq 2(u+1)/(N+1)^{\alpha} \eqsp .
	\end{align}

      \item If $\alpha \in \ocint{\alpha^\star, 1/2}$, then
        Lemma~\ref{lemma:D2_discrete_fz} gives that for all
        $k \in \defEns{0,\dots,N}$,
		\begin{align}
			\expe{\norm{X_k-\yo}^2}&\leq 2\expe{\norm{X_k-x\st}^2}+2\expe{\norm{\yo-x\st}^2} \\
			&\leq 2\Cconvdiscun (k+1)^{1-2\alpha}\log(k+1)+2\Cconvdiscun(N+1)^{1-2\alpha}\log(N+1)+4\Cconvdiscdeux \\
			&\leq 4\Cconvdiscun (N+1)^{1-2\alpha}\log(N+1)+4\Cconvdiscdeux \eqsp .
		\end{align}
		Combining \eqref{eq:assum_shamir_int_fz} and \eqref{eq:shamir_discrete_sum_fz} we have
		\begin{align}
			\label{eq:shamir_discrete_eq_fz}
				&\expe{\sum_{k=N-u}^N f(X_k)-f(\yo)} \leq (2\gamma)\pinv (N-u+1)^{\alpha} \expe{\norm{X_{N-u}-\yo}^2} \\
				&\quad+ (2\gamma)\pinv\parenthese{\Cconvdisctrois+4\Cconvdiscun\log(N+1)(N+1)^{1-2\alpha}}\parenthese{(N+1)^{\alpha}-(N-u+1)^{\alpha}} \\
				&\quad + \Cconvddeux\Cshamdisc(1-\alpha)\pinv \parenthese{(N+1)^{1-\alpha}-(N-u)^{1-\alpha}} \\
				&\leq  \Cconvdtrois \parenthese{(N+1)^{1-\alpha}-(N-u)^{1-\alpha}} +(2\gamma)\pinv (N-u+1)^{\alpha} \expe{\norm{X_{N-u}-\yo}^2} \\
				&\quad+ (2\gamma)\pinv\parenthese{\Cconvdisctrois+4\Cconvdiscun}(1+\log(N+1))\parenthese{(N+1)^{\alpha}-(N-u)^{\alpha}} \\
			&\leq\Cdisc (1+ \log(N+1)) \parenthese{(N+1)^{1-\alpha}-(N-u)^{1-\alpha}} \\
			&\quad+(2\gamma)\pinv (N-u+1)^{\alpha} \expe{\norm{X_{N-u}-\yo}^2} \eqsp .
		\end{align}
              \item If $\alpha > 1/2$, then $\alpha>\alpha\st$ and
                Lemma~\ref{lemma:D2_discrete_fz} gives
		\begin{equation}
			\forall k \in \defEns{0,\dots,N}, \, \expe{\norm{X_k-\yo}^2}\leq 2\expe{\norm{X_k-x\st}^2}+2\expe{\norm{\yo-x\st}^2} \leq 4\Cconvdiscdeux = \Cconvdisctrois \eqsp .
		\end{equation}
		Using \Cref{lemma:alpha}, \eqref{eq:assum_shamir_int_fz} and
                \eqref{eq:shamir_discrete_sum_fz} we have
		\begin{align}
			\label{eq:shamir_discrete_geq_fz}
			\begin{split}
				&\expe{\sum_{k=N-u}^N f(X_k)-f(\yo)} \leq  (\gamma\Cshamdisc/2) (1-\alpha)\pinv \parenthese{(N+1)^{1-\alpha}-(N-u)^{1-\alpha}} \\
				&\quad+(2\gamma)\pinv (N-u+1)^{\alpha} \expe{\norm{X_{N-u}-\yo}^2} \\
				&\quad+ (2\gamma)\pinv\Cconvdisctrois\parenthese{(N+1)^{\alpha}-(N-u+1)^{\alpha}}
			\end{split} \\
			\begin{split}
				&\leq  \Cconvdtrois \parenthese{(N+1)^{1-\alpha}-(N-u)^{1-\alpha}} +(2\gamma)\pinv (N-u+1)^{\alpha} \expe{\norm{X_{N-u}-\yo}^2} \\
				&\quad+ (2\gamma)\pinv\Cconvdisctrois\parenthese{(N+1)^{\alpha}-(N-u)^{\alpha}}
			\end{split} \\
			&\leq\Cdisc \parenthese{(N+1)^{\alpha}-(N-u)^{\alpha}} +(2\gamma)\pinv (N-u+1)^{\alpha} \expe{\norm{X_{N-u}-\yo}^2} \eqsp .
		\end{align}

	Similarly to \eqref{eq:shamir_qte_conj_fz} we have
	\begin{align}
		(N+1)^{\alpha}-(N-u)^{\alpha} &= \defEns{\parenthese{(N+1)^{\alpha}-(N-u)^{\alpha}}  \parenthese{(N+1)^{1-\alpha}+(N-u)^{1-\alpha}}}\\ & \qquad \quad \times \parenthese{(N+1)^{1-\alpha}+(N-u)^{1-\alpha}}\pinv \\
		&\leq 2(u+1)/(N+1)^{1-\alpha} \eqsp .
	\end{align}
\end{enumerate}
Finally, putting the three cases above together we obtain
\begin{align}
	\expe{\sum_{k=N-u}^N f(X_k)-f(Y)} & \leq 2\Cdisc (u+1)/(N+1)^{\min(\alpha, 1-\alpha)} (1+\log(N+1))\Psial(N+1) \\
                                                   &\quad +(2\gamma)\pinv (N-u+1)^{\alpha} \expe{\norm{X_{N-u}-\yo}^2} \eqsp ,
                                                     	\label{eq:shamir_discrete_master_fz}
\end{align}
with
 \begin{equation}
 		\Psial(n)=
 		\begin{cases}
 			n^{\beta}&\mbox{if }\alpha \leq \alpha\st \eqsp , \\
 			1 &\mbox{if }\alpha>\alpha\st \eqsp .
 		\end{cases}
 \end{equation}
 Note that the additional $\log(N+1)$ factor can be removed if $\alpha \neq 1/2$.
\end{proof}

  \begin{lemma_colt}
  \label{lemma:s0-st-discrete_fz}
  Assume \rref{assum:f_lip}, \tup{\rref{assum:grad_sto}-\ref{item:ml}}
  and \tup{\rref{assum:f}-\ref{item:ftilde_conv}}. In addition, assume that
  \eqref{eq:assum_shamir_int_fz} holds. Then, for any
  $\alpha, \gamma \in \ooint{0,1}$ and $N \in \nset$ we have
  \begin{equation}
    S_0-S_N \leq 2\Cdisc (N+1)^{-\min(\alpha, 1-\alpha)} (1+\log(N+1))^2 \Psial(N+1) \eqsp .
  \end{equation}
  with $S$ given in \eqref{eq:S_def_disc_fz}.  
\end{lemma_colt}
\begin{proof}
  Let $u \in \defEns{0,\dots, N}$. Using
  \Cref{lemma:control_derivative_discrete_fz} with the choice $\yo=X_{N-u}$
  gives
\begin{align}
	\label{eq:shamir_discrete_sum2_fz}
	\expe{\sum_{k=N-u}^N f(X_k)-f(X_{N-u})}\leq 2\Cdisc (u+1)/(N+1)^{\min(\alpha, 1-\alpha)} (1+\log(N+1)) \Psial(N+1) \eqsp .
\end{align}

		And then,
		\begin{align}			
			S_u&=(u+1)\pinv \sum_{k=N-u}^N \expe{f(X_k)} \\
			&\leq 2\Cdisc (N+1)^{-\min(\alpha, 1-\alpha)} (1+\log(N+1))\Psial(N+1) + \expe{f(X_{N-u})} \eqsp . \label{eq:shamir_discrete_Su_fz}
		\end{align}
		We have now, using \eqref{eq:shamir_discrete_Su_fz},
		\begin{align}
			uS_{u-1}&=(u+1)S_u-\expe{f(X_{N-u})} \\
			&=uS_u+S_u-\expe{f(X_{N-u})}\\
			&\leq uS_u+2\Cdisc (N+1)^{-\min(\alpha, 1-\alpha)} (1+\log(N+1)) \Psial(N+1) \\
			S_{u-1}-S_u &\leq  2\Cdisc u\pinv (N+1)^{-\min(\alpha, 1-\alpha)} \log(N+1) \\
			S_0-S_N &\leq 2\Cdisc (N+1)^{-\min(\alpha, 1-\alpha)} (1+\log(N+1)) \Psial(N+1) \sum_{u=1}^N(1/u)  \\
                  S_0-S_N &\leq 2\Cdisc (N+1)^{-\min(\alpha, 1-\alpha)} (1+\log(N+1))^2 \Psial(N+1) \eqsp .
                            			\label{eq:shamir_discrete_a_fz}
		\end{align}
                \end{proof}

\begin{lemma_colt}
  \label{lemma:st-fstar-discrete_fz}
  Assume \rref{assum:f_lip}, \tup{\rref{assum:grad_sto}-\ref{item:ml}}
  and \tup{\rref{assum:f}-\ref{item:ftilde_conv}}. In addition, assume that
  \eqref{eq:assum_shamir_int_fz} holds. Then, for any
  $\alpha, \gamma \in \ooint{0,1}$ and $N \in \nset$ we have
          \begin{align}
		S_N-f\st&\leq 2\Cdisc (1+\log(N+1))^2 (N+1)^{-\min(\alpha,1-\alpha)}\Psial(N+1)\\
		&\quad+(2\gamma)\pinv(N+1)\pinv\norm{X_0-x\st}^2 \eqsp .		\label{eq:shamir_discrete_b_fz}
	\end{align}
  with $S$ given in \eqref{eq:S_def_disc_fz}.
  \end{lemma_colt}
                
\begin{proof}
  Using \Cref{lemma:control_derivative_discrete_fz} with the choice $\yo=x\st$
  and $u=N$ gives
	\begin{align}
		(N+1)\pinv\expe{\sum_{k=0}^N f(X_k)-f(x\st)} &\leq 2\Cdisc (1+\log(N+1)) (N+1)^{-\min(\alpha,1-\alpha)}\Psial(N+1)\\
                                                             &\quad+(2\gamma)\pinv(N+1)\pinv\norm{X_0-x\st}^2
        \end{align}
        Therefore,
        \begin{align}
		S_N-f\st&\leq 2\Cdisc (1+\log(N+1))^2 (N+1)^{-\min(\alpha,1-\alpha)}\Psial(N+1)\\
		&\quad+(2\gamma)\pinv(N+1)\pinv\norm{X_0-x\st}^2 \eqsp .		\label{eq:shamir_discrete_b_fz}
	\end{align}
\end{proof}

\begin{theorem}
	\label{thm:shamir_discrete_fz}
	Let $\gamma, \alpha \in \ooint{0,1}$ and $(X_n)_{n\geq 0}$ be given by
        \eqref{eq:sgd}. Assume \rref{assum:f_lip},
        \tup{\rref{assum:grad_sto}-\ref{item:ml}} and \tup{\rref{assum:f}-\ref{item:ftilde_conv}}. Then, there
        exists $C \geq 0$ (explicit and given in the proof) such that for any
        $N \geq 1$,
		\begin{equation}
       \expe{f(X_N)}-{ \textstyle\min_{\rset^d}} f \leq C (1+\log(N+1))^2/(N+1)^{\alpha\wedge(1-\alpha)} \eqsp.
     \end{equation}
\end{theorem}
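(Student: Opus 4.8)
The plan is to mimic the proof of \Cref{thm:shamir_discrete}, replacing the estimates that rely on \rref{assum:grad_sto}-\ref{item:approx_sto} by their counterparts established under \rref{assum:grad_sto}-\ref{item:ml} in \Cref{sec:technical-results}. The engine of the argument is the intermediate bound \Cref{prop:shamir_discrete_intermediate_fz}: it converts an a priori polynomial control of $\expeLigne{\normLigne{\nabla \f(X_n, Z)}^2}$ into a convergence rate for $\expeLigne{f(X_N)}-f\st$ whose decay exponent is strictly better, and iterating this gain is what permits a bootstrap.

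First I would establish, by induction over $m \in \bN^*$, the statement $\mathcal{H}(m)$: for every $\alpha > 1/(m+1)$ there is $\Cshamaplus > 0$ with $\expeLigne{\normLigne{\nabla \f(X_n,Z)}^2} \leq \Cshamaplus$ for all $n \in \nset$, and for every $\alpha \leq 1/(m+1)$ there is $\Cshamamoins > 0$ with $\expeLigne{\normLigne{\nabla \f(X_n,Z)}^2} \leq \Cshamamoins (n+1)^{1-(m+1)\alpha}(1+\log(n+1))^2$. The base case $m = 1$ follows by combining \Cref{lemma:D2_discrete_fz}, which gives $\expeLigne{\normLigne{X_n-x\st}^2} \leq \Cconvdiscun\Cconvdisc(n+1)+\Cconvdiscdeux$, with \Cref{lemma:gradfz_norm}, which bounds $\int_{\msz}\normLigne{\nabla\f(x,z)}^2\rmd\muz(z)$ by $\trcst(\normLigne{x-x\st}^2+1)$; taking the expectation closes it. For the inductive step, assuming $\mathcal{H}(m)$, set $\alpha\st = 1/(m+1)$ and $\beta = 1-(m+1)\alpha$, so that \eqref{eq:assum_shamir_int_fz} holds (up to a harmless poly-logarithmic factor absorbed into $\Cshamdisc$, or a routine modification of the proof of \Cref{prop:shamir_discrete_intermediate_fz}); that proposition then yields
\[
\expeLigne{f(X_N)}-f\st \leq \Cshamt \bigl\{(1+\log(N+1))^2 (N+1)^{1-(m+2)\alpha} + (N+1)^{-1}\bigr\} \eqsp .
\]
If $\alpha > 1/(m+2)$ the right-hand side is bounded, say by $\bar{\Cconv}_{\alpha}$; if $\alpha \leq 1/(m+2)$ it is of order $(N+1)^{1-(m+2)\alpha}(1+\log(N+1))^2$. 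In either case, \Cref{lemma:gradfz}, available because $\f(\cdot,z)$ is convex under \rref{assum:f}-\ref{item:ftilde_conv} and giving $\int_{\msz}\normLigne{\nabla\f(x,z)}^2\rmd\muz(z) \leq \trcst(f(x)-f(x\st)+1)$, turns this into the bound on $\expeLigne{\normLigne{\nabla \f(X_n,Z)}^2}$ required by $\mathcal{H}(m+1)$, completing the induction.

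To conclude, fix $\alpha \in \ooint{0,1}$. Since $\rset$ is archimedean there is $m \in \bN^*$ with $\alpha > 1/(m+1)$, whence $\mathcal{H}(m)$ provides $\Cshamdisc > 0$ with $\expeLigne{\normLigne{\nabla \f(X_n,Z)}^2} \leq \Cshamdisc$ for all $n$. A final application of \Cref{prop:shamir_discrete_intermediate_fz}, now with $\alpha\st = 0$ so that $\Psial \equiv 1$, yields $\expeLigne{f(X_N)}-\min_{\rset^d}f \leq C(1+\log(N+1))^2/(N+1)^{\alpha\wedge(1-\alpha)}$ with $C = 2\Cshamt$, which is the claim. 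I expect the main difficulty to lie in the bookkeeping of the poly-logarithmic overhead across the bootstrap — each use of \Cref{prop:shamir_discrete_intermediate_fz} adds a logarithmic factor that must be shown harmless for the polynomial exponent — and in checking that the hypothesis \eqref{eq:assum_shamir_int_fz} is genuinely met at every stage; this is exactly where the convexity of $\f(\cdot,z)$ and \Cref{lemma:gradfz} are essential, as they replace the crude Lipschitz inequality $\normLigne{\nabla f(x)}^2 \leq \Lip^2\normLigne{x-x\st}^2$ used in the \rref{assum:grad_sto}-\ref{item:approx_sto} version.
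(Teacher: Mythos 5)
Your proposal follows the paper's proof essentially verbatim: the same induction statement $\mathcal{H}(m)$, the same base case via \Cref{lemma:D2_discrete_fz} and \Cref{lemma:gradfz_norm}, the same inductive step driven by \Cref{prop:shamir_discrete_intermediate_fz} together with \Cref{lemma:gradfz}, and the same final step invoking the Archimedean property to reduce to the bounded-gradient regime. Your observation about tracking the poly-logarithmic overhead is the one delicate point, and the paper handles it exactly as you anticipate by letting the hypothesis $\eqref{eq:assum_shamir_int_fz}$ absorb a logarithmic factor.
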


\begin{proof}
    We begin by proving by induction over $m \in \bN^*$ that the following
  assertion \Cref{assum:recu_conv_disc_fz}($m$) is true.
  \begin{assumptionH}[$m$]
    \label{assum:recu_conv_disc_fz}
  For any $\alpha > 1/(m+1)$, there exists $\Cshamaplus>0$ such that for all
  $n\in \nset,\ \expeLigne{\normLigne{\nabla \f(X_n, Z)}^2} \leq
  \Cshamaplus$. In addition, for any $\alpha \leq 1/(m+1)$, there exists
  $\Cshamamoins>0$ such that for all
  $n\in \nset,\ \expeLigne{\normLigne{\nabla \f(X_n, Z)}^2} \leq \Cshamamoins
  n^{1-(m+1)\alpha}(1+\log(n))^2$.
\end{assumptionH}

For $m=1$, \Cref{assum:recu_conv_disc_fz}($1$) is an immediate consequence of
\rref{assum:f_lip} and Lemma~\ref{lemma:D2_discrete_fz}, with
$\Cshamaplus=\Lip^2\Cconvdiscdeux$ and
$\Cshamamoins=\Lip^2\max(\Cconvdiscun, \Cconvdiscdeux)$.  Now, let $m\in \bN^*$
and assume that \Cref{assum:recu_conv_disc_fz}($m$) holds.  Let
$\alpha \in \ooint{0,1}$. Setting $\alpha\st=1/m+1$ we see that
\eqref{eq:assum_shamir_int_fz} is verified with $\beta=1-(m+1)\alpha$.
Consequently, using \rref{assum:f_lip},
\tup{\rref{assum:f}-\ref{item:ftilde_conv}}, \rref{assum:grad_sto}-\ref{item:ml}
we can apply Proposition~\ref{prop:shamir_discrete_intermediate_fz} which shows
that, for $\alpha\leq 1/(m+1)$
\begin{align}
	\expe{f(X_N)}-f\st &\leq \Cshamt \defEns{ (1+\log(N+1))^2 /(N+1)^{\min(\alpha,1-\alpha)}\Psial(N+1) +1/(N+1)} \\
	&\leq \Cshamt \defEns{ (1+\log(N+1))^2 (N+1)^{-\alpha}(N+1)^{1-(m+1)\alpha} +1/(N+1)} \\
	&\leq \Cshamt \defEns{ (1+\log(N+1))^2 (N+1)^{1-(m+2)\alpha} +1/(N+1)}\eqsp . 	\label{eq:shamir_rec_disc_fz}
\end{align}
In particular, if $\alpha >1/(m+2)$ we have the existence of $\bar{\Cconv}_{\alpha}>0$ such that for all $n \in \defEns{0,\cdots, N}$,
$\expe{f(X_n)}-f\st \leq \bar{\Cconv}_{\alpha}$. And using \rref{assum:f_lip} and Lemma~\ref{lemma:gradfz} we get that, for all $n \in \defEns{0,\cdots, N}$
\begin{equation}
\expeLigne{\normLigne{\nabla \f(X_n, Z)}^2} \leq \trcst(1+ \expe{f(X_n)-f\st}) \leq \trcst (1+\bar{\Cconv}_{\alpha}) \eqsp ,
\end{equation}
Combining this result with \eqref{eq:shamir_rec_disc_fz}, we get that
\Cref{assum:recu_conv_disc_fz}($m+1$) holds with
$\Cshamaplus=\trcst (1+\bar{\Cconv}_{\alpha})$ and $\Cshamamoins=2\Cshamt$.
Finally this proves that \Cref{assum:recu_conv_disc_fz}($m$) is true for any $n \geq 1$ by induction

Now, let $\alpha \in \ooint{0,1}$. Since $\bR$ is archimedean, there exists
$m \in \bN^*$ such that $\alpha>1/(m+1)$ and therefore
\Cref{assum:recu_conv_disc_fz}($m$) shows the existence of $\Cshamdisc>0$ such
that $\expeLigne{\normLigne{\nabla \f(X_n, Z)}^2} \leq \Cshamdisc$ for all $n \in \bN^*$.
Applying Proposition~\ref{prop:shamir_discrete_intermediate_fz} gives the
existence of $\Cdisc >0$ such that for all $N\geq 1$
\begin{equation}
	\expe{f(X_N)}-f\st \leq \Cdisc (1+\log(N+1))^2/(N+1)^{\min(\alpha, 1-\alpha)} \eqsp ,
\end{equation}
with $\Cdisc = 2 \Cshamt$, concluding the proof.
\end{proof}



\section{Weakly Quasi-Convex Case}
\label{sec:weakly-convex_appendix}

In this section we give the proofs of the results presented in
\Cref{sec:non-convex-case}. We prove \Cref{thm:f_weak} in
\Cref{thm:f_weak:proof}. Technical lemmas are gathered in
\Cref{sec:technical-lemmas-1}. We control the norm of
$\expeLigne{\normLigne{\bfX_t - x^\star}^{2p}}^{1/p}$ in the convex framework in
\Cref{sec:control-norm-convex}. The proof of \Cref{sec:let-aplha-gamma:proof} is
presented in \Cref{sec:let-aplha-gamma:proof}. Its discrete counterpart is given
\Cref{sec:discr-count-crefc_alpha}. Finally, we conclude this section with the
proof of \Cref{thm:f_weak_discrete} in \Cref{thm:f_weak_discrete:proof}.

\subsection{Proof of \Cref{thm:f_weak}}
\label{thm:f_weak:proof}

  Without loss of generality, we assume that $f^{\star} = 0$.  Let
  $\alpha, \gamma \in \ooint{0,1}$, $x_0 \in \rset^{\dim}$, $a_t = \gua + t$,
  $\ell_t = 1 + \log(1 + \gua^{-1}t)$ for any $t \geq 0$ and
  $\delta = \min(\delta_1, \delta_2)$ with $\delta_1$ and $\delta_2$ given in
  \Cref{thm:f_weak}.  Using \Cref{lemma:dynkin}, we have for any $t \geq 0$
  \begin{align}
    &\expe{f(\bfX_t)a_t^{\delta}\ell_t^{-\varepsilon}} - f(x_0)\gua^{\delta}  \\ & \qquad = \int_0^t \left\lbrace -\ell_s^{-\varepsilon} a_s^{\delta-\alpha} \expeLigne{\norm{\nabla f(\bfX_s)}^2} \right.
     \left. + (\gua/2) \ell_s^{-\varepsilon} a_s^{\delta-2\alpha} \expe{\langle \nabla^2f(\bfX_s) , \Sigma(\bfX_s) \rangle } \right .\\
    & \qquad \qquad \left . + \delta \ell_s^{-\vareps} a_s^{\delta -1} \expe{f(\bfX_s)} -\varepsilon \ell_s^{-\varepsilon-1} a_s^{\delta} \expe{f(\bfX_s)}\right \rbrace \rmd s \eqsp .
  \end{align}
  Define for any $t \geq 0$,
  $\mce(t) = \expeLigne{f(\bfX_t)}a_t^{\delta}\ell_t^{-\vareps}$. $(t \mapsto \mce(t))$ is
  differentiable and using \Cref{assum:f_lip} and \Cref{assum:grad_sto} we have
  for any $t > 0$,
  \begin{equation}
    \dd\mce(t)/\dd t \leq -\ell_t^{-\vareps} a_t^{\delta - \alpha} \expe{\| \nabla f(\bfX_t) \|^2} + (\gua/2) \ell_t^{-\vareps} a_t^{\delta - 2\alpha} \boundLSig  + \delta a_t^{-1}\mce(t) \eqsp .
  \end{equation}
  Using, \Cref{assum:f_weak} and Hölder's inequality we have for any $t \geq 0$
  \begin{equation}
    \tau \expe{f(\bfX_t)} \leq \expe{\| \bfX_t - x^{\star}\|^{r_2 r_3}}^{r_3^{-1}} \expeLigne{\| \nabla f(\bfX_t) \|^2}^{r_1/2} \eqsp .
  \end{equation}
  Noting that $(r_3 r_1)^{-1} = r_1^{-1} - 1/2$, we get for any $t \geq 0$
  \begin{align}
    \expeLigne{\norm{\nabla f(\bfX_t)}^2} &\geq \tau^{2 r_1^{-1}} \expe{f(\bfX_t)}^{2r_1^{-1}} \expe{\| \bfX_t - x^{\star}\|^{r_2 r_3}}^{1 -2r_1^{-1}} \\
                                          &\geq \tau^{2r_1^{-1}} \Cbeta^{1-2r_1^{-1}} a_t^{\beta(1-2r_1^{-1})} \ell_t^{\explog(1-2r_1^{-1})} \expe{f(\bfX_t)}^{2r_1^{-1}} \\
    &\geq \tau^{2r_1^{-1}} \Cbeta^{1-2r_1^{-1}} a_t^{\beta(1-2r_1^{-1})- 2r_1^{-1}\delta} \ell_t^{\explog(1-2r_1^{-1}) - 2r_1^{-1}-\vareps} \mce(t)^{2r_1^{-1}} \eqsp .
  \end{align}
  Therefore, we have for any $t \geq 0$
\begin{align}
  \dd\mce(t)/\dd t &\leq - \tau^{2r_1^{-1}} \Cbeta^{1-2r_1^{-1}} a_t^{(1-2r_1^{-1})(\delta + \beta) - \alpha} \mce(t)^{2r_1^{-1}} + \gua \ell_t^{-\vareps} a_t^{\delta - 2\alpha} \Lip \eta  + \delta a_t^{-1} \mce(t)  \eqsp .
\end{align}
Let $\Cweakapp_3 = \max(\Cweakapp_1, \Cweakapp_2)$ with
\begin{equation}
  \begin{aligned}
    \Cweakapp_1 &= (|\delta|\Cbeta^{2r_1^{-1}-1}\tau^{-2r_1^{-1}}\gua^{(2r_1^{-1}-1)(\delta + \beta) +\alpha - 1})^{(2r_1^{-1}-1)^{-1}} \eqsp ,\\
    \Cweakapp_2 &= ((\Lip \eta/2) \Cbeta^{2r_1^{-1}-1}\tau^{-2r_1^{-1}}\gua^{(2r_1^{-1}-1)(\delta + \beta) + \delta - \alpha + 1})^{r_1/2} \eqsp .
  \end{aligned}
\end{equation}
If $\mce(t) \geq \Cweakapp_3$ then $\dd\mce(t)/\dd t \leq 0$. Let $\Cweakapp = \max(\Cweakapp_3, \mce(0))$, then for any $t \geq 0$, $\mce(t) \leq \Cweakapp$, which concludes the proof.

\subsection{Technical lemmas}
\label{sec:technical-lemmas-1}

\begin{lemma_colt}
  \label{lemma:f_beat_norme}
  Assume that $f$ is continuous, that
  $x^{\star} \in \argmin_{x\in\rset^{\dim}} f(x)$ and that there exist
  $c, R \geq 0$ such that for any $x \in \rset^{\dim}$ with
  $\normLigne{x- x^{\star}} \geq R$ we have
  $f(x) - f(x^{\star}) \geq c \normLigne{x - x^{\star}}$. Let $p \in \nset$, $X$
  a $d$-dimensional random variable and $\ttfun \geq 1$ such that
  $\expeLigne{(f(X) - f(x^{\star}))^{2p}} \leq \ttfun$. Then there exists
  $\ttfdeux \geq 0$ such that
  \begin{equation}
    \expe{\norm{X - x^{\star}}^{2p}} \leq \ttfdeux \ttfun \eqsp .
  \end{equation}
\end{lemma_colt}

\begin{proof}
  Since $f$ is continuous there exists $\ttamin \geq 0$ such that for any $x \in \rset^{\dim}$,
  $f(x) - f(x^{\star}) \geq c\normLigne{x - x^{\star}} - \ttamin$. Therefore, using Jensen's inequality and that $\ttfun \geq 1$ we have
  \begin{align}
    \expe{\norm{X - x^{\star}}^{2p}} &\leq c^{-2p} \sum_{k=0}^{2p} {k \choose 2p} \expe{(f(X) - f(x^{\star}))^k} \ttamin^{2p - k} \\
                                     &\leq c^{-2p}\sum_{k=0}^{2p} {k \choose 2p} \expe{(f(X) - f(x^{\star}))^{2p}}^{k/(2p)} \ttamin^{2p - k} \\
    &\leq c^{-2p} \sum_{k=0}^{2p} {k \choose 2p} \ttfun^{k/(2p)} \ttamin^{2p - k} \leq \ttfdeux \ttfun \eqsp ,
  \end{align}
  with $\ttfdeux = c^{-2p}\sum_{k=0}^{2p} {k \choose 2p} \ttamin^{2p-k}$.
\end{proof}

\begin{lemma_colt}
  \label{lemma:control_f}
  Assume \rref{assum:f_weak} with $r_1 = r_2 = 1$. Then for any $p \in \nset$ with $p \geq 2$ and $d$-dimensional random variable $X$ we have
\begin{equation}
  \label{eq:nabla_min}
   \expe{\norm{\nabla f(X)}^{2}(f(X) - f(x^{\star}))^{p-1}} \geq \expe{(f(X) - f(x^{\star}))^p}^{1+1/p} \expe{\norm{X - x^{\star}}^{2p}}^{-1/p} \eqsp ,
\end{equation}
\end{lemma_colt}

\begin{proof}
  Let $p \in \nset$ with $p\geq 2$ and let $\varpi = 2p/(p+1)$. Using \Cref{assum:f_weak}  we have for any $x \in \rset^{\dim}$
\begin{align}
  \norm{x - x^{\star}}^{\varpi} \norm{\nabla f(x)}^{\varpi} (f(x) - f(x^{\star}))^{\varpi(p-1) /2} \geq (f(x) - f(x^{\star}))^{\varpi (p+1)/2} \geq (f(x) - f(x^{\star}))^p \eqsp .
\end{align}
Let $\varsigma = 2\varpi^{-1} = 1+ p^{-1}$ and $\conj$ such that $\varsigma^{-1} + \conj^{-1} =1$. Using Hölder's inequality  the fact that $\conj \varpi = 2p$  we have
\begin{multline}
  \expe{\norm{X - x^{\star}}^{\varpi} \norm{\nabla f(X)}^{\varpi} (f(X) - f(x^{\star}))^{\varpi(p-1) /2}} \\ \leq \expe{\norm{\nabla f(X)}^{2}(f(X) - f(x^{\star}))^{p-1}}^{1/\varsigma} \expe{\norm{X - x^{\star}}^{2p}}^{1/\conj} \eqsp .
\end{multline}
Since, $\conj^{-1} = (1+p)^{-1}$ we have
\begin{equation}
   \expe{\norm{\nabla f(X)}^{2}(f(X) - f(x^{\star}))^{p-1}} \geq \expe{(f(X) - f(x^{\star}))^p}^{1+1/p} \expe{\norm{X - x^{\star}}^{2p}}^{-1/p} \eqsp ,
\end{equation}
which concludes the proof.
\end{proof}

\begin{lemma_colt}
  \label{lemma:bound_p}
  Let $\alpha, \gamma \in \ooint{0,1}$. Assume that \rref{assum:f_weak_quasi} holds then for any $p \in \nset$, there exists $\ttfpquatre \geq 0$ such that for any $t \geq 0$
  \begin{equation}
    \expe{\norm{\bfX_t - x^{\star}}^{2p}}^{1/p} \leq \ttfpquatre \defEns{1 + (\gua +t)^{1 - 2\alpha}} \eqsp .
  \end{equation}
\end{lemma_colt}

\begin{proof}
  Let $\alpha, \gamma \in \ooint{0,1}$ and $p \in \nset$. Let $\mce_{t,p} = \expe{\normLigne{\bfX_t-x^{\star}}^{2p}}$. Using
\Cref{lemma:bound_scal} and \Cref{lemma:dynkin} we have for any $t >0$
\begin{align}
  \dd\mce_{t,p}/\dd t &= - 2p(\gua + t)^{-\alpha} \expe{\langle \nabla f(\bfX_t), \bfX_t - x^{\star}\rangle \| \bfX_t - x^{\star} \|^{2(p -1)}} \\
  &\quad+ p\gua (\gua + t)^{-2\alpha} \left\lbrace \expe{\trace( \Sigma(\bfX_t)) \norm{\bfX_t - x^{\star}}^{2(p-1)}} \right. \\ & \qquad \left. + 2(p-1) \expe{\langle (\bfX_t-x^{\star})^{\top} (\bfX_t - x^{\star}), \Sigma(\bfX_t) \rangle \norm{\bfX_t - x^{\star}}^{2(p-2))}}\right\rbrace \\
  &\leq 2p\gua\eta(2p-1)(\gua + t)^{-2\alpha} \expe{\norm{\bfX_t - x^{\star}}^{2(p-1)}} \\
  &\leq p\gua\eta(2p-1)(\gua + t)^{-2\alpha} \mce_{t, (p-1)} \eqsp .   \label{eq:p_recu}
\end{align}
If $p=1$, the proposition holds and by recursion and using \eqref{eq:p_recu} we obtain the result for $p \in \nset$.
\end{proof}

\subsection{Control of the norm in the convex case}
\label{sec:control-norm-convex}

\begin{proposition_colt}
  \label{prop:recurrence}
  Let $\alpha, \gamma \in \ooint{0, 1}$. Let $m \in \ccint{0,2}$ and $\varphi > 0$ such that for any $p \in \nset$ there exists $\ttfpdeux \geq 0$  such that for any $t \geq 0$, $\expeLigne{\norm{\bfX_t - x^{\star}}^{2p}}^{1/p} \leq \ttfpun \defEnsLigne{1 + (\gua + t)^{m - \varphi \alpha}}$. Assume \rref{assum:f_lip} and \rref{assum:f_weak_quasi} and that there exist $R \geq 0$ and $c >0$ such that for any $x \in \rset^{\dim}$, with $\normLigne{x} \geq R$, $f(x) - f(x^{\star}) \geq c \norm{x - x^{\star}}$. Then, for any $p \in \nset$, there exists $\ttfpdeux \geq 0$ such that for any $t \geq 0$,
  \begin{equation}
    \expe{\norm{\bfX_t - x^{\star}}^{2p}}^{1/p}\leq \ttfpdeux \defEnsLigne{1 + (\gua + t)^{m - (1+\varphi) \alpha}} \eqsp .
  \end{equation}
\end{proposition_colt}

\begin{proof}
  If $\alpha \geq m/\varphi$ the proof is immediate since $\sup_{t \geq 0} \defEnsLigne{\expeLigne{\norm{\bfX_t - x^{\star}}^{2p}}^{1/p}} < +\infty$. Now assume that $\alpha < m/\varphi$. Let $p \in \nset$, $\delta_p = p(1+\varphi)\alpha - pm$ and $(t \mapsto \mce_{t,p})$ such that for any $t \geq 0$, $\mce_{t, p} = (f(\bfX_t) - f(x^{\star}))^{2p}(\gua + t)^{\delta_p}$ . Using \Cref{lemma:dynkin} we have for any $t > 0$
\begin{align}
  \label{eq:ito}
  \dd\mce_{t,p}/\dd t &= -2p(\gua + t)^{-\alpha + \delta_p} \expe{\norm{\nabla f(\bfX_t)}^{2}(f(\bfX_t) - f(x^{\star}))^{2p-1}} \\ & \quad + p\gua(\gua + t)^{-2\alpha+\delta_p} \left\lbrace \expe{\langle \nabla^2f(\bfX_t), \Sigma(\bfX_t) \rangle (f(\bfX_t) - f(x^{\star}))^{2p-1}} \right. \\
  & \quad + \left. (2p-1) \expe{ \langle \nabla f(\bfX_t) \nabla f(\bfX_t)^{\top}, \Sigma(\bfX_t) \rangle (f(\bfX_t) - f(x^{\star})^{2p-2})} \right\rbrace + \delta_p (\gua + t)^{-1} \mce_{t,p}\eqsp .
\end{align}
Combining \eqref{eq:ito}, \Cref{lemma:bound_scal}, \Cref{lemma:kolmo}, \Cref{lemma:control_f} and the fact that for any $t \geq 0$, $\expeLigne{\norm{\bfX_t - x^{\star}}^{4p}}^{1/(2p)} \leq \ttfpun \defEnsLigne{1 + (\gua + t)^{m - \varphi \alpha}}$ we get
\begin{align}
  \label{eq:res_final_pres}
  \dd\mce_{t,p}/\dd t  &\leq -2p(\gua + t)^{-\alpha + \delta_p} \expe{(f(\bfX_t) - f(x^{\star}))^{2p}}^{1+1/(2p)} \expe{\norm{\bfX_t - x^{\star}}^{4p}}^{-1/(2p)} \\
  & \quad + p\gua(\gua + t)^{-2\alpha+\delta_p} \left\lbrace \boundLSig \expe{(f(\bfX_t) - f(x^{\star}))^{2p-1}} \right.\\
  & \quad + \left. \Lip(2p-1)\eta^2 \expe{(f(\bfX_t) - f(x^{\star}))^{2p-1}} \right\rbrace + \delta_p (\gua + t)^{-1} \mce_{t,p}\\
  &\leq -2p(\gua + t)^{-\alpha - \delta_p/(2p)} \mce_{t,p}^{1 + 1/(2p)} \expe{\norm{\bfX_t - x^{\star}}^{2p}}^{-1/(2p)} \\
  & \quad + p \gua (d + 2p -1)\Lip \eta(1+\eta) (\gua + t)^{-2\alpha + \delta_p/(2p)} \mce_{t,p}^{1 - 1/(2p)}  + \delta_p (\gua + t)^{-1} \mce_{t,p}\\
  &\leq  -2p(\gua + t)^{-\alpha - \delta_p/(2p)} \mce_{t,p}^{1 + 1/(2p)} \ttfpun^{-1} \defEnsLigne{1 + (\gua + t)^{m - \varphi \alpha}}^{-1} \\
  & \quad + p \gua (d + 2p -1)\Lip \eta(1+\eta) (\gua + t)^{-2\alpha + \delta_p/p} \mce_{t,p}^{1 - 1/p} + \delta_p (\gua + t)^{-1} \mce_{t,p}\\
  &\leq  -2p(\gua + t)^{(\varphi-1)\alpha - \delta_p/(2p)- m} \mce_{t,p}^{1 + 1/(2p)} \ttfpun^{-1} \defEnsLigne{1 + (\gua + t)^{-m + \varphi \alpha}}^{-1}  \\
  & \quad + 2p \gua (d + 2p -1)\Lip \eta(1+\eta) (\gua + t)^{-2\alpha + \delta_p/(2p)} \mce_{t,p}^{1 - 1/(2p)} + \delta_p (\gua + t)^{-1} \mce_{t,p}\\
  &\leq  -p  \ttfpun^{-1} \defEnsLigne{1 + \gua^{-m + \varphi \alpha}}^{-1} (\gua + t)^{(\varphi-1)\alpha - \delta_p/(2p)- m} \mce_{t,p}^{1 + 1/(2p)}  \\
   & \quad + 2p \gua (d + 2p -1)\Lip \eta(1+\eta) (\gua + t)^{-2\alpha + \delta_p/(2p)} \mce_{t,p}^{1 - 1/(2p)} + \delta_p (\gua + t)^{-1} \mce_{t,p} \eqsp .
\end{align}
Since $m \in \ccint{0,2}$, we have that $1 - m + (\varphi - 1)\alpha \geq (1+ \varphi)\alpha/2 - m/2$. Hence,
\begin{equation}
  (1-\varphi)\alpha - \delta_p/(2p) - m \leq 2 \alpha + \delta_p/(2p) \eqsp , \qquad (1-\varphi)\alpha - \delta_p/(2p) - m \leq 1 \eqsp .
\end{equation}
Therefore, using \Cref{lemma:ode}, there exists $\ttfp^{(a)} \geq 1$ such that for any $t \geq 0$, $\mce_{t,p} \leq \ttfp^{(a)}$.
Hence, for any $t \geq 0$,
\begin{equation}
  \expe{(f(\bfX_t) - f(x^{\star}))^{2p}} \leq \ttfp^{(a)} (1 + (\gua + t)^{pm - p(1+\varphi)\alpha}) \eqsp .
\end{equation}
Using \Cref{lemma:f_beat_norme}, there exists $\ttfdeux \geq 0$ such that
\begin{equation}
  \expe{\norm{\bfX_t - x^{\star}}^{2p}} \leq \ttfdeux (1 + (\gua + t)^{pm - p(1+\varphi)\alpha}) \eqsp ,
\end{equation}
which concludes the proof upon using that for any $a, b \geq 0$, $(a+b)^{1/2} \leq a^{1/2} + b^{1/2}$.
\end{proof}

The following corollary is of independent interest.

\begin{corollary_colt}
  Let $\alpha, \gamma \in \ooint{0, 1}$. Assume \rref{assum:f} and that $\argmin_{\rset^{\dim}} f$ is bounded. Then, for any $p \geq 0$ and $t \geq 0$,
  \begin{equation}
    \expe{\norm{\bfX_t - x^{\star}}^{p}} < + \infty \eqsp .
  \end{equation}
\end{corollary_colt}

\begin{proof}
Without loss of generality we assume that $x^{\star} = 0$ and $f(x^{\star}) = 0$. First, since $\argmin_{\rset^{\dim}} f$ is bounded, there exists $\tilde{R} \geq 0$ such that for any $x \in \rset^{\dim}$ with $\norm{x} \geq \tilde{R}$, $f(x) > 0$. Let $\sphere = \{ x \in \rset^{\dim}, \normLigne{x} = 1 \}$ and consider $m: \sphere \to \ooint{0, +\infty}$ such that for any $\theta \in \sphere$, $m(\theta) = f(\tilde{R}\theta)$. $m$ is continuous since $f$ is convex and therefore it attains its minimum and there exists $m^{\star} >0$ such that for any $\theta \in \sphere$, $m(\theta) \geq m^{\star}$. Let $x \in \rset^{\dim}$ with $\norm{x} \geq 2\tilde{R}$. Since $f_x : \coint{0, +\infty} \to \rset$ such that $f_x(t) = f(tx)$ is convex we have
\begin{equation}
  (f(x) - f(\tilde{R}x/\norm{x}))(\norm{x} - \tilde{R})^{-1} \geq (f(\tilde{R}x/\norm{x}))\tilde{R}^{-1} \geq m^{\star}\tilde{R}^{-1} \eqsp .
\end{equation}
Therefore, there exists $c >0$ and $R \geq 0$ such that for any $x \in \rset^{\dim}$ with $\norm{x} \geq R$, $f(x) \geq c\normLigne{x}$. Let $p \in \nset$. Noticing that \Cref{assum:f} implies that \rref{assum:f_weak_quasi} holds we can apply \Cref{lemma:bound_p} and \Cref{prop:recurrence} with $m = 1$ and $\varphi=2$. Applying repeatedly \Cref{prop:recurrence} we obtain that there exists $\ttfp \geq 0$ such that
\begin{multline}
  \expe{\norm{\bfX_t - x^{\star}}^{2p}}^{1/p}\leq \ttfp \defEnsLigne{1 + (\gua + t)^{m - \ceil{\alpha^{-1}} \alpha}} \\ \leq \ttfp \defEnsLigne{1 + (\gua + t)^{m - \ceil{m/\alpha} \alpha}} \leq \ttfp \defEnsLigne{1 + \gua^{m - \ceil{m/\alpha} \alpha}} \eqsp ,
\end{multline}
which concludes the proof.
\end{proof}

\subsection{Proof of \Cref{cor:let-alpha-gamma}}
\label{sec:let-aplha-gamma:proof}

  Let $\alpha, \gamma \in \ooint{0,1}$ and $X_0 \in \rset^{\dim}$.
    Using \Cref{lemma:dynkin}, we have for any $t \geq 0$
    \begin{align}
      \expe{\norm{\bfX_t - x^{\star}}^2} &= \norm{X_0 - x^{\star}}^2 - \int (\gua + s)^{-\alpha} \langle f(\bfX_s), \bfX_s - x^{\star} \rangle \rmd s \\
      &\qquad + (\gua/2) \int (\gua + s)^{-2\alpha} \langle \Sigma(\bfX_s), \nabla^2 f(\bfX_s) \rangle \rmd s \eqsp .       \label{eq:integr_carre}
    \end{align}
    Let $\mce_t = \expeLigne{\normLigne{\bfX_t - x^{\star}}^2}$.
    Using, \eqref{eq:integr_carre} we have for any $t \geq 0$,
    \begin{equation}
      \label{eq:derivative}
      \mce_t' \leq - (\gua + t)^{-\alpha} \expe{\langle \nabla f (\bfX_s), \bfX_s - x^{\star} \rangle} + (\gua \boundLSig/2) (\gua + t)^{-2\alpha} \eqsp .
    \end{equation}
    We divide the proof into three parts.
    \begin{enumerate}[wide, labelwidth=!, labelindent=0pt, label=(\alph*)]
  \item First, assume that \rref{assum:f_weak_quasi} holds. Combining this result and \eqref{eq:derivative}, we get that for any $t \geq 0$, $\mce_t' \leq \gua \Lip \eta^2 d (\gua + t)^{-2\alpha}$. Therefore, there exist $\beta, \vareps \geq 0$ and $\Cbeta \geq 0$ such that $\expeLigne{\normLigne{\bfX_t - x^{\star}}^2} < \Cbeta (\gua + t)^{-\beta}(1+\log(1+\gua^{-1}t))^{\vareps}$ with $\beta = 0$ and $\vareps =0$ if $\alpha > 1/2$, $\beta = 1 -2 \alpha$ and $\vareps = 0$ if $\alpha < 1/2$ and $\beta = 0$ and $\vareps = 1$ if $\alpha = 1/2$. Combining this result and \Cref{thm:f_weak} concludes the proof.
  \item
    We can apply \Cref{lemma:bound_p} and \Cref{prop:recurrence} with $m = 1$ and $\varphi=2$. Applying repeatedly \Cref{prop:recurrence} we obtain that there exists $\ttfp \geq 0$ such that
\begin{multline}
  \expe{\norm{\bfX_t - x^{\star}}^{2p}}^{1/p} \\ \leq \ttfp \defEnsLigne{1 + (\gua + t)^{m - \ceil{\alpha^{-1}} \alpha}} \leq \ttfp \defEnsLigne{1 + (\gua + t)^{m - \ceil{m/\alpha} \alpha}} \leq \ttfp \defEnsLigne{1 + \gua^{m - \ceil{m/\alpha} \alpha}} \eqsp ,
\end{multline}
which concludes the proof.
  \item Finally, assume that there exists $R \geq 0$ such that for any
    $x \in \rset^{\dim}$ with $\| x \| \geq R$,
    $\langle \nabla f(x), x- x^{\star}\rangle \geq \mtt \norm{x-x^{\star}}^2$. Therefore, since $(x \mapsto \nabla f(x))$ is continuous, there exists $\mtta \geq 0$ such that for any $x \in \rset^{\dim}$, $\langle \nabla f(x), x- x^{\star}\rangle \geq \mtt \norm{x-x^{\star}}^2 - \mtta$. Combining this result and \eqref{eq:derivative}, we get that for any $t \geq 0$,
    \begin{equation}
      \mce_t' \leq - \mtt (\gua + t)^{-\alpha} \mce_t + (\gua + t)^{-\alpha} \mtta + \gua \Lip \eta (\gua + t)^{-2\alpha}
    \end{equation}
Hence, if $\mce_t \geq \max(\mtta / \mtt, \Lip \eta)$ we have that $\mce_t' \leq 0$ and for any $t \geq 0$, $\mce_t \leq \max(\mtta / \mtt, \Lip \eta, \mce_0)$ and is bounded. Therefore, there exist $\beta, \vareps \geq 0$ and $\Cbeta \geq 0$ such that $\expeLigne{\normLigne{\bfX_t - x^{\star}}^2} < \Cbeta (\gua + t)^{-\beta}(1+\log(1+\gua^{-1}t))^{\vareps}$ with $\beta = \vareps = 0$, which concludes the proof.
\end{enumerate}

\subsection{Discrete counterpart of \Cref{cor:let-alpha-gamma}}
\label{sec:discr-count-crefc_alpha}

\begin{corollary_colt}
  \label{cor:let-alpha-gamma_disc}
  Let $\alpha, \gamma \in \ooint{0,1}$ and $x_0 \in \rset^{\dim}$. Assume
  \rref{assum:f_lip}, \rref{assum:grad_sto}.
  Then we have:
  \begin{enumerate}[wide, labelwidth=!, labelindent=0pt, label=(\alph*)]
  \item \label{item:a_weak_disc} if \rref{assum:f_weak_quasi} holds then, there exists
    $\Cweakapp \geq 0$ such that for any $N \in \nsets$
  \begin{equation}
    \expe{f(X_N)}-f\st \leq \Cweakapp \parentheseDeux {N^{(1-3\alpha)/2} +N^{-\alpha/2} + N^{\alpha -1}} \eqsp ,
  \end{equation}
  \item \label{item:c_weak_disc} if \rref{assum:f_weak} holds and if there exists $R \geq 0$ such that
    for any $x \in \rset^{\dim}$ with $\| x \| \geq R$,
    $\langle \nabla f(x), x- x^{\star}\rangle \geq \mtt \norm{x-x^{\star}}^2$, then
    there exists $\Cweakapp \geq 0$ such that for any $N \in \nsets$
    \begin{equation}
          \expe{f(X_N)}-f\st \leq \Cweakapp \parentheseDeux{N^{-\alpha/2} + N^{\alpha -1}} \eqsp .
    \end{equation}
  \end{enumerate}
\end{corollary_colt}

\begin{proof}
  Let $\alpha, \gamma \in \ooint{0,1}$ and $x_0 \in \rset^{\dim}$.
  We have for any $n \in \nset$,
  \begin{align}
    \expe{\norm{X_{n+1} - x^{\star}}^2} &= \expe{\norm{X_n - x^{\star}}^2} + 2 \expe{\langle X_n - x^{\star}, X_{n+1} - X_n \rangle} + \expe{\norm{X_{n+1} - X_n}^2} \\
    &\leq \expe{\norm{X_n - x^{\star}}^2} - 2\gamma(n+1)^{-\alpha} \expe{\langle X_n - x^{\star}, \nabla f(X_n) \rangle} \\ & \qquad + 2\gamma^2(n+1)^{-2\alpha} \expe{\norm{\nabla f(X_n)}^2} + 2\gamma(n+1)^{-2\alpha} \eta \eqsp .     \label{eq:for_all}
  \end{align}
  We now divide the proof into two parts.
  \begin{enumerate}[wide, labelwidth=!, labelindent=0pt, label=(\alph*)]
  \item Using \rref{assum:f_weak_quasi} and \Cref{lemma:kolmo} we have for any $x \in \rset^{\dim}$,
    \begin{equation}
      \label{eq:weak_grad}
      \langle \nabla f(x), x - x^{\star} \rangle \geq \tau(f(x) - f(x^{\star})) \geq \tau \norm{\nabla f(x)}^2 / (2\Lip) \eqsp .
    \end{equation}
    Using \Cref{assum:f_lip}, \eqref{eq:for_all} and  \eqref{eq:weak_grad} we have for any $n \geq (4\gamma\Lip/\tau)^{1/\alpha}$
    \begin{align}
      \expe{\norm{X_{n+1} - x^{\star}}^2} &\leq  \expe{\norm{X_{n} - x^{\star}}^2}  + 2\gamma (n+1)^{-\alpha} (-\tau/(2\Lip) + \gamma (n+1)^{-\alpha})\expe{\norm{\nabla f(X_n)}^2} \\
      &\quad+ 2\gamma(n+1)^{-2\alpha} \eta \\
      &\leq \expe{\norm{X_{n} - x^{\star}}^2} + 2\gamma(n+1)^{-2\alpha} \eta \eqsp .
    \end{align}
    Therefore, there exist $\beta, \vareps \geq 0$ and $\Cbeta \geq 0$ such that $\expeLigne{\normLigne{\bfX_n - x^{\star}}^2} < \Cbeta (n+1)^{-\beta}(1+\log(1+n))^{\vareps}$ with $\beta = 0$ and $\vareps =0$ if $\alpha > 1/2$, $\beta = 1 -2 \alpha$ and $\vareps = 0$ if $\alpha < 1/2$ and $\beta = 0$ and $\vareps = 1$ if $\alpha = 1/2$. Combining this result and \Cref{thm:f_weak_discrete} concludes the proof.
  \item Finally, assume that there exists $R \geq 0$ such that for any
    $x \in \rset^{\dim}$ with $\| x \| \geq R$,
    $\langle \nabla f(x), x- x^{\star}\rangle \geq \mtt \norm{x-x^{\star}}^2$. Therefore, since $(x \mapsto \nabla f(x))$ is continuous, there exists $\mtta \geq 0$ such that for any $x \in \rset^{\dim}$, $\langle \nabla f(x), x- x^{\star}\rangle \geq \mtt \norm{x-x^{\star}}^2 - \mtta$. Combining this result and \eqref{eq:for_all} we get that for any $n \in \nset$ such that $n \geq (2/\gamma)^{\alpha^{-1}}$
    \begin{equation}
      \expe{\norm{X_{n+1} - x^{\star}}^2}  \leq (1 - \gamma(n+1)^{-\alpha}) \expe{\norm{X_{n} - x^{\star}}^2} + 2\gamma(n+1)^{-\alpha} \mtta + 2\gamma^2(n+1)^{-2\alpha} \eta \eqsp .
    \end{equation}
    Hence, if $n \geq (2/\gamma)^{-\alpha^{-1}}$ and $\expeLigne{\normLigne{X_n - x^{\star}}^2} \geq \max(2\mtta, 2\gamma \eta)$ then $\expeLigne{\normLigne{X_{n+1} - x^{\star}}^2} \leq \expeLigne{\normLigne{X_{n} - x^{\star}}^2}$. Therefore, we obtain by recursion that for any $n \in \nset$, that $(\expeLigne{\normLigne{X_{n} - x^{\star}}^2})_{n \in \nset}$ is bounded which concludes the proof by applying \Cref{thm:f_weak_discrete}.
  \end{enumerate}
\end{proof}

\subsection{Proof of \Cref{thm:f_weak_discrete}}
\label{thm:f_weak_discrete:proof}

  Without loss of generality, we assume that $f^{\star} = 0$. Let $\alpha, \gamma \in \ooint{0,1}$, $x_0 \in \rset^{\dim}$. Let $\delta = \min(\delta_1, \delta_2)$, with
  $\delta_1, \delta_2$ given in \Cref{thm:f_weak_discrete} and let $(E_k)_{k \in \nset}$ such that
  for any $k \in \nset$, $E_k = (k+1)^{\delta} \expe{f(X_k)} (1 + \log(k+1))^{-\vareps}$.
  There exists $c_{\delta} \in \rset$ such that for any $x \in \ccint{0,1}$, $(1 + x)^{\delta} \leq 1 + c_{\delta} x$.
  Hence, for any $n \in \nset$ we have
  \begin{equation}
    \label{eq:der_n}
    (n+2)^{\delta} - (n+1)^{\delta} \leq (n+1)^{\delta} \defEns{(1+(n+1)^{-1})^{\delta} - 1} \leq c_{\delta} (n+1)^{\delta - 1} \eqsp .
  \end{equation}
Using \citep[Lemma 1.2.3]{nesterov2004introductory} and \Cref{assum:grad_sto} we have for any $n \in \nset$ such that $n \geq (2\Lip\gamma)^{1/\alpha}$
\begin{align}
  &\expen{f(X_{n+1})} \leq f(X_n) - \gamma (n+1)^{-\alpha} \expen{\langle \nabla f(X_n), H(X_n, Z_{n+1}) \rangle} \\
  &\qquad+(\Lip/2) \gamma^2(n+1)^{-2\alpha} \expen{\norm{ H(X_n, Z_{n+1}) }^2} \\
     \expe{f(X_{n+1})} &\leq \expe{f(X_n)} - \gamma (n+1)^{-\alpha} \expe{\norm{\nabla f(X_n)}^2} \\
     &\qquad + \Lip\gamma^2(n+1)^{-2\alpha} \expe{\norm{ \nabla f(X_n) }^2} + \Lip\gamma^2(n+1)^{-2\alpha} \eta \\
                    &\leq \expe{f(X_n)} -\gamma (n+1)^{-\alpha} \defEns{1 - \Lip\gamma(n+1)^{-\alpha}} \expe{\norm{\nabla f(X_n)}^2}  + \Lip\gamma^2(n+1)^{-2\alpha} \eta \\
&\leq \expe{f(X_n)} -\gamma (n+1)^{-\alpha} \expe{\norm{\nabla f(X_n)}^2}/2  + \Lip\gamma^2(n+1)^{-2\alpha} \eta \eqsp .  \label{eq:der_f}
\end{align}
Combining \eqref{eq:der_n} and \eqref{eq:der_f} we get for any $n \in \nset$ such that $n \geq (2\Lip\gamma)^{1/2}$
\begin{align}
  E_{n+1} - E_{n} &= (n+2)^{\delta} \expe{f(X_{n+1})} (1 + \log(n+2))^{-\vareps } - (n+1)^{\delta} \expe{f(X_{n})} (1 + \log(n+1))^{-\vareps } \\
                    &\leq (1 + \log(n+1))^{-\vareps } \left[\defEns{(n+2)^{\delta} -(n+1)^{\delta}} (\expe{f(X_{n+1})}) \right. \\
                    &\qquad \left.+ (n+1)^{\delta} \defEns{\expe{f(X_{n+1})} - \expe{f(X_n)}}\right] \\
                    &\leq (1 + \log(n+1))^{-\vareps } \left[ \defEns{(n+2)^{\delta} -(n+1)^{\delta}} (\expe{f(X_{n})} + \Lip\gamma^2(n+1)^{-2\alpha} \eta) \right. \\
                    & \qquad \left. + (n+1)^{\delta} \defEns{-\gamma (n+1)^{-\alpha} \expe{\norm{\nabla f(X_n)}^2}/2  + \Lip\gamma^2(n+1)^{-2\alpha} \eta} \right] \\
                    &\leq (1 + \log(n+1))^{-\vareps } \left[ c_{\delta} (n+1)^{\delta-1} (\expe{f(X_{n})} + 2\gamma^2(n+1)^{-2\alpha} \eta) \right. \\
                    & \qquad \left. + (n+1)^{\delta} \defEns{-\gamma (n+1)^{-\alpha} \expe{\norm{\nabla f(X_n)}^2}/2  +\Lip\gamma^2(n+1)^{-2\alpha} \eta} \right] \\
                  &\leq c_{\delta} E_n+ 2\Lip\gamma^2(1+c_{\delta})(n+1)^{\delta - 2\alpha} (1 + \log(n+1))^{-\vareps } \eta \\
                  &\quad - \gamma (n+1)^{\delta - \alpha}(1 + \log(n+1))^{-\vareps } \expe{\| \nabla f(X_n) \|^2} / 2 \eqsp .   \label{eq:main_calc}
\end{align}
Using \eqref{assum:f_weak} and the fact that for any $k \in \nset$,
$\expe{\| X_k - x^{\star} \|^{r_2 r_3}} \leq \Cbeta(k+1)^{\beta}(1 + \log(1 +
k))^{\explog}$ and Hölder's inequality and that $r_1r_3 = 2(2r_1^{-1}-1)^{-1}$, we have for any $k \in \nset$
\begin{equation}
\label{eq:holder}
  \expe{\norm{\nabla f(X_k)}^2} \geq \expe{f(X_k)}^{2r_1^{-1}} \Cbeta^{-(2r_1^{-1}-1)^{-1}} \tau^{2r_1^{-1}} (k+1)^{-\beta(2r_1^{-1}-1)}(1 + \log(k+1))^{-\vareps(2r_1^{-1}-1)} \eqsp .
\end{equation}
Combining \eqref{eq:main_calc} and \eqref{eq:holder} we get that for any $n \in \nset$ with $n \geq (4 \gamma)^{1/\alpha}$
\begin{align}
  &E_{n+1} - E_n \leq c_{\delta} E_n+ 2\Lip\gamma^2(1+c_{\delta})(n+1)^{\delta - 2\alpha} (1 + \log(n+1))^{-\vareps } \eta \\ & \qquad - \gamma (n+1)^{\delta - \alpha -\beta(2r_1^{-1}-1)}\expe{f(X_n)}^{2r_1^{-1}} \Cbeta^{-(2r_1^{-1}-1)^{-1}} \tau^{2r_1^{-1}} (1 + \log(n+1))^{-\vareps 2r_1^{-1}} / 2 \\
  &\leq c_{\delta} E_n+ 2\Lip\gamma^2(1+c_{\delta})(n+1)^{\delta - 2\alpha} (1 + \log(n+1))^{-\vareps } \eta  \\
  &\quad - \gamma (n+1)^{\alpha -(\delta +\beta)(2r_1^{-1}-1)}E_n^{2r_1^{-1}} \Cbeta^{-(2r_1^{-1}-1)^{-1}} \tau^{2r_1^{-1}}  / 2 \eqsp .
\end{align}
Let $\Cweakapp_3 = \max(\Cweakapp_1, \Cweakapp_2)$ with
\begin{equation}
  \label{eq:5}
  \left\lbrace
  \begin{aligned}
    \Cweakapp_1 &= (2|c_{\delta}| \Cbeta^{2r_1^{-1} - 1} \tau^{-2r_1^{-1}})^{2r_1^{-1} -1} \eqsp , \\
    \Cweakapp_2 &=  (4\Lip\gamma^2(1+c_{\delta})\Cbeta^{2r_1^{-1} - 1} \tau^{-2r_1^{-1}})^{r_1/2} \eqsp .
  \end{aligned}
  \right.
\end{equation}
If $E_n \geq \Cweakapp_3$ and $n \geq (4\gamma)^{1/\alpha}$ then $E_{n+1} \leq E_n$.
Therefore, we obtain by recursion that $E_n \leq \Cweakapp$ with $\Cweakapp = \max(E_0, \dots, E_{\ceil{(2\Lip\gamma)^{1/\alpha}}}, \Cweakapp_3)$.


\end{document}